\newcommand{\showcomments}{yes}
\newsavebox{\commentbox}
\newcounter{intronum}
\newcounter{ax}
\newtheorem{thm}{Theorem}[section]
\newtheorem{lem}[thm]{Lemma}
\newtheorem{cor}[thm]{Corollary}
\newtheorem{prop}[thm]{Proposition}
\newtheorem{thmi}{Theorem}
\newtheorem{cori}[thmi]{Corollary}
\theoremstyle{definition}
\newtheorem{defn}[thm]{Definition}
\newtheorem{rem}[thm]{Remark}
\newtheorem*{unrem}{Remark}
\newtheorem{remi}[intronum]{Remark}
\newtheorem{notation}[thm]{Notation}
\newtheorem{claim}{Claim}
\newtheorem{claim*}{Claim}
\newtheorem{cons}[thm]{Construction}
\DeclareMathOperator{\diam}{\textup{\textsf{diam}}}
\DeclareMathOperator{\asdim}{asdim}
\newcommand{\neb}{\mathcal N}
\def\MCG{\mathcal{MCG}}
\newcommand{\field}[1]{\mathbb{#1}}
\newcommand{\integers}{\ensuremath{\field{Z}}}
\newcommand{\naturals}{\ensuremath{\field{N}}}
\newcommand{\inv}{^{-1}}
\newcommand{\HHS}[2]{\ensuremath{(\cuco {#1},\frak {#2})}}
\newcommand{\pyramid}[1]{\mathrm{Pyr}({#1})}
\newcommand{\interior} [1] {{\ensuremath \text{\rm Int}(#1) }}
\newcommand{\hhhg}{\hookrightarrow_{_{hh}}}
\newcommand{\Rmnum}[1]{\mathbf{{\expandafter\@slowromancap\romannumeral #1@}}}
\newcommand{\tup}[1]{\vec{#1}}
\let\oldmarginpar\marginpar
\renewcommand\marginpar[1]{\-\oldmarginpar[\raggedleft\footnotesize #1]{\raggedright\footnotesize #1}}
\newcommand{\tsh}[1]{\left\{\kern-.7ex\left\{#1\right\}\kern-.7ex\right\}}
\newcommand{\Tsh}[2]{\tsh{#2}_{#1}}
\newcommand{\ignore}[2]{\Tsh{#2}{#1}}
\newcommand{\retract}{\mathfrak l}
\newcommand{\co}{\colon}
\newcounter{enumitemp}
\newcommand{\dist}{\textup{\textsf{d}}}
\newcommand{\cuco}[1]{{\mathcal #1}}
\newcommand{\fontact}{{\mathcal C}}
\newcommand{\gate}{\mathfrak g}
\newcommand{\coneoff}[1]{\widehat{#1}}
\newcommand{\propnest}{\sqsubsetneq}
\newcommand{\nest}{\sqsubseteq}
\newcommand{\orth}{\bot}
\newcommand{\transverse}{\pitchfork}
\newcommand{\relevant}{\mathbf{Rel}}
\newcommand{\cayley}{\mathrm{Cay}}
\newcommand{\cox}{\coneoff{\cuco X}}
\newcommand{\cone}{\mathfrak C}
\newcommand{\induced}{^{*}}
\newcommand{\inducedS}{^{\tiny{\diamondsuit}}}
\newcommand{\nclose}[1]{\widehat{#1}}
\begin{document}
\title[Asdim and small-cancellation for HHS]
{Asymptotic dimension and small-cancellation for hierarchically hyperbolic spaces and groups}

\author[J. Behrstock]{Jason Behrstock}
\address{Lehman College and The Graduate Center, CUNY, New York, New York, USA}
\email{jason.behrstock@lehman.cuny.edu}
\thanks{\flushleft{Behrstock was supported as a Simons Fellow.}}

\author[M.F. Hagen]{Mark F. Hagen}
\address{Dept. of Pure Maths and Math. Stat., University of Cambridge, Cambridge, UK}
\email{markfhagen@gmail.com}
\thanks{\flushleft {Hagen was supported by the EPSRC}}

\author[A. Sisto]{Alessandro Sisto}
\address{ETH, Z\"{u}rich, Switzerland}
\email{sisto@math.ethz.ch}
\thanks{\flushleft{Sisto was supported by the Swiss National Science Foundation project 144373}}

\maketitle

\begin{abstract}
We prove that all hierarchically hyperbolic groups have finite
asymptotic dimension. 
One application of this result is to obtain 
the sharpest known
bound on the asymptotic dimension of the mapping class group of a
finite type surface: improving the bound from exponential to at most 
quadratic in the 
complexity of the surface.  We also apply the main result to various other
hierarchically hyperbolic groups and spaces. We also prove a 
small-cancellation result namely: 
if $G$ is a hierarchically hyperbolic group, $H\leq G$ is a suitable
hyperbolically embedded subgroup, and $N\triangleleft H$ is
``sufficiently deep'' in $H$, then $G/\nclose N$ is a relatively
hierarchically hyperbolic group. This new class provides many new examples 
to which our asymptotic dimension bounds apply.
Along the way, we prove new results about the structure of HHSs,
for example: the associated hyperbolic spaces are always obtained,
up to quasi-isometry, by coning off canonical coarse product regions
in the original space (generalizing a relation established by 
Masur--Minsky between the complex 
of curves of a surface and Teichm\"{u}ller space).
\end{abstract}

\tableofcontents

\renewcommand{\qedsymbol}{$\Box$}

\section*{Introduction}\label{sec:intro}

Motivated by 
the observation that a suitable CAT(0) cube complex, equipped with a
collection of hyperbolic graphs encoding the relationship between its
hyperplanes, has many properties exactly parallel to those of the
mapping class group, equipped with the collection of curve graphs of
subsurfaces, we introduced  the
class of hierarchically hyperbolic spaces, abbreviated HHS, as 
a notion of ``coarse nonpositive
curvature'' which provides a framework for studying these two 
seemingly disparate classes
of spaces/groups.

The class of hierarchically hyperbolic spaces 
consists of metric spaces whose geometry
can be recovered, coarsely, from projections onto a specified
collection of hyperbolic metric spaces; the axioms governing these
spaces and projections are modelled on the relation between the 
subsurface projections between the curve graph of a surface and the 
curve graph of its subsurfaces, i.e., the mapping class group of a surface is
the archetypal HHS
(see~\cite{MasurMinsky:I,MasurMinsky:II,BKMM:consistency}).  The
hyperbolic spaces onto which one projects are partially ordered so
that there is a unique maximal element and numerous elements that are
minimal in every chain in which they appear. Relaxing the
hyperbolicity requirement for these minimal spaces, one obtains the 
notion of a \emph{relatively hierarchically hyperbolic space}, 
abbreviated RHHS.  These
notions are reviewed in Section~\ref{sec:preliminaries} of this paper 
and a detailed discussion can be found 
in~\cite{BehrstockHagenSisto:HHS_I} and~\cite{BehrstockHagenSisto:HHS_II}.

\subsection*{Asymptotic dimension}

The \emph{asymptotic dimension} of a metric space is a well-studied
quasi-isometry invariant, introduced by
Gromov~\cite{Gromov:asymptotic}, which provides a coarse version of
the topological dimension.  Early motivation for studying asymptotic
dimension was provided by Yu, who showed that groups with finite asymptotic
dimension satisfy both the coarse Baum-Connes and the Novikov 
conjectures \cite{Yu:novikov}. It is now known that asymptotic 
dimension provides coarse analogues for many properties of 
topological dimension, see \cite{BellDranishnikov:asdim_1} for a 
recent survey. Using very different techniques a number of groups and 
spaces have been shown to have finite asymptotic dimension, although 
good estimates on this dimension have proved difficult in many cases:  
curve graphs 
\cite{BellFujiwara:asdim_curve_graph, BestvinaBromberg:asdim}, 
mapping class groups 
\cite{BBF:quasi_tree}, cubulated groups 
\cite{Wright:asdim}, graph manifold groups \cite{Smirnov:asdimgraphmfld}, 
and groups hyperbolic relative to 
ones with finite asymptotic dimension \cite{Osin:asdim}.

One of our main results is the following very general result, which 
in addition to covering many new cases, provides a unified proof of finite asymptotic dimension for almost all the cases just mentioned:

\begin{thmi}\label{thmi:asdim}
Let $\cuco X$ be a uniformly proper HHS.  Then $\asdim\cuco 
X<\infty$. In particular, any HHG has finite asymptotic dimension.
\end{thmi}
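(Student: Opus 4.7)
The plan is to induct on the complexity of $\cuco X$, that is, on the maximum length of a $\sqsubseteq$-chain in the associated index set $\mathfrak S$. In the base case of complexity $1$, the space $\cuco X$ is quasi-isometric to its unique top-level hyperbolic space $\calC(S)$; uniform properness of $\cuco X$ passes to $\calC(S)$, and finite asymptotic dimension then follows from a bound for uniformly proper hyperbolic spaces, for instance via an isometric embedding into a finite product of quasi-trees in the style of Bestvina--Bromberg--Fujiwara, or via the Buyalo--Lebedeva bound in terms of the Gromov boundary.

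For the inductive step, I would exploit the structural result highlighted in the abstract: the top hyperbolic space $\calC(S)$ is quasi-isometric to $\cuco X$ after coning off the canonical coarse product regions $P_U$, as $U$ ranges over an appropriate family of strictly smaller domains. Each such $P_U$ is coarsely a product $\calC(U)\times F_U$, in which both factors are themselves uniformly proper HHSs of strictly smaller complexity. The inductive hypothesis, combined with the product inequality $\asdim(A\times B)\le \asdim A + \asdim B$, therefore yields a uniform bound on $\asdim P_U$ as $U$ varies.

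To assemble the top-level hyperbolic space and the product regions, I would invoke (or, if necessary, prove) a coning-off principle for asymptotic dimension in the spirit of Bell--Dranishnikov's Hurewicz theorem and Osin's theorem for relatively hyperbolic groups: if a metric space $Y$ is obtained from $\cuco X$ by coning off a family of subsets $\{A_\alpha\}$ with $\sup_\alpha \asdim A_\alpha$ finite, and $\asdim Y$ is finite, then $\asdim \cuco X$ is finite and bounded in terms of these two data. Applying this with $Y$ quasi-isometric to $\calC(S)$ and the $A_\alpha$ the product regions $P_U$ closes the induction, and gives the statement for HHSs. The HHG statement is then immediate, since a finitely generated group with its word metric is automatically uniformly proper.

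The main obstacle I foresee is twofold. First, the base case and, more importantly, the uniformity needed in the inductive step require a controlled asymptotic-dimension bound on the hyperbolic spaces $\calC(U)$, uniform in $U \in \mathfrak S$; here one should argue directly from the HHS axioms using projection-complex technology in the style of Bestvina--Bromberg--Fujiwara. Second, the coning-off lemma must handle overlapping product regions, so one needs quantitative control on the coarse pairwise intersections $P_U \cap P_V$; the consistency and bounded-geodesic-image axioms together with the distance formula should deliver this, but translating them into the correct hypotheses for a general asdim principle is where I expect the bulk of the technical effort to lie.
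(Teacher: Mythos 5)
Your overall architecture --- induction on complexity, coning off product regions, a Hurewicz-type fibration theorem, plus a uniform bound on $\asdim\fontact U$ --- matches the paper's. But the ``coning-off principle'' you propose to invoke is not a theorem, and this is where the real content of the proof lives. The statement ``if $Y$ is obtained from $\cuco X$ by coning off subsets $\{A_\alpha\}$ with $\sup_\alpha\asdim A_\alpha<\infty$ and $\asdim Y<\infty$, then $\asdim\cuco X$ is finite and bounded in terms of these two data'' is false. Take $\cuco X=\integers^n$ and cone off all lines parallel to the coordinate axes: each line has asymptotic dimension $1$, the coned-off space has diameter at most $n$ (hence asymptotic dimension $0$), yet $\asdim\integers^n=n$ is unbounded. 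The failure is visible already in the Bell--Dranishnikov fibration theorem (Theorem~\ref{thm:fibration_theorem}): what must be controlled is not the dimension of the individual coned-off pieces but the dimension of the \emph{preimage of a ball} under the coning map, and such a preimage is an infinite union of overlapping product regions chained together (in the $\integers^2$ example the preimage of a radius-$2$ ball is already all of $\integers^2$). The correct bound on these preimages is $\asdim\mathcal O$, the uniform asymptotic dimension of products $\prod_{V\in\mathfrak V}\mathbf F_V$ over \emph{totally orthogonal} families $\mathfrak V$ --- which is exactly why orthogonality, and not just the per-piece dimension, enters the final answer. Establishing this is the content of Proposition~\ref{prop:build_A}, which builds an exhaustion of the ball preimage by hierarchically quasiconvex sets, uses gates and the realization theorem to control pairwise coarse intersections of the sets $B_{\mathcal U}$, and then applies the Union Theorem at each stage so that the dimension does not grow along the exhaustion. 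Your proposal acknowledges ``technical effort'' here, but the missing ingredient is not a refinement of a known principle; it is the HHS-specific analysis that replaces a false general principle with a true one.

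Two smaller divergences from the paper are worth noting. First, the paper does not cone off all product regions at once to reach $\fontact S$; it cones off only the parallel copies of $\mathbf F_U$ for $\nest$--minimal $U$, obtaining an HHS of complexity $\xi-1$ (Proposition~\ref{prop:cone_off_bottom}) and inducting one level at a time --- this is what produces the sum $\sum_\ell P_\ell\Delta_\ell$ in Theorem~\ref{thm:technical_asdim}. Second, for the uniform bound on $\asdim\fontact U$, note that uniform properness of $\cuco X$ does \emph{not} pass to the $\fontact U$ (curve graphs are locally infinite), and it is unclear that Bestvina--Bromberg--Fujiwara projection-complex methods apply to the domains of an arbitrary HHS; the paper instead generalizes the Bell--Fujiwara tight-geodesics argument, using properness of $\cuco X$ itself together with the identification of $\fontact S$ with a factored space, to get Corollary~\ref{cor:finite_asdim_curve_graph}.
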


Theorem~\ref{thmi:asdim} is proven in Section~\ref{sec:main_asdim_theorem}, where 
we establish 
the slightly stronger Theorem~\ref{thm:technical_asdim}, which 
obtains explicit bounds on the dimension.

In special cases where there is a hierarchical structure with a known
bound on the asymptotic dimension of the various $\fontact U$, we can
obtain fairly effective bounds on this dimension, as we now show in 
the case of the mapping class group.

Bestvina--Bromberg--Fujiwara established 
finiteness of the asymptotic dimension of mapping class group 
\cite{BBF:quasi_tree}, but without providing explicit 
bounds.   
Bestvina--Bromberg then improved on their prior work to obtain an 
explicit bound on the asymptotic 
dimension which is exponential in the complexity of the surface 
\cite{BestvinaBromberg:asdim}. Bestvina--Bromberg 
conjectured that the asymptotic 
dimension of the mapping class group is equal to 
its virtual cohomological dimension (and, in particular, linear in the 
complexity of the surface). 

Here, using the
hierarchically hyperbolic structure on the mapping class group of a
surface, constructed in~\cite{BehrstockHagenSisto:HHS_II},  
Section~\ref{sec:main_asdim_theorem} we show that a careful 
application of   
Theorem~\ref{thm:technical_asdim} yields the following  
which improves the sharpest bounds from exponential to quadratic:

\begin{cori}[Asymptotic dimension of $\MCG(S)$]\label{cori:MCG}
Let $S$ be a connected oriented surface of finite type of complexity
$\xi(S)\geq 2$.  Then $\asdim\MCG(S)\leq 5\xi(S)^2$.
\end{cori}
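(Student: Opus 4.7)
The plan is to specialize the explicit version, Theorem~\ref{thm:technical_asdim}, to the standard hierarchically hyperbolic structure on $\MCG(S)$ constructed in~\cite{BehrstockHagenSisto:HHS_II}, for which the index set is (essentially) the collection of isotopy classes of essential subsurfaces of $S$ under the containment relation, and each associated hyperbolic space $\fontact U$ is the curve graph $\mathcal C(U)$. Because each proper nesting $U\propnest V$ of essential subsurfaces strictly decreases complexity, the nesting depth of this hierarchy is at most $\xi(S)$.

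The next step is to feed into Theorem~\ref{thm:technical_asdim} a linear-in-complexity bound on the asymptotic dimensions of the curve graphs that appear as $\fontact U$. Using the explicit bounds from~\cite{BestvinaBromberg:asdim} (and the obvious fact that annular curve graphs are quasi-isometric to $\reals$, hence have asymptotic dimension $1$), one obtains $\asdim\mathcal C(U)\leq c\cdot \xi(U)\leq c\cdot \xi(S)$ for a small absolute constant $c$, so that
\[
\sup_{U\in\mathfrak S}\asdim \fontact U \;\leq\; c\,\xi(S).
\]
Plugging these two inputs, $\Delta\leq \xi(S)$ and $D\leq c\,\xi(S)$, into Theorem~\ref{thm:technical_asdim} yields a bound of the shape $\asdim\MCG(S)\leq (\text{poly in }\Delta)(\text{linear in }D)$, i.e.\ quadratic in $\xi(S)$.

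The main obstacle is constant-tracking: the stated bound $5\xi(S)^2$ is quite tight, so one must (a) use the sharpest available linear bound on $\asdim \mathcal C(U)$, (b) check that the combinatorics of Theorem~\ref{thm:technical_asdim} produce a product of the depth bound with the curve-graph bound (and not, say, a product with the depth squared or with an additive correction that is quadratic by itself), and (c) verify that the HHS constants coming from subsurface projections on $\MCG(S)$ do not accidentally introduce an extra factor of $\xi(S)$ at any step. This final bookkeeping—combining the depth count, the curve-graph asymptotic dimension input, and the explicit formula of Theorem~\ref{thm:technical_asdim}—is where the work lies; once it is done, the bound $\asdim\MCG(S)\leq 5\xi(S)^2$ falls out after collecting constants.
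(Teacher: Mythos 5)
Your overall strategy is the paper's: apply Theorem~\ref{thm:technical_asdim} to the standard HHS structure on $\MCG(S)$ with the Bestvina--Bromberg linear bound on $\asdim$ of curve graphs. But there is a genuine gap exactly at the point you flag as ``where the work lies,'' and it is not mere constant-tracking. The bound in Theorem~\ref{thm:technical_asdim} is $\sum_{\ell=1}^{\xi}P_\ell\Delta_\ell$, where $P_\ell$ is the maximal size of a pairwise-orthogonal family of level-$\ell$ domains and $\Delta_\ell$ is the (uniform) asymptotic dimension of the level-$\ell$ curve graphs. If you only input ``depth $\leq\xi(S)$'' and ``$\Delta_\ell\leq c\,\xi(S)$,'' the generic estimate $P_\ell\leq\xi(S)$ gives $\sum_\ell P_\ell\Delta_\ell\leq \xi(S)\cdot\xi(S)\cdot c\,\xi(S)$, which is \emph{cubic} in $\xi(S)$, not quadratic. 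Your phrase ``a product of the depth bound with the curve-graph bound'' is precisely what does not follow from the two inputs you supply.

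The missing idea is a trade-off between the level of a domain and how many pairwise-disjoint domains of that level can coexist: a subsurface of level $\ell>1$ contains at least $\ell-1$ disjoint level-$1$ subsurfaces, so $(\ell-1)P_\ell\leq\xi(S)$. Combined with the level-sensitive form of the Bestvina--Bromberg bound, $\Delta_\ell\leq 2\ell+3$ (rather than the cruder $\Delta_\ell\leq c\,\xi(S)$), each summand satisfies $P_\ell\Delta_\ell\leq\frac{\xi(S)}{\ell-1}(2\ell+3)=O(\xi(S))$, and summing over the at most $\xi(S)$ levels yields
\[
\asdim\MCG(S)\;\leq\;5\xi(S)+\xi(S)\sum_{\ell=2}^{\xi(S)}\frac{2\ell+3}{\ell-1}\;\leq\;2\xi(S)^2+3\xi(S)\log\xi(S)+8\xi(S)\;\leq\;5\xi(S)^2
\]
for $\xi(S)\geq2$. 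Without this counting argument your proposal does not establish the quadratic bound.
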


For convenience, we omit from the statement the case of connected oriented
surfaces of finite type and complexity at most one (i.e., $S^2$ with at
most 4 punctures and $S^1\times S^1$ with at most 1 puncture); this 
is not omitting any cases of interest since
such mapping class groups are either finite, $\mathbb Z$,
or virtually free, and hence their asymptotic dimensions are 0 or 1.

The notion of
a hierarchically hyperbolic structure plays a central role in
establishing the bound in Corollary~\ref{cori:MCG}. 
Indeed, our proof of Theorem~\ref{thmi:asdim} relies on the fact that
``coning off'' an appropriate collection of subspaces of an HHS yields
a new HHS of lower complexity, and the bound we eventually obtain is
in terms of a uniform bound on the asymptotic dimensions of the
hyperbolic spaces in the HHS structure, which is obtained separately
in Corollary~\ref{cor:finite_asdim_curve_graph}.  

To establish this bound for general (relatively) hierarchically
hyperbolic spaces, we generalize the ``tight geodesics'' strategy of
Bell--Fujiwara~\cite{BellFujiwara:asdim_curve_graph}, who proved that
the asymptotic dimension of the curve graph of a surface of finite
type is finite.  Bell--Fujiwara's work relies on a finiteness theorem of
Bowditch~\cite{Bowditch:tight}, which does not provide an explicit
bound on the asymptotic dimension. 

In the case of the mapping class group, we do not use such upper
bounds.  Instead, in the proof of Corollary~\ref{cori:MCG}, we
sidestep Section~\ref{sec:tight} and the ``tight geodesics'' method
completely and instead use the linear bound (in terms of complexity)
on the asymptotic dimension of the curve graph provided by
Bestvina-Bromberg in~\cite{BestvinaBromberg:asdim} when we invoke
Theorem~\ref{thm:technical_asdim}.  The structure of the proof of
Theorem~\ref{thm:technical_asdim} allows this, and depends in an
essential way on the notion of a hierarchically hyperbolic space. 
Interestingly, even though the notion of an HHS generalizes known 
structures in the mapping class group, the present work can not be 
employed in the mapping class group case without appeal to the full 
generality of hierarchically hyperbolic spaces. 
Roughly, this is because our approach involves coning off certain subspaces of
a (relatively) hierarchically hyperbolic space to produce a
hierarchically hyperbolic space of lower complexity, enabling
induction. Although this procedure keeps us in the category of being 
an HHS, being a mapping class group is not similarly
closed under this coning operation. 

In~\cite{BBF:quasi_tree}, the asymptotic dimension of the mapping
class group is shown to be finite as a consequence of the fact that
the asymptotic dimension of Teichm\"{u}ller space $\mathcal T(S)$ is
finite.  Our method gives an improved bound on
$\asdim(\mathcal T(S))$, where $\mathcal T(S)$ is given either the 
Teichm\"{u}ller metric or the Weil--Petersson metric:

\begin{cori}\label{cori:teich}
Let $S$ be a connected oriented surface of finite type of complexity $\xi(S)\geq 1$.  Then 
$\asdim(\mathcal T(S))\leq 5\xi(S)^2+\xi(S)$.
\end{cori}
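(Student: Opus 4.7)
The plan is to apply Theorem \ref{thm:technical_asdim} (the explicit-bound strengthening of Theorem \ref{thmi:asdim}) directly to Teichm\"uller space in each of the two metrics. By the results of \cite{BehrstockHagenSisto:HHS_II}, $\mathcal T(S)$ equipped with either the Teichm\"uller or the Weil--Petersson metric is a uniformly proper hierarchically hyperbolic space whose index set is indexed by (isotopy classes of) essential subsurfaces of $S$, and the complexity of this HHS structure (the length of the longest $\nest$-chain) is again $\xi(S)$, matching that of the HHS structure on $\MCG(S)$ used in Corollary \ref{cori:MCG}.

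First I would rerun the coning-off induction from the proof of Corollary \ref{cori:MCG}. At each stage one cones off the standard product regions associated with a bottom layer of the hierarchy, obtaining a lower-complexity HHS, and the contribution to the asymptotic dimension coming from the non-minimal hyperbolic spaces is controlled by the Bestvina--Bromberg linear-in-$\xi(S)$ bound on $\asdim(\mathcal C(Y))$ \cite{BestvinaBromberg:asdim}. Iterating $\xi(S)$ times, these contributions sum to the term $5\xi(S)^2$, exactly as in the mapping class group case.

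The extra $\xi(S)$ summand reflects the way in which the HHS structure on $\mathcal T(S)$ differs from that on $\MCG(S)$ at the minimal level. In $\MCG(S)$, the minimal hyperbolic spaces are annular curve graphs, which are quasi-lines of asymptotic dimension $1$. For $\mathcal T(S)$ in either metric, these minimal pieces instead carry an additional continuous Fenchel--Nielsen-type length-twist coordinate and so are quasi-isometric to $\mathbb H^2$-like spaces of asymptotic dimension at most $2$. Each of the $\xi(S)$ minimal factors appearing in a maximal $\nest$-chain thus contributes one extra unit compared with the mapping class group case, producing the additional $+\xi(S)$.

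The main obstacle is bookkeeping rather than new geometric content: one must verify that the HHS structures on $\mathcal T(S)$ in the two metrics satisfy the uniform-properness hypothesis of Theorem \ref{thm:technical_asdim}, and check that the coning-off operation used in the induction preserves the structure needed at each step (in particular that the asymptotic dimension of the coned-off minimal pieces is bounded as claimed). Once this verification is in place, adding the two contributions yields $\asdim(\mathcal T(S))\leq 5\xi(S)^2+\xi(S)$, as claimed.
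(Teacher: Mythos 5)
Your proposal is correct and follows essentially the same route as the paper: invoke the HHS structure on $\mathcal T(S)$ (indexed by essential subsurfaces, non-annular only for Weil--Petersson), run Theorem~\ref{thm:technical_asdim} with the Bestvina--Bromberg bounds exactly as in Corollary~\ref{cori:MCG}, and account for the extra $+\xi(S)$ because the $\nest$--minimal annular spaces are combinatorial horoballs of asymptotic dimension at most $2$ rather than quasi-lines. Your ``Fenchel--Nielsen length-twist'' description of those minimal pieces is just a geometric gloss on the same combinatorial-horoball fact the paper uses, so there is no substantive difference.
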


Pre-existing bounds on the asymptotic dimension of the associated
collection of hyperbolic spaces can be used to bound the asymptotic
dimension of some other HHS without recourse to
Corollary~\ref{cor:finite_asdim_curve_graph}.  For example,
in~\cite{BehrstockHagenSisto:HHS_I}, we showed
that if $\cuco X$ is a CAT(0) cube complex admitting a collection of
convex subcomplexes called a \emph{factor system}, then $\cuco X$
admits a hierarchically hyperbolic structure in which the associated
hyperbolic spaces are uniformly quasi-isometric to simplicial trees, and thus have asymptotic dimension $\leq1$.
This holds in particular when $\cuco X$ embeds convexly in the
universal cover of the Salvetti complex of a right-angled Artin group
$A_\Gamma$ associated to a finite simplicial graph $\Gamma$.
Theorem~\ref{thm:technical_asdim} then provides $\asdim\cuco
X\leq\sum_{\ell=0}^{|\Gamma^{(0)}|}K_\ell$, where $K_\ell\leq\ell$ is
the maximum size of a clique appearing in a subgraph of $\Gamma$ with
$\ell$ vertices.  This reproves finiteness of the asymptotic dimension
for such complexes, as established by Wright~\cite{Wright:asdim}.

We also recover the following result of Osin~\cite{Osin:asdim}:

\begin{cori}[Asymptotic dimension of relatively hyperbolic groups]\label{cori:rel_hyp}
Let the group $G$ be hyperbolic relative to a finite collection $\mathcal P$ of peripheral subgroups such that $\asdim P<\infty$ for each $P\in\mathcal P$.  Then $\asdim G<\infty$.
\end{cori}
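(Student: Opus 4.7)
The plan is to exhibit a relatively hierarchically hyperbolic structure on $G$ coming directly from its relative hyperbolicity, and then to invoke the relative version of Theorem~\ref{thm:technical_asdim} announced in Section~\ref{sec:main_asdim_theorem}. The RHHS framework is tailor-made for this: RHHSs allow the $\nest$-minimal associated spaces to be non-hyperbolic, which is exactly what the peripheral cosets force here.

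First I would describe the RHHS structure on (a Cayley graph of) $G$. The index set $\mathfrak S$ consists of a unique $\nest$-maximal domain $S$, with $\cuco C S$ equal to the coned-off Cayley graph (or, equivalently, a Bowditch/Groves--Manning cusped space for $G$ relative to $\mathcal P$), together with one $\nest$-minimal domain for each left coset $gP$ with $P\in\mathcal P$, whose associated space $\cuco C(gP)$ is $gP$ with its intrinsic word metric. Nesting is just $gP\nest S$; distinct peripheral cosets are declared transverse. The projections to peripheral cosets are the standard coarse closest-point projections from the theory of relatively hyperbolic groups, while $\pi_S$ is induced by the identity map to the coned-off Cayley graph. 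The consistency and bounded geodesic image axioms for this data follow from the Drutu--Sapir/Sisto projection machinery and the fact that peripheral cosets have uniformly bounded image in the coned-off graph. Since $G$ is finitely generated and each peripheral is locally finite, the structure is uniformly proper.

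Next I would check the hypotheses of the (RHHS form of) Theorem~\ref{thm:technical_asdim}: one needs a uniform bound on the asymptotic dimensions of the associated spaces $\cuco C U$ for $U\in\mathfrak S$. For the minimal domains this is immediate from the assumption $\asdim P<\infty$ for each $P\in\mathcal P$, applied only to the finitely many conjugacy representatives. For the maximal domain, $\cuco C S$ is a hyperbolic graph of bounded valence (in the Groves--Manning model, say), and a hyperbolic space of bounded geometry has finite asymptotic dimension. Plugging these finite numbers into the bound from Theorem~\ref{thm:technical_asdim} yields $\asdim G<\infty$.

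The main obstacle, and the step I would spend the most care on, is establishing the RHHS axioms for the structure described above; most of the work is already implicit in the projection and geometric-separation statements standard in the relatively hyperbolic setting, but one has to verify the large link, partial realization, and uniqueness axioms in the relative (non-hyperbolic peripheral) form. Once that is in place, the rest is essentially formal: feed the verified structure and the finite $\asdim$ inputs into Theorem~\ref{thm:technical_asdim}.
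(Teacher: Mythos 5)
Your proposal is correct and follows essentially the same route as the paper: the paper likewise equips $G$ with the relatively hierarchically hyperbolic structure whose index set is $G$ itself ($\nest$--maximal) together with the peripheral cosets as pairwise-transverse $\nest$--minimal domains, and then applies Theorem~\ref{thm:technical_asdim}. The only cosmetic difference is that you argue finiteness of $\asdim$ for the top-level coned-off graph directly via bounded geometry, whereas the paper's Theorem~\ref{thm:technical_asdim} already supplies this through Corollary~\ref{cor:finite_asdim_curve_graph}.
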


\begin{proof}
It is easy to verify that $(G,\mathfrak S)$ is a relatively hierarchically
hyperbolic space, where $\mathfrak S$ consists of $G$ together with
the set of all conjugates of elements of $\mathcal P$; each of these
conjugates is nested in $G$ and the conjugates are pairwise-transverse
(the orthogonality relation is empty);
see~\cite{BehrstockHagenSisto:HHS_II}.  The result then follows 
immediately from
Theorem~\ref{thm:technical_asdim}.
\end{proof}

\subsection*{Quotients of hierarchically hyperbolic groups}\label{subsec:intro_quotients}
As discussed above, the first examples of hierarchically hyperbolic
groups were mapping class groups and many cubical groups \cite{BehrstockHagenSisto:HHS_I}; many 
further constructions and combination theorems 
were then provided in \cite{BehrstockHagenSisto:HHS_II}. 
In
Section~\ref{sec:HHG_quotient} we provide many new examples of (relatively) hierarchically
hyperbolic groups, which arise as quotients of hierarchically
hyperbolic groups by suitable subgroups, using small-cancellation
techniques closely related to the theory developed in~\cite{DGO}. In the aforementioned paper, the authors introduced the notion of \emph{hyperbolically embedded subgroup} of a group and extended the relatively hyperbolic Dehn filling theorem \cite{Osin:peripheral, GrovesManning:dehn}, thereby constructing many interesting quotients of groups such as mapping class groups. In particular, they showed that mapping class groups are SQ-universal, i.e. for every hyperbolic surface $S$ and for every countable group $Q$ there exists a quotient of $\MCG(S)$ containing an isomorphic copy of $Q$. Roughly speaking, we prove that Dahmani-Guirardel-Osin's construction of quotients preserves (relative) hierarchical hyperbolicity when applied to a (relatively) hierarchically hyperbolic group.

We say that the group $H$ is \emph{hierarchically hyperbolically embedded} if $G$ can be generated by a set $\mathcal T$ so that: $\mathcal T\cap H$ generates $H$, and $\cayley(G,\mathcal T)$ is quasi-isometric to the $\nest$--maximal element of $\mathfrak S$, and $H$ is hyperbolically embedded in $(G,\mathcal T)$ in the sense of~\cite{DGO}.  Theorem~\ref{thmi:quotients} is a direct consequence of Theorem~\ref{thm:quotients} below.  The proof of Theorem~\ref{thmi:quotients} relies heavily on the ``small-cancellation'' methods of~\cite{DGO}.

\begin{thmi}\label{thmi:quotients}
Let $(G,\mathfrak S)$ be an HHG and let $H\hhhg(G,\mathfrak S)$ be \emph{hierarchically hyperbolically embedded}.  Then there exists a finite set $F\subset H-\{1\}$ such that for all $N\triangleleft H$ with $F\cap N=\emptyset$, the group $G/\nclose N$ is a relatively hierarchically hyperbolic group.  If, in addition, $H/N$ is hyperbolic, then $G/\nclose N$ is hierarchically hyperbolic.
\end{thmi}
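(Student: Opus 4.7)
The plan is to transport the HHG structure on $G$ to a (relatively) hierarchically hyperbolic structure on $\bar G := G/\nclose N$ by combining it with the small-cancellation/Dehn filling machinery of~\cite{DGO}. I would choose $F \subset H-\{1\}$ so that, whenever $N\triangleleft H$ avoids $F$, the DGO quotient theorems guarantee: (i) $H/N$ injects into $\bar G$ and is hyperbolically embedded with respect to the image $\bar{\mathcal T}$ of $\mathcal T$; (ii) the coned-off Cayley graph of $(\bar G,\bar{\mathcal T})$, obtained by collapsing cosets of $H/N$, is hyperbolic; and (iii) balls in $\cayley(G,\mathcal T)$ centred far from translates of $H$ embed isometrically into $\cayley(\bar G,\bar{\mathcal T})$. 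Property~(iii) is the workhorse for pushing projections and nesting relations from $(G,\mathfrak S)$ forward to the quotient.

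Next I would build the index set $\overline{\mathfrak S}$. Replace the $\nest$-maximal element $S\in\mathfrak S$, whose associated space $\fontact S$ is quasi-isometric to $\cayley(G,\mathcal T)$, by a new maximal element whose associated hyperbolic space is the coned-off Cayley graph from~(ii). For each non-maximal $U\in\mathfrak S$, keep one representative of its $\nclose N$-orbit, with $\fontact U$ inherited unchanged; nesting, orthogonality, and transversality all descend from $\mathfrak S$. When $H/N$ is itself hyperbolic, $\cayley(\bar G,\bar{\mathcal T})$ is already hyperbolic without coning, so no peripheral coning is needed and $\overline{\mathfrak S}$ delivers an HHG structure rather than merely a relatively hierarchically hyperbolic one. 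Projections are defined by lifting: given $\bar g\in\bar G$ and a non-maximal $\bar U$, pick a lift $g$ of $\bar g$ realising the geometry of $U$ away from $\nclose N$ (permitted by~(iii)) and set $\pi_{\bar U}(\bar g):=\pi_U(g)$. The DGO control over the action of $\nclose N$-elements on peripheral projection data makes this independent of the lift up to uniform error; the relative projections descend by the same recipe, and the projection to the maximal domain is tautological.

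The bulk of the work, and the main obstacle, is the verification of the (relative) HHS axioms in $\overline{\mathfrak S}$. Finite complexity and uniqueness of the index set are inherited formally from $(G,\mathfrak S)$. The consistency inequalities, the bounded geodesic image axiom, and the partial realisation axiom all follow routinely for projection data staying uniformly far from $\nclose N$, by~(iii) combined with the corresponding axioms in $G$. The delicate case throughout is the behaviour near translates of $H/N$: here the local geometry of $\bar G$ genuinely departs from that of $G$, and one compensates by using the hyperbolic embedding of $H/N$ in $\bar G$ as a direct replacement for that of $H$ in $G$, so that short excursions in $\bar G$ through $H/N$-cosets correspond to controlled excursions in $G$ through $H$-cosets. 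The dichotomy in the conclusion---HHG when $H/N$ is hyperbolic, relatively hierarchically hyperbolic otherwise---reflects exactly whether the new maximal domain requires peripheral coning in order to become hyperbolic.
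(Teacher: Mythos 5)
Your overall strategy---push the HHG structure of $G$ forward to $G/\nclose N$ using the rotating-family/Dehn-filling machinery of~\cite{DGO}---is the right starting point, but your proposed index set has a fatal gap: you never add the cosets of $H$ (equivalently, of $H/N$) as new domains. In the paper's structure the index set is $(\mathfrak S\cup\{gH\}_{g\in G})/\nclose N$, and each orbit of cosets becomes a new $\nest$--minimal domain whose associated space is a Cayley graph of $H/N$. These domains are indispensable for the uniqueness axiom: if the maximal domain is the coned-off Cayley graph (in which every coset of $H/N$ has diameter at most $2$), and the remaining domains are just the old $\fontact U$ for $U\in\mathfrak S-\{S\}$ (to which any coset $gH$ projects with uniformly bounded image, by bounded geodesic image and properness of the $H$--action on $\cayley(G,\mathcal T)$), then two elements of $H/N$ that are far apart in $G/\nclose N$ have uniformly close projections to \emph{every} domain of your $\overline{\mathfrak S}$. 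So your structure is not a hierarchical space at all. Relatedly, your explanation of the HHG/relative-HHG dichotomy is upside down: in the definition of a relatively HHS it is the $\nest$--\emph{minimal} domains that are permitted to be non-hyperbolic, and the maximal domain is the coned-off space in both cases; the quotient is an HHG exactly when the new minimal domains (Cayley graphs of $H/N$) are hyperbolic, not because one can dispense with coning the maximal domain. Your fallback of using the unconed $\cayley(\bar G,\bar{\mathcal T})$ as maximal domain when $H/N$ is hyperbolic is not what is needed and is not supplied by~\cite{DGO}.

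Two further points where the proposal under-estimates the work. First, the paper does not quotient $(G,\mathfrak S)$ directly: it first builds an intermediate HHG structure $(G,\mathfrak T)$ on $G$ itself (Proposition~\ref{prop:aux_HHG}) in which the cosets $gH$ are already domains and the maximal hyperbolic space is the pyramid space $\pyramid{G}$; only then does it pass to $\nclose N$--orbits. Second, your definition of projections by ``picking a lift realising the geometry of $U$ away from $\nclose N$'' with well-definedness ``up to uniform error'' is precisely the hard part, and the isometric-embedding-of-balls heuristic does not deliver it. The paper's substitute is the calculus of \emph{fulcrums} and \emph{linked pairs}: projections and relative projections in the quotient are defined as unions over all linked representatives of the relevant orbits, Greendlinger's lemma guarantees that any two orbit pairs admit linked representatives and that $\nclose N$--translates are never linked, and Corollary~\ref{cor:unlinked_have_same_proj} shows that unlinked representatives have coarsely equal projections. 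This is also what makes the structure $G/\nclose N$--equivariant, which your ``choose one representative per orbit'' device would break. Without the coset domains and the linked-pair mechanism, the axiom verifications you describe as routine cannot be carried out.
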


Here $\nclose{N}$ denotes the normal closure of $N$ in $G$. We remark that acylindrically hyperbolic groups contain plenty of hyperbolically embedded subgroups, and in particular they contain hyperbolically embedded virtually $F_2$ subgroups \cite[Theorem 6.14]{DGO}. Moreover, hierarchically hyperbolic groups are ``usually'' acylindrically hyperbolic, in the sense that any non-elementary hierarchically hyperbolic group $G$ so that $\pi_S(G)$ is unbounded is acylindrically hyperbolic, where $S$ is the $\nest$--maximal $S\in\mathfrak S$.

Theorem~\ref{thmi:quotients} provides many new examples of
hierarchically hyperbolic groups, and hence, via
Theorem~\ref{thmi:asdim}, expands the class of groups known to have
finite asymptotic dimension.  For example:

\begin{cori}\label{cori:kill_pA}
Let $S$ be a surface of finite type and let $f\in\MCG(S)$ be a
pseudo-Anosov element.  Then there exists $N$ such that
$\MCG(S)/\langle\langle f^{kN}\rangle\rangle$ is a hierarchically
hyperbolic group for all integers $k\geq 1$ and has asymptotic dimension at 
most $5\xi(S)^2$.
\end{cori}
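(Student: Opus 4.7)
The plan is to combine Theorem \ref{thmi:quotients} with the explicit asymptotic dimension bound obtained in the proof of Corollary \ref{cori:MCG}. The input to Theorem \ref{thmi:quotients} will be the HHG structure on $\MCG(S)$ constructed in \cite{BehrstockHagenSisto:HHS_II}, together with a virtually cyclic hierarchically hyperbolically embedded subgroup $H$ containing a power of $f$.

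First I would produce such an $H$. Since $f$ is pseudo-Anosov it acts loxodromically on the curve graph $\mathcal C(S)$, which is (quasi-isometric to) the $\nest$-maximal hyperbolic space in the HHS structure on $\MCG(S)$. By \cite{DGO} the elementary closure $E(f)$ is then virtually cyclic and hyperbolically embedded in $\MCG(S)$ with respect to an appropriate infinite generating set $\mathcal T$, obtained by adjoining the stabiliser of a vertex $v\in\mathcal C(S)$ to any finite generating set; this choice of $\mathcal T$ makes $\cayley(\MCG(S),\mathcal T)$ equivariantly quasi-isometric to $\mathcal C(S)$, so that $H = E(f)$ satisfies the \emph{hierarchically} hyperbolically embedded condition introduced before Theorem \ref{thmi:quotients}. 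Now let $F \subset H\setminus\{1\}$ be the finite set supplied by Theorem \ref{thmi:quotients}. Since $H$ is virtually cyclic, there is $N_0 \geq 1$ such that $\langle f^{N_0}\rangle$ is normal of finite index in $H$ and $f^{kN_0}\notin F$ for every $k\geq 1$. Setting $N = \langle f^{kN_0}\rangle$, we have $N\triangleleft H$ with $F\cap N=\emptyset$, so Theorem \ref{thmi:quotients} produces a relative HHG structure on $\MCG(S)/\nclose{\langle f^{kN_0}\rangle}$; since $H/N$ is finite and hence hyperbolic, the ``in addition'' clause of Theorem \ref{thmi:quotients} upgrades this to an honest HHG structure on the quotient.

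To bound the asymptotic dimension by $5\xi(S)^2$, I would feed this HHG structure into Theorem \ref{thm:technical_asdim}. The construction in Theorem \ref{thmi:quotients} leaves the index set $\mathfrak S$ and its nesting poset unchanged, so the complexity of the quotient is the same as that of $\MCG(S)$; the associated hyperbolic spaces are uniform DGO-style cone-offs of the subsurface curve graphs, which remain uniformly hyperbolic and retain the Bestvina--Bromberg \cite{BestvinaBromberg:asdim} linear-in-$\xi(S)$ asymptotic dimension bound. Running the computation from the proof of Corollary \ref{cori:MCG} verbatim then gives $\asdim\leq 5\xi(S)^2$. The main obstacle will be precisely this last step: verifying that, after the DGO-style cone-off, the modified hyperbolic coordinate spaces in the HHS structure on the quotient inherit a linear-in-$\xi(S)$ asymptotic dimension bound uniformly in $k$, so that the complexity-squared bound survives passage to the quotient.
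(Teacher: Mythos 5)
Your argument is essentially the paper's: the paper proves only the cubical analogue (Corollary~\ref{cori:kill_rank_one}) and notes the pseudo-Anosov case is identical, and that proof is exactly your first two paragraphs --- loxodromic action of $f$ on the top-level hyperbolic space ($\mathcal C(S)$), hence $E(f)$ is hierarchically hyperbolically embedded, then choose $N$ avoiding the finite set $F$ and apply Theorem~\ref{thmi:quotients}, with $H/N$ finite hence hyperbolic. One correction to your final paragraph, which actually shrinks the obstacle you flag: by the third bullet of Theorem~\ref{thm:quotients}, the coordinate spaces $\fontact_{\mathfrak S_N}U$ for $U\neq S_N$ are \emph{isometric} either to the original subsurface curve graphs or to Cayley graphs of the finite group $H/N$ --- they are not cone-offs --- so the Bestvina--Bromberg bounds apply to them verbatim and the only space requiring a new asymptotic dimension estimate is the single $\nest$--maximal one, $\cayley(G/\nclose N,\mathcal T/\nclose N\cup H/N)$; since $P_L=1$ at the top level, any reasonable bound there (and the paper supplies no more detail than you do) leaves the $5\xi(S)^2$ computation of Corollary~\ref{cori:MCG} intact.
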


Corollary~\ref{cori:kill_pA} has an exact analogue in the world of cubical groups.

\begin{cori}\label{cori:kill_rank_one}
Let $X$ be a compact special cube complex with universal cover $\widetilde X$ and let $G$ act properly and cocompactly on $\widetilde X$.  Let $g\in G$ be a rank-one element, no nonzero power of which stabilizes a hyperplane.  Then there exists $N$ such that for all integers $k\geq 1$, the group $G/\langle\langle g^{kN}\rangle\rangle$ is hierarchically hyperbolic and hence has finite asymptotic dimension.
\end{cori}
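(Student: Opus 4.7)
The plan is to derive this as a direct cubical analogue of Corollary~\ref{cori:kill_pA}, by producing a hierarchically hyperbolically embedded virtually cyclic subgroup containing a power of $g$ and then invoking Theorems~\ref{thmi:quotients} and~\ref{thmi:asdim}. First, I would record that since $X$ is compact special, the universal cover $\widetilde X$ admits a $G$--equivariant factor system, so by the construction in~\cite{BehrstockHagenSisto:HHS_I} the pair $(G,\mathfrak S)$ is an HHG whose $\nest$--maximal hyperbolic space $\fontact S$ is (quasi-isometric to) the contact graph of $\widetilde X$, i.e.\ the space obtained by coning off hyperplane carriers and the more general factor-system subcomplexes.

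The heart of the argument is to show that $H := E(g)$, the elementary closure of $\langle g \rangle$ (which under the given hypotheses is virtually infinite cyclic, possibly after replacing $g$ by a suitable power), is hierarchically hyperbolically embedded in $(G,\mathfrak S)$. The rank-one assumption gives that $g$ has a contracting quasi-axis in $\widetilde X$, and the assumption that no nonzero power of $g$ stabilizes a hyperplane ensures that this axis persists after coning off the factor-system carriers, so $g$ acts loxodromically on $\fontact S$ with contracting quasi-axis. By standard results (see~\cite{DGO}) this suffices to produce a finite relative generating set $\mathcal T$ of $G$ for which $H$ is hyperbolically embedded in $(G,\mathcal T)$. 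Enlarging $\mathcal T$ if necessary, one arranges simultaneously that $\cayley(G,\mathcal T)$ is quasi-isometric to $\fontact S$, which exhibits $H\hhhg(G,\mathfrak S)$ in the sense required by Theorem~\ref{thmi:quotients}.

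Given this, Theorem~\ref{thmi:quotients} produces a finite set $F\subset H-\{1\}$. Since $H$ is virtually $\mathbb Z$ and $F$ is finite, I would choose $N$ large enough that no nontrivial power $g^{jN}$ ($j\neq 0$) lies in $F$. For every $k\geq 1$, the subgroup $N_k=\langle g^{kN}\rangle$ is then normal in $H$ with $N_k\cap F=\emptyset$, and $H/N_k$ is virtually cyclic, hence hyperbolic. The ``in addition'' clause of Theorem~\ref{thmi:quotients} therefore yields that $G/\nclose{N_k}$ is hierarchically hyperbolic, and Theorem~\ref{thmi:asdim} then delivers the finite asymptotic dimension bound.

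The main obstacle is verifying that $E(g)$ is \emph{hierarchically} (and not merely DGO-) hyperbolically embedded, i.e.\ producing a single relative generating set $\mathcal T$ that is simultaneously adapted to the DGO hyperbolic-embedding framework and to the HHS structure, in the sense that $\cayley(G,\mathcal T)$ is quasi-isometric to $\fontact S$. The two cubical hypotheses on $g$ are precisely what bridge the frameworks: a contracting axis that avoids hyperplane stabilizers remains loxodromic and well-behaved after coning off factor-system subcomplexes, which is both what is needed for the DGO criterion and what relates $\cayley(G,\mathcal T)$ to the contact graph that models $\fontact S$.
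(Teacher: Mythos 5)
Your proposal is correct and follows essentially the same route as the paper: establish the HHG structure with top-level space the contact graph, show $g$ acts loxodromically on it (the paper cites \cite{Hagen:boundary} for this, and \cite{antolin2015commensurating,hull2013small} for the hierarchical hyperbolic embeddedness of $E(g)$, which is the point you flag as the main obstacle), and then apply Theorem~\ref{thmi:quotients}. Your write-up is in fact more explicit than the paper's about the choice of $N$ and why $\langle g^{kN}\rangle$ avoids $F$ and has hyperbolic quotient $H/N_k$.
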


Since the proofs of Corollaries \ref{cori:kill_pA} and \ref{cori:kill_rank_one} are very similar, we only give that of Corollary \ref{cori:kill_rank_one}.

\begin{proof}
As shown in~\cite{BehrstockHagenSisto:HHS_I,BehrstockHagenSisto:HHS_II}, $G$ is a hierarchically hyperbolic group, where the top-level associated hyperbolic space is quasi-isometric to the intersection graph of the hyperplane carriers in $\widetilde X$.  As shown in~\cite{Hagen:boundary}, the given $g$ acts loxodromically on this graph, and thus the maximal elementary subgroup containing $g$ is hierarchically hyperbolically embedded in $G$, see \cite[Corollary 3.11]{antolin2015commensurating} or \cite[Corollary 4.14]{hull2013small} (which both refine \cite[Theorem 6.8]{DGO}).  Finally, apply Theorem~\ref{thmi:quotients}.
\end{proof}

Note that $G$ need not be virtually special to satisfy the hypotheses of Corollary~\ref{cori:kill_rank_one} (for example, the non-virtually special examples of Burger-Mozes and Wise~\cite{BurgerMozes,Wise:CSC} act geometrically on the universal cover of the product of two finite graphs), so the corollary can not be proved via cubical small-cancellation theory or related techniques (see~\cite{Wise:quasiconvex_hierarchy}) followed by an application of the results of~\cite{BehrstockHagenSisto:HHS_I}.  Stronger versions of Corollary~\ref{cori:kill_pA} and Corollary~\ref{cori:kill_rank_one} exist, where one kills more complex subgroups.  

\begin{remi}After we posted the initial version of this paper, it was shown in~\cite{HagenSusse:cubical} that any proper CAT(0) cube complex admitting a proper, cocompact group action is a hierarchically hyperbolic space (and the group in question a hierarchically hyperbolic group).  Hence the conclusion of Corollary~\ref{cori:kill_rank_one} holds with the ``special'' hypothesis.  This is also true for the above-mentioned asymptotic dimension result.\end{remi}\label{remi:new_cubes}

\subsection*{Factored spaces}

In section~\ref{sec:factored} we give a construction which we call a 
\emph{factored space}. Roughly, the factored space, $\coneoff{\cuco 
X}$,  associated to a 
hierarchically hyperbolic space $(\cuco X,\mathfrak S)$ is obtained 
by collapsing particularly subsets of $\cuco X$ which are isomorphic 
to direct products. In Proposition~\ref{prop:cone_off_bottom} we 
prove that this construction yields a new HHS.

One particularly interesting consequence of this construction is the 
following corollary which is a special case of 
Corollary~\ref{cor:cone_off_all_F}. This result 
generalizes \cite[Theorem~1.2]{MasurMinsky:I} where it is proven that 
the the curve graph of a surface is quasi-isometric to 
Teichm\"{u}ller space after collapsing the thin parts, and, also, 
\cite[Theorem~1.3]{MasurMinsky:I} where the mapping class group 
is considered and the multicurve stabilizers are collapsed.

\begin{cori}[$\coneoff{\cuco X}$ is QI to $\pi_S(\cuco X)$]\label{thm:coarse_factors}
Let $(\cuco X,\mathfrak S)$ be hierarchically hyperbolic and 
let
$S\in\mathfrak S$ be the unique $\nest$--maximal element. 
Then $\coneoff{\cuco X}$ is quasi-isometric to 
$\pi_S(\cuco X)\subseteq\fontact S$.
\end{cori}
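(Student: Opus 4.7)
The plan is to exhibit the quasi-isometry via the projection $\pi_S\colon \cuco X \to \fontact S$, by showing it descends through the factored space as a coarsely Lipschitz, coarsely surjective map onto $\pi_S(\cuco X)$ with a uniform coarse inverse.

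First I would handle the ``coarsely Lipschitz'' direction. The map $\pi_S$ is already coarsely Lipschitz on $\cuco X$ by the HHS axioms, so it suffices to verify that each subset collapsed in forming $\coneoff{\cuco X}$ has uniformly bounded $\pi_S$-image. These collapsed subsets are the (coarse) product regions associated to non-$\nest$-maximal $U \in \mathfrak S$, and for any such region one has $\pi_S(F_U)$ uniformly close to the bounded relative projection $\rho^U_S$; this uniform boundedness is a direct consequence of the consistency and bounded geodesic image axioms. Hence $\pi_S$ factors through the quotient $\coneoff{\cuco X}$ to give a coarsely Lipschitz, coarsely surjective map $\bar\pi_S\colon \coneoff{\cuco X} \to \pi_S(\cuco X)$.

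For the reverse direction I would appeal to Proposition~\ref{prop:cone_off_bottom}, which asserts that $\coneoff{\cuco X}$ is itself an HHS with $S$ remaining the unique $\nest$-maximal element and with associated top-level hyperbolic space still $\fontact S$. The essential point, built into the coning construction, is that every other domain either disappears from the index set or acquires a uniformly bounded associated hyperbolic space, because the product regions witnessing the non-trivial geometry in those coordinates have been collapsed. Applying the HHS distance formula on $\coneoff{\cuco X}$ with threshold $K$ exceeding that uniform bound, all non-maximal terms drop out and one obtains
\[
d_{\coneoff{\cuco X}}(x,y) \asymp d_S\bigl(\pi_S(x),\pi_S(y)\bigr),
\]
which is precisely the lower bound needed to upgrade $\bar\pi_S$ to a quasi-isometry onto its image.

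The main obstacle, and the place where genuine work is required, is the structural claim that after coning off \emph{all} the product regions specified in the factored-space construction, the resulting HHS has uniformly bounded coordinate spaces at every non-maximal domain. This is the content of Proposition~\ref{prop:cone_off_bottom}; once that structural statement is granted, the corollary reduces to combining coarse Lipschitzness of $\bar\pi_S$ with a single invocation of the distance formula in the new HHS.
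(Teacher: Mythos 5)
Your proposal is correct and follows essentially the same route as the paper: the paper's proof of Corollary~\ref{cor:cone_off_all_F} is exactly your second step, namely invoking Proposition~\ref{prop:cone_off_bottom} to see that $(\cox_{\mathfrak S-\{S\}},\{S\})$ is an HHS whose index set has been reduced to $\{S\}$ alone, and then applying the distance formula so that only the $\fontact S$ term survives. Your first step (checking that $\pi_S$ collapses each parallel copy of $\mathbf F_U$ to a set near the bounded set $\rho^U_S$, hence descends to a coarsely Lipschitz map) is a correct but redundant verification of one of the two inequalities already delivered by the distance formula, and your phrasing that non-maximal domains ``acquire a uniformly bounded hyperbolic space'' is slightly off --- in the proposition they are simply removed from the index set --- but this does not affect the argument.
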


This result implies that if 
$(G,\mathfrak S)$ is a hierarchically hyperbolic group, then
$\fontact S$ is quasi-isometric to the coarse intersection graph of
the ``standard product regions.''

This result can be interpreted as stating that
hierarchically hyperbolic structures always arise from a coarse
version of the ``factor system'' construction used
in~\cite{BehrstockHagenSisto:HHS_I} to endow CAT(0) cube complexes
with hierarchically hyperbolic structures.

\subsection*{Structure of the paper}\label{subsec:structure}
In Section~\ref{sec:preliminaries}, we review basic facts about
asymptotic dimension and about hierarchical spaces and groups,
including (relatively) hierarchically hyperbolic ones.  In
Section~\ref{sec:factored}, we introduce a coning construction 
which shows that the
top-level hyperbolic space associated to a hierarchically hyperbolic
space is quasi-isometric to the space obtained by coning off the
``standard product regions.'' This construction, which we use in
the inductive proof of Theorem~\ref{thmi:asdim}, is
of independent interest and generalizes a construction we had 
originally 
included in the first version of \cite{BehrstockHagenSisto:HHS_II}.  
 Finiteness of the
asymptotic dimension of the hyperbolic spaces associated to a
relatively hierarchically hyperbolic space, is proved in
Section~\ref{sec:tight}.
In Section~\ref{sec:ball_preimage}, we
prove one of the key propositions needed in the induction argument for Theorem~\ref{thmi:asdim}.
In Section~\ref{sec:main_asdim_theorem}, we prove Theorem~\ref{thmi:asdim} and
its corollaries, and finally we prove 
Theorem~\ref{thmi:quotients} in Section~\ref{sec:HHG_quotient}.

\subsection*{Acknowledgments}\label{subsec:acknowledgments}
The authors thank the organizers of the conferences \emph{Manifolds
and groups} (Ventotene, September 2015) and the \emph{Th\'eorie
g\'eom\'etrique et asymptotique des groupes et applications} (CIRM,
September 2015), where our initial discussions took place.  The 
authors thank Carolyn Abbott for helpful feedback. The
authors also thank the referee for numerous comments that improved the
exposition.

\section{Preliminaries}\label{sec:preliminaries}

\subsection{Background on asymptotic dimension}\label{defn:asdim_background}
Let $(\cuco X,\dist)$ be a metric space.  There are several equivalent
definitions of the \emph{asymptotic dimension} of $\cuco X$ (see
e.g.~\cite{BellDranishnikov:Bedlewo} or 
\cite{BellDranishnikov:asdim_1} for comprehensive surveys).  We say
that $\asdim\cuco X \leq n$ if for each $D>0$, there exist $B\geq0$ 
and families
$\mathcal U_0,\ldots,\mathcal U_n$ of subsets which form a cover of $\cuco X$ such that:
\begin{enumerate}
 \item for all $i\leq n$ and all $U\in\mathcal U_i$, we have $\diam(U)\leq B$;\label{item:bounded}
 \item for all $i\leq n$ and all $U,U'\in\mathcal U_i$, if $U\neq U'$ then $\dist(U,U')>D$.\label{item:d_disjoint}
\end{enumerate}
A function $f\colon [0,\infty)\to[0,\infty)$ such that for each sufficiently large $D$, there is a cover of $\cuco X$ as above that satisfies part~\eqref{item:d_disjoint} for the given $D$ and satisfies part~\eqref{item:bounded} with $B=f(D)$ is an \emph{$n$--dimensional control function} for $\cuco X$.

We say  $\cuco X$ \emph{has asymptotic dimension $n$ (with control function $f$)}, when $n$ is minimal so that $\asdim\cuco X\leq n$ (and $f$ is an $n$--dimensional control function).

A family of metric spaces, $\{(\cuco X_i,\dist_i)\}_{i\in I}$, 
\emph{has $\asdim\cuco X_i \leq n$ uniformly}
if for all sufficiently large $D\geq0$, there exists $B\geq0$ such
that for each $i\in I$, there are sets $\mathcal
U_0^i,\ldots,\mathcal U^i_n$ of subsets of $\cuco X_i$, collectively covering $\cuco X_i$, so that: 
\begin{enumerate}
\item for all $i\in I$, all $0\leq j\leq n$, and all $U\in\mathcal U_j^i$, we have $\diam(U)\leq B$;
 \item for all $i\in I$, all $0\leq j<k\leq n$, and all $U,U'\in\mathcal U_j^i$, if $U\neq U'$ then $\dist(U,U')>D$.
\end{enumerate}
As above, $f\colon [0,\infty)\to[0,\infty)$ is an \emph{$n$--dimensional control function} for $\{\cuco X_i\}$ if for each $i$, and each sufficiently large $D$, we can choose the covers above so that if the second condition is satisfied for $D$, then the first is satisfied with $B=f(D)$.

Equivalently, $\asdim\cuco X\leq n$ if for all $r\geq0$ there exists a
uniformly bounded cover of $\cuco X$ such that any $r$--ball
intersects at most $n+1$ sets in the
cover~\cite{BellDranishnikov:asymptotic_groups}, and $\{\cuco X_i\}$
has $\asdim\cuco X_i\leq n$ uniformly if for each $r$ the covers can
be chosen to consist of sets bounded independently of~$i$.  We will
use this formulation in Section~\ref{sec:tight}.

We will require the following theorems of Bell--Dranishnikov:

\begin{thm}[Fibration theorem; \cite{BellDranishnikov:hurewicz}]\label{thm:fibration_theorem}
Let $\psi\co\cuco X\to\cuco Y$ be a Lipschitz map, with $\cuco X$ a geodesic space and $\cuco Y$ a metric space.  Suppose that for each $R>0$, the collection $\{\psi^{-1}(B(y,R))\}_{y\in\cuco Y}$ has $\asdim \psi^{-1}(B(y,R))\leq n$ uniformly.  Then $\asdim\cuco X\leq\asdim \cuco Y+n$.
\end{thm}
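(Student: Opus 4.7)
The plan is a pullback-and-combine argument of Hurewicz type for asymptotic dimension. Set $m = \asdim \cuco Y$ and let $L$ be the Lipschitz constant of $\psi$. Given a scale $D > 0$, the goal is to construct a uniformly bounded cover of $\cuco X$ by $m + n + 1$ families, each $D$-disjoint, which by definition witnesses $\asdim \cuco X \leq m + n$.

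First, at a sufficiently large scale $D_Y$ (to be determined), I would apply $\asdim \cuco Y \leq m$ to obtain a cover $\mathcal{V} = \mathcal{V}_0 \sqcup \cdots \sqcup \mathcal{V}_m$ of $\cuco Y$ in which each family $\mathcal{V}_j$ is $D_Y$-disjoint and each $V \in \mathcal{V}$ has diameter at most some $B_Y = f_{\cuco Y}(D_Y)$. Since $\psi$ is $L$-Lipschitz, the preimages of distinct elements of $\mathcal{V}_j$ are $(D_Y/L)$-disjoint in $\cuco X$. Each preimage $\psi^{-1}(V)$ is contained in $\psi^{-1}(B(y_V, B_Y))$ for some $y_V \in V$, so by the uniform asymptotic dimension hypothesis it admits a $D$-disjoint cover by $n+1$ families $\mathcal{U}_{V,0}, \ldots, \mathcal{U}_{V,n}$, uniformly bounded by some $B_X$ depending only on $D$ and $B_Y$.

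The main obstacle is the combinatorial merging: the naive combination $\mathcal{W}_{j,k} = \bigcup_{V \in \mathcal{V}_j} \mathcal{U}_{V,k}$ yields $(m+1)(n+1)$ families, whereas we want only $m+n+1$. I would resolve this by following the Bell--Dranishnikov coloring scheme: choose $D_Y$ so that $D_Y/L \geq D + 2 B_X$, giving enough extra separation between preimages of sets in a common $\mathcal{V}_j$ that the color classes indexed by $(j,k)$ can be merged according to an appropriate equivalence, reducing the number of classes to $m + n + 1$ while preserving $D$-disjointness and uniform boundedness. The principal delicacy is the scale bookkeeping: for given $D$, the uniform control function for preimages supplies $B_X$ as a function of $D$ and $B_Y$, while the control function for $\cuco Y$ supplies $B_Y$ as a function of $D_Y$; the implicit inequality $D_Y/L \geq D + 2 B_X\bigl(D, f_{\cuco Y}(D_Y)\bigr)$ must be solved for $D_Y$, which can be done since the control functions are finite-valued by hypothesis. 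Once the scales are arranged, the merged cover has the required parameters and the theorem follows.
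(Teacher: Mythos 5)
This statement is quoted in the paper from Bell--Dranishnikov and is not proved there, so there is no internal proof to compare against; the question is whether your argument stands on its own. It does not: the step you flag as ``the main obstacle'' --- merging the $(m+1)(n+1)$ families $\mathcal U_{V,k}$, $V\in\mathcal V_j$, into $m+n+1$ families --- is the entire content of the theorem, and you dispose of it by appealing to ``the Bell--Dranishnikov coloring scheme,'' i.e.\ to the very proof you are meant to supply. Worse, the specific mechanism you propose cannot work. The inequality $D_Y/L\geq D+2B_X$ buys separation only between preimages of \emph{distinct sets in the same family} $\mathcal V_j$; it gives no separation whatsoever between sets lying over $V\in\mathcal V_j$ and $V'\in\mathcal V_{j'}$ with $j\neq j'$, since such $V,V'$ may overlap in $\cuco Y$ and hence their preimages may intersect. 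Now run the adjacency count for the naive pulled-back cover: two distinct sets $W\in\mathcal U_{V,k}$ and $W'\in\mathcal U_{V',k'}$ can be within distance $D$ of each other whenever $j\neq j'$ (no separation available), and also whenever $V=V'$ and $k\neq k'$ (distinct colors inside one fiber cover). So a valid $D$-disjoint coloring of this particular cover must be injective on the full index set of pairs $(j,k)$ --- it genuinely needs all $(m+1)(n+1)$ colors, and no relabelling or rescaling fixes this.

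The reduction to $m+n+1$ therefore requires building the fiberwise covers \emph{compatibly} across the different color classes of $\cuco Y$, not independently over each $V$ and then merging. This is where the actual Bell--Dranishnikov argument does real work (via their union theorems for families and a characterization of $\asdim$ adapted to this inductive construction), and it is also where the hypothesis that $\cuco X$ is a \emph{geodesic} space enters --- a hypothesis your proposal never uses, which is itself a warning sign. As written, the proposal correctly sets up the standard pullback framework but omits, and mis-describes, the one step that makes the theorem true.
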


\begin{thm}[Union theorem; \cite{BellDranishnikov:asymptotic_groups}]\label{thm:union_theorem}
Let $\cuco X$ be a metric space and assume that $\cuco X=\bigcup_{i\in I}\cuco X_i$, where $\{\cuco X_i\}_{i\in I}$ satisfies $\asdim\cuco X_i\leq n$ uniformly.  Suppose that for each $R$ there exists $Y_R\subset\cuco X$, with $\asdim Y_R\leq n$, such that for all distinct $i,i'\in I$, we have $\dist(\cuco X_i-Y_R,\cuco X_{i'}-Y_R)\geq R$.  Then $\asdim\cuco X\leq n$. 
\end{thm}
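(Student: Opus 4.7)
The plan is to verify the $D$-disjoint-families definition of asymptotic dimension directly. Given $D>0$, the strategy is to cover a bounded thickening of $Y_R$ using $\asdim Y_R\leq n$, cover the remainder using the uniform bound $\asdim\cuco X_i\leq n$, and use the $R$-separation to glue these covers together across different~$i$.

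Concretely, first apply the uniform bound $\asdim\cuco X_i\leq n$ at scale $D$ to get, for each $i$, $D$-disjoint families $\mathcal U_0^i,\ldots,\mathcal U_n^i$ covering $\cuco X_i$ with diameters bounded by some $B$ independent of $i$. Choose $R\gg D+B$ and apply the hypothesis to obtain $Y_R$. Take $\tilde Y=N_B(Y_R)$; since the inclusion $Y_R\hookrightarrow\tilde Y$ is a quasi-isometry, $\asdim\tilde Y\leq n$, and applying this at scale $3D+2B$ yields $(3D+2B)$-disjoint uniformly bounded families $\mathcal V_0,\ldots,\mathcal V_n$ covering $\tilde Y$. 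Let $\mathcal I_j=\{U\in\bigcup_i\mathcal U_j^i : U\cap Y_R=\emptyset\}$ be the ``included'' sets.

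Coverage: any $x\in\tilde Y$ lies in some $\mathcal V_j$; if $x\notin\tilde Y$ then $\dist(x,Y_R)>B$, so any $\mathcal U_j^i$-set $U$ containing $x$ has $\dist(U,Y_R)>0$, hence $U\cap Y_R=\emptyset$ and $U\in\mathcal I_j$. Thus $\mathcal W_j^{\circ}=\mathcal V_j\cup\mathcal I_j$ covers $\cuco X$; boundedness is immediate. For $D$-disjointness within each $\mathcal W_j^{\circ}$, same-family disjointness is inherited from the constructions, and across distinct $\cuco X_i$'s the included $\mathcal U$-sets lie in $\cuco X_i\setminus Y_R$ and $\cuco X_{i'}\setminus Y_R$ and so are at distance $\geq R\geq D$.

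The main obstacle is the cross-type disjointness in $\mathcal W_j^{\circ}$: $\mathcal V_j$-sets (in $\tilde Y$) and included $\mathcal U$-sets (outside $Y_R$) can come arbitrarily close near $Y_R$. The resolution is a single round of merging: for each $V\in\mathcal V_j$ set $\tilde V=V\cup\{U\in\mathcal I_j:\dist(U,V)\leq D\}$, and let $\mathcal W_j$ be the collection of these $\tilde V$'s together with the non-absorbed members of $\mathcal I_j$. The $(3D+2B)$-disjointness of $\mathcal V_j$ guarantees each $\tilde V$ has bounded diameter and that distinct $\tilde V$'s remain $D$-disjoint. The crucial point is that a non-absorbed $U$ and an absorbed $U''$ are \emph{both} included, so both lie in $\cuco X_i\setminus Y_R$ and $\cuco X_{i''}\setminus Y_R$ respectively; hence $\dist(U,U'')>D$ by the internal $D$-disjointness of $\mathcal U_j^i$ when $i=i''$ and by the $R$-separation when $i\neq i''$. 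Combined with $\dist(U,V)>D$, this gives $\dist(U,\tilde V)>D$, and the resulting $n+1$ families witness $\asdim\cuco X\leq n$.
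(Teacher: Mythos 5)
The paper does not prove this statement; it is imported as a black box from Bell--Dranishnikov, and your argument is a correct, self-contained proof that follows the standard approach of that reference: separate covers for a thickening of $Y_R$ and for the $R$-separated complements, glued by absorbing nearby $\mathcal I_j$-sets into the $\mathcal V_j$-sets. All the metric estimates check out (in particular the $(3D+2B)$-separation of $\mathcal V_j$ does force each $U\in\mathcal I_j$ to be absorbed by at most one $\tilde V$ and keeps distinct $\tilde V$'s more than $D$ apart), so there is nothing to fix.
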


\subsection{Background on hierarchical spaces}\label{subsec:hierarchical_prelim}
We recall our main definition from~\cite{BehrstockHagenSisto:HHS_II}:

\begin{notation}
In Definition~\ref{defn:space_with_distance_formula} below, we use the notation $\dist_W(-,-)$ to denote distance in a space $\fontact W$, where $W$ is in an index set $\mathfrak S$.  We will follow this convention where it will not cause confusion. However, in Section~\ref{sec:HHG_quotient}, where there are multiple HHS structures and spaces in play, we generally avoid this abbreviation. Similarly, where it will not cause confusion, we write, e.g. $\dist_W(x,y)$ to mean $\dist_W(\pi_W(x),\pi_W(y))$, where $x,y\in\cuco X$, and $W\in\mathfrak S$, and $\pi:\cuco X\to\fontact W$ is a projection.  We emphasise that, throughout the text, e.g. $\dist_W(x,y)$ and $\dist_{\fontact W}(\pi_W(x),\pi_W(y))$ mean the same thing.
\end{notation}

\begin{defn}[Hierarchical space, (relative) hierarchically hyperbolic space]\label{defn:space_with_distance_formula}
The $q$--quasigeodesic space  $(\cuco X,\dist)$ is a \emph{hierarchical space} if there exists an index set $\mathfrak S$, and a set $\{\fontact W:W\in\mathfrak S\}$ of geodesic spaces $(\fontact U,\dist_U)$,  such that the following conditions are satisfied:\begin{enumerate}
\item\textbf{(Projections.)}\label{item:dfs_curve_complexes} There is 
a set $\{\pi_W\co\cuco X\rightarrow2^{\fontact W}\mid W\in\mathfrak S\}$ of \emph{projections} sending points in $\cuco X$ to sets of diameter bounded by some $\xi\geq0$ in the various $\fontact W\in\mathfrak S$.  Moreover, there exists $K$ so that each $\pi_W$ is $(K,K)$--coarsely Lipschitz.

 \item \textbf{(Nesting.)} \label{item:dfs_nesting} $\mathfrak S$ is
 equipped with a partial order $\nest$, and either $\mathfrak
 S=\emptyset$ or $\mathfrak S$ contains a unique $\nest$--maximal
 element; when $V\nest W$, we say $V$ is \emph{nested} in $W$.  We
 require that $W\nest W$ for all $W\in\mathfrak S$.  For each
 $W\in\mathfrak S$, we denote by $\mathfrak S_W$ the set of
 $V\in\mathfrak S$ such that $V\nest W$.  Moreover, for all $V,W\in\mathfrak S$
 with $V\propnest W$ there is a specified subset
 $\rho^V_W\subset\fontact W$ with $\diam_{\fontact W}(\rho^V_W)\leq\xi$.
 There is also a \emph{projection} $\rho^W_V\colon \fontact
 W\rightarrow 2^{\fontact V}$.  (The similarity in notation is
 justified by viewing $\rho^V_W$ as a coarsely constant map $\fontact
 V\rightarrow 2^{\fontact W}$.)
 
 \item \textbf{(Orthogonality.)} 
 \label{item:dfs_orthogonal} $\mathfrak S$ has a symmetric and anti-reflexive relation called \emph{orthogonality}: we write $V\orth W$ when $V,W$ are orthogonal.  Also, whenever $V\nest W$ and $W\orth U$, we require that $V\orth U$.  Finally, we require that for each $T\in\mathfrak S$ and each $U\in\mathfrak S_T$ for which $\{V\in\mathfrak S_T:V\orth U\}\neq\emptyset$, there exists $W\in \mathfrak S_T-\{T\}$, so that whenever $V\orth U$ and $V\nest T$, we have $V\nest W$.  Finally, if $V\orth W$, then $V,W$ are not $\nest$--comparable.
 
 \item \textbf{(Transversality and consistency.)}
 \label{item:dfs_transversal} If $V,W\in\mathfrak S$ are not
 orthogonal and neither is nested in the other, then we say $V,W$ are
 \emph{transverse}, denoted $V\transverse W$.  There exists
 $\kappa_0\geq 0$ such that if $V\transverse W$, then there are
  sets $\rho^V_W\subseteq\fontact W$ and
 $\rho^W_V\subseteq\fontact V$ each of diameter at most $\xi$ and 
 satisfying: $$\min\left\{\dist_{
 W}(\pi_W(x),\rho^V_W),\dist_{
 V}(\pi_V(x),\rho^W_V)\right\}\leq\kappa_0$$ for all $x\in\cuco X$.
 
 For $V,W\in\mathfrak S$ satisfying $V\propnest W$ and for all
 $x\in\cuco X$, we have: $$\min\left\{\dist_{
 W}(\pi_W(x),\rho^V_W),\diam_{\fontact
 V}(\pi_V(x)\cup\rho^W_V(\pi_W(x)))\right\}\leq\kappa_0.$$ 
 
 The preceding two inequalities are the \emph{consistency inequalities} for points in $\cuco X$.
 
 Finally, if $U\propnest V$, then $\dist_W(\rho^U_W,\rho^V_W)\leq\kappa_0$ whenever $W\in\mathfrak S$ satisfies either $V\propnest W$ or $V\transverse W$ and $W\not\orth U$.
 
 \item \textbf{(Finite complexity.)} \label{item:dfs_complexity} There exists $n\geq0$, the \emph{complexity} of $\cuco X$ (with respect to $\mathfrak S$), so that any set of pairwise--$\nest$--comparable elements has cardinality at most $n$.
  
 \item \textbf{(Large links.)} \label{item:dfs_large_link_lemma} There
 exist $\lambda\geq1$ and $E\geq\max\{\xi,\kappa_0\}$ such that the following holds.
 Let $W\in\mathfrak S$ and let $x,x'\in\cuco X$.  Let
 $N=\lambda\dist_{_W}(\pi_W(x),\pi_W(x'))+\lambda$.  Then there exists $\{T_i\}_{i=1,\dots,\lfloor
 N\rfloor}\subseteq\mathfrak S_W-\{W\}$ such that for all $T\in\mathfrak
 S_W-\{W\}$, either $T\in\mathfrak S_{T_i}$ for some $i$, or $\dist_{
 T}(\pi_T(x),\pi_T(x'))<E$.  Also, $\dist_{
 W}(\pi_W(x),\rho^{T_i}_W)\leq N$ for each $i$.
 
 \item \textbf{(Bounded geodesic image.)} \label{item:dfs:bounded_geodesic_image} For all $W\in\mathfrak S$, all $V\in\mathfrak S_W-\{W\}$, and all geodesics $\gamma$ of $\fontact W$, either $\diam_{\fontact V}(\rho^W_V(\gamma))\leq E$ or $\gamma\cap\neb_E(\rho^V_W)\neq\emptyset$. 
 
 \item \textbf{(Partial Realization.)} \label{item:dfs_partial_realization} There exists a constant $\alpha$ with the following property. Let $\{V_j\}$ be a family of pairwise orthogonal elements of $\mathfrak S$, and let $p_j\in \pi_{V_j}(\cuco X)\subseteq \fontact V_j$. Then there exists $x\in \cuco X$ so that:
 \begin{itemize}
 \item $\dist_{V_j}(x,p_j)\leq \alpha$ for all $j$,
 \item for each $j$ and 
 each $V\in\mathfrak S$ with $V_j\propnest V$, we have 
 $\dist_{V}(x,\rho^{V_j}_V)\leq\alpha$, and
 \item if $W\transverse V_j$ for some $j$, then $\dist_W(x,\rho^{V_j}_W)\leq\alpha$.
 \end{itemize}

\item\textbf{(Uniqueness.)} For each $\kappa\geq 0$, there exists
$\theta_u=\theta_u(\kappa)$ such that if $x,y\in\cuco X$ and
$\dist(x,y)\geq\theta_u$, then there exists $V\in\mathfrak S$ such
that $\dist_V(x,y)\geq \kappa$.\label{item:dfs_uniqueness}
\end{enumerate}
If there exists $\delta\geq0$ such that $\fontact U$ is $\delta$--hyperbolic for all $U\in\mathfrak S$, then $(\cuco X,\mathfrak S)$ is \emph{hierarchically hyperbolic}.  If there exists $\delta$ so that $\fontact U$ is $\delta$--hyperbolic for all non--$\nest$--minimal $U\in\mathfrak S$, then $(\cuco X,\mathfrak S)$ is \emph{relatively hierarchically hyperbolic}.
\end{defn}

We require the following proposition from~\cite{BehrstockHagenSisto:HHS_II}:

\begin{prop}[$\rho$--consistency]\label{prop:rho_consistency}
There exists $\kappa_1$ so that the following holds.  Suppose that $U,V,W\in\mathfrak S$ satisfy both of the following conditions: $U\propnest V$ or $U\transverse V$; and $U\propnest W$ or $U\transverse W$.  Then, if $V\transverse W$, then 
 $$\min\left\{\dist_{
 W}(\rho^U_W,\rho^V_W),\dist_{
 V}(\rho^U_V,\rho^W_V)\right\}\leq\kappa_1$$
 and if $V\propnest W$, then 
 $$\min\left\{\dist_{
 W}(\rho^U_W,\rho^V_W),\diam_{\fontact
 V}(\rho^U_V\cup\rho^W_V(\rho^U_W))\right\}\leq\kappa_1.$$
\end{prop}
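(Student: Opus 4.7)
The plan is to reduce to the consistency inequality for points of $\cuco X$ (Definition~\ref{defn:space_with_distance_formula}(\ref{item:dfs_transversal})), applied to a point that realizes the data of $U$. Since the hypotheses require $U\propnest V$ or $U\transverse V$, and similarly for $W$, the partial realization axiom (Definition~\ref{defn:space_with_distance_formula}(\ref{item:dfs_partial_realization})) applied to the singleton family $\{U\}$ (with any basepoint in $\pi_U(\cuco X)$) produces some $x\in\cuco X$ with
\[
\dist_V(\pi_V(x),\rho^U_V)\leq\alpha\quad\text{and}\quad \dist_W(\pi_W(x),\rho^U_W)\leq\alpha.
\]

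Now apply the consistency inequality to $x$ with respect to the pair $(V,W)$. In the transverse case $V\transverse W$, we get
$\min\{\dist_W(\pi_W(x),\rho^V_W),\dist_V(\pi_V(x),\rho^W_V)\}\leq\kappa_0$, and the triangle inequality together with the partial realization bounds immediately yields $\min\{\dist_W(\rho^U_W,\rho^V_W),\dist_V(\rho^U_V,\rho^W_V)\}\leq\kappa_0+\alpha+\xi$. In the nested case $V\propnest W$, the consistency inequality gives either $\dist_W(\pi_W(x),\rho^V_W)\leq\kappa_0$---in which case triangle inequality yields the first alternative of the conclusion with constant $\kappa_0+\alpha+\xi$---or $\diam_V(\pi_V(x)\cup\rho^W_V(\pi_W(x)))\leq\kappa_0$. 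In this latter subcase the partial realization bound controls $\rho^U_V$ against $\pi_V(x)$, and so the remaining task is to compare $\rho^W_V(\pi_W(x))$ with $\rho^W_V(\rho^U_W)$.

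For this last comparison I would apply bounded geodesic image (Definition~\ref{defn:space_with_distance_formula}(\ref{item:dfs:bounded_geodesic_image})) to a geodesic $\gamma$ in $\fontact W$ joining $\pi_W(x)$ to $\rho^U_W$, noting that $\gamma$ has length at most $\alpha+\xi$. Either $\diam_V(\rho^W_V(\gamma))\leq E$, yielding $\rho^W_V(\pi_W(x))$ and $\rho^W_V(\rho^U_W)$ within $E$ of each other and hence the desired bound on $\diam_V(\rho^U_V\cup\rho^W_V(\rho^U_W))$; or $\gamma$ meets $\neb_E(\rho^V_W)$, forcing $\dist_W(\rho^U_W,\rho^V_W)\leq\alpha+\xi+E$ and giving the first alternative of the conclusion. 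Taking $\kappa_1$ to be the maximum of the constants produced above completes the argument. The only subtle step is this nested subcase, since one must combine three layers of approximation (partial realization, bounded geodesic image, and the ambient consistency inequality) into a single constant; all of the constants depend solely on the HHS data, so this is bookkeeping rather than a genuine obstacle.
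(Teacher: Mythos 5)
The paper does not actually prove this proposition here --- it is imported from \cite{BehrstockHagenSisto:HHS_II} --- but your argument is correct and is essentially the standard one used there: apply partial realization to the singleton $\{U\}$ to get a point $x$ whose $V$-- and $W$--coordinates lie $\alpha$--close to $\rho^U_V$ and $\rho^U_W$ (this is exactly where the hypothesis that $U$ is properly nested in or transverse to each of $V,W$ is used), then invoke the consistency inequalities for $x$ and, in the nested subcase, bounded geodesic image along a short geodesic from $\pi_W(x)$ to $\rho^U_W$ to transfer $\rho^W_V(\pi_W(x))$ to $\rho^W_V(\rho^U_W)$. The only loose ends are, as you note, bookkeeping with the diameters $\xi$ of the coarse sets involved (e.g.\ controlling $\rho^W_V$ on all of the bounded set $\rho^U_W$ rather than a single endpoint, which is again BGI or the first alternative), and all constants depend only on the HHS data.
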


\begin{notation}\label{notation:E}
Given a hierarchical space $\HHS X S$, let $E$ be the maximum of all of the constants appearing in Definition~\ref{defn:space_with_distance_formula} and Proposition~\ref{prop:rho_consistency}.  Moreover, if $\HHS X S$ is $\delta$--(relatively) HHS, then we choose $E\geq\delta$ as well.
\end{notation}

\begin{notation}\label{notation:orth_set}
Let $\HHS X S$ be a hierarchical space and let $\mathcal U\subset\mathfrak S$.  Given $V\in\mathfrak S$, we write $V\orth\mathcal U$ to mean $V\orth U$ for all $U\in\mathcal U$.
\end{notation}

We can now prove the following lemma, analogous to Definition~\ref{defn:space_with_distance_formula}.\eqref{item:dfs_transversal}:

\begin{lem}\label{lem:orth_close}
Let $(\cuco X,\mathfrak S)$ be a hierarchical space and let $W\in\mathfrak S$ and let $U,V\in\mathfrak S_W-\{W\}$ satisfy $U\orth V$.  Then $\dist_W(\rho^U_W,\rho^V_W)\leq 2E$.
\end{lem}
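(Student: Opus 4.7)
The plan is to deduce this from Partial Realization (axiom~\eqref{item:dfs_partial_realization} of Definition~\ref{defn:space_with_distance_formula}) applied to the pair $\{U,V\}$, which is a valid family of pairwise orthogonal elements of $\mathfrak S$ since $U\orth V$. Choose any $p_U\in\pi_U(\cuco X)\subseteq\fontact U$ and any $p_V\in\pi_V(\cuco X)\subseteq\fontact V$; the axiom produces a point $x\in\cuco X$ whose projections to all $\nest$--ambient elements are controlled.

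Applying the second bullet of Partial Realization to the element $U$ (with $W$ playing the role of the ambient element, since $U\propnest W$), we obtain $\dist_W(\pi_W(x),\rho^U_W)\leq\alpha$. The same bullet applied to $V$ (using $V\propnest W$) yields $\dist_W(\pi_W(x),\rho^V_W)\leq\alpha$. The triangle inequality then gives $\dist_W(\rho^U_W,\rho^V_W)\leq 2\alpha$.

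Finally, by Notation~\ref{notation:E}, the constant $\alpha$ from Partial Realization is bounded by $E$, so the desired bound $\dist_W(\rho^U_W,\rho^V_W)\leq 2E$ follows.

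There is no serious obstacle; the only subtle point is verifying that the relevant instances of Partial Realization are available, namely that the hypothesis $U,V\in\mathfrak S_W-\{W\}$ ensures $U\propnest W$ and $V\propnest W$ (so that the second bullet, which controls the projection to strictly ambient domains, is the right one to invoke rather than the third). The rest is just the triangle inequality together with the absorption of constants into $E$.
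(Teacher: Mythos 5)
Your proof is correct and is exactly the paper's argument: the paper's proof consists of the single line ``Apply partial realization,'' and your write-up simply fills in the details of that application (the second bullet for each of $U\propnest W$ and $V\propnest W$, followed by the triangle inequality and the absorption of $\alpha$ into $E$).
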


\begin{proof}
Apply partial realization (Definition~\ref{defn:space_with_distance_formula}.\eqref{item:dfs_partial_realization}.
\end{proof}

The following lemma, which is~\cite[Lemma~2.5]{BehrstockHagenSisto:HHS_II}, is used in Section~\ref{sec:tight}:

\begin{lem}[Passing large projections up the $\nest$--lattice]\label{lem:passing_up}
 Let $(\cuco X,\mathfrak S)$ be a hierarchical space.  For every
 $C\geq0$ there exists $N$ with the following property.  Let
 $V\in\mathfrak S$, let $x,y\in\cuco X$, and let
 $\{S_i\}_{i=1}^{N}\subseteq \mathfrak S_V-\{V\}$ be distinct and
 satisfy $\dist_{S_i}(x,y)\geq E$.  Then there exists
 $S\in\mathfrak S_V$ and $i$ so that $S_i\propnest S$ and
 $\dist_{S}(x,y)\geq C$.
\end{lem}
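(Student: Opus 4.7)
The natural approach is induction on the length of the longest $\propnest$-chain in $\mathfrak{S}_V$, with the Large Links axiom (Definition~\ref{defn:space_with_distance_formula}.\eqref{item:dfs_large_link_lemma}) doing essentially all the work. The base case $\mathfrak{S}_V = \{V\}$ is vacuous, since there are no admissible $S_i \in \mathfrak{S}_V - \{V\}$. For the inductive step, apply Large Links at $V$ to the pair $x, y$ to obtain elements $T_1, \ldots, T_m \in \mathfrak{S}_V - \{V\}$ with $m \leq \lfloor \lambda \dist_V(x,y) + \lambda \rfloor$ such that every $T \in \mathfrak{S}_V - \{V\}$ with $\dist_T(x,y) \geq E$ satisfies $T \nest T_j$ for some $j$.

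If $\dist_V(x,y) \geq C$, then taking $S := V$ and any of the $S_i$ closes the argument, since $S_i \in \mathfrak{S}_V - \{V\}$ forces $S_i \propnest V$. Otherwise $\dist_V(x,y) < C$, and hence $m \leq m_0 := \lfloor \lambda C + \lambda \rfloor$. Because $\dist_{S_i}(x,y) \geq E$ for each $i$, Large Links guarantees $S_i \nest T_{j(i)}$ for some index $j(i)$. At most $m_0$ of the $S_i$ can equal some $T_j$, so at least $N - m_0$ of them satisfy $S_i \propnest T_{j(i)}$. A pigeonhole argument then produces some $T_{j_0}$ that properly contains at least $(N - m_0)/m_0$ of the $S_i$.

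Since $T_{j_0} \propnest V$, the longest $\propnest$-chain in $\mathfrak{S}_{T_{j_0}}$ is strictly shorter than the one in $\mathfrak{S}_V$, so the inductive hypothesis yields a threshold $N'(C)$ with the property that once at least $N'(C)$ of the $S_i$ lie properly below $T_{j_0}$, the desired $S \in \mathfrak{S}_{T_{j_0}} \subseteq \mathfrak{S}_V$ with $\dist_S(x,y) \geq C$ and some $S_i \propnest S$ can be produced. Thus $N := m_0\bigl(N'(C) + 1\bigr)$ suffices, and iterating across the finitely many levels permitted by finite complexity (Definition~\ref{defn:space_with_distance_formula}.\eqref{item:dfs_complexity}) yields an explicit (essentially exponential) bound for $N$ in terms of $C$ and the complexity of $\mathfrak{S}$. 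The only mild subtlety is bookkeeping for the $S_i$ that might coincide with some $T_j$ rather than be properly nested below it; the $-m_0$ correction above handles this cleanly, and no other step is delicate.
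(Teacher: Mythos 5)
Your proof is correct and follows essentially the same route as the source: the paper does not reprove this lemma but quotes it from \cite{BehrstockHagenSisto:HHS_II} (Lemma~2.5), where the argument is exactly this induction on complexity, using the large links axiom to bound the number of $\nest$--maximal containers when $\dist_V(x,y)<C$ and pigeonholing the $S_i$ into one of them. Your bookkeeping for the $S_i$ that coincide with some $T_j$ and the resulting threshold $N=m_0(N'(C)+1)$ are both sound.
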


In this paper, we primarily work with relatively HHS.  The main results from~\cite{BehrstockHagenSisto:HHS_II} that we will require are realization, the distance formula, and the existence of hierarchy paths (Theorems~\ref{thm:realization},\ref{thm:distance_formula},\ref{thm:hierarchy_paths} below), whose statements require the following definitions:

\begin{defn}[Consistent tuple]\label{defn:consistent_tuple}
Let $\kappa\geq0$ and let $\tup b\in\prod_{U\in\mathfrak S}2^{\fontact U}$ be a tuple such that for each $U\in\mathfrak S$, the $U$--coordinate $b_U$ has diameter $\leq\kappa$.  Then $\tup b$ is \emph{$\kappa$--consistent} if for all $V,W\in\mathfrak S$, we have $$\min\{\dist_V(b_V,\rho^W_V),\dist_W(b_W,\rho^V_W)\}\leq\kappa$$ whenever $V\transverse W$ and $$\min\{\dist_W(x,\rho^V_W),\diam_V(b_V\cup\rho^W_V)\}\leq\kappa$$ whenever $V\propnest W$.
\end{defn}

\begin{defn}[Hierarchy path]\label{defn:hierarchy_path}
A path $\gamma\colon I\to\cuco X$ is a \emph{$(D,D)$--hierarchy path} if $\gamma$ is a $(D,D)$--quasigeodesic and $\pi_U\circ\gamma$ is an unparameterized $(D,D)$--quasigeodesic for each $U\in\mathfrak S$.
\end{defn}

\begin{thm}[Realization]\label{thm:realization}
Let $\HHS X S$ be a hierarchical space. Then for each $\kappa\geq1$, there exists $\theta=\theta(\kappa)$ so that, for any $\kappa$--consistent tuple $\tup b\in\prod_{U\in\mathfrak S}2^{\fontact U}$, there exists $x\in\cuco X$ such that $\dist_V(x,b_V)\leq\theta$ for all $V\in\mathfrak S$.
\end{thm}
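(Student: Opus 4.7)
The plan is to prove Theorem~\ref{thm:realization} by induction on the complexity $n$ of the hierarchical space $(\cuco X,\mathfrak S)$. The base case, when $\mathfrak S$ consists solely of the $\nest$--maximal element $S$, is immediate from partial realization (Definition~\ref{defn:space_with_distance_formula}.\eqref{item:dfs_partial_realization}) applied to the singleton family $\{S\}$ and any point $p_S\in\pi_S(\cuco X)$ within $\kappa$ of $b_S$: this yields $x\in\cuco X$ with $\dist_S(x,b_S)\leq\alpha+\kappa$, so we can take $\theta(\kappa):=\alpha+\kappa$ at this level.

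For the inductive step, I let $S$ be the $\nest$--maximal element and begin by applying partial realization to obtain a preliminary $x_0\in\cuco X$ with $\dist_S(x_0,b_S)\leq\alpha$. I then classify each $V\in\mathfrak S\setminus\{S\}$ as \emph{passive} or \emph{active} according to whether $\dist_S(b_S,\rho^V_S)$ exceeds a threshold $E_1$, which I choose larger than all the constants appearing in Definitions~\ref{defn:space_with_distance_formula} and~\ref{defn:consistent_tuple} and Proposition~\ref{prop:rho_consistency}. For passive $V$, the tuple consistency inequality forces $b_V$ close to $\rho^S_V(b_S)$ while the point consistency inequality for $x_0$ forces $\pi_V(x_0)$ close to $\rho^S_V(\pi_S(x_0))$; the bounded geodesic image axiom then pins $\pi_V(x_0)$ uniformly close to $b_V$, so the passive coordinates are already correctly realized by $x_0$.

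For the active domains, the plan is to invoke the container portion of Definition~\ref{defn:space_with_distance_formula}.\eqref{item:dfs_orthogonal} to produce a finite pairwise orthogonal family $\{W_1,\dots,W_m\}$ of proper $\nest$--descendants of $S$ such that every active element is nested in some $W_j$. The subhierarchy $(\cuco X,\mathfrak S_{W_j})$ has complexity strictly less than $n$ and the restricted tuple $(b_U)_{U\nest W_j}$ is still $\kappa$--consistent, so the inductive hypothesis supplies $y_j\in\cuco X$ realizing it up to $\theta(n-1,\kappa)$. I then apply partial realization one final time, to the orthogonal family $\{W_j\}$ with target points $\pi_{W_j}(y_j)$, and take $x$ to be the resulting point. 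Because $\rho^{W_j}_S$ lies within $E_1$ of $b_S$ by activity, partial realization keeps $\dist_S(x,b_S)$ uniformly bounded, and its transversality clause keeps $\pi_V(x)$ close to $\rho^{W_j}_V$ for any passive $V$ transverse to some $W_j$.

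The hard part will be ensuring that these three corrections do not destructively interfere: realizing the active subtuples via $y_j$ could, a priori, drive $\pi_S$ or some passive $\pi_V$ away from the target. The resolution rests on the pairwise orthogonality of $\{W_j\}$, which lets partial realization simultaneously control the $W_j$--coordinates, the $S$--coordinate (via $\rho^{W_j}_S$ near $b_S$), and coordinates transverse to each $W_j$; $\rho$--consistency together with bounded geodesic image then guarantees that the projections $\rho^V_S$ of passive $V$ remain far from every $\rho^{W_j}_S$, so passive coordinates cannot be disturbed by the correction. The residual work is a bookkeeping calibration of $E_1$ and the inductive constants $\theta(n-1,\kappa)$ to ensure the recursion closes, together with verifying that $(\cuco X,\mathfrak S_{W_j})$ indeed satisfies all axioms of Definition~\ref{defn:space_with_distance_formula} with the inherited data.
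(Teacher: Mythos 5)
This theorem is not proved in the present paper at all: it is imported from~\cite{BehrstockHagenSisto:HHS_II}, so the only fair comparison is with the argument given there. Your overall skeleton (induction on complexity, partial realization for the base case, splitting domains according to whether $\rho^V_S$ is near $b_S$, handling the ``passive'' domains by consistency plus bounded geodesic image) matches the genuine proof, and the passive-domain step is fine.

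The fatal gap is the claim that the active domains can be swept into a finite \emph{pairwise orthogonal} family $\{W_1,\dots,W_m\}$ of proper $\nest$--descendants of $S$ with every active element nested in some $W_j$. No axiom gives this. The container clause of Definition~\ref{defn:space_with_distance_formula}.\eqref{item:dfs_orthogonal} only produces a container for the orthogonal complement of a single domain; it says nothing about covering an arbitrary collection of domains whose projections $\rho^V_S$ happen to be close to $b_S$. In fact such a family typically cannot exist: active domains are very often pairwise transverse (think of two overlapping subsurfaces $U\transverse V$ whose union fills $S$, so that $S$ is the only domain containing both), and if $U\nest W_i$, $V\nest W_j$ with $W_i\orth W_j$, then the orthogonality axiom forces $U\orth V$. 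So your scheme can only succeed when the active domains are already pairwise orthogonal or share proper containers, which is exactly the easy case. The whole difficulty of the realization theorem lies in the transverse active domains, and the actual proof in~\cite{BehrstockHagenSisto:HHS_II} handles them with the partial order $\preceq$ on pairwise transverse relevant domains (recalled in Section~\ref{subsubsec:partial_order} of this paper) and an iterative construction that corrects coordinates in $\preceq$--order, checking at each stage that earlier coordinates are not disturbed. A secondary issue: $(\cuco X,\mathfrak S_{W_j})$ is \emph{not} a hierarchical space (the uniqueness axiom fails, since points can agree on all of $\mathfrak S_{W_j}$ while being far apart in $\cuco X$), so the inductive hypothesis cannot be applied to it as stated; one must work with the standard product regions $\mathbf F_{W_j}$, whose construction is itself intertwined with realization. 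Finally, in your base case the existence of a point of $\pi_S(\cuco X)$ within $\kappa$ of $b_S$ is assumed without justification.
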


\noindent Observe that uniqueness (Definition~\eqref{item:dfs_uniqueness} implies that the \emph{realization point} for $\tup b$ provided by Theorem~\ref{thm:realization} is coarsely unique.  The following theorem is Theorem~6.7 in~\cite{BehrstockHagenSisto:HHS_II}, which is proved using the corresponding statement for hierarchically hyperbolic spaces (\cite[Theorem~4.5]{BehrstockHagenSisto:HHS_II}):

\begin{thm}[Distance formula for relatively HHS]\label{thm:distance_formula}
Let $\HHS X S$ be a relatively hierarchically hyperbolic space.  Then
there exists $s_0$ such that for all $s\geq s_0$, there exist $C,K$ so
that for all $x,y\in\cuco X$,
$$\dist(x,y)\asymp_{K,C}\sum_{U\in\mathfrak
S}\ignore{\dist_U(x,y)}{s}.$$
\end{thm}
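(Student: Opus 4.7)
The plan is to reduce the statement to the HHS distance formula (Theorem~4.5 of~\cite{BehrstockHagenSisto:HHS_II}). The key observation is that in a relatively HHS, $\delta$-hyperbolicity may fail only for the $\nest$-minimal elements of $\mathfrak S$; all non-minimal $\fontact U$ are $\delta$-hyperbolic, and crucially all axiomatic machinery invoked in the HHS distance formula proof (realization, partial realization, large links, bounded geodesic image, passing up) is already available in the general hierarchical-space setting without any hyperbolicity hypothesis. So morally the HHS proof can be run, and the only new ingredient needed addresses contributions from minimal elements.

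I would establish the two inequalities separately. For the upper bound $\dist(x,y)\leq K\Sigma_s+C$, where $\Sigma_s:=\sum_{U\in\mathfrak S}\ignore{\dist_U(x,y)}{s}$, one interpolates between the $E$-consistent tuples $(\pi_U(x))_U$ and $(\pi_U(y))_U$ through finitely many intermediate $\kappa$-consistent tuples, each realized (via Theorem~\ref{thm:realization}) by a point in $\cuco X$. Successive realization points will differ noticeably in only one coordinate $U$, contributing at most $O(\dist_U(x,y))$ to the total $\cuco X$-distance because $\pi_U$ is coarsely Lipschitz and $\fontact U$ is a geodesic space. Summing these contributions over relevant $U$ yields the desired upper bound; this step does not use hyperbolicity of $\fontact U$ anywhere.

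For the lower bound $\Sigma_s\leq K\dist(x,y)+C$: since each $\pi_U$ is $(K,K)$-coarsely Lipschitz, every nonzero term satisfies $\ignore{\dist_U(x,y)}{s}\leq K\dist(x,y)+K$, so it suffices to show that the set of $U$ with $\dist_U(x,y)\geq s$ has cardinality bounded independently of $x,y$ once $s$ is chosen large enough. This is achieved by iteratively applying Lemma~\ref{lem:passing_up} together with finite complexity: if too many distinct $U_i$ had $\dist_{U_i}(x,y)\geq E$, one would produce a $\nest$-ancestor $V$ with $\dist_V(x,y)$ arbitrarily large, contradicting the coarse Lipschitz property of $\pi_V$. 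After at most $n$ (the complexity) rounds of passing up, the resulting bound on the number of relevant $U$ is uniform.

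The main obstacle is managing the interaction between the possibly non-hyperbolic minimal levels and the hyperbolic upper levels of the hierarchy while maintaining uniform constants. The saving observation is that the hyperbolicity of $\fontact W$ is used in large links and bounded geodesic image only when $W$ appears as the \emph{source} space (the one carrying the geodesic), never as the \emph{target} $V\propnest W$. Since minimal $U$ have no proper $\nest$-subspaces, $\delta$-hyperbolicity of $\fontact U$ at minimal $U$ is in fact never invoked anywhere in the argument. Consequently the HHS proof of Theorem~4.5 in~\cite{BehrstockHagenSisto:HHS_II} adapts essentially verbatim, with all constants absorbed into a single uniform threshold $s_0$.
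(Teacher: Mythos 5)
This theorem is not proved in the paper at all: it is imported verbatim as Theorem~6.7 of~\cite{BehrstockHagenSisto:HHS_II}, where it is deduced from the genuine HHS distance formula (Theorem~4.5 there) essentially by coning off the non-hyperbolic $\nest$--minimal domains to get an honest HHS (compare the factored-space construction of Section~\ref{sec:factored}) and then accounting separately for the minimal-domain contributions. So the relevant question is whether your sketch would actually constitute a proof, and there is a concrete gap in your lower bound.

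You claim that it suffices to show the set $\{U\in\mathfrak S:\dist_U(x,y)\geq s\}$ has cardinality bounded \emph{independently of $x,y$}, and you argue this via Lemma~\ref{lem:passing_up}: too many relevant $U_i$ would produce an ancestor $V$ with $\dist_V(x,y)$ arbitrarily large, ``contradicting the coarse Lipschitz property of $\pi_V$.'' This is not a contradiction: the coarse Lipschitz property only gives $\dist_V(x,y)\leq K\dist(x,y)+K$, so a huge projection to $V$ merely certifies that $\dist(x,y)$ is huge. Indeed the claim itself is false --- in $\MCG(S)$, a product of large powers of Dehn twists about $n$ pairwise-filling curves produces $n$ annular domains with projection $\geq s$, with $n$ unbounded; the number of $s$--relevant domains grows linearly in $\dist(x,y)$, which is precisely why the distance formula is a sum rather than a maximum. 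What passing-up plus coarse Lipschitzness actually yields is a bound on the number of relevant domains that is \emph{linear in $\dist(x,y)$}; combined with your termwise bound $\ignore{\dist_U(x,y)}{s}\leq K\dist(x,y)+K$ this gives only $\Sigma_s\lesssim\dist(x,y)^2$, which does not prove the theorem. The correct lower bound requires an induction on complexity using the large links axiom (or hierarchy paths): large links bounds the number of ``children'' carrying relevant domains linearly in $\dist_W(x,y)$ for each $W$, and one must telescope these estimates down the $\nest$--lattice so that the total sum, not just the count, is linearly controlled. Your upper bound is also only a heuristic (ordering the relevant domains and moving one coordinate at a time while preserving consistency is the hard part, carried out in~\cite{BehrstockHagenSisto:HHS_II} via the partial order of Section~\ref{subsubsec:partial_order}), but the lower bound as written is the step that fails outright.
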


\noindent (The notation $\ignore{A}{B}$ denotes the quantity which is $A$ if $A\geq B$ and $0$ otherwise.)

The following closely-related statement is Theorem~6.8 of~\cite{BehrstockHagenSisto:HHS_II}:

\begin{thm}[Hierarchy paths in relatively HHS]\label{thm:hierarchy_paths}
Let $\HHS X S$ be a relatively hierarchically hyperbolic space.  Then there exists $D\geq0$ such that for all $x,y\in\cuco X$, there is a $(D,D)$--hierarchy path in $\cuco X$ joining $x,y$.
\end{thm}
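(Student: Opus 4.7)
The plan is to construct hierarchy paths by induction on the complexity of $\HHS X S$, using the realization theorem (Theorem~\ref{thm:realization}) and the distance formula (Theorem~\ref{thm:distance_formula}) as the main engines. For the base case, when the complexity is one, the set $\mathfrak S$ contains a single element $S$ that is both $\nest$-maximal and $\nest$-minimal; partial realization together with the uniqueness axiom imply that $\pi_S\co\cuco X\to\fontact S$ is a quasi-isometry onto its image, so any $(D,D)$-quasigeodesic in $\fontact S$ lifts, via uniqueness, to a $(D,D)$-hierarchy path in $\cuco X$.

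For the inductive step, let $S$ be the $\nest$-maximal element and recall that, in a relatively HHS, $\fontact S$ is $\delta$-hyperbolic. Given $x,y\in\cuco X$, fix a geodesic $\alpha\co[0,L]\to\fontact S$ from $\pi_S(x)$ to $\pi_S(y)$. For each integer $t\in\{0,\dots,\lfloor L\rfloor\}$ I would define a tuple $\tup b^t\in\prod_{U\in\mathfrak S}2^{\fontact U}$ by setting $b^t_S=\alpha(t)$ and, for each $V\propnest S$, declaring $b^t_V=\pi_V(y)$ if $\alpha([0,t])$ enters the $E$-neighborhood of $\rho^V_S$ and $b^t_V=\pi_V(x)$ otherwise. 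The bounded geodesic image axiom, together with Proposition~\ref{prop:rho_consistency} (the $\rho$-consistency statement), should yield that each $\tup b^t$ is $\kappa$-consistent for a uniform $\kappa$: for pairs $V\transverse W$ or $V\propnest W$ one distinguishes cases according to whether $\alpha$ has already passed both $\rho^V_S$ and $\rho^W_S$, and the required inequality either comes from the consistency of the original tuples at $x$ or $y$ (by induction applied to the sub-hierarchy $\mathfrak S_S-\{S\}$ viewed through partial realization) or from bounded geodesic image applied to $\alpha$. Applying Theorem~\ref{thm:realization} then produces points $\gamma(t)\in\cuco X$ with $\dist_U(\gamma(t),b^t_U)\leq\theta$ for every $U\in\mathfrak S$.

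To upgrade the sequence $\{\gamma(t)\}$ to a continuous path, I would connect consecutive realization points by geodesics of $\cuco X$. Verifying the $(D,D)$-quasigeodesic property is then a direct application of the distance formula: $\dist(\gamma(t),\gamma(t'))$ is coarsely $\sum_U\ignore{\dist_U(\gamma(t),\gamma(t'))}{s}$, and by construction the $S$-term contributes $|t-t'|$ while each $V\propnest S$ term contributes only a bounded amount except during the one transition of $b^{(\cdot)}_V$. Summing these contributions along a subinterval gives additive and multiplicative control. Finally, checking that $\pi_U\circ\gamma$ is an unparameterized $(D,D)$-quasigeodesic is immediate for $U=S$ from the construction, and for $U\propnest S$ it follows because $\pi_U(\gamma(t))$ is coarsely constant on the two subintervals before and after $\alpha$ meets $\rho^U_S$, with a single jump bounded by the inductive hierarchy-path hypothesis applied inside the sub-hierarchy.

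The main obstacle I expect is the case where $U$ is $\nest$-minimal and $\fontact U$ is not hyperbolic: one cannot appeal to geodesics in $\fontact U$ to control $\pi_U\circ\gamma$. Here the argument must instead use that the $U$-coordinate of the realization tuple $\tup b^t$ is forced by the bounded geodesic image condition to be coarsely locally constant, changing only in a controlled window around the time $\alpha$ meets $\rho^U_S$; combined with Lemma~\ref{lem:passing_up} applied to chains in $\mathfrak S_U=\{U\}$, this forces $\pi_U(\gamma)$ to be an unparameterized quasigeodesic even though $\fontact U$ itself carries no hyperbolic structure. Once this is verified, the induction closes.
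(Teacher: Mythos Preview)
First, note that the paper does not actually prove Theorem~\ref{thm:hierarchy_paths}: it is quoted from~\cite[Theorem~6.8]{BehrstockHagenSisto:HHS_II} without argument.  So there is no ``paper's own proof'' to compare against here; your proposal must stand or fall on its own.

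There is a genuine gap.  The step ``connect consecutive realization points by geodesics of $\cuco X$'' is exactly where the difficulty lives, and your argument does not address it.  At a transition time---when $\alpha$ crosses $\neb_E(\rho^V_S)$ for some $V$ with $\dist_V(x,y)$ large---the tuples $\tup b^t$ and $\tup b^{t+1}$ differ by the entire $V$--coordinate, so $\dist(\gamma(t),\gamma(t+1))$ can be arbitrarily large by the distance formula.  A $(q,q)$--quasigeodesic of $\cuco X$ joining them is \emph{not} known to project to a quasigeodesic in any $\fontact U$; that is precisely the content of being a hierarchy path, so invoking such a segment is circular.  Worse, several mutually transverse $V_1,\ldots,V_k$ can transition simultaneously (their $\rho^{V_i}_S$ can be $E$--close on $\alpha$), so $\gamma(t),\gamma(t+1)$ need not lie in any common standard product region and there is no obvious lower--complexity sub--HHS in which to run the induction you allude to.  Your sentence ``a single jump bounded by the inductive hierarchy-path hypothesis applied inside the sub-hierarchy'' never identifies the sub-hierarchy, nor explains why both endpoints lie in it; as written, the induction is not actually invoked anywhere.

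Two smaller issues.  The consistency of $\tup b^t$ in the mixed case $b^t_V=\pi_V(x)$, $b^t_W=\pi_W(y)$ with $V\transverse W$ requires a real argument (use the order in which $\alpha$ meets $\rho^V_S,\rho^W_S$ together with bounded geodesic image and consistency at $x,y$); your appeal to Proposition~\ref{prop:rho_consistency} alone does not cover it.  Finally, the proposed use of Lemma~\ref{lem:passing_up} for $\nest$--minimal $U$ is vacuous: $\mathfrak S_U=\{U\}$ contains nothing properly nested in $U$, so the lemma has no content there and cannot force $\pi_U\circ\gamma$ to be a quasigeodesic.  The actual construction in~\cite{BehrstockHagenSisto:HHS_II} proceeds differently, essentially by resolving the relevant domains in order and concatenating paths through standard product regions, which is where the induction on complexity genuinely enters.
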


\subsubsection{Hierarchical quasiconvexity, gates, and standard product regions}\label{subsubsec:HQSP}
The next definition slightly generalizes Definition~5.1
of~\cite{BehrstockHagenSisto:HHS_II} (which it was stated for the 
case of 
hierarchically hyperbolic spaces):

\begin{defn}[Hierarchical quasiconvexity in relatively HHS]\label{defn:hier_quasi}
Let $\HHS X S$ be a $\delta$--relatively hierarchically hyperbolic space, and let $\cuco Y\subseteq\cuco X$.  Then $\cuco Y$ is \emph{hierarchically quasiconvex} if there exists a function $k\colon [0,\infty)\to[0,\infty)$ such that:
\begin{itemize}
 \item for each $U\in\mathfrak S$ with $\fontact U$ a $\delta$--hyperbolic space, the subspace $\pi_U(\cuco Y)\subseteq\fontact U$ is $k(0)$--quasiconvex;
 \item for each ($\nest$--minimal) $U\in\mathfrak S$ for which $\fontact U$ is not $\delta$--hyperbolic, either $\fontact U=\neb^{\fontact U}_{k(0)}(\pi_U(\cuco Y))$ or $\diam(\pi_U(\cuco Y))\leq k(0)$;
 \item for all $\kappa\geq0$ and all $\kappa$--consistent tuples $\tup
 b$ for which $b_U\subset\pi_U(\cuco Y)$ for all $U\in\mathfrak S$, each
 realization point $x\in\cuco X$ for which
 $\dist_U(\pi_U(x),b_U)\leq\theta(\kappa)$ satisfies $\dist(x,\cuco Y)\leq
 k(\kappa)$ (where $\theta(\kappa)$ is
 as in Theorem~\ref{thm:realization}) .
\end{itemize}
In this case, we say $\cuco Y$ is \emph{$k$--hierarchically quasiconvex} and refer to $k$ as a \emph{hierarchical quasiconvexity function} for $\cuco Y$.
\end{defn}

Let $\HHS X S$ be relatively hierarchically hyperbolic and let $\cuco Y\subseteq\cuco X$ be $k$--hierarchically quasiconvex.  Given $x\in\cuco X$ and $U\in\mathfrak S$, let $p_U(x)$ be defined as follows.  If $U$ is $\delta$--hyperbolic, then $p_U(x)$ is the coarse projection of $\pi_U(x)$ on $\pi_U(\cuco Y)$ (which is defined since $\pi_U(\cuco Y)$ is $k(0)$--quasiconvex).  If $\pi_U\colon \cuco Y\to\fontact U$ is $k(0)$--coarsely surjective, then $p_U(x)$ is the set of all $p\in\pi_U(\cuco Y)$ with $\dist_U(x,p)\leq k(0)$ (which is nonempty).  Otherwise, $\pi_U(\cuco Y)$ has diameter at most $k(0)$, and we let $p_U(x)=\pi_U(\cuco Y)$.  The tuple $(p_U(x))_{U\in\frak S}$ is easily checked to be $\kappa=\kappa(k(0))$--consistent, and we apply the realization theorem (Theorem~\ref{thm:realization}) and the uniqueness axiom to produce a coarsely well-defined point $\gate_{\cuco Y}(x)\in\cuco Y$ so that $\dist_U(\gate_{\cuco Y}(x),p_U(x))$ is bounded in terms of $k$ for all $U$.  The (coarsely well-defined) map $\gate_{\cuco Y}\colon\cuco X\to\cuco Y$ given by $x\mapsto\gate_{\cuco Y}(x)$ is the \emph{gate map} associated to $\cuco Y$.

Important examples of hierarchically quasiconvex subspaces of the relatively HHS $\HHS X S$ are the \emph{standard product regions} defined as follows (see~\cite[Section~5]{BehrstockHagenSisto:HHS_II} for more detail).  For each $U\in\mathfrak S$, let $\mathfrak S_U$ denote the set of $V\in\mathfrak S$ with $V\nest U$, and let $\mathfrak S_U^\orth$ denote the set of $V\in\mathfrak S$ such that $V\orth U$, together with some $A_U\in\mathfrak S$ such that $V\nest A_U$ for all $V$ with $V\orth U$.  Then there are uniformly hierarchically quasiconvex subspaces $\mathbf F_U,\mathbf E_U\subseteq\cuco X$ such that $(\mathbf F_U,\mathfrak S_U),(\mathbf E_U,\mathfrak S_U^\orth)$ are relatively hierarchically hyperbolic spaces and the inclusions $\mathbf F_U,\mathbf E_U\hookrightarrow\cuco X$ extend to a uniform quasi-isometric embedding $\mathbf F_U\times \mathbf E_U\to\cuco X$ whose image $\mathbf P_U$ is hierarchically quasiconvex.  We call $\mathbf P_U$ the \emph{standard product region} associated to $U$ and, for each $e\in \mathbf E_U$, the image of $\mathbf F_U\times\{e\}$ is a \emph{parallel copy of $\mathbf F_U$ (in $\cuco X$)}.  The relevant defining property of $\mathbf P_U$ is: there exists $\alpha$, depending only on $\cuco X,\mathfrak S$ and the output of the realization theorem, so that for all $x\in \mathbf P_U$ (and hence each parallel copy of $\mathbf F_U$), we have $\dist_V(x,\rho^U_V)\leq\alpha$ whenever $U\propnest V$ or $U\transverse V$.  Moreover we can choose $\alpha$ so that, if $U\orth V$, then $\diam(\pi_U(\mathbf F_V\times\{e\}))\leq\alpha$ for all $e\in\mathbf E_V$.

\begin{rem}[Gates in standard product regions and their factors]\label{rem:gates}
Let $\HHS X S$ be a relatively HHS and let $U\in\mathfrak S$.  The
gate map $\gate_{\mathbf P_U}\colon \cuco X\to \mathbf P_U$ can be
described as follows.  For each $x\in\cuco X$ and $V\in\mathfrak S$,
we have:
\begin{itemize}
 \item $\dist_V(\pi_V(\gate_{\mathbf P_U}(x)),\rho^U_V)\leq\alpha$ if $V\transverse U$ or $U\propnest V$;
 \item $\pi_V(\gate_{\mathbf P_U}(x))=\pi_V(x)$ otherwise.
\end{itemize}
For each $e\in \mathbf E_U$, the gate map $\gate_{\mathbf F_U\times\{e\}}\colon \cuco X\to \mathbf F_U\times\{e\}$ is described by:
\begin{itemize}
 \item $\dist_V(\pi_V(\gate_{\mathbf F_U\times\{e\}}(x)),\rho^U_V)\leq\alpha$ if $V\transverse U$ or $U\propnest V$;
 \item $\pi_V(\gate_{\mathbf F_U\times\{e\}}(x))=\pi_V(x)$ if $V\nest U$;
 \item $\dist_V(\pi_V(\gate_{\mathbf F_U\times\{e\}}(x)),\pi_V(e))\leq\alpha$ if $V\orth U$.
\end{itemize}
Likewise, for each $f\in \mathbf F_U$, the gate map $\gate_{\{f\}\times \mathbf E_U}\colon \cuco X\to \{f\}\times \mathbf E_U$ is described by:
\begin{itemize}
 \item $\dist_V(\pi_V(\gate_{\{f\}\times \mathbf E_U}(x)),\rho^U_V)\leq\alpha$ if $V\transverse U$ or $U\propnest V$;
 \item $\pi_V(\gate_{\{f\}\times \mathbf E_U}(x))=\pi_V(x)$ if $V\orth U$;
 \item $\dist_V(\pi_V(\gate_{\{f\}\times \mathbf E_U}(x)),\pi_V(f))\leq\alpha$ if $V\nest U$.
\end{itemize}
\end{rem}

\begin{rem}[Standard product regions in relatively HHS]\label{rem:standard_product_regions}
In~\cite[Section~5.2]{BehrstockHagenSisto:HHS_II}, standard product regions are constructed in the context of hierarchically hyperbolic spaces.  However, the construction uses only the hierarchical space axioms and realization (Theorem~\ref{thm:realization}), so that $\mathbf F_U,\mathbf E_U,\mathbf P_U$ can be constructed in an arbitrary hierarchical space.  The way we have defined things, the assertion that these subspaces are hierarchically quasiconvex requires $\HHS X S$ to be relatively hierarchically hyperbolic.  It is easy to see, from the definition, that the explanation of hierarchical quasiconvexity from~\cite{BehrstockHagenSisto:HHS_II} (for HHS) works in the more general setting of relatively HHS.
\end{rem}

\begin{defn}[Totally orthogonal]\label{defn:tot_orth}
Given a hierarchical space $\HHS X S$, we say that $\mathcal U\subset\mathfrak S$ is \emph{totally orthogonal} if $U\orth V$ for all distinct $U,V\in\mathcal U$.
\end{defn}

Recall from~\cite[Lemma~2.1]{BehrstockHagenSisto:HHS_II} that there is a uniform bound, namely the complexity, on the size of totally orthogonal subsets of $\mathfrak S$.  Observe that if $\mathcal U$ is a totally orthogonal set in the relatively HHS $\HHS X S$, then $\bigcap_{U\in\mathcal U}\mathbf P_U$ coarsely contains $\prod_{U\in\mathcal U}\mathbf F_U$.

\subsubsection{Partially ordering relevant domains}\label{subsubsec:partial_order}
Given a hierarchical space $\HHS X S$, a constant $K\geq0$, and $x,y\in\cuco X$, we say that $U\in\mathfrak S$ is \emph{$K$--relevant (for $x,y$)} if $\dist_U(x,y)\geq K$.  In Section~2 of~\cite{BehrstockHagenSisto:HHS_II}, it is shown that when $K\geq100E$, then any set $\relevant_{max}(x,y,K)$ of pairwise $\nest$--incomparable $K$--relevant elements of $\mathfrak S$ can be partially ordered as follows: if $U,V\in\relevant(x,y,K)$, then $U\preceq V$ if $U=V$ or if $U\transverse V$ and $\dist_U(\rho^V_U,y)\leq E$.  (This was done in~\cite{BehrstockHagenSisto:HHS_II} in the context of hierarchically hyperbolic spaces, but the arguments do not use hyperbolicity and thus hold for arbitrary hierarchical spaces.)

\subsubsection{Automorphisms and (relatively) hierarchically hyperbolic groups}\label{subsubsec:aut}
Let $(\cuco X,\mathfrak S)$ be a hierarchical space.  An \emph{automorphism} $g$ of $(\cuco X,\mathfrak S)$ is a map $g\colon \cuco X\to\cuco X$, together with a bijection $g\inducedS\colon \mathfrak S\to\mathfrak S$ and, for each $U\in\mathfrak S$, an isometry $g\induced(U)\colon \fontact U\to\fontact U$ so that the following diagrams coarsely commute whenever the maps in question are defined (i.e., when $U,V$ are not orthogonal):
\begin{center}
$
\begin{diagram}
\node{\cuco 
X}\arrow[3]{e,t}{g}\arrow{s,l}{\pi_U}\node{}\node{}\node{\cuco 
X'}\arrow{s,r}{\pi_{g\inducedS(U)}}\\
\node{\fontact (U)}\arrow[3]{e,t}{g\induced(U)}\node{}\node{}\node{\fontact 
(g\inducedS(U))}
\end{diagram}
$
\end{center}
and
\begin{center}
$
\begin{diagram}
\node{\fontact 
U}\arrow[4]{s,l}{\rho^U_V}\arrow[7]{e,t}{g\induced(U)}\node{}\node{}\node{}\node{}\node{}\node{}\node{}\node{}\node{}\node{\fontact(g\inducedS(U))}\arrow[4]{s,r}{\rho^{g\inducedS(U)}_{g\inducedS(V)}}\\
\node{}\node{}\node{}\node{}\node{}\node{}\node{}\node{}\\
\node{}\node{}\node{}\node{}\node{}\node{}\node{}\node{}\\
\node{}\node{}\node{}\node{}\node{}\node{}\node{}\node{}\\
\node{\fontact 
V}\arrow[7]{e,t}{g\induced(V)}\node{}\node{}\node{}\node{}\node{}\node{}\node{}\node{}\node{}\node{\fontact (g\inducedS (V))}
\end{diagram}
$
\end{center}
The finitely generated group $G$ is \emph{hierarchical} if there is a hierarchical structure $(G,\mathfrak S)$ on $G$ (equipped with a word-metric) so that the action of $G$ on itself by left multiplication is an action by HS automorphisms (with the above diagrams uniformly coarsely commuting).  If $(G,\mathfrak S)$ is a (relatively) hierarchically hyperbolic space, we say that $(G,\mathfrak S)$ (or just $G$) is a \emph{(relatively) hierarchically hyperbolic group [(R)HHG]}.

\subsection{Very rotating families}\label{subsec:rotating_families}
In Section~\ref{sec:HHG_quotient}, we will make use of the \emph{very rotating families} technology introduced in~\cite{DGO}.  All of the notions we need in that section are defined there, and we refer the reader to~\cite{DGO} or~\cite{Guirardel_notes} for additional background.

\section{Factored spaces}\label{sec:factored}
Given a hierarchical space $\HHS X S$, we say $\mathfrak U\subseteq\mathfrak S$ is \emph{closed under nesting} if for all $U\in\mathfrak U$, if $V\in\mathfrak S-\mathfrak U$, then $V\not\nest U$.

\begin{defn}[Factored space]\label{defn:factored_space}
Let $(\cuco X,\mathfrak S)$ be a hierarchical space. 
A \emph{factored space} $\coneoff{\cuco X}_{\mathfrak U}$ is constructed by
defining a new metric $\hat\dist$ on $\cuco X$ depending on a 
given subset $\mathfrak U \subset\mathfrak S$ which is closed under nesting. 
First, for each $U\in\mathfrak U$, for each pair $x,y\in\cuco X$ for 
which there exists $e\in {\bf E}_{U}$ such that $x,y\in {\bf 
F}_{U}\times \{e\}$, we set 
$\dist'(x,y)=\min\{1,\dist(x,y)\}$.  
For any pair $x,y\in\cuco X$ for which there does not exists such an 
$e$ we set $\dist'(x,y)=\dist(x,y)$.  We now
define the distance $\hat\dist$ on $\coneoff{\cuco X}_{\mathfrak U}$.  Given a
sequence $x_0,x_1,\ldots,x_k\in\coneoff{\cuco X}_{\mathfrak U}$, define its length
to be $\sum_{i=1}^{k-1}\dist'(x_i,x_{i+1})$.  Given
$x,x'\in\coneoff{\cuco X}_{\mathfrak U}$, let $\hat\dist(x,x')$ be the infimum of
the lengths of such sequences $x=x_0,\ldots,x_k=x'$.  
\end{defn}

Given a hierarchical space $\HHS X S$, and a set $\mathfrak
U\subseteq\mathfrak S$ closed under nesting, let $\psi \co \cuco X \to
\coneoff{\cuco X}_{\mathfrak U}$ be the  set-theoretic identity map.  Observe that:

\begin{prop}\label{prop:identity_Lipschitz}
The map $\psi\co \cuco X \to \coneoff{\cuco X}_{\mathfrak U}$ is Lipschitz.
\end{prop}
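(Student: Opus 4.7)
The plan is to show that $\psi$ is in fact $1$--Lipschitz, by a direct comparison of $\hat\dist$ with $\dist$. The basic observation is that the auxiliary function $\dist'$ appearing in Definition~\ref{defn:factored_space} satisfies $\dist'(x,y)\leq\dist(x,y)$ for every pair $x,y\in\cuco X$: indeed, if there exists $U\in\mathfrak U$ and $e\in\mathbf E_U$ with $x,y\in\mathbf F_U\times\{e\}$, then by definition $\dist'(x,y)=\min\{1,\dist(x,y)\}\leq\dist(x,y)$, and otherwise $\dist'(x,y)=\dist(x,y)$ by definition.

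Given any $x,y\in\cuco X$, consider the two-term sequence $x_0=x$, $x_1=y$ in $\coneoff{\cuco X}_{\mathfrak U}$; its $\dist'$--length is $\dist'(x,y)$. Since $\hat\dist(x,y)$ is the infimum of lengths of sequences joining $x$ to $y$, we conclude
\[
\hat\dist\bigl(\psi(x),\psi(y)\bigr)\;\leq\;\dist'(x,y)\;\leq\;\dist(x,y),
\]
so $\psi$ is $1$--Lipschitz. No hypothesis beyond the definitions of $\dist'$ and $\hat\dist$ is required, and there is no genuine obstacle here; the statement is really a sanity check that the identity does not stretch distances when one passes to the factored space.
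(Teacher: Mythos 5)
Your proof is correct and is essentially the paper's argument spelled out: the paper's proof is the one-line observation that the claim ``follows from the definition of $\hat\dist$,'' and your chain $\hat\dist(x,y)\leq\dist'(x,y)\leq\dist(x,y)$ makes this precise, even yielding the sharper conclusion that $\psi$ is $1$--Lipschitz. You are also right that nothing beyond the definitions is needed; the quasigeodesic hypothesis the paper cites plays no role in this direction of the comparison.
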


\begin{proof}
This follows from the definition of $\hat\dist$ and the fact that
$\cuco X$ is a quasigeodesic space.
\end{proof}

\begin{defn}[Hat space]\label{defn:hat_space} 
Let $\mathfrak U_{1}$ denote the set of $\nest$--minimal 
elements of $\mathfrak S$. The \emph{hat space}  $\coneoff{\cuco X}=\coneoff{\cuco X}_{\mathfrak U_1}$
is the factored space associated to the set $\mathfrak U_{1}$.  
\end{defn}

Recall that a $\delta$--relatively HHS is an HS \HHS X S, such that for all $U\in\mathfrak S$, either $\fontact U$ is $\delta$--hyperbolic 
    or $U\in\mathfrak U_{1}$.

\begin{prop}\label{prop:cone_off_bottom}
    Fix a $\delta$--relatively HHS, \HHS X S, and let $\mathfrak
    U\subset \mathfrak S$ be closed under $\nest$ and contain each
    $U\in\frak U_1$ for which $\fontact U$ is not
    $\delta$--hyperbolic.  The space $(\coneoff{\cuco X}_{\mathfrak U},\mathfrak S-
    \mathfrak U)$ is an HHS,
    where the associated $\fontact(*), \pi_{*}, \rho_{*}^{*}, 
    \nest,\orth,\transverse$ are the 
    same as in the original structure.
\end{prop}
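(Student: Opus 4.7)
The plan is to verify each axiom of Definition~\ref{defn:space_with_distance_formula} for $(\coneoff{\cuco X}_{\mathfrak U},\mathfrak S-\mathfrak U)$ equipped with the inherited index set, projections $\pi_W$, $\rho$--maps, and relations $\nest,\orth,\transverse$. Since the underlying point set is unchanged and $\hat\dist\leq\dist$, every axiom whose statement refers only to the index set and to projections of points of $\cuco X$ transfers verbatim: the nesting partial order with its unique $\nest$--maximum, the orthogonality relation and its interaction with $\nest$, the consistency inequalities at points, finite complexity, bounded geodesic image (the relevant geodesics still live in the unchanged $\fontact W$), and partial realization (a realization point produced inside $\cuco X$ also lies in $\coneoff{\cuco X}_\mathfrak U$). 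Hyperbolicity of $\fontact W$ for $W\in\mathfrak S-\mathfrak U$ is immediate from the hypothesis that $\mathfrak U$ contains every non-hyperbolic $\nest$--minimal domain. For large links at $W\in\mathfrak S-\mathfrak U$, I would prune the family $\{T_i\}\subseteq\mathfrak S_W-\{W\}$ produced by the original axiom by discarding any $T_i\in\mathfrak U$; by nesting-closure, every $T\nest T_i$ for such a pruned $T_i$ is itself in $\mathfrak U$, so the pruned family still covers all relevant domains in $\mathfrak S_W-\mathfrak U-\{W\}$.

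The axioms with genuine metric content are that (i) each $\pi_W$ is coarsely Lipschitz with respect to $\hat\dist$, and (ii) uniqueness. For (i), nesting-closure of $\mathfrak U$ forbids $W\nest U$ whenever $U\in\mathfrak U$ and $W\in\mathfrak S-\mathfrak U$, so one of $U\propnest W$, $U\orth W$, or $U\transverse W$ must hold. In each case the description of parallel copies of $\mathbf F_U$ recalled in Section~\ref{subsubsec:HQSP} (via closeness to $\rho^U_W$, or, in the orthogonal case, via the $\alpha$--bound on $\diam(\pi_W(\mathbf F_U\times\{e\}))$) yields $\diam(\pi_W(\mathbf F_U\times\{e\}))\leq\alpha$ for every $e\in\mathbf E_U$. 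After discretizing a $\hat\dist$-path into short steps, each step is either an ordinary $\dist$-step of length at most $1$ or an intra-fiber ``jump'' of $\pi_W$-displacement at most $\alpha$, so $\pi_W$ remains uniformly coarsely Lipschitz for $\hat\dist$.

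The main obstacle is uniqueness, which I would prove contrapositively: assuming $\dist_W(x,y)\leq\kappa$ for every $W\in\mathfrak S-\mathfrak U$, bound $\hat\dist(x,y)$ uniformly in $\kappa$. Fix a $(D,D)$--hierarchy path $\gamma$ from $x$ to $y$ via Theorem~\ref{thm:hierarchy_paths}, and choose a threshold $K>\max\{\kappa,100E\}$. The hypothesis forces every element of $\relevant_{max}(x,y,K)$ into $\mathfrak U$, and its cardinality is uniformly bounded: otherwise, iterated application of Lemma~\ref{lem:passing_up} within the bounded complexity of $\mathfrak S$ would force a large projection at the $\nest$--maximum $S$, contradicting the hypothesis when $S\in\mathfrak S-\mathfrak U$, while the remaining possibility $S\in\mathfrak U$ forces $\mathfrak U=\mathfrak S$, in which case $\coneoff{\cuco X}_\mathfrak U$ has diameter at most $1$ and uniqueness is vacuous. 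Using the partial order on $\relevant_{max}(x,y,K)$ from Section~\ref{subsubsec:partial_order} and tracking $\gamma$ through the corresponding standard product regions $\mathbf P_V$, I would then decompose $\gamma$ (up to uniformly bounded error at the transitions, controlled by the product-region description together with bounded geodesic image) into a uniformly bounded number of subpaths, each contained in a parallel copy of some $\mathbf F_V$ with $V\in\mathfrak U$. Each such subpath has $\hat\dist$-length at most $1$ by construction of the factored metric, giving the desired bound on $\hat\dist(x,y)$.
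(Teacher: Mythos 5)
Most of your verification matches the paper's: the relational axioms, consistency, bounded geodesic image, large links, and partial realization do transfer for exactly the reasons you give (your pruning of the $\{T_i\}$ via nesting-closure is in fact more careful than the paper's one-line justification), and your coarse-Lipschitz argument for $\pi_W$ via the $\alpha$--bounds on $\pi_W(\mathbf F_U\times\{e\})$ is the paper's argument. The reduction of uniqueness to bounding $|\relevant_{max}|$ via Lemma~\ref{lem:passing_up}, and the ordering of the surviving domains, also match the paper.

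The gap is in the last step of your uniqueness argument: the claim that $\gamma$ decomposes, ``up to uniformly bounded error at the transitions,'' into boundedly many subpaths \emph{each contained in a parallel copy of some} $\mathbf F_V$ with $V\in\mathfrak U$, each of $\hat\dist$--length at most $1$. The subpath of $\gamma$ passing through $\mathbf P_V=\mathbf F_V\times\mathbf E_V$ is not contained in a single parallel copy of $\mathbf F_V$: it enters on one fiber $\mathbf F_V\times\{e\}$ and must exit on another $\mathbf F_V\times\{e'\}$, and the displacement from $e$ to $e'$ in $\mathbf E_V$ is governed by the domains $W\orth V$. Those $W$ lying in $\mathfrak S-\mathfrak U$ contribute at most $\kappa$ by hypothesis, but those lying in $\mathfrak U$ can contribute arbitrarily large $\dist_W$, so the ``transition'' is not uniformly bounded in $\dist$, and neither the product-region description nor bounded geodesic image controls its $\hat\dist$--length. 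The only reason this transition is cheap in $\hat\dist$ is that the large $\mathfrak U$--coordinates of the $\mathbf E_V$--travel are themselves collapsed by cones inside $\mathbf E_V$ --- but making that precise requires applying the uniqueness statement to the factored space $\bigl(\coneoff{\mathbf E_V}_{\mathfrak U\cap\mathfrak S_V^\orth},\,\mathfrak S_V^\orth-\mathfrak U\bigr)$, i.e.\ an induction on complexity. This is precisely the step the paper's proof of Lemma~\ref{lem:cone_off_uniqueness} carries out (the bound on $\hat\dist(z_{i+1},x_{i+1})$ via the inductively-obtained uniqueness function $f$ for $\coneoff{\mathbf E}_{\mathfrak U_{\mathbf E}}$, fed by Claim~\ref{claim:gate_bound}), and it is missing from your proposal. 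Without it, your final inequality bounding $\hat\dist(x,y)$ does not follow.
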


\begin{proof}
We must verify each of the requirements of Definition~\ref{defn:space_with_distance_formula}.  First observe that by the definition of $\hat\dist$ and the fact that $(\cuco X,\dist)$ is a quasigeodesic space, $(\cox_{\mathfrak U},\hat\dist)$ is also a $(K,K)$--quasigeodesic space for some $K$.

\textbf{Projections:} 
By our
hypothesis on $\mathfrak U$, we have that $\fontact U$ is
$\delta$--hyperbolic for each $U\in\mathfrak S-\mathfrak U$, so it
remains to check that $(\coneoff{\cuco X},\mathfrak S-\frak U)$ is a
hierarchical space.  The projections $\pi_U\colon \cox_{\mathfrak
U}\to 2^{\fontact U}$ are as before (more precisely, they are
compositions of the original projections $\pi_U\colon \cuco X\to\fontact U$
with the  set-theoretic identity $\cox_{\mathfrak U}\to\cuco X$, but 
we will abuse
notation and call them $\pi_U$).

Fix $U\in\mathfrak S-\mathfrak U$.  By Definition~\ref{defn:space_with_distance_formula}.\eqref{item:dfs_curve_complexes}, there exists $K$, independent of $U$, so that $\pi_U$ is $(K,K)$--coarsely Lipschitz.  Let $x,y\in\cox_{\mathfrak U}$ and let $x=x_0,\ldots,x_\ell=y$ be a sequence with $\hat\dist(x,y)\geq\sum_{i=0}^{\ell-1}\dist'(x_i,x_{i+1})-1$. Note that $\dist_U(x,y)\leq\sum_{i=0}^{\ell-1}\dist_U(x_i,x_{i+1})$.  

Let $I_1$ be the set of $i\in\{0,\ldots,\ell-1\}$ such that $\dist'(x_i,x_{i+1})=\dist(x_i,x_{i+1})$, let $I_2$ be the set of $i$ for which $x_i,x_{i+1}$ lie in a common parallel copy of $\mathbf F_V$, where $V\transverse U$ or $V\propnest U$ and $V\in\mathfrak U$, and let $I_3$ be the set of $i$ so that $x_i,x_{i+1}$ lie an a common parallel copy of $\mathbf F_W$, where $W\orth U$ and $W\in\mathfrak U$.  Note that we do not need to consider the case where $W\in\mathfrak U$ and $U\nest W$, since $W\in\mathfrak U$ and $\mathfrak U$ is closed under nesting.  Then $$\dist_U(x,y)\leq\sum_{i\in I_1}\left[K\dist'(x_i,x_{i+1})+K\right]+ 2\alpha|I_2|+\alpha|I_3|.$$ 

The third term comes from the fact that, given $i\in I_3$ and $W\orth U$ the associated element of $\mathfrak U$ with $x_i,x_{i+1}\in\mathbf F_W\times\{e\}$ for some $e\in\mathbf E_W$, we have that $\pi_U(\mathbf F_W)$ has diameter at most $\alpha$, so $\dist_U(x_i,x_{i+1})\leq\alpha$.  Combining the above provides the desired coarse Lipschitz constant $C$.

\textbf{Nesting, orthogonality, transversality, finite complexity:}  The parts of Definition~\ref{defn:space_with_distance_formula} that only concern $\mathfrak S$ and the relations $\nest,\orth,\transverse$ continue to hold with $\mathfrak S$ replaced by $\mathfrak S-\mathfrak U$.  The complexity of $(\cox_{\mathfrak U},\frak S-\frak U)$ is obviously bounded by that of $\HHS X S$.  (Note that the fact that $\mathfrak U$ is closed under nesting is needed to ensure that for all $W\in\frak S-\frak U$ and $U\propnest W$, there exists $V\propnest W$ so that $T\nest V$ for each $T$ with $T\propnest W$ and $T\orth U$.)

\textbf{Consistency:}  Since the projections $\pi_*$ and relative projections $\rho^*_*$ have not changed, consistency holds for $(\cox_{\mathfrak U},\mathfrak S-\frak U)$ since it holds for $\HHS X S$.

\textbf{Bounded geodesic image and large links:} The bounded geodesic image axiom holds for $(\cox_{\mathfrak U},\mathfrak S-\frak U)$ since it holds for $\HHS X S$ and is phrased purely in terms of geodesics in the various $\fontact (*)$ and relative projections $\rho^*_*$.  The same applies to the large link axiom.

\textbf{Partial realization:} Since for each $U\in\mathfrak S$, we have $\pi_U(\cuco X)=\pi_U(\cox_{\mathfrak U})$, and since we have not changed any of the projections $\pi_*$ or relative projections $\rho^*_*$, the partial realization axiom for $\HHS X S$ implies that for $(\cox_{\mathfrak U},\frak S-\frak U)$.

\textbf{Uniqueness:} This is done in Lemma~\ref{lem:cone_off_uniqueness} below.
\end{proof}
%
%

\begin{defn}[Friendly]\label{defn:friendly}
For $U,V\in\mathfrak S$, we say that $U$ is \emph{friendly} to $V$ if
$U\nest V$ or $U\orth V$.  Note that when $U$ is \emph{not} friendly
to $V$, then $\rho^V_U$ is a uniformly bounded subset of $\fontact U$.
\end{defn}


In the proof of Lemma \ref{lem:cone_off_uniqueness}, we will need to ``efficiently'' jump between product regions $P_U, P_V$. Heuristically, the pairs of points that are ``closest in every $\fontact W$'' are of the form $p,q$ for some $p\in\gate_{\mathbf P_U}(\mathbf P_V)$ and $q=\gate_{\mathbf P_V}(p)$, and these are the ones we study in the following lemma. In particular, we are interested in the distance formula terms for such pairs $p,q$.

\begin{lem}[Knowing who your friends are]\label{lem:friendly}
Let $U,V\in\mathfrak S$ and let $p\in\gate_{\mathbf P_U}(\mathbf P_V)$. For $q=\gate_{\mathbf P_V}(p)$, the following holds.  If $W\in\mathfrak S$ satisfies $\dist_W(p,q)\geq10^3\alpha E$ then $W$ is not friendly to either of $U$ or $V$, and $\dist_W(\rho^U_W,\rho^V_W)\geq500\alpha E$. 
\end{lem}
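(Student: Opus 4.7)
The plan is to argue by case analysis on the friendship status of $W$ with respect to $U$ and $V$, using the explicit descriptions of the gates $\gate_{\mathbf P_U}$ and $\gate_{\mathbf P_V}$ from Remark~\ref{rem:gates}. Write $p = \gate_{\mathbf P_U}(v)$ for some $v\in\mathbf P_V$, so $p\in\mathbf P_U$, and recall $q = \gate_{\mathbf P_V}(p)\in\mathbf P_V$. The strategy is that if $W$ is friendly to $V$ (respectively $U$), then the gate formula forces $\pi_W(q)$ to coarsely coincide with $\pi_W(p)$ (respectively forces $\pi_W(p)$ to coarsely coincide with $\pi_W(v)$, which lies near $\rho^V_W$), making $\dist_W(p,q)$ at most a constant multiple of $\alpha$ and contradicting the hypothesis.

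In detail, if $W\nest V$ or $W\orth V$, then by Remark~\ref{rem:gates} applied to $\gate_{\mathbf P_V}$, $\pi_W(q)$ coarsely equals $\pi_W(p)$, so $\dist_W(p,q) = O(\alpha)$, contradicting $\dist_W(p,q)\geq 10^3\alpha E$. Hence $W\transverse V$ or $V\propnest W$, and the defining property of $\mathbf P_V$ gives $\dist_W(v,\rho^V_W)\leq\alpha$ and $\dist_W(q,\rho^V_W)\leq\alpha$. Similarly, if $W\nest U$ or $W\orth U$, then by the same remark applied to $\gate_{\mathbf P_U}$, $\pi_W(p)$ coarsely equals $\pi_W(v)$, so $\dist_W(p,\rho^V_W)\leq 2\alpha$; combined with $\dist_W(q,\rho^V_W)\leq\alpha$, the triangle inequality again yields $\dist_W(p,q) = O(\alpha)$, a contradiction. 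Thus $W$ is not friendly to either $U$ or $V$, establishing the first assertion.

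For the second assertion, since $p\in\mathbf P_U$ and $W$ is not friendly to $U$, the standard product region property yields $\dist_W(p,\rho^U_W)\leq\alpha$; likewise $\dist_W(q,\rho^V_W)\leq\alpha$. The triangle inequality then gives
\[
\dist_W(\rho^U_W,\rho^V_W) \geq \dist_W(p,q) - 2\alpha \geq 10^3\alpha E - 2\alpha \geq 500\alpha E,
\]
as required. The argument is essentially bookkeeping with the product-region and gate axioms; the only subtle point is remembering that $q\in\mathbf P_V$ (not merely that $q$ equals the gate of $p$), which is precisely what makes the subcase ``$W$ friendly to $U$ but not to $V$'' close up cleanly via the $\rho^V_W$-estimate on $q$.
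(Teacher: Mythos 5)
Your proof is correct and follows essentially the same route as the paper: rule out $W$ friendly to $V$ via the gate description for $\gate_{\mathbf P_V}$, then rule out $W$ friendly to $U$ by tracing $\pi_W(p)$ back to $\pi_W(v)$ (the paper's $p_0$) and using that both $v$ and $q$ project near $\rho^V_W$, and finally deduce the $\rho$-separation from $\dist_W(p,\rho^U_W),\dist_W(q,\rho^V_W)\leq\alpha$. The only differences are notational.
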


\begin{proof}
If $W\nest V$ or $W\orth V$, then $\pi_W(p),\pi_W(q)$ coarsely coincide by the definition of gates.  Hence $W$ is not friendly to $V$.  Suppose now that $W$ is friendly to $U$.  Choose $p_0\in \mathbf P_V$ so that $p=\gate_{\mathbf P_U}(p_0)$.  Since $W$ is not friendly to $V$ and $p_0,p\in \mathbf P_V$, the $W$--coordinates of $p_0,q$ both coarsely coincide with $\rho^V_W$.  Hence, since $W$ is friendly to $U$, the $W$--coordinate of $p$ also coarsely coincides with $\rho^V_W$, contradicting $\dist_W(p,q)\geq10^3\alpha E$.  Hence $W$ is not friendly to $U$.  The final assertion follows from the fact that $\dist_W(p,\rho^U_W),\dist_W(q,\rho^V_W)\leq E$.
\end{proof}

\begin{lem}\label{lem:irrelevant_and_unfriendly}
Suppose that $W\in\mathfrak S$ and $x,y\in\cuco X$ satisfy $\dist_W(x,y)\leq100\alpha$, while $\dist_V(x,y)\geq100\alpha$ for some $V\in\mathfrak S$.  Suppose that $W$ is not friendly to $V$.  Then $\dist_W(\rho^V_W,x)\leq200\alpha E$.  
\end{lem}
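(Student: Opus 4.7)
The plan is to argue by contradiction, assuming $\dist_W(\rho^V_W,x)>200\alpha E$. The hypothesis that $W$ is not friendly to $V$ means $W\not\nest V$ and $W\not\orth V$, so, since $\rho^V_W$ is defined, we must have either $V\transverse W$ or $V\propnest W$. The triangle inequality together with $\dist_W(x,y)\leq 100\alpha$ immediately yields $\dist_W(\rho^V_W,y)\geq 200\alpha E-100\alpha$, which is comfortably larger than $E$. The strategy is to use consistency to push the resulting information from $W$ over to $V$, and then derive a bound on $\dist_V(x,y)$ that contradicts the hypothesis $\dist_V(x,y)\geq 100\alpha$.

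If $V\transverse W$, I apply the transversality consistency inequality from Definition~\ref{defn:space_with_distance_formula}.\eqref{item:dfs_transversal} separately to $x$ and to $y$. Since both $\dist_W(x,\rho^V_W)$ and $\dist_W(y,\rho^V_W)$ exceed $E$, consistency forces $\dist_V(x,\rho^W_V)\leq E$ and $\dist_V(y,\rho^W_V)\leq E$. Combining with $\diam_V(\rho^W_V)\leq\xi\leq E$, the triangle inequality gives $\dist_V(x,y)\leq 3E$, which (in the constant regime set up by Notation~\ref{notation:E}) contradicts $\dist_V(x,y)\geq 100\alpha$.

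If $V\propnest W$, the nesting consistency inequality applied to $z\in\{x,y\}$ gives $\diam_V(\pi_V(z)\cup\rho^W_V(\pi_W(z)))\leq E$, because $\dist_W(z,\rho^V_W)>E$. To compare $\rho^W_V(\pi_W(x))$ with $\rho^W_V(\pi_W(y))$, I invoke the bounded geodesic image axiom on a $\fontact W$--geodesic $\gamma$ from $\pi_W(x)$ to $\pi_W(y)$. The length of $\gamma$ is at most $100\alpha$ (since $\pi_W$ is essentially Lipschitz and $\dist_W(x,y)\leq 100\alpha$), while both endpoints of $\gamma$ lie at $\fontact W$--distance at least $200\alpha E-100\alpha$ from $\rho^V_W$, which exceeds $E+100\alpha$. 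Therefore $\gamma\cap\neb_E(\rho^V_W)=\emptyset$, so bounded geodesic image yields $\diam_V(\rho^W_V(\gamma))\leq E$. Triangle-inequality bookkeeping then again produces $\dist_V(x,y)\leq 3E$, contradicting the hypothesis.

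The main obstacle is purely the bookkeeping of constants, in two places: ensuring that the difference $200\alpha E-100\alpha$ is large enough both to trigger the ``$\leq E$'' branches of consistency at $y$ and to guarantee that the short $\fontact W$--geodesic stays outside $\neb_E(\rho^V_W)$, and verifying that the resulting $3E$ bound on $\dist_V(x,y)$ is genuinely smaller than $100\alpha$ in the regime fixed by Notation~\ref{notation:E}. The generous slack built into the coefficient $200\alpha E$ is precisely what makes both steps go through.
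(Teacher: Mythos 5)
Your proof is correct and follows essentially the same route as the paper's: the transverse case is handled by the consistency inequality applied at $x$ and $y$ (you phrase it as a contradiction, the paper argues directly, but these are contrapositives of the same step), and the nested case uses bounded geodesic image on a short $\fontact W$--geodesic exactly as in the paper. The only caveat is that the final contradiction $3E<100\alpha$ rests on the standing assumption $\alpha\geq E$ made where the lemma is applied (the proof of Lemma~\ref{lem:cone_off_uniqueness}), not on Notation~\ref{notation:E} itself, but the paper's own proof relies on the same implicit normalization.
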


\begin{proof}
First suppose $W\transverse V$.  The lower bound on $\dist_V(x,y)$ implies that either $\dist_V(x,\rho^W_V)>E$ or $\dist_V(y,\rho^W_V)>E$.  In the first case, an application of consistency yields the desired conclusion.  In the second case, apply consistency and the upper bound on $\dist_W(x,y)$.

Next suppose $V\propnest W$.  If $\dist_W(\rho^V_W,x)\leq200\alpha E$, then the bound on $\dist_W(x,y)$ implies that geodesics from $\pi_W(x)$ to $\pi_W(y)$ would remain $E$--far from $\rho^V_W$.  But then consistency and bounded geodesic image would imply that $\pi_V(x),\pi_V(y)$ lie $\leq E$--close, a contradiction.
\end{proof}

\begin{lem}[Uniqueness]\label{lem:cone_off_uniqueness}
For all $\kappa\geq0$, there exists $\theta=\theta(\kappa)$ such that for all $x,y\in\cox_{\mathfrak U}$ with $\hat\dist(x,y)\geq\theta$, there exists $U\in\mathfrak S-\mathfrak U$ such that $\dist_U(x,y)\geq\kappa$.
\end{lem}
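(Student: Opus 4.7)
The plan is to prove the contrapositive: if $\dist_U(x,y) \leq \kappa$ for every $U \in \mathfrak S - \mathfrak U$, then $\hat\dist(x,y)$ is bounded by a function $\theta(\kappa)$. The first observation is that if, in addition, every $V \in \mathfrak U$ satisfies $\dist_V(x,y) \leq \kappa + 1$, then the original uniqueness axiom applied to $(\cuco X, \mathfrak S)$ bounds $\dist(x,y)$ by $\theta_u(\kappa+1)$, and Proposition \ref{prop:identity_Lipschitz} converts this into a bound on $\hat\dist(x,y)$. So I may assume that the set $\mathfrak L = \{V \in \mathfrak U : \dist_V(x,y) \geq \kappa + 1\}$ is nonempty.

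Let $\mathfrak M \subseteq \mathfrak L$ denote the $\nest$-maximal elements of $\mathfrak L$. I claim that $|\mathfrak M|$ is bounded by a constant depending only on $\kappa$. Applying the passing-up lemma (Lemma \ref{lem:passing_up}) with $C = \kappa + 1$: if $|\mathfrak M|$ exceeds the resulting constant $N$, one finds $S \in \mathfrak S$ properly $\nest$-above some $U \in \mathfrak M$ with $\dist_S(x,y) \geq \kappa + 1$. The hypothesis rules out $S \in \mathfrak S - \mathfrak U$; while $S \in \mathfrak U$ would place $S$ in $\mathfrak L$ and contradict the $\nest$-maximality of $U$. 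This bounds $|\mathfrak M|$.

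Next, I would construct a finite sequence $x = z_0, z_1, \ldots, z_k = y$ in $\cox_{\mathfrak U}$ by iteratively gating through the product regions $\mathbf P_{U_i}$ for $U_i \in \mathfrak M$, ordered suitably (using the partial order $\preceq$ from Section \ref{subsubsec:partial_order} on transverse pairs). Each jump corresponds to a motion inside some $\mathbf P_{U_i} = \mathbf F_{U_i} \times \mathbf E_{U_i}$: the $\mathbf F_{U_i}$-component costs at most $1$ in $\hat\dist$, since $U_i \in \mathfrak U$ forces each $\mathbf F_{U_i} \times \{e\}$ to have $\hat\dist$-diameter $\leq 1$ by construction, and the $\mathbf E_{U_i}$-component is estimated by applying the statement inductively to the smaller hierarchical subsystem $(\mathbf E_{U_i}, \mathfrak S^\orth_{U_i})$ with the induced family $\mathfrak U \cap \mathfrak S^\orth_{U_i}$. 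Lemmas \ref{lem:friendly} and \ref{lem:irrelevant_and_unfriendly} are the crucial tool here: they guarantee that when moving from one product region to the next via a gate, the projections to any domain $W$ that is not transverse to or $\nest$-above both $U_i$ and $U_{i+1}$ are preserved up to constants, so the accumulated projection error at each stage is controlled.

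The main obstacle is the bookkeeping in the third step. After all $|\mathfrak M|$ gating stages, one needs to argue that every remaining projection to every element of $\mathfrak S - \mathfrak U$ is still bounded by some function of $\kappa$, so that the residual displacement can be absorbed by a final application of the original uniqueness axiom plus Proposition \ref{prop:identity_Lipschitz}. Setting up the induction with the correct parametrization --- by complexity of $(\cuco X, \mathfrak S)$ together with $\kappa$ --- and ordering the $U_i$ so that the friendliness lemmas apply cleanly at each step is where the technical content lies; the downward closure of $\mathfrak U$ is essential, since it ensures that the sub-elements $V \nest U_i$ to which the motion in $\mathbf F_{U_i}$ is ``charged'' all lie in $\mathfrak U$ and are therefore collapsed.
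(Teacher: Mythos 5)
Your outline reproduces the architecture of the paper's proof almost exactly: pass to the contrapositive, use Lemma~\ref{lem:passing_up} to bound the number of $\nest$--maximal relevant elements of $\mathfrak U$ (or else win outright by finding a relevant element of $\mathfrak S-\mathfrak U$), linearly order them via $\preceq$, hop between the $\mathbf P_{V_i}$ by gates, charge $1$ to each $\mathbf F$--jump, and handle the $\mathbf E$--displacement by induction on complexity. That is the right strategy, and your identification of Lemmas~\ref{lem:friendly} and~\ref{lem:irrelevant_and_unfriendly} as the control mechanism is correct. However, the part you defer as ``bookkeeping'' is most of the proof: those two lemmas only tell you that a domain $W$ with $\dist_W(x_i,x'_i)$ large is unfriendly to both $V_i$ and $V_{i+1}$ and is itself relevant. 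To get a contradiction one must then nest $W$ into some $U'\in\mathfrak R^{max}$ and run a genuine argument in the partial order showing $V_i\prec U'\prec V_{i+1}$, violating the chosen numbering; the endpoint cases $i=0$ and $i=k$ need a separate variant. (Also, your relevance threshold $\kappa+1$ must be replaced by something at least $E$, e.g.\ $\max\{\kappa+1,100\alpha\}$, for Lemma~\ref{lem:passing_up} to apply.)

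The more serious gap is in the inductive step, and you do not flag it. The induction hypothesis for $(\coneoff{\mathbf E}_{\mathfrak U_{\mathbf E}},\mathfrak S^\orth_{V_{i+1}}-\mathfrak U_{\mathbf E})$ bounds $\hat\dist_{\mathbf E}(z_{i+1},x_{i+1})$ only in terms of $\max_U\dist_U(z_{i+1},x_{i+1})$ over $U\in\mathfrak S^\orth_{V_{i+1}}-\mathfrak U_{\mathbf E}$, and for $U\orth V_{i+1}$ nothing in the definition of $x_{i+1}\in\gate_{\mathbf P_{V_{i+1}}}(\mathbf P_{V_{i+2}})$ controls $\pi_U(x_{i+1})$: the gate onto $\mathbf P_{V_{i+1}}$ simply copies the $U$--coordinate of an arbitrary point of $\mathbf P_{V_{i+2}}$, which can be far from both $\pi_U(x)$ and $\pi_U(y)$. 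The paper closes this hole by re-choosing each $x_i$ to be the gate of a point on a hierarchy path from $x$ to $y$ (Claim~\ref{claim:baby_hierarchy}), which forces $\pi_U(x_i)$ to lie uniformly close to a geodesic from $\pi_U(x)$ to $\pi_U(y)$ and hence yields the bound $\dist_U(z_{i+1},x_{i+1})\leq M+O(1)$ of Claim~\ref{claim:gate_bound}. Without some such refinement of the gate points, the induction has no bounded input and the argument does not close.
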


\begin{proof}
Let $x,y\in\cuco X$ and let $M=\max_{V\in\mathfrak S-\mathfrak U}\dist_V(x,y)+1$.  We may assume $\alpha\geq E$.

We declare $U\in\mathfrak S$ to be \emph{relevant} if $\dist_U(x,y)\geq100\alpha$.  Let $\mathfrak R^{max}$ be the set of relevant $T\in\mathfrak U$ not properly nested into any relevant element of $\mathfrak U$.

\textbf{Counting and ordering relevant elements:}  By Lemma~\ref{lem:passing_up}, there exists $N=N(100\alpha+\kappa)$ so that if $V_1,\ldots,V_{N+1}\in\mathfrak R^{max}$, then there exists $T\in\mathfrak S$ so that $\dist_T(x,y)\geq100\alpha+\kappa$ and $V_i\propnest T$ for some $i$.  The latter property would ensure that $T\in\mathfrak S-\mathfrak U$, since $\mathfrak R^{max}$ consists of maximal relevant elements of $\mathfrak U$.  Now, if there is such a $T$, then we are done: we have found $T\in\mathfrak S-\mathfrak U$ with $\dist_T(x,y)\geq\kappa$.  Hence we may assume that $|\mathfrak R^{max}|\leq N$, where $N\geq1$ depends only on $(\cuco X,\mathfrak S)$, the constant $\alpha$, and the desired $\kappa$.  

By definition, if $U,V\in\mathfrak R^{max}$, then $U\transverse V$ or $U\orth V$.  Hence, let $U\preceq V$ if either $U=V$ or $U\transverse V$ and $\dist_U(y,\rho^U_V)\leq E$.  As discussed in Section~\ref{subsubsec:partial_order}, $\preceq$ is a partial ordering on $\mathfrak R^{max}$, and $U,V$ are $\preceq$--incomparable if and only if they are orthogonal.  Let $V_1,\ldots,V_k$, with $k\leq N$, be the elements of $\mathfrak R^{max}$, numbered so that $i\leq j$ if $V_i\preceq V_j$.

\textbf{A sequence to estimate $\hat\dist(x,y)$:} The idea is to jump from $x$ to $P_{V_1}$, then from $P_{V_1}$ to $P_{V_2}$ and so on until we get to $y$. The most ``efficient'' way of jumping between product regions is described in Lemma \ref{lem:friendly}, which justifies the definition of the following sequence of points.\footnote{It seems natural to take $x_i=\pi_{V_i}(y)$, $x'_i=\pi_{V_{i+1}}(x)$, but in certain situations this would create extraneous distance formula terms between $x_i,x'_i$, namely when there exists $U\nest V_i,V_{i+1}$.} Let $x_0=x,x_0'=\gate_{\mathbf P_{V_1}}(x),x_k=\gate_{\mathbf P_{V_k}}(x),x'_k=\gate_{\mathbf P_{V_k}}(y)$ and, for $1\leq i\leq k-1$, let $x_i\in\gate_{\mathbf P_{V_i}}(\mathbf P_{V_{i+1}})$ and $x'_i=\gate_{\mathbf P_{V_{i+1}}}(x_i)$. 

There exists $e\in\mathbf E_{V_{i+1}}$ and $f\in\mathbf F_{V_{i+1}}$ so that $x'_i\in\mathbf F_{V_{i+1}}\times\{e\}$ and $x_{i+1}\in\{f\}\times\mathbf E_{V_{i+1}}$.  Let $z_{i+1}=\gate_{\{f\}\times\mathbf E_{V_{i+1}}}(x'_i)$.  Observe that $\dist'(x'_i,z_{i+1})=1$.

\textbf{Bounding $\dist_U(x_i,x'_i)$:} If $\dist_U(x_i,x_{i}')\geq10^3\theta E$ for some $U\in\mathfrak S$, then $V_i\transverse V_{i+1}$ by Lemma~\ref{lem:friendly} and~\cite[Lemma~2.11]{BehrstockHagenSisto:HHS_I}.  We now bound $\dist_U(x_i,x'_i)$ for each of the possible types of $U\in\mathfrak S$.

First suppose that $U\in\mathfrak S-\mathfrak U$.  Then $U\not\nest V_i$ and $U\not\nest V_{i+1}$ since $\mathfrak U$ is closed under nesting.  If $V_i\propnest U$, then since $V_i$ is $100\alpha>E$--relevant for $x,y$, consistency and bounded geodesic image imply that $\rho^{V_i}_U$ lies $E$--close to any geodesic in $\fontact U$ from $\pi_U(x)$ to $\pi_U(y)$.  The same is true for $\rho^{V_{i+1}}_U$ if $V_{i+1}\propnest U$.  If $V_i\transverse U$, then consistency implies that $\rho^{V_i}_U$ lies $E$--close to $\pi_U(x)$ or $\pi_U(y)$, so that $\rho^{V_i}_U$ again lies $E$--close to any geodesic from $\pi_U(x)$ to $\pi_U(y)$.  Hence, if neither of $V_i,V_{i+1}$ is orthogonal to $U$, then $\dist_U(x_i,x'_i)\leq 2(E+\alpha)+M\leq10^3\alpha EM$.  If $U\orth V_i$, then Lemma~\ref{lem:friendly} implies that $\dist_U(x'_i,x_i)\leq10^3\alpha E$.  We conclude that $\dist_U(x'_i,x_i)\leq10^3\alpha EM$ whenever $U\in\mathfrak S-\mathfrak U$.

Next, suppose $U\in\mathfrak U$ and $1\leq i\leq k-1$.  If $\dist_U(x_i,x'_i)>10^3\alpha E$, then Lemma~\ref{lem:friendly} implies that $U$ is not friendly to $V_i$ or $V_{i+1}$.  Moreover, since $\dist_U(\rho^{V_i}_U,\rho^{V_{i+1}}_U)\geq500\alpha E$ by the same lemma, Lemma~\ref{lem:irrelevant_and_unfriendly} implies that $U$ is relevant, so $U\nest U'$ for some $\nest$--maximal relevant $U'\in\mathfrak U$.  Now, $U\transverse V_i,V_{i+1}$ and $U\nest U'$, so $U'\not\in\{V_i,V_{i+1}\}$.  Similarly, we cannot have $U'\orth V_i,V_{i+1}$.  Finally, $V_i,V_{i+1},U'$ are pairwise $\nest$--incomparable, so all are in $\mathfrak R^{max}$ and are pairwise $\preceq$--comparable.  Note that we can extend $\preceq$ to $\mathfrak R^{max}$ and observe that $\preceq$ is a partial order on $\{U,V_i,V_{i+1}\}$.

If $U'\preceq V_i$, then by definition $\dist_{U'}(y,\rho^{V_i}_{U'})\leq E$.  Since $U'$ is relevant, we have $\dist_{U'}(x,\rho^{V_i}_{U'})>E$, so consistency implies that $\dist_{V_i}(x,\rho^{U'}_{V_i})\leq E$.  Hence $\dist_{V_i}(\rho^{U'}_{V_i},y)\geq50E$.  Definition~\ref{defn:space_with_distance_formula}.\eqref{item:dfs_transversal} implies that $\dist_{V_i}(\rho^{U'}_{V_i},\rho^U_{V_i})\leq E$, so consistency implies that $U\prec V_i$.  Since $U\prec V_{i+1}$, by transitivity of $\prec$, we have that $\rho^{V_{i+1}}_U$ coarsely coincides with $\pi_U(y)$.  But then $\dist_{U}(\rho^{V_i}_U,\rho^{V_{i+1}}_U)\leq 2E$, a contradiction.  A similar argument rules out $V_{i+1}\prec U'$, whence $V_i\prec U'\prec V_{i+1}$.  However, this contradicts the way we numbered the elements of $\mathfrak R^{max}$.  Thus $\dist_U(x_i,x_i')\leq 10^3\alpha E$, as desired.

It remains to bound $\dist_U(x_0,x'_0)$ for $U\in\mathfrak U$ (the case $i=k$ is identical to the case $i=0$).  Suppose that $\dist_U(x_0,x'_0)\geq10^3\alpha E$.  The definition of the gate ensures that we cannot have $U\nest V_1$ or $U\orth V_1$, so $U$ is not friendly to $V_1$.  Moreover, $\dist_U(x_0,\rho^{V_1}_U)\geq500\alpha E$ since $\rho^{V_1}_U$ coarsely coincides with $\pi_U(x_0')$.  If $U$ is irrelevant, then Lemma~\ref{lem:irrelevant_and_unfriendly} implies that $\dist_U(x_0,\rho^{V_1}_U)\leq200\alpha E$, a contradiction, so $U$ is relevant and $U\prec V_1$.  Also, $U$ is nested in some $U'\in\mathfrak R^{max}$.  Since $U\nest U'$ and $U$ is not friendly to $V_1$, we have that $U,U'\transverse V_1$ and $U',V_1$ are $\prec$--comparable.  Thus $V_1\prec U$.  Since $\rho^U_{V_1},\rho^{U'}_{V_1}$ coarsely coincide, we have that $\rho^{U'}_{V_1}$ is far from $x_0$, so that $U'\prec V_1$, which is impossible.  Hence $\dist_U(x_0,x'_0)\leq 10^3\alpha E$.

For any $\mu'\geq 10^3\alpha E$, we have shown that $\dist_U(x_i,x'_i)\leq\mu' M$ when $U\in\mathfrak S-\mathfrak U$ and $\dist_U(x_i,x'_i)\leq\mu'$ when $U\in\mathfrak U$, for $0\leq i\leq k$.

\textbf{Bounding $\hat\dist(x,y)$:} We have produced a uniform constant $\mu'$ so that $\dist_U(x_i,x'_i)\leq\mu' M$ when $U\in\mathfrak S-\mathfrak U$ and $\dist_U(x_i,x'_i)\leq\mu'$ when $U\in\mathfrak U$, for $0\leq i\leq k$.  Hence, by the distance formula (Theorem~\ref{thm:distance_formula}) with threshold $\mu'+1$, we have $\dist'(x_i,x'_i)\leq\mu NM$ for some uniform $\mu$.  Thus $\sum_{i=0}^k\dist'(x_i,x'_i)\leq (k+1)\mu NM\leq 2\mu N^2M.$  (Recall that $N$ depends only on $(\cuco X,\mathfrak S)$, the set $\mathfrak U$, and the input $\kappa$.)  

Now, 
\begin{eqnarray*}
\hat\dist(x,y)&\leq&\sum_{i=0}^k\dist'(x_i,x'_i)+\sum_{i=0}^{k-1}[\dist'(x'_i,z_{i+1})+\hat\dist(z_{i+1},x_{i+1})]\\
      &\leq& 2\mu N^2M + N+\sum_{i=0}^{k-1}\hat\dist(z_{i+1},x_{i+1}).\\
\end{eqnarray*}

Fix $0\leq i\leq k-1$, let $\mathbf E=\mathbf E_{V_{i+1}}$ and $\mathbf F=\mathbf F_{V_{i+1}}$, for convenience.  Consider the hierarchical space $(\mathbf E,\mathfrak S^\orth_{V_{i+1}})$, where $\mathfrak S^{\orth}_{V_{i+1}}$ consists of all those $U\in\mathfrak S$ with $U\orth V_{i+1}$ together with some $A\in\mathfrak S$ such that $A\propnest S$ and each $U$ orthogonal to $V_{i+1}$ satisfies $U\nest A$.  Let $\mathfrak U_{\mathbf E}=\mathfrak U\cap\mathfrak S_{V_{i+1}}^\orth$ and consider the factored space $(\coneoff{\mathbf E}_{\mathfrak U_{\mathbf E}},\mathfrak S^\orth_{V_{i+1}}-\mathfrak U_{\mathbf E})$, whose metric we denote $\hat\dist_{\mathbf E}$.  Observe that $\hat\dist(z_{i+1},x_{i+1})\leq \epsilon\hat\dist_{\mathbf E}(z_{i+1},x_{i+1})+\epsilon$ for some uniform $\epsilon$, since $\mathbf E\to\cuco X$ is a uniform quasi-isometric embedding and any two points lying on a parallel copy of some $\mathbf F_U$ in $\mathbf E$ also lie on such a parallel copy in $\cuco X$.

Now, by induction on complexity, there exists a ``uniqueness function'' $f:\naturals\to\naturals$, independent of $i$, so that $(\coneoff{\mathbf E}_{\mathfrak U_{\mathbf E}},\mathfrak S^\orth_{V_{i+1}}-\mathfrak U_{\mathbf E})$ has the following property: if $e,e'\in\mathbf E$, then $$\hat\dist(e,e')\leq \epsilon f\left(\max_{U\in\mathfrak S^\orth_{V_{i+1}}-\mathfrak U_{\mathbf E}}\dist_U(e,e')\right)+\epsilon.$$
Indeed, in the base case, either $\mathfrak S^\orth_{V_{i+1}}-\mathfrak U_{\mathbf E}=\emptyset$, and $\mathbf E$ is uniformly bounded in $\cuco X$ (and hence its $\hat\dist$--diameter is uniformly bounded) or $\mathfrak U_{\mathbf E}=\emptyset$ and $\hat\dist_{\mathbf E}$ coarsely coincides with $\dist$ on $\mathbf E$, whence $f$ exists by uniqueness in $\mathbf E$ (with metric $\dist$ and HS structure $\mathfrak S^\orth_{V_{i+1}}$).

\renewcommand{\qedsymbol}{$\blacksquare$}
\begin{claim}\label{claim:baby_hierarchy}
There exists $\eta=\eta(\HHS X S)$ such that for all $U\in\mathfrak S-\mathfrak U$ and $1\leq i\leq k-1$, there exists $x_i\in\gate_{P_{V_i}}(P_{V_{i+1}})$ so that $\pi_U(x_i)$ lies $\eta$--close to a geodesic from $\pi_U(x)$ to $\pi_U(y)$.
\end{claim}

\begin{proof}[Proof of Claim~\ref{claim:baby_hierarchy}]
Theorem~\ref{thm:hierarchy_paths} provides a $D$--discrete $D$--hierarchy path $\gamma$ joining $y$ to $x$, where $D$ depends only on $\HHS X S$.  We may assume that $D\leq\alpha$, since $\alpha$ was chosen in advance in terms of $\HHS X S$ only.  The proof of Proposition~5.16 of~\cite{BehrstockHagenSisto:HHS_II} (which does not use hyperbolicity of the various $\fontact U$), provides a constant $\eta'$ so that, for each $i$, there exists a maximal subpath $\gamma_i$ of $\gamma$ lying in $\neb_{\eta'}(\mathbf P_{V_i})$, with initial point $x'_i$.  Moreover, $\pi_U(x'_i)$ uniformly coarsely coincides with $\rho^{V_i}_U$ when $V_i\propnest U$ or $V_i\transverse U$.  Let $y_i\in\mathbf P_{V_{i+1}}$ lie $\eta''$--close to the terminal point of $\gamma_{i+1}$.  We claim that $\dist(\gate_{\mathbf P_{V_i}}(y_i),x'_i)$ is uniformly bounded.  Indeed, by definition $\pi_U(\gate_{\mathbf P_{V_i}}(y_i))$ coarsely coincides with $\rho^{V_i}_U$, and hence with $\pi_U(x)$, when $U\transverse V_i$ or $V_i\nest U$, and coincides with $\pi_U(y_i)$ when $U\nest V_i$ or $U\orth V_i$.  Our choice of $x'_i,y_i$ ensures that $\dist_U(x'_i,y_i)$ is uniformly bounded for such $U$, so our claim follows from the distance formula.  Taking $x_i=\gate_{\mathbf P_{V_i}}(y_i)$ completes the proof, since $x'_i$, and hence $x_i$, lies uniformly close to any geodesic from $\pi_U(x)$ to $\pi_U(y)$ in any $\delta$--hyperbolic $\fontact U$, by the definition of a hierarchy path.
\end{proof}

We now choose specific values of $x_i,x'_i,z_i$ satisfying the above defining conditions.  First, as before, $x_0=x,x'_0=\gate_{\mathbf P_{V_1}}(x)$, while $x_k=\gate_{\mathbf P_{V_k}}(y)$ and $x'_k=y$.  For $1\leq i\leq k-1$, let $x_i$ be a point provided by Claim~\ref{claim:baby_hierarchy}.  Then let $x'_i=\gate_{P_{V_{i+1}}}(x_i)$ for $1\leq i\leq k-1$, as before, and define the points $z_i$ as above.

\begin{claim}\label{claim:gate_bound}
There exists a function $f'\co\naturals\to\naturals$, independent of $i$, so that $\dist_U(z_{i+1},x_{i+1})\leq f'(M)$ for all $U\in\mathfrak S^\orth_{V_{i+1}}-\mathfrak U_{\mathbf E}$.
\end{claim}

\begin{proof}[Proof of Claim~\ref{claim:gate_bound}]
Let $U\in\mathfrak S^\orth_{V_{i+1}}-\mathfrak U_{\mathbf E}$.  By the definition of gates, for $1\leq i\leq k-1$, we have $\dist_U(z_{i+1},x_i)\leq\alpha$ so, since $\pi_U(x_i)$ lies $\eta$--close to a geodesic from $\pi_U(x)$ to $\pi_U(y)$, we have $\dist_U(z_{i+1},\{x,y\})\leq \eta+\alpha$.  Likewise, $\dist_U(x_{i+1},\{x,y\})\leq \eta$.  Hence $\dist_U(z_{i+1},x_{i+1})\leq M+2\eta+\alpha$.
\end{proof}

\renewcommand{\qedsymbol}{$\Box$}

Claim~\ref{claim:gate_bound} and the above discussion imply that, if $M\leq\kappa$, we have $$\hat\dist(x,y)\leq 2\mu N^2\kappa + N + \epsilon Nf(f'(\kappa)) + N\epsilon,$$ which completes the proof.
\end{proof}

\setcounter{claim}{0}

\subsection{$\fontact(*)$ as a coarse intersection graph}\label{subsec:coarse_inter}
We conclude this section by highlighting 
a particularly interesting application of 
Proposition~\ref{prop:cone_off_bottom}, one of the tools we developed 
for proving the results about asymptotic dimension.
\footnote{A proof of the results in this subsection appeared in the first version of 
\cite{BehrstockHagenSisto:HHS_II} using techniques which we have now  
generalized to prove Proposition~\ref{prop:cone_off_bottom}.}

\begin{cor}\label{cor:cone_off_all_F}
    Given a relatively HHS $\HHS X S$, the space $\coneoff{\cuco 
    X}_{\mathfrak S - \{S\}}$ is quasi-isometric to $\pi_S(\cuco X)\subseteq\fontact S$, 
    where $S\in\mathfrak S$ is $\nest$--maximal. 
\end{cor}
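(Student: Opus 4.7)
The plan is to apply Proposition~\ref{prop:cone_off_bottom} with the distinguished subset $\mathfrak U := \mathfrak S - \{S\}$, thereby exhibiting $\coneoff{\cuco X}_{\mathfrak S - \{S\}}$ as a hierarchically hyperbolic space whose only index is $S$. The corollary will then follow from the distance formula (Theorem~\ref{thm:distance_formula}) applied to this new HHS.

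First, I would verify that $\mathfrak U = \mathfrak S - \{S\}$ satisfies the two hypotheses of Proposition~\ref{prop:cone_off_bottom}. To check that $\mathfrak U$ is closed under nesting: the only element of $\mathfrak S - \mathfrak U$ is $S$, and since $S$ is $\nest$--maximal, the relation $S \nest U$ forces $U = S$, so no element outside $\mathfrak U$ is properly nested into any element of $\mathfrak U$. To check the second hypothesis, every $\nest$--minimal $U$ whose $\fontact U$ is non-hyperbolic must differ from $S$ in the non-degenerate case (where $\mathfrak S \neq \{S\}$, so $S$ is not $\nest$--minimal and $\fontact S$ is hyperbolic), and therefore lies in $\mathfrak U$. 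In the degenerate case $\mathfrak S = \{S\}$, one has $\mathfrak U = \emptyset$ and $\coneoff{\cuco X}_\emptyset = \cuco X$, so the corollary reduces directly to the distance formula for $\HHS X S$ with the single index~$S$.

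With the hypotheses in place, Proposition~\ref{prop:cone_off_bottom} yields that $(\coneoff{\cuco X}_{\mathfrak S - \{S\}}, \{S\})$ is a hierarchically hyperbolic space whose only associated hyperbolic space is $\fontact S$ and whose projection is the original $\pi_S$, factored through the set-theoretic identity $\cuco X \to \coneoff{\cuco X}_{\mathfrak S - \{S\}}$. In particular $\pi_S$ is coarsely Lipschitz on $\coneoff{\cuco X}_{\mathfrak S - \{S\}}$, and its image is precisely $\pi_S(\cuco X)$.

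Finally, applying the distance formula (Theorem~\ref{thm:distance_formula}) to this single-index HHS produces a threshold $s$ and constants $K, C$ such that
\[ \hat\dist(x, y) \asymp_{K, C} \ignore{\dist_S(x, y)}{s} \]
for all $x, y \in \coneoff{\cuco X}_{\mathfrak S - \{S\}}$. This is exactly the statement that $\pi_S$ restricts to a quasi-isometry onto $\pi_S(\cuco X) \subseteq \fontact S$, completing the proof. The only substantive step is the verification of Proposition~\ref{prop:cone_off_bottom}, which was already carried out (with the main difficulty being the uniqueness argument of Lemma~\ref{lem:cone_off_uniqueness}); after that, the result is a formal consequence of the observation that an HHS with a single index is quasi-isometric to its associated hyperbolic space.
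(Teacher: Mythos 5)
Your proof is correct and follows exactly the paper's argument: apply Proposition~\ref{prop:cone_off_bottom} with $\mathfrak U=\mathfrak S-\{S\}$ to get a single-index HHS, then invoke the distance formula. The extra care you take in checking the hypotheses of Proposition~\ref{prop:cone_off_bottom} (closure under nesting, the degenerate case $\mathfrak S=\{S\}$) is sound and merely makes explicit what the paper leaves implicit.
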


\begin{proof}
By Proposition~\ref{prop:cone_off_bottom}, $(\cox_{\mathfrak S-\{S\}},\{S\})$ is a hierarchically hyperbolic space, and the claim follows from the distance formula (Theorem~\ref{thm:distance_formula}).
\end{proof}

\begin{rem}[Coning off $\mathbf P_U$]\label{rem:cone_off_product_regions}
If we had constructed $\cox_{\mathfrak S-\{S\}}$ by ``coning off'' $\mathbf P_U$ for each $U\in\mathfrak S-\{S\}$, instead of coning off each parallel copy of each $\mathbf F_U$, then Corollary~\ref{cor:cone_off_all_F} would continue to hold.
\end{rem}

In many examples of interest, $\pi_S$ is coarsely surjective, so that
Corollary~\ref{cor:cone_off_all_F} yields a quasi-isometry
$\cox_{\mathfrak S-\{S\}}\to\fontact S$.  Moreover, if $\HHS X S$ is
an HHS, then (as described
in~\cite{BehrstockHagenSisto:HHS_II,DurhamHagenSisto:HHS_IV}), $\cuco
X$ admits an HHS structure obtained by replacing each $\fontact U$
with a hyperbolic space quasi-isometric to $\pi_U(\cuco X)$, so in
particular $\fontact S$ becomes quasi-isometric to the space obtained
by coning off each parallel copy of each $\mathbf F_U, U\neq S$.  If,
as is the case for hierarchically hyperbolic \emph{groups} $\HHS G S$,
the parallel copies of the various $\mathbf F_U$ coarsely cover $\cuco
X$, this provides a hierarchically hyperbolic structure in which
$\fontact S$ is a coarse intersection graph of the set of $\mathbf
F_U$ with $U\propnest S$ for which there is no $V$ with $U\propnest
V\propnest S$.  This is a coarse version of what happens, for example,
when $\cuco X$ is a CAT(0) cube complex with a factor system and we
can take $\fontact S$ to be the \emph{contact graph} of $\cuco X$
(see~\cite{Hagen:quasi_arb,BehrstockHagenSisto:HHS_I}).

\setcounter{claim}{0}

\section{Asymptotic dimension of the $\fontact U$}\label{sec:tight}
In this section $\HHS X S$ is a relatively hierarchically hyperbolic
space with the additional property that $\cuco X$ is a \emph{uniformly
locally-finite discrete geodesic space}, i.e.,
\begin{enumerate}
 \item there exists $r_0>0$ so that $\dist(x,y)\geq r_0$ for all distinct $x,y\in\cuco X$;\label{item:discrete_1}
 \item there is a function $p\colon [0,\infty)\to[0,\infty)$ so that $|B(x,r)|\leq p(r)$ for all $x\in\cuco X$;\label{item:proper_1}
 \item there exists $r_1$ so that for all $x,y\in\cuco X$, there
 exists $n$ and $\gamma\colon \{0,n\}\to\cuco X$ so that
 $\gamma(0)=x,\gamma(n)=y$, and
 $\dist(x,y)=\sum_{i=0}^{n-1}\dist(\gamma(i),\gamma(i+1))$, and
 $\dist(\gamma(i),\gamma(i+1))\leq r_1$ for all
 $i$.\label{item:geodesic_1}
\end{enumerate}
If $\cuco X$ satisfies \eqref{item:discrete_1} and \eqref{item:geodesic_1} (but not necessarily \eqref{item:proper_1}), then $\cuco X$ is a \emph{$(r_0,r_1)$--discrete geodesic space}.

The following notion is motivated by work of Bowditch; see~\cite[Section 3]{Bowditch:tight}.

\begin{defn}[Tight space]\label{defn:tight}
The $\delta$--hyperbolic space $F$ is \emph{$(C,K)$--tight} if there 
exists a map $\beta\colon F^2\to 2^F$ so that:
\begin{enumerate}
\item for every $x,y\in F$, we have $\dist_{Haus}([x,y],\beta(x,y))\leq C$, where $[x,y]$ is any geodesic from $x$ to $y$;\label{item:beta_close_to_geodesic}
\item for every $x,y,z\in F$ with $y$ on a geodesic from $x$ to $z$, if $\dist(y,\{x,z\})\geq r+C$ for some $r\in\mathbb R^+$ then $B(y,2\delta+2C)\cap \bigcup_{x'\in B(x,r),z'\in B(z,r)} \beta(x',z')$ has cardinality at most $K$.\label{item:property_B}
\end{enumerate}
\end{defn}

Notice that for $x,y,z,r$ as in \ref{item:property_B}, if $x'\in B(x,r),z'\in B(z,r)$ then $\beta(x',z')$ intersects $B(y,2\delta+2C)$ by $\delta$--hyperbolicity and part~\ref{item:beta_close_to_geodesic} of Definition~\ref{defn:tight}.

Let $\xi$ denote the complexity of $\mathfrak S$, i.e., the length of a longest $\nest$--chain.  By Definition~\ref{defn:space_with_distance_formula}.\eqref{item:dfs_complexity}, $\xi<\infty$.  The aim of this section is to prove:

\begin{thm}[Existence of tight geodesics]\label{thm:tight_geodesics}
Let $(\cuco X,\mathfrak S)$ be a $\delta$--relatively hierarchically
hyperbolic space, where $\cuco X$ is a uniformly locally-finite
discrete geodesic space.  Suppose moreover that $\pi_U:\cuco X\to\fontact U$ is uniformly coarsely surjective, where $U$ varies over all elements of $\mathfrak S$ with $\fontact U$ a $\delta$--hyperbolic space.  Then there exist $C,K\geq 0$ so that
$\fontact U$ is $(C,K)$--tight for every $U\in\mathfrak S$ for which
$\fontact U$ is $\delta$--hyperbolic.
\end{thm}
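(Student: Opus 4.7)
The plan is to construct the tightening map $\beta\colon\fontact U^2\to 2^{\fontact U}$ using projections of hierarchy paths in $\cuco X$, and to reduce both conditions of Definition~\ref{defn:tight} to uniform local finiteness of $\cuco X$. Using uniform coarse surjectivity of $\pi_U$, I would first fix a set-theoretic section $\sigma\colon\fontact U\to\cuco X$ with $\dist_U(\pi_U(\sigma(v)),v)$ uniformly bounded in $v$. For each pair $x,y\in\fontact U$, let $\gamma_{xy}$ be a $(D,D)$--hierarchy path in $\cuco X$ from $\sigma(x)$ to $\sigma(y)$, as provided by Theorem~\ref{thm:hierarchy_paths}. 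Since $\cuco X$ is a discrete geodesic space, $\gamma_{xy}$ is naturally a finite sequence of points in $\cuco X$; define $\beta(x,y)$ to be the image under $\pi_U$ of this sequence of vertices.

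Condition~\eqref{item:beta_close_to_geodesic} of Definition~\ref{defn:tight} is immediate: since $\pi_U\circ\gamma_{xy}$ is an unparametrized $(D,D)$--quasigeodesic in the $\delta$--hyperbolic space $\fontact U$ between points uniformly close to $x$ and $y$, stability of quasigeodesics in a hyperbolic space supplies a uniform constant $C=C(\delta,D)$ giving the Hausdorff-distance bound.

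For condition~\eqref{item:property_B}, fix $x,y,z$ with $y$ on a geodesic $[x,z]$ and $\dist_U(y,\{x,z\})\geq r+C$. Any $w$ in the relevant intersection has the form $w=\pi_U(\tilde w)$ for some vertex $\tilde w$ on a hierarchy path $\gamma_{x'z'}$ with $\dist_U(\pi_U(\tilde w),y)\leq 2\delta+2C$. The heart of the argument is the claim that there exists $R=R(\HHS X S,C,\delta)$, independent of $r$ and of the choices of $x,y,z,x',z',\tilde w$, such that every such $\tilde w$ lies within $R$ of $\sigma(y)$ in $\cuco X$. Once this is shown, uniform local finiteness bounds $|B(\sigma(y),R)|$ by some $K=p(R)$, and since distinct values of $w$ come from distinct $\tilde w$, condition~\eqref{item:property_B} follows. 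To establish the claim I would invoke the distance formula (Theorem~\ref{thm:distance_formula}), reducing it to a bound on $\dist_V(\tilde w,\sigma(y))$ for each $V\in\mathfrak S$. For $V=U$, the bound is immediate. For $V$ with $V\propnest U$, $U\propnest V$, or $V\transverse U$, I would combine consistency (Definition~\ref{defn:space_with_distance_formula}.\eqref{item:dfs_transversal}) with bounded geodesic image (Definition~\ref{defn:space_with_distance_formula}.\eqref{item:dfs:bounded_geodesic_image}): these force $\pi_V(\tilde w)$ uniformly close to $\rho^U_V$ whenever $\rho^V_U$ lies $E$--far from $y$ in $\fontact U$. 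I would therefore specify $\sigma(y)$ to be a realization point of the consistent tuple prescribing $\rho^U_V$ in each such $V$. The remaining $V$ in this family (those with $\rho^V_U$ within $E$ of $y$) are controlled in number and total contribution by combining the large links axiom with the passing-up lemma (Lemma~\ref{lem:passing_up}).

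\textbf{Main obstacle.} The principal difficulty is controlling $V$ orthogonal to $U$, for which neither $\pi_U(\tilde w)$ nor the geodesic $[x,z]$ gives a direct consistency constraint on $\pi_V(\tilde w)$. I would handle this through the standard product region $\mathbf P_U=\mathbf F_U\times\mathbf E_U$: the $V$--coordinate of $\tilde w$ for $V\orth U$ is coarsely prescribed by the image of $\gamma_{x'z'}$ under the gate map onto $\mathbf P_U$, and a delicate gates-and-realization argument using the constraint $\pi_U(\tilde w)\approx y$ should pin this coordinate uniformly close to $\pi_V(\sigma(y))$, independently of $r$. Making this precise --- essentially an HHS analogue of Bowditch's finiteness theorem for tight geodesics in the curve graph --- is the main technical work of the proof.
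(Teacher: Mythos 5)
Your construction of $\beta$ and the verification of condition~\eqref{item:beta_close_to_geodesic} are fine, but the central claim on which your condition~\eqref{item:property_B} rests --- that every vertex $\tilde w$ of a hierarchy path $\gamma_{x'z'}$ with $\pi_U(\tilde w)\in B(y,2\delta+2C)$ lies in a ball $B(\sigma(y),R)$ with $R$ independent of $r$ --- is false, and the failure is not where you locate it. The real obstruction is not the domains orthogonal to $U$ but the domains $V$ nested in or transverse to $U$ whose $\rho^V_U$ lies close to $y$. For such $V$, consistency and bounded geodesic image give you nothing: $\pi_V\circ\gamma_{x'z'}$ is an unparametrized quasigeodesic from $\pi_V(\sigma(x'))$ to $\pi_V(\sigma(z'))$, a distance over which you have no control whatsoever, and while the path traverses that distance its $\pi_U$--image stays within $O(E)$ of $\rho^V_U$, hence of $y$. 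Concretely, in $\MCG(S)$ with $U=S$: take $V$ a subsurface with $\rho^V_S$ within $E$ of $y$ and $\dist_V(\sigma(x),\sigma(z))$ huge; then $\gamma_{xz}$ contains a long segment of vertices all projecting into $B(y,2\delta+2C)$ in the curve graph but spread over an unboundedly large region of $\cuco X$ (their $\pi_V$--images sweep out $\fontact V$--distance $\approx\dist_V(\sigma(x),\sigma(z))$). So the set of relevant $\tilde w$ is unbounded in $\cuco X$, the appeal to $|B(\sigma(y),R)|\leq p(R)$ collapses, and your phrase ``controlled in number and total contribution'' is precisely the problem: the \emph{number} of such $V$ is controllable by passing-up, but the \emph{contribution} $\dist_V(\tilde w,\sigma(y))$ of even a single one is not. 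Whether your particular $\beta$ still satisfies condition~\eqref{item:property_B} is therefore left entirely open by your argument.

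Two repairs are needed, and together they are essentially the paper's proof. First, $\beta$ must be cut down: one defines $\beta_M(x,y)$ as the set of $z\in\cuco X$ that are $M$--close to $[x,y]$ in the top-level space \emph{and} satisfy $\dist_V(z,\{x,y\})\leq M$ for every proper $V\in\mathfrak S$, discarding exactly the ``mid-twist'' vertices above; condition~\eqref{item:beta_close_to_geodesic} must then be re-proved for this smaller set (this is Claim~\ref{claim:geodesic_close_to_beta}, which needs gates onto product regions and is not automatic). Second, the cardinality bound cannot come from confining everything to one ball; it is a two-level count: one classifies the surviving points $y'$ by the subset $\mathcal U(y')$ of relevant domains on which $y'$ agrees with $x$ rather than $z$, bounds the number of such subsets by roughly $2^{K^\xi}$ using passing-up and partial realization, and then shows that two points in the same class are uniformly close in \emph{every} domain, hence uniformly close in $\cuco X$ by the distance formula, hence at most $p(K_3)$ per class by local finiteness. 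Note also that this last step forces the objects being counted to be points of the locally finite set $\cuco X$ rather than of $\fontact U$ (which is not locally finite, so ``boundedly close in $\fontact U$'' yields no cardinality bound); the paper arranges this by proving tightness for the factored space $\cox\cong\fontact S$, whose vertex set is $\cuco X$, and handles general $U$ by restricting to $(\mathbf F_U,\mathfrak S_U)$.
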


\begin{proof}
Throughout this proof we use the 
identification of $\fontact S$ with the coned-off space $\cox$, as 
established in Proposition~\ref{prop:cone_off_bottom}.  Our assumption on coarse surjectivity of the projections $\pi_U$ implies that, for each $U\in\mathfrak S$ with $\fontact U$ a $\delta$--hyperbolic space, we may (by an initial change in the constants from Definition~\ref{defn:space_with_distance_formula}), assume that $\pi_U$ is actually surjective.  

Fix a constant $D$, as provided by Theorem~\ref{thm:hierarchy_paths}, 
so that every pair of points of $\cuco X$ can be joined by
a $D$--hierarchy path.  For
$S$ the $\nest$--maximal element of $\mathfrak S$, we will show that
$\fontact S$ is $(C,K)$--tight, where $C,K$ depend only on $D,E$, the
complexity $\xi$ of $\mathfrak S$, and the function $p$ which 
quantifies the local-finiteness.  To see that
this suffices, recall that for each $U\in\mathfrak S$, there is a
hierarchically quasiconvex subspace $\mathbf F_U$ with a relatively
hierarchically hyperbolic structure $(\mathbf F_U,\mathfrak S_U)$ in
which $U$ is $\nest$--maximal.

For $M\geq 0$ and $x,y\in \cuco X$, define
$$\beta_M(x,y)=\{z\in\cuco X: \hat\dist(z,[x,y])\leq M, \dist_U(z,\{x,y\})\leq M\ \forall U\in\mathfrak S- \{S\}\}.$$

For sufficiently large $M$, the map $\beta_M\colon (\fontact S)^2\to 2^{\fontact S}$ satisfies property~\eqref{item:beta_close_to_geodesic},  by the definition and Claim~\ref{claim:geodesic_close_to_beta} below.

\begin{claim}\label{claim:geodesic_close_to_beta}
 For each sufficiently large $M$ there exists $K_1$ so that for each $x,y\in\cuco X$ and $z\in[x,y]$ we have $\hat\dist(z,\beta_M(x,y))\leq K_1$.
\end{claim}

\renewcommand{\qedsymbol}{$\blacksquare$}

\begin{proof}
 We distinguish two cases.  First, suppose that there exists
 $U\in\relevant(x,y,10E)$ for which $\hat\dist(\rho^U_S,z)\leq 10DE$, and
 take a $\nest$--maximal $U$ with such property.  Then consider
 $z'=\gate_{P_U}(x)$.  Clearly, $\hat\dist(z,z')$ is uniformly
 bounded, and we now show $z'\in\beta_M(x,y)$, provided that $M$ is
 large enough.  In order to do so, we uniformly bound
 $\dist_V(\{x,y\},z)$ for each $V\in\mathfrak S- \{S\}$.  If $V$ is
 either nested into $U$ or orthogonal to $U$, then we are done by the
 definition of gate.  Otherwise, $\pi_V(z')$ coarsely coincides with
 $\rho^U_V$, so we have to show that either $\pi_V(x)$ or $\pi_V(y)$
 coarsely coincides with $\rho^U_V$.  Suppose by contradiction that
 this is not the case.  If $U\transverse V$, then consistency implies
 that $\pi_U(x),\pi_U(y)$ both coarsely coincide with $\rho^V_U$, so
 that $\dist_U(x,y)\leq 3E$ contradicting the choice of $U$.  If
 $U\propnest V$, then any geodesic from $\pi_V(x)$ to $\pi_V(y)$ stays
 far from $\rho^U_V$ since, by maximality of $U$, we have
 $\dist_V(x,y)<10E$.  In particular, by bounded geodesic image and
 consistency we have $\dist_U(x,y)<10E$, a contradiction.
 
\begin{figure}[h]
\begin{overpic}[width=0.75\textwidth]{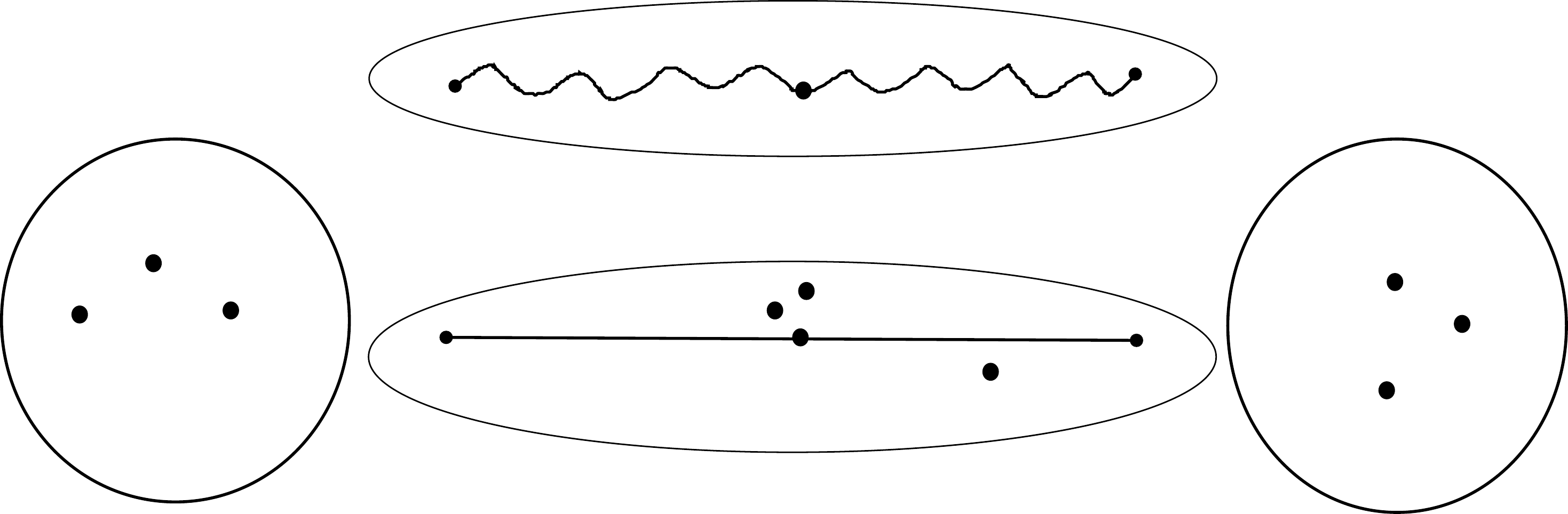}
\put(50,19){$\downarrow$}
\put(78,26){$\cuco X$}
\put(6,25){$\fontact U_1$}
\put(90,25){$\fontact U_2$}
\put(46,0){$\cox\cong \fontact S$}
\put(52,19){$\pi_S$}
\put(26,27){$x$}
\put(50,24){$z'$}
\put(73,27){$y$}
\put(60,25){$\gamma$}
\put(73,10){$y$}
\put(26,10){$x$}
\put(50,8){$z$}
\put(46,12){$z'$}
\put(52,12.5){$\rho^{U_1}_S$}
\put(63,7){$\rho^{U_2}_S$}
\put(1,10){$\pi_{U_1}(x)$}
\put(12,10){$\pi_{U_1}(y)$}
\put(5,18){$\pi_{U_1}(z')$}
\put(84,5){$\pi_{U_2}(x)$}
\put(90,9){$\pi_{U_2}(y)$}
\put(86,17){$\pi_{U_2}(z')$}
\end{overpic}
\caption{The second case in the proof of Claim~\ref{claim:geodesic_close_to_beta} when each relevant $U$ has $\rho^U_S$ far from $z$.  The subcases in the proof correspond to the $U_1,U_2\in\mathfrak S$ shown.}\label{fig:tight_claim_1_case_2} 
\end{figure}

 Suppose now that there does not exist any $U\in\relevant(x,y,10E)$ so that $\hat\dist(\rho^U_S,z)\leq 10DE$. Consider a hierarchy path $\gamma$ from $x$ to $y$. Since $\pi_S$ is coarsely Lipschitz, there exists $z'\in\gamma$ so that $\hat\dist(z,z')\leq 5DE$. We claim $z'\in\beta_M(x,y)$ for sufficiently large $M$. In fact, for any $U\in\mathfrak S-\{S\}$ (notice that if such $U$ exists then $\hat {\cuco X}$ is $\delta$--hyperbolic) either $\hat\dist(\rho^U_S,z')\leq 5E$, so $\hat\dist(\rho^U_S,z)\leq 5DE+5E\leq 10DE$, in which case $U$ is irrelevant and we are done by hypothesis, or $\rho^U_S$ lies $5E$--far from a geodesic from $\pi_S(z')$ to one of $\pi_S(x),\pi_S(y)$.  In this case, we can apply bounded geodesic image to conclude.  See Figure~\ref{fig:tight_claim_1_case_2}.
\end{proof}

We now prove Property \ref{item:property_B}. Let $x,y,z\in \cuco X$ with $y$ on a (discrete) $\hat d$--geodesic from $x$ to $z$, and suppose $\hat\dist(y,\{x,z\})\geq r+K_1$ for some $r\geq K_1+2\delta+100E$.  Let $$T=B^{\cox}(y,2\delta+2K_1)\cap \left(\bigcup_{x'\in B^{\hat{\cuco X}}(x,r),z'\in B^{\hat{\cuco X}}(z,r)} \beta_M(x',z')\right).$$
 
\noindent Moreover, let $\mathcal U=\mathcal U(x,y,z)$ be the set of all $U\in\mathfrak S$ satisfying the following conditions:
\begin{itemize}
 \item $\dist_U(x,z)\geq 10M+10^3E$,
 \item $U\neq S$ and $\dist_S(\rho^U_S,y)\leq 2\delta+2K_1+50E$,
 \item whenever $V\in\mathfrak S$ satisfies $\dist_V(x,z)\geq 10M+10^3E$ and $U\propnest V \propnest S$, we have $\dist_V(\rho^U_V,\{x,z\})\leq 100E+10M$.
\end{itemize}
We will see in what follows that for $y'\in T$ it is sufficient to have information about $\pi_U(y')$ for $U\in\mathcal U$ to coarsely reconstruct all $\pi_V(y')$. Moreover, we will bound the cardinality of $\mathcal U$.

For $y'\in T$, denote $\mathcal U(y')=\{U\in\mathcal U: \dist_U(x,y')\leq M+10E\}$, and let $\mathcal B$ be the set of all subsets of $\mathfrak S$ of the form $\mathcal U(y'),y'\in T$.

\begin{claim}\label{claim:bound_U_i}
There is $K_3=K_3(E,K_1,M)$ with $|\{y'\in T:\mathcal U(y')=\mathcal U_i\}|\leq p(K_3)$ for all $\mathcal U_i\in\mathcal B$.
\end{claim}

\begin{proof}[Proof of Claim~\ref{claim:bound_U_i}]
Fix $y',y''\in T$ with $\mathcal U(y')=\mathcal U(y'')$. We bound $\dist(y',y'')$ by bounding $\dist_U(y',y'')$ for all $U\in \mathfrak S$.

Let $U\in\mathfrak S$.  If $U\in\mathcal U$, then $\dist_U(y',y'')\leq 2M+10E$ since $U(y')=U(y'')$ (and the fact that each of $\pi_U(y'),\pi_U(y'')$ coarsely coincides with either $\pi_U(x)$ or $\pi_U(z)$ in $\fontact U$). We now analyze the other cases, but for technical reasons we change the constants from the definition of $\mathcal U$.

If $U=S$, then $\dist_U(y',y'')\leq 4\delta+4K_1$. Assume $U\neq S$ from now on.

If $\dist_U(x,z)< 10M+150E$, then $\dist_U(y',y'')\leq 20M+200E$.

If $\dist_S(\rho^U_S,y)> 2\delta+2K_1+10E$, then no geodesic from $y'$ to $y''$ in $\cox$ passes $E$--close to $\rho^U_S$, so consistency and bounded geodesic image again yield $\dist_U(y',y'')\leq 10E$.

Finally, suppose that $\dist_U(x,z)\geq 10M+150E$ and $\dist_S(\rho^U_S,y)\leq 2\delta+2K_1+10E$, but that there exists $V\in\mathfrak S$ satisfying $\dist_V(x,z)\geq 10M+10^3E$, $U\propnest V \propnest S$ and $\dist_V(\rho^U_V,\{x,z\})> 50E+10M$. Consider a $\nest$-maximal $V$ with this property. We claim that $V\in\mathcal U$. In fact, $\rho^V_S$ coarsely coincides with $\rho^U_S$, yielding the second condition in the definition of $\mathcal U$. Moreover, for any $W\in\mathfrak S$ with $V\propnest W\propnest S$, we have $\dist_W(\rho^V_W,\{x,z\})\leq 100E+10M$, for otherwise $V$ would not be maximal (we are once again using that $\rho^V_W$ coarsely coincides with $\rho^U_W$).

Now, since $\mathcal U(y')=\mathcal U(y'')$ we have that $\pi_V(y'),\pi_V(y'')$ are both close to one of $\pi_V(x),\pi_V(z)$. Hence, $\pi_U(y')$ must $10E$--coarsely coincide with $\pi_U(y'')$ because geodesics in $\fontact V$ from $\pi_V(y')$ to $\pi_V(y'')$ stay $E$--far from $\rho^U_V$.

We conclude that for all $U\in\mathfrak S$, we have, say, $\dist_U(y',y'')\leq 500ME\delta K_1$, so the distance formula (Theorem~\ref{thm:distance_formula}) provides a uniform $K_3=K_3(M,E,K_1)$ so that $\dist_{\cuco X}(y',y'')\leq K_3$, and the claim follows from the definition of $p$.
\end{proof}

\begin{claim}\label{claim:tight_claim_2}
There exists $K_4=K_4(E,M)$ so that $|\mathcal B|\leq 2^{K_4^\xi}$.
\end{claim}

\begin{proof}
By definition, if $U\in\mathcal U$, then $\dist_S(\rho^U_S,y)\leq 2\delta+2K_1+50E$.  Choose $a,b\in\cuco X$ such that $a$ is $E$--close to a $\cox$--geodesic from $y$ to $z$ and satisfies $\hat\dist(y,a)\in[2\delta+2K_1+60E,2\delta+2K_1+80E]$, while $b$ is $E$--close to a $\cox$--geodesic from $x$ to $y$ and satisfies $\hat\dist(y,b)\in[2\delta+2K_1+60E,2\delta+2K_1+80E]$.  Then $\rho^U_S$ does not lie $E$--close to a geodesic from $a$ to $z$ or from $x$ to $b$, so consistency and bounded geodesic image yield $\dist_U(a,z)\leq E$ and $\dist_U(x,b)\leq E$.  Thus $\dist_U(b,a)\geq10M+98E\geq E$, so Lemma~\ref{lem:passing_up} yields $K_5=K_5(M,E)$ so that there are at most $K_5$ such $U$ that are $\nest$--maximal.

Fix $U\in\mathcal U-\{S\}$ to be $\nest$--maximal and consider all $V\in\mathfrak S$ such that $V\propnest U$ and such that the following conditions are satisfied:
\begin{itemize}
 \item $\dist_V(x,z)\geq 10M+10^3E$,
 \item $\dist_S(\rho^V_S,y)\leq 2\delta+2K_1+50E$,
 \item $\dist_U(\rho^V_U,x)\leq 100E+10M$.
\end{itemize}
(Note that any $V\in\mathcal U$ nested into $U$ satisfies either these conditions or the same conditions with $z$ replacing $x$ in the third one.)  By partial realization and the first and third conditions above, there exists $a_U\in\cuco X$ such that $\dist_U(a_U,x)\leq 10M+200E$ and $\rho^U_V$ fails to come $E$--close to every geodesic from $\pi_U(x)$ to $\pi_U(a_U)$ for all $V$ as above.  Hence, for each $V$ as above, bounded geodesic image and consistency imply that $\dist_U(a_U,x)\leq 10M+200E$ and $\dist_V(a_U,x)\geq10M+9E$.  Let $\mathcal V_0$ be the set of all $\nest$--maximal $V\propnest U$ contained in $\mathcal U$.  By Lemma~\ref{lem:passing_up} and maximality, there exists $K_6=K_6(10M+200E)$ so that $|\mathcal V_0|\leq K_6$.  

For each $V\in\mathcal V_0\cap\mathcal U$, choose $a_V$ as above, so that $\dist_V(a_V,x)\leq 10M+200E$ and, any element $W$ of $\mathcal U$ properly nested into $V$ and satisfying $\dist_V(\rho^W_V,x)\leq 100E+10M$, we have $\dist_W(a_V,x)\geq10M+9E$.  Then, exactly as above, we find that there are at most $K_6$ such $W$ that are $\nest$--maximal and properly nested in $V$.  Proceeding inductively, we see that there are at most $K_6^{\xi-1}$ elements $T$ of $\mathcal U$ satisfying $\dist_U(\rho^T_U,x)\leq 100E+10M$ and properly nested into $U$, while an identical discussion bounds the set of $T\in\mathcal U$ properly nested in $U$ and satisfying $\dist_U(\rho^T_U,z)\leq 100E+10M$.  Hence $|\mathcal U|\leq 2K_5K_6^{\xi-1}$, so, letting $K_4=\max\{2K_5,K_6\}$, we get $|\mathcal B|\leq 2^{K_4^\xi}$.  
\end{proof}

\renewcommand{\qedsymbol}{$\Box$}

Claim~\ref{claim:bound_U_i} and Claim~\ref{claim:tight_claim_2} together imply that $|T|\leq p(K_3)\cdot 2^{K_4^\xi}$ for uniform $K_4$, so $\fontact S$ is $(K_1,p(K_3)\cdot 2^{K_4^\xi})$--tight.
\end{proof}

\begin{cor}\label{cor:finite_asdim_curve_graph}
Let $\cuco X$ be a uniformly locally-finite discrete geodesic space and let $(\cuco X,\mathfrak S)$ be $\delta$--relatively hierarchically hyperbolic.  Suppose moreover that $\pi_U:\cuco X\to\fontact U$ is uniformly coarsely surjective, where $U$ varies over all elements of $\mathfrak S$ with $\fontact U$ a $\delta$--hyperbolic space.  Then there exist $\lambda,\mu=\lambda(\delta,D,E,p),\mu(\delta,D,E,p)$, independent of $\xi$, so that $\asdim\fontact U\leq\lambda\cdot 2^{\mu^\xi}$ uniformly, whenever $U\in\mathfrak S$ has the property that $\fontact U$ is $\delta$--hyperbolic. 
\end{cor}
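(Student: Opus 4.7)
My plan is to deduce the corollary from Theorem~\ref{thm:tight_geodesics} by combining it with an abstract estimate: any $\delta$--hyperbolic, $(C,K)$--tight geodesic space $F$ has $\asdim F\leq K$, with implicit constants depending only on $\delta$ and $C$. Granting this, Theorem~\ref{thm:tight_geodesics} (applied to $\HHS X S$, and for $\nest$--minimal $U$ with $\fontact U$ hyperbolic, to the relatively HHS $(\mathbf F_U,\mathfrak S_U)$ in which $U$ is $\nest$--maximal) furnishes uniform tightness constants $C=K_1$ and $K=p(K_3)\cdot 2^{K_4^\xi}$, where the dependence of $K_4$ on $\delta,D,E$ and of $K_3$ on $\delta,D,E,p$ is tracked through the proof of Theorem~\ref{thm:tight_geodesics}. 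Setting $\lambda=p(K_3)$ and $\mu=K_4$ then yields the claimed bound.

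The abstract estimate would be established by adapting the tight-geodesic scheme of Bell--Fujiwara~\cite{BellFujiwara:asdim_curve_graph}. I would use the characterization of asymptotic dimension from~\cite{BellDranishnikov:asymptotic_groups}: for each $r\geq 1$, it suffices to construct a uniformly bounded cover $\mathcal V_r$ of $F$ such that every $r$--ball meets at most $K+1$ members. Fix a basepoint $o\in F$, partition $F$ into annuli $A_n=B(o,(n+1)R)\setminus B(o,nR)$ with $R\gg r+C+\delta$, and color the annuli cyclically by $A_n\bmod N_0$ for a bounded number $N_0$ of colors. As cover elements within $A_n$, take, for each $x$ in an $R$--net in $\partial B(o,nR)$ and each $z$ in an $R$--net in $\partial B(o,(n+2)R)$, the set
$$A_n\cap\bigcup_{x'\in B(x,r'),\,z'\in B(z,r')}\beta(x',z'),$$
where $r'$ is a uniform constant depending on $r,\delta,C$. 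Part~\eqref{item:beta_close_to_geodesic} of Definition~\ref{defn:tight} together with thin triangles bounds the diameter of each such set in terms of $r,\delta,C$; part~\eqref{item:property_B} bounds by $K$ the number of these sets that a ball of radius $2\delta+2C$ centered at a point of $A_n$ far from $\partial A_n$ can intersect, since any such intersecting set contains $\beta(x',z')$ for $(x',z')$ in the relevant product of balls.

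The main obstacle is making the cover construction uniform: one must choose $R$, $r'$, the density of the $x,z$--nets, and the number of colors $N_0$ so that (i) $F=\bigcup\mathcal V_r$, (ii) the sets are uniformly bounded, and (iii) the multiplicity bound extends from $(2\delta+2C)$--balls to $r$--balls and handles balls straddling annulus boundaries (where the cyclic coloring allows one to separate competing annuli). This is technical but routine given Definition~\ref{defn:tight}; once established, one concludes $\asdim F\leq K$, and hence $\asdim\fontact U\leq p(K_3)\cdot 2^{K_4^\xi}=\lambda\cdot 2^{\mu^\xi}$.
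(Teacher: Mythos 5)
Your reduction — apply Theorem~\ref{thm:tight_geodesics} to get uniform $(C,K)$--tightness with $K=p(K_3)\cdot 2^{K_4^\xi}$, then prove an abstract ``tight hyperbolic space has $\asdim\leq K$'' lemma via the Bell--Fujiwara scheme — is exactly the paper's route. However, your cover construction inside that abstract lemma has a genuine gap, and it sits precisely in the step you defer as ``technical but routine.'' You index cover elements by \emph{pairs} $(x,z)$ of net points on two spheres and take $A_n\cap\bigcup_{x'\in B(x,r'),z'\in B(z,r')}\beta(x',z')$. First, these sets are not uniformly bounded: if $x$ and $z$ are not radially aligned (small Gromov product at $o$), a geodesic $[x',z']$ dips toward $o$ and meets $A_n$ in two widely separated arcs, so part~\eqref{item:beta_close_to_geodesic} of Definition~\ref{defn:tight} plus thin triangles does not bound the diameter. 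Second, and more fundamentally, the multiplicity bound does not follow from part~\eqref{item:property_B}: that property counts \emph{points} of $B(y,2\delta+2C)$ lying on tight paths $\beta(x',z')$ where $(x',z')$ ranges over $r$--perturbations of a \emph{single} pair. It says nothing about how many widely separated net pairs $(x_1,z_1)\neq(x_2,z_2)$ admit tight paths through a given small ball, and since $\fontact U$ is typically locally infinite (e.g.\ a curve graph), that number can be infinite — in a locally infinite tree, every pair $(x,z)$ with $z$ a fixed descendant of $w$ and $x$ on the ``other side'' of $o$ has $w\in[x,z]$. Tightness is the substitute for local finiteness, so you cannot fall back on packing arguments for the nets either.

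The fix is the paper's actual construction: index the sets in $A_n$ by single points $s_i$ of the sphere $S_{n-2}$, and put $x\in B_n^i$ when the \emph{radial} tight path $\beta(x_0,x)$ passes through $s_i$. Then each $B_n^i$ is automatically bounded (all its points lie at roughly the same distance beyond $s_i$ along geodesics through $s_i$), and if $B_n^i,B_n^j$ both meet an $\frac r2$--ball at $y_i,y_j$, the competing tight paths $\beta(x_0,y_i),\beta(x_0,y_j)$ share the initial point $x_0$ and have endpoints within $r$ of each other, so they are perturbations of the single pair $(x_0,y_i)$; part~\eqref{item:property_B} applied at $y=s_i$ then bounds the number of distinct points $s_i',s_j'$ in $B(s_i,2\delta+2C)$ by $K$, which is what controls the multiplicity. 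Your outer framework is correct, but the pair-indexed cover must be replaced by this basepoint-radial one for the argument to close.
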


\begin{proof}
By Theorem~\ref{thm:tight_geodesics}, there exist $C,\lambda',\mu$ so that $\fontact U$ is $(C,\lambda'\cdot 2^{\mu^\xi})$--tight for each $U\in\mathfrak S$ with $\fontact U$ a $\delta$--hyperbolic space.  We now argue roughly as in the proof of~\cite[Theorem~1]{BellFujiwara:asdim_curve_graph}.

Fix $r\in\naturals,x_0\in\fontact U,$ and $\ell\in\naturals$ with
$\ell\geq10\delta+C$.  For each $n\geq1$, let $$A_n=\{x\in\fontact
U:10(n-1)(r+\ell)\leq\dist_U(x,x_0)\leq10n(r+\ell)\},$$ so that
$\cup_nA_n=\fontact U$.  Let $S_n=\{x\in\fontact
U:\dist_U(x,x_0)=10n(r+\ell)\}$.  Given $n\geq 3$, define subsets
$B^i_n\subset A_n$ as follows: and for each $s_i\in S_{n-2}$ and 
$x\in A_n$, set $x\in B^i_n$ if and only if there exists
$\beta(x,x_0)$ lying at Hausdorff distance $C$ from a geodesic joining
$x_0$ to $x$ that passes through $s_i$.  For $n\in\{1,2\}$, define 
$B_n^1=A_n$.  Then the sets $\{B_n^i\}$ cover $\fontact U$.  Observe that
$\diam B^i_n\leq100C(r+\ell)$ for each $n,i$.

Hence $\{B^i_n\}$ is a uniformly bounded cover of $\fontact U$.  We
now check that its $\frac{r}{2}$--multiplicity is at most
$2\lambda'\cdot 2^{\mu^\xi}$, from which the claim follows.  Our claim
is a consequence of the following statement: for $n\geq1$, there are
$\leq\lambda'\cdot 2^{\mu^\xi}$ elements of $\{B_n^i\}$ intersecting
any $\frac{r}{2}$--ball $\mathcal B$ in $\fontact U$.

Indeed, the claim is clear when $n\leq 2$, so assume $n\geq3$.  Choose
distinct $B_n^i,B_n^j$ intersecting $\mathcal B$, with the
intersections respectively containing points $y_i,y_j$.  Hence we have
geodesics $[x_0,y_i],[x_0,y_j]$, respectively joining $x_0$ to
$y_i,y_j$ and passing through $s_i,s_j$, and paths
$\beta_i=\beta(x_0,y_i)$ and $\beta_j=\beta(x_0,y_j)$ at Hausdorff
distance $\leq C$ from $[x_0,y_i],[x_0,y_j]$.  Choose
$s_i'\in\beta_i,s_j'\in\beta_j$ with
$\dist_U(s_i,s_i'),\dist_U(s_j,s'_j)\leq C$.  Since
$\dist_U(y_i,y_j)\leq r$, because $y_i,y_j\in\mathcal B$, we see that
$s'_i,s'_j$ both lie in $\bigcup_{x'\in B(x_0,r),z'\in
B(y_i,r)}\beta(x',z')$.

Now, $s_i$ lies on a geodesic from $x_0$ to $y_i$ and
$\dist_U(s_i,\{x_0,y_i\})\geq r+C$ by definition.  Hyperbolicity and
the definition of $S_{n-2}$ imply that $\dist_U(s_i,s_j)\leq 2\delta$,
whence $\dist_U(s_j',si),\dist_U(s_i',s_i)\leq 2\delta+2C$, i.e.,
$s_i',s_j'\in B(s_i,\delta+2C)\cap\bigcup_{x'\in B(x_0,r),z'\in
B(y_i,r)}\beta(x',z')$.  This intersection is bounded by 
Property~\eqref{item:property_B} of tightness, thus 
completing the proof.
\end{proof}

\setcounter{claim}{0}

\section{Asymptotic dimension of ball-preimages under $\psi\co\cuco X\to\cox$}\label{sec:ball_preimage}
Fix a relatively hierarchically hyperbolic space $\HHS X S$.  Let $\mathcal O$ be the set of totally orthogonal subsets of $\mathfrak 
U_{1}\subseteq\mathfrak S$. We set $\asdim\mathcal O$ to be the minimal uniform asymptotic 
dimension of the collection $\{\Pi_{V\in\mathfrak V} \mathbf F_{V}: \mathfrak 
    V\in\mathcal O\}$.  Our goal is to prove:

\begin{prop}\label{prop:build_A}
    For every $R\geq 0$, there exists a 
    hierarchical quasiconvexity function $k$  
    and a control function $f$, 
    so that for every 
    $x_{0}\in\coneoff{\cuco X}$ there exists 
    $A\subset \cuco X$ satisfying:
    \begin{itemize}
        \item $A$ is $k$--hierarchically quasiconvex in $\cuco X$, 
    
         \item $A\supseteq \psi\inv(B(x_{0},R))$,
    
        \item $\asdim A\leq \asdim\mathcal O$ with control function $f$.
    \end{itemize}
\end{prop}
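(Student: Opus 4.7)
The plan is to take $A$ to be the hierarchical hull of $\psi^{-1}(B(x_0,R))$, namely the set of realization points (Theorem~\ref{thm:realization}) of all $\kappa$--consistent tuples $\tup b$ with $b_U\subseteq\neb_\kappa(\pi_U(\psi^{-1}(B(x_0,R))))$, where $\kappa$ is chosen sufficiently large in terms of the HHS constants. Realization together with uniqueness yield a function $k$, depending only on $R$ and the HHS constants, with respect to which $A$ is hierarchically quasiconvex; the containment $A\supseteq\psi^{-1}(B(x_0,R))$ is automatic.

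The first key structural fact is that for every $U\in\mathfrak S-\mathfrak U_1$, the diameter of $\pi_U(A)$ is bounded by some $D=D(R)$. Any $y\in\psi^{-1}(B(x_0,R))$ is joined to $x_0$ by a concatenation of at most $\lceil R\rceil+1$ steps, each of which is either an ordinary $\cuco X$--step of length $\leq 1$ or a jump within a single parallel copy $\mathbf F_V\times\{e\}$ with $V\in\mathfrak U_1$. On a $\cuco X$--step $\pi_U$ moves by the coarse Lipschitz constant. On a jump within $\mathbf F_V\times\{e\}$, since $V\in\mathfrak U_1$ is $\nest$--minimal and distinct from $U$, one of $V\propnest U$, $V\transverse U$, $V\orth U$ holds, and in each case partial realization together with Remark~\ref{rem:gates} yields $\diam\pi_U(\mathbf F_V\times\{e\})\leq\alpha$. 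Summing gives the bound on $\diam\pi_U(\psi^{-1}(B(x_0,R)))$, and hence on $\diam\pi_U(A)$.

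For the asdim bound I would invoke the Fibration Theorem. Define $\Phi\co A\to\prod_{U\in\mathfrak S-\mathfrak U_1}\fontact U$ with sum metric by $\Phi(y)=(\pi_U(y))_U$; this map is coarsely Lipschitz and $\Phi(A)$ is bounded by the previous paragraph, so $\asdim\Phi(A)=0$. By Theorem~\ref{thm:fibration_theorem} it suffices to uniformly bound $\asdim(\Phi^{-1}(B(\bar b,r))\cap A)$ by $\asdim\mathcal O$. On such a preimage $A'$, the projections to $U\notin\mathfrak U_1$ are uniformly bounded, and by the distance formula (Theorem~\ref{thm:distance_formula}) all nontrivial variation happens through $\pi_V$ for $V\in\mathfrak U_1$. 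For a large threshold $L$, set $\mathfrak V(y)=\{V\in\mathfrak U_1\co\dist_V(y,x_0)>L\}$. Consistency (Definition~\ref{defn:space_with_distance_formula}.\eqref{item:dfs_transversal}) forces any two distinct $V,W\in\mathfrak V(y)$ with $V\transverse W$ to have one of $\pi_V(y),\pi_W(y)$ pinned near $\rho^W_V$ or $\rho^V_W$, which contradicts $L$ being large. Hence $\mathfrak V(y)\in\mathcal O$, and the piece $A'_{\mathfrak V}=\{y\in A'\co\mathfrak V(y)=\mathfrak V\}$ coarsely embeds via the distance formula into a parallel copy of $\prod_{V\in\mathfrak V}\mathbf F_V$, whose asymptotic dimension is at most $\asdim\mathcal O$ uniformly by definition.

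Applying the Union Theorem (Theorem~\ref{thm:union_theorem}) to $A'=\bigcup_{\mathfrak V\in\mathcal O}A'_{\mathfrak V}$ would then complete the argument. I expect the main obstacle to be verifying the separation condition: for each $R'>0$ one must produce $Y_{R'}\subseteq A'$ with $\asdim Y_{R'}\leq\asdim\mathcal O$ so that distinct $A'_{\mathfrak V},A'_{\mathfrak V'}$ are $R'$--separated outside $Y_{R'}$. The natural candidate for $Y_{R'}$ is a bounded thickening of the transition locus where some $\dist_V(\cdot,x_0)$ lies near $L$; controlling its size requires a Passing Up argument (Lemma~\ref{lem:passing_up}) to bound the number of $V\in\mathfrak U_1$ realizing simultaneously large projections, combined with the consistency inequalities to localize where transitions occur. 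The final control function $f$ is then assembled from the Fibration and Union Theorems and the uniform control function for the products $\prod_{V\in\mathfrak V}\mathbf F_V$.
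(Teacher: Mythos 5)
There is a genuine gap at the heart of your asymptotic dimension argument: the claim that $\mathfrak V(y)=\{V\in\mathfrak U_1:\dist_V(y,x_0)>L\}$ is totally orthogonal is false. For transverse $V,W\in\mathfrak V(y)$, consistency does pin (say) $\pi_V(y)$ near $\rho^W_V$, but this does not contradict $\dist_V(y,x_0)>L$, because $\rho^W_V$ need not be near $\pi_V(x_0)$: applying consistency to $x_0$ then only yields that $\pi_W(x_0)$ is near $\rho^V_W$. This is exactly the configuration underlying the partial order $\preceq$ on relevant domains recalled in Section~\ref{subsubsec:partial_order} --- transverse relevant minimal domains form chains, not antichains. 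Concretely, in $\MCG(S)$ take intersecting curves $\alpha,\beta$ and $y=T_\alpha^nT_\beta^nx_0$: then $y\in\psi\inv(B(x_0,C))$ for a constant $C$ independent of $n$ (each twist is a jump inside a parallel copy of an $\mathbf F_V$ with $V$ an annulus), both annular projections have size about $n$, all non-annular projections stay bounded, and the two annuli are transverse. So $\mathfrak V(y)\notin\mathcal O$, and the pieces $A'_{\mathfrak V}$ do not decompose $A'$ into sets controlled by $\asdim\mathcal O$. This failure is precisely why the paper's construction is iterative: it builds $A=A_n$ with $n\approx 10D^2R$, at each stage adjoining only the sets $B_{\mathcal U}$ for \emph{totally orthogonal} admissible $\mathcal U\subseteq\mathfrak U_1$ whose relative projections all lie near the previous stage $A_{n-1}$, so that a chain of pairwise transverse relevant domains is absorbed one generation at a time.

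Two smaller points. First, your invocation of the Fibration Theorem is vacuous: since $\Phi(A)$ is bounded, $\Phi^{-1}(B(\bar b,r))\cap A=A$ once $r$ exceeds $\diam\Phi(A)$, so the hypothesis of Theorem~\ref{thm:fibration_theorem} already demands the conclusion $\asdim A\leq\asdim\mathcal O$. (Your preliminary observation that $\diam\pi_U(\psi\inv(B(x_0,R)))$ is bounded for $U\notin\mathfrak U_1$ is correct, and is implicit in the paper via hierarchical quasiconvexity of the $A_n$, but it cannot by itself localize the problem to $\mathfrak U_1$.) Second, the Union Theorem separation you defer is not a technicality: the paper's Claim~\ref{claim:asdim_bound} handles it by a second induction on $|\mathcal U|$, proving $\neb_r(B_{\mathcal U})\cap\neb_r(B_{\mathcal U'})\subseteq\neb_{Kr+K}(A_{n-1}')\cup\neb_{Kr+K}(B_{\mathcal U\cap\mathcal U'})$, so that the ``transition locus'' is itself one of the previously controlled pieces; a thickening of the level set where some $\dist_V(\cdot,x_0)$ is near $L$ carries no a priori asymptotic dimension bound, so the Union Theorem could not be applied to it.
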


\begin{proof}
 For convenience, fix $x_0\in\coneoff{\cuco X}$ and stipulate that all constants, hierarchical quasiconvexity functions, and control functions below are chosen independently of $x_0$.  Fix $D$ large enough that any pair of points in $\cuco X$ are joined by a $(D,D)$--quasigeodesic.
 
 Set $A_0=\{x_0\}$. We will inductively construct $A_n,k_n,f_n$ so that
 \begin{enumerate}
  \item $A_n\subseteq \cuco X$ is $k_n$--hierarchically quasiconvex,\label{item:hqc}
  \item $\asdim A_n\leq \asdim\mathcal O$ with control function $f_n$,\label{item:asdim_bound}
  \item for each $n\geq 1$, the set $A_n$ contains $\neb_{10D}(A_{n-1})$,\label{item:contains_nhood}
  \item for each $n\geq 1$ and $U\in\mathfrak U_{1}$, 
  the set $A_n$ contains each
  parallel copy of $\mathbf F_U$ which intersects
  $A_{n-1}$.\label{item:parallel_copies}
 \end{enumerate}

 Assuming for the moment that we can construct such $A_n$, then the  
 proposition follows. Indeed, given $R\geq1$, we now show that 
 the set $A_n$ contains
 $\psi\inv(B(x_{0},R))$ for $n\geq 10D^2R$. Let $x\in
 \psi\inv(B(x_{0},R))$, whence, by definition of $\dist'$, there exists a sequence
 $x_0,\dots,x_k=x$ for which $\sum_{i=1}^{k-1}\dist'(x_i,x_{i+1})\leq
 2R$.  Using that $\cuco X$ is a quasigeodesic space, we can suitably
 interpolate between consecutive points in the sequence and find
 another sequence $x_0=y_0,\dots,y_{k'}=x$ so that
 $\sum_{i=1}^{k'-1}\dist'(y_i,y_{i+1})\leq 10D^2R$ and for each $i$
 either $y_i,y_{i+1}$ lie in a common $\mathbf F_U$ for $U\in\mathfrak U_1$ or
 $\dist(y_i,y_{i+1})\leq 10D$.  It is readily shown
 inductively that $y_i$ lies in $A_i$ for each $i$, so that in
 particular $x\in A_n$, as required.
 
 Fix  $n\geq 1$ and assume we have constructed $A_{n-1}$ with the desired 
 properties; we now construct $A_{n}$. Let $A'_{n-1}$ be the
 $10D$--neighborhood of $A_{n-1}$.  Note that, for suitable
 $k'_{n-1},f'_{n-1}$ depending on $k_{n-1},f_{n-1}$, and $D$, the space $A'_{n-1}$ is $k'_{n-1}$--hierarchically
 quasiconvex and has asymptotic dimension at most $\asdim \mathcal O$
 with control function $f'_{n-1}$.
 
 Let $M$ be a constant to be chosen later. We say 
 $U\in\mathfrak U_1$ is \emph{admissible} if whenever $V\in\mathfrak
 S$ satisfies either $U\transverse V$ or $U\propnest V$ we have
 $\dist_V(\rho^U_V, A_{n-1})\leq M$.  The totally orthogonal collection
 $\mathcal U\subseteq \mathfrak U_1$ is admissible if each $U\in\mathcal U$ is admissible.  For an
 admissible collection $\mathcal U$ we define $B_{\mathcal U}\subseteq
 \cuco X$ to be the set of all $x\in \cuco X$ so that
 \begin{itemize}
  \item $\dist_V(x, A'_{n-1})\leq M$ for each $V\perp\mathcal U$;
  \item $\dist_V(x,\rho^{U}_V)\leq M$ whenever $V\in\mathfrak S$ and $U\in\mathcal U$ satisfy $U\transverse V$ or $U\propnest V$.
 \end{itemize}
 (Roughly, $\mathcal B_{\mathcal U}$ is the set of partial 
 realization points for $\mathcal U$ whose projections 
 lie close to $A_{n-1}'$ 
 except possibly in $\fontact U$ for $U\in\mathcal U$.)
 
 Since $A_{n-1}$ is hierarchically quasiconvex, $\mathcal B_{\emptyset}$ coarsely coincides with $A_{n-1}$. Let $A_n=A'_{n-1}\cup \bigcup_{\mathcal U} B_{\mathcal U}$, where
the union is taken over all admissible collections $\mathcal U$. 
The desired properties of $A_n$, for suitable $k_n,f_n$, will be checked in the following claims.

\renewcommand{\qedsymbol}{$\blacksquare$}

\begin{claim}\label{claim:parallel_copies}
If $U\in\mathfrak U_1$ has the property that $\mathbf F_U\times\{e\}$ intersects $A_{n-1}$ for some $e\in \mathbf E_U$, then $\mathbf F_U\times\{e\}\subseteq A_n$. 
\end{claim}

\begin{proof}
 For $U$ as in the statement and $M$ large enough,
 $\diam_V(\rho^U_V\cup\pi_V(\mathbf F_U))\leq M$ whenever
 $U\transverse V$ or $U\propnest V$, by the definition of $\mathbf
 F_U$.  Hence, since $\pi_V$ is coarsely Lipschitz
 and  $\pi_V(\mathbf F_U)$
  intersects $\pi_V(A_{n-1})$, we also have that 
 $\dist_V(\rho^U_V,A_{n-1})\leq M$. Thus $U$ is admissible.  Any
 point $x$ in a parallel copy of $\mathbf F_U$ which intersects
 $A_{n-1}$ is readily seen to be in $\mathcal B_{\{U\}}\subseteq A_n$,
 as required.
\end{proof}

\begin{claim}\label{claim:image_proj}
 For each admissible $U\in\mathfrak U_1$, the projections $\pi_U(\cuco X)$ and $\pi_U(A_{n})$ coarsely coincide. 
\end{claim}

\begin{proof}
 Suppose that $U$ is admissible and let $p\in \pi_U(\cuco X)$. Fix $y\in A_{n-1}$ and define the tuple $\tup b$ in the following way. Let $b_V=\pi_V(y)$ if $V\perp U$, $b_V=\rho^U_V$ if $V\in\mathfrak S$ satisfies $U\transverse V$ or $U\propnest V$, and $b_U=p$. It is easy to check that $\tup b$ is consistent, using the definitions and Proposition~\ref{prop:rho_consistency}. Hence, for $M$ large enough, realization (Theorem~\ref{thm:realization}) provides us with a point $x\in\cuco X$ which, by the definition of $\tup b$, is contained in $\mathcal B_{\{U\}}$ and has $\dist_U(x,p)\leq M$. This completes the proof of the claim.
\end{proof}

\begin{claim}\label{claim:hqc}
 $A_n$ is $k_n$-hierarchically quasiconvex.
\end{claim}

\begin{proof}
Claim \ref{claim:image_proj} provides a constant $C$ so that $\pi_U(\cuco X)$ and $\pi_U(A_{n})$ $C$--coarsely coincide.  Also, $\pi_U(A_{n})$ $C$--coarsely coincides with $\pi_U(A'_{n-1})$ when $U$ is not admissible.  This verifies the part of the definition of hierarchical quasiconvexity governing projections; it remains to check the part governing realization points.

Consider $x\in \cuco X$ so that $\dist_V(x,A_n)\leq t$ for each $V\in\mathfrak S$ and some $t\geq0$. Let $\mathcal U$ be the collection of all $U\in\mathfrak U_1$ so that $\dist_U(x,A_{n-1})\geq 10ME$. Note that each $U\in\mathcal U$ is admissible. We now show that $\mathcal U$ is totally orthogonal, thereby showing that $\mathcal U$ is admissible.

Since each $U\in\mathcal U$ is $\nest$--minimal, distinct elements of $\mathcal U$ are $\nest$--incomparable. Hence we only have to rule out the existence of $U_1,U_2\in\mathcal U$ so that $U_1\transverse U_2$. If such $U_i$ existed then, up to switching $U_1,U_2$, we would have $\dist_{U_2}(x,\rho^{U_1}_{U_2})\leq E$ by consistency and hence $\dist_{U_2}(\rho^{U_1}_{U_2},A_{n-1})\geq 10ME-2E>M$, contradicting the admissibility of $U_1$.

Let us now define the tuple $\tup b$ in the following way. For each 
$V\in\mathfrak S$, let $\pi_{V,A_{n-1}}$ be a coarse closest point
projection $\fontact V\to \pi_V(A_{n-1})$.  Since $A_{n-1}$ is
hierarchically quasiconvex, this map is defined in the usual way when
$\fontact V$ is hyperbolic, and is either coarsely constant or
coarsely the identity otherwise.  We set
$b_V=\pi_{V,A_{n-1}}(\pi_V(x))$ if $V\perp \mathcal U$.  Also, we
require $\dist_V(b_V,\rho^U_V)\leq 10E$ if $V\in\mathfrak S$ satisfies
$U\transverse V$ or $U\propnest V$.  Finally, we set $b_U=\pi_U(x)$
whenever $U\in\mathcal U$.  It is easy to check that $\tup b$ is
consistent, allowing us to invoke realization
(Theorem~\ref{thm:realization}) to get a point $x'\in \cuco X$.  In
fact, we have $x'\in B_{\mathcal U}\subseteq A_n$.

We now bound $\dist(x,x')$. In order to do so, by the uniqueness axiom, we can instead uniformly bound $\dist_V(x,b_V)$ for each $V\in\mathfrak S$. We consider the following cases.
\begin{itemize}
 \item If $V\in\mathcal U$, then $b_V=\pi_V(x)$, as required.
 \item If $V\perp\mathcal U$ then $\pi_V(x)$ is within distance $t+10EMD$ of $A_{n-1}$, and thus within distance $t+10EMD$ of $b_V=\pi_{V,A_{n-1}}(\pi_V(x))$.
 \item Suppose that there exists $U\in\mathcal U$ with $U\transverse V$. If, by contradiction, we had $\dist_V(x,\rho^U_V)\geq 100ME$, then by consistency we would have $\dist_U(x,\rho^V_U)\leq E$. Since by definition of $\mathcal U$ we have that $\pi_U(x)$ is far from $\pi_U(A_{n-1})$, we then get $\dist_U(A_{n-1},\rho^V_U)> E$. By consistency applied to any $y\in A_{n-1}$, we get $\diam_V(\pi_V(A_{n-1})\cup \rho^U_V)\leq 2E$. In particular, $\dist_V(x,A_{n-1})\geq 10ME$, i.e., $V\in\mathcal U$, a contradiction.
 \item Suppose that there exists $U\in\mathcal U$ with $U\propnest V$. If, by contradiction, we had $\dist_V(x,\rho^U_V)\geq 100ME$, then any geodesic from $\pi_V(x)$ to $\pi_{V,A_{n-1}}(x)$ stays $E$--far from $\rho^U_V$ since $\dist_V(x,A_{n-1})\leq 10ME$. Let $y\in A_{n-1}$ be any point so that $\pi_V(y)=\pi_{V,A_{n-1}}(x)$. By bounded geodesic image and consistency (for $x$ and $y$) we have $\dist_U(x,A_{n-1})\leq \dist_U(x,y)\leq 10E$, contradicting $U\in\mathcal U$.
\end{itemize}
This completes the proof of the claim.
\end{proof}

\begin{claim}\label{claim:asdim_bound}
 $A_n$ has asymptotic dimension $\leq \asdim\mathcal O$ with control function $f_n$.
\end{claim}

\begin{proof}
For $i\geq 0$, we define $A^{(i)}_n=A'_{n-1}\cup\bigcup_{\mathcal U} B_{\mathcal U}$, where the union is taken over all admissible collections $\mathcal U$ of cardinality at most $i$.  Observe that there exists $I$, depending only on the complexity of $(\cuco X,\mathfrak S)$, so that $A_n^{(i)}=A_n$ for all $i\geq I$.  Hence it suffices to show, by induction on $i$, that for each $i\geq0$, we have that $\asdim A_n^{(i)}\leq\asdim\mathcal O$, with some control function $f_n^{(i)}$.     

When $i=0$, we have $A_n^{(0)}=A_{n-1}'$, which we saw above has asymptotic dimension $\leq\asdim\mathcal O$ with control function $f_n^{(0)}=f_{n-1}'$.  Suppose for some $i\geq0$ that we have $\asdim A_n^{(i)}\leq\asdim\mathcal O$ with control function $f_n^{(i)}$.  

Write $A_n^{(i+1)}=A_n^{(i)}\cup\bigcup_{\mathcal U}B_{\mathcal U}$,
where $\mathcal U$ varies over all admissible totally orthogonal
subsets in $\frak U_1$ with $|\mathcal U|\leq i+1$.  Note that $\asdim
B_{\mathcal U}\leq\asdim\mathcal O$ uniformly, since $B_{\mathcal U}$
is (uniformly) coarsely contained in $\prod_{U\in\mathcal U}\mathbf
F_U$.  By our induction hypothesis, for each $r\geq0$, we have that
$\neb_r(A_n^{(i)})$ has asymptotic dimension
$\leq\asdim\mathcal O$.  Below, we will establish the following: there
exists $K$ independent of $r$ so that for all $r\geq 0$, and all
admissible sets $\mathcal U\neq\mathcal U'$ of cardinality at most
$i$, we have $\dist(B_{\mathcal U}-Y_r,B_{\mathcal U'}-Y_r)\geq r$,
where $Y_r=\neb_{Kr+K}(A_n^{(i)})$.  Given this, the Union
Theorem~\cite[Theorem~1]{BellDranishnikov:asymptotic_groups} proves
the claim, where the control function is $f_n=f_n^{I}$.

In other words, it suffices to prove that there exists $K\geq0$ such that, if $\mathcal U,\mathcal U'\subseteq\frak U_1$ are admissible totally orthogonal subsets of cardinality at most $i$, then $$\neb_r(B_{\mathcal U})\cap \neb_r(B_{\mathcal U'})\subseteq \neb_{Kr+K}(A_{n-1}')\cup \neb_{Kr+K}(B_{\mathcal U\cap\mathcal U'}).$$  Let $x\in \neb_r(B_{\mathcal U})\cap \neb_r(B_{\mathcal U'})$.  

Let $V\in\mathfrak S$.  If $V\transverse U$ or $U\nest V$ for some $U\in\mathcal U\cup\mathcal U'$, then $\dist_V(x,\rho^U_V)\leq \lambda r+\mu$, where $\lambda,\mu$ depend only on $M$ and the coarse Lipschitz constants for $\pi_V$ (which are independent of $V$).
Since each $U\in\mathcal U\cup\mathcal U'$ is admissible, we then have $\dist_V(\rho^U_V,A_{n-1}')\leq M$, so that $\dist_V(x,A_{n-1}')\leq\lambda r+\mu+M$.  If $V\orth U$ for all $U\in\mathcal U$ or $V\orth U$ for all $U\in\mathcal U'$, then $\dist_V(x,A_{n-1}')\leq \lambda r+\mu+M$ by admissibility.  

Suppose that $V$ is $10(\lambda r+\mu_M)$--\emph{relevant} for $x,\gate_{A'_{n-1}}(x)$, i.e., $\dist_V(x,\gate_{A'_{n-1}}(x))\geq 10(\lambda r+\mu_M)$.  If $V\not\in\mathcal U$ (respectively, $\mathcal U'$), then the preceding discussion shows that for all $U\in\mathcal U\cup\mathcal U'$, we have $V\not\transverse U$ and $U\not\nest V$.  On the other hand, there must exist $U\in\mathcal U\cup\mathcal U'$ with $U\not\orth V$, whence $V\propnest U$, which is impossible since $\mathcal U\cup\mathcal U'\subseteq\frak U_1$.  Hence any such relevant $V$ lies in $\mathcal U\cap\mathcal U'$.  

Let $K'=\max\{\lambda,\mu+M\}$ and let $\mathcal U\cap\mathcal U'=\{U_j\}$.  We have established that:
\begin{itemize}
 \item $\dist_V(x,A_{n-1}')\leq K'r+K'$ for all $V$ for which there exists $j$ with $V\transverse U_j$ or $U_j\nest V$, and indeed $\dist_V(x,\rho^{U_j}_V)\leq K'r+K'$;
 \item $\dist_V(x,A_{n-1}')\leq K'r+K'$ for all $V$ such that $V\orth U_j$ for all $j$;
\end{itemize}
Define a tuple $\tup t\in\prod_{V\in\mathfrak S}2^{\fontact V}$ by $t_{U_i}=\pi_{U_i}(x)$ for all $i$, and so that $t_V=\rho^{U_i}_V$ if there exists $i$ with $U_i\transverse V$ or $U_i\propnest V$ (for some arbitrarily-chosen $i$ with that property if there are many -- the $\rho^{U_i}_V$ all $10E$--coarsely coincide, as can be seen by considering $\pi_V(\prod_i\mathbf F_{U_i})$), and $t_V=\pi_V(\gate_{A'_{n-1}}(x))$ otherwise.  

We claim that $\tup t$ is $100E$--consistent.  Indeed, let $V,W\in\mathfrak S$ with $V\transverse W$ or $V\propnest W$ or $W\propnest V$.  If $t_V=\pi_V(\gate_{A'_{n-1}}(x))$ and $t_W=\pi_W(\gate_{A'_{n-1}}(x))$, then we are done by the consistency axiom.  If $V=U_i$ for some $i$, then $t_W$ $10E$--coarsely coincides with $\rho^{U_i}_W=\rho^V_W$ as required.  If $t_W=\pi_W(\gate_{A'_{n-1}}(x))=\pi_W(g)$ and $V\transverse U_i$ or $U_i\propnest V$ for some $U_i$, then $t_V=\rho^{U_i}_V$.  For each $j$, either $W\orth U_j$ or $W\nest U_j$, by definition.  The latter is impossible since $U_j\in\mathfrak U_1$, and hence $W\orth U_i$.  If $V\transverse W$ or $W\propnest V$, it follows that $\rho^{U_i}_V$ is $10E$--coincident with $\rho^W_V$.  If $V\propnest W$, then $V\orth U_i$, a contradiction.  Finally consider the case where there exist $i,j$ so that $t_V=\rho^{U_i}_V$ and $t_W=\rho^{U_j}_W$.  If $V\propnest W$, then $W\not\orth U_i$, so we can take $U_j=U_i$.  Then the claim follows by $\rho$--consistency (Proposition~\ref{prop:rho_consistency}).  If $V\transverse W$ and $\dist_W(\rho^{U_j},\rho^V_W)>100E$, then $\dist_W(z,\rho^V_W)$ for all $z\in P_{U_j}$, whence $\dist_V(z,\rho^W_V)\leq E$ by consistency.  But $\pi_V(z)$ $10E$--coarsely coincides with $\rho^{U_i}_V$ since $U_i\orth U_j$.  

Hence, by realization (Theorem~\ref{thm:realization}), there exists $y\in\cuco X$ with $\dist_V(y,t_V)\leq\theta$ for all $V$.  If $M\geq\theta$, then $y\in B_{\{U_i\}}$.  The distance formula (with fixed threshold $\theta$, independent of $r$) thus provides $K$ so that $\dist(x,B_{\{U_i\}})\leq\dist(x,y)\leq Kr+K$, since $\dist_V(x,y)\leq K'r+K'+\theta$ for each $V$ that is irrelevant for $x,\gate_{A_{n-1}'}(x)$ and $\dist_{U_i}(x,y)\leq\theta$ for each $i$.  
\end{proof}

\renewcommand{\qedsymbol}{$\Box$}
Assertion~\eqref{item:parallel_copies} follows from Claim~\ref{claim:parallel_copies}, assertion~\eqref{item:hqc} follows from Claim~\ref{claim:hqc}, and assertion~\eqref{item:asdim_bound} follows from Claim~\ref{claim:asdim_bound}.  Assertion~\eqref{item:contains_nhood} holds by definition.  This completes the proof.
\end{proof}

\section{Proof of Theorem~\ref{thmi:asdim} and 
Corollaries~\ref{cori:MCG} and~\ref{cori:teich}}\label{sec:main_asdim_theorem}
In this section, we fix a uniformly locally-finite discrete geodesic
space $\cuco X$ admitting a $\delta$--relatively hierarchically
hyperbolic structure $(\cuco X,\mathfrak S)$ of complexity $\xi$.  Let
$\mathfrak U_1$ be the set of $\nest$--minimal elements, so that
$\fontact U$ is $\delta$--hyperbolic for each $U\in\mathfrak
S-\mathfrak U_1$.  Let $\mathcal O$ be the set of totally orthogonal
subsets of $\mathfrak U_1$ and let $\asdim\mathcal O$ denote the
minimal uniform asymptotic dimension of $\{\prod_{V\in\mathfrak
V}\mathbf F_V:\mathfrak V\in\mathcal O\}$.  
For $U\in\mathfrak U_1$, by~\cite[Lemma~2.1]{BehrstockHagenSisto:HHS_II}
and~\cite[Theorem~32]{BellDranishnikov:asdim_1}, 
we have that $\fontact U$ and $\mathbf F_U$ are uniformly
quasi-isometric and for $\mathfrak V\in\mathcal O$ we have $\asdim\prod_{V\in\mathfrak V}\fontact
V\leq\xi\max\{\asdim\fontact V:V\in\mathfrak V\}$.
Hence $\asdim\mathcal 
O\leq\xi n$, where $n$ is the minimal uniform asymptotic dimension of
$\{\fontact U:U\in\mathfrak U_1\}$.  

\begin{unrem}
In order to apply Corollary~\ref{cor:finite_asdim_curve_graph}, we must assume that for each $U\in\mathfrak S$ with $\fontact U$ a $\delta$--hyperbolic space, the projection $\pi_U$ is uniformly coarsely surjective.  By the proof of Proposition~1.16 of~\cite{DurhamHagenSisto:HHS_IV}, we can always assume that this holds (see also Remark 1.3 of~\cite{BehrstockHagenSisto:HHS_II}).  Hence, in the proof of Theorem~\ref{thm:technical_asdim}, we can make this coarse surjectivity assumption and thus apply Corollary~\ref{cor:finite_asdim_curve_graph}.
\end{unrem}

\begin{defn}[Level, $P_\ell$, $\Delta_\ell$]\label{defn:p_Delta}
Define the \emph{level} of $U\in\mathfrak S$ to be $1$ if $U$ is $\nest$--minimal and inductively define the level of $U\in\mathfrak S$ to be $\ell$ if $\ell-1$ is the maximal integer such that there exists $V\in\mathfrak S$ of level $\ell-1$ with $V\propnest U$.  Let $P_\ell$ be the maximal cardinality of pairwise-orthogonal sets in $\mathfrak S$ each of whose elements has level $\ell$.  Let $\Delta_\ell$ be the maximal uniform asymptotic dimension of $\fontact U$ with $U$ of level $\ell$ and $\fontact U$ hyperbolic.
\end{defn}

\begin{thm}\label{thm:technical_asdim}
Let $\cuco X$ be a uniformly locally-finite discrete geodesic space admitting a $\delta$--relatively HHS structure $(\cuco X,\mathfrak S)$ and let $\xi,n,$ and the $\Delta_\ell$ be as above.  Assume that $n<\infty$.  Then $\asdim\cuco X\leq n\xi+\sum_{\ell=2}^\xi P_\ell\Delta_\ell<\infty$.  In particular, if $(\cuco X,\mathfrak S)$ is an HHS, then $\asdim\cuco X\leq\sum_{\ell=1}^\xi P_\ell\Delta_\ell<\infty$.
\end{thm}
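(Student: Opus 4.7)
The strategy is to induct on the complexity $\xi$, reducing to the hat space $\coneoff{\cuco X}$ via the Lipschitz map $\psi$ of Proposition~\ref{prop:identity_Lipschitz} and then applying the Fibration Theorem~\ref{thm:fibration_theorem}. Both the HHS and the general RHHS bound will be obtained simultaneously, with the inductive hypothesis maintaining the sharper HHS bound. In the base case $\xi=1$, the set $\mathfrak S$ contains a single $\nest$-minimal element $S$; the distance formula (Theorem~\ref{thm:distance_formula}) together with the coarse surjectivity of $\pi_S$ (ensured by the preliminary remark) identifies $\cuco X$ with $\fontact S$ up to quasi-isometry, so the trivial bound $\asdim\cuco X\leq n=P_1\Delta_1=n\cdot 1$ settles this case.

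For the inductive step at complexity $\xi\geq 2$, I would apply Proposition~\ref{prop:cone_off_bottom} with $\mathfrak U=\mathfrak U_1$. Because $\mathfrak U_1$ contains every element whose associated space could fail to be hyperbolic, the pair $(\coneoff{\cuco X},\mathfrak S-\mathfrak U_1)$ is a genuine HHS of complexity at most $\xi-1$. A routine bookkeeping check shows that a level-$\ell$ element of $(\cuco X,\mathfrak S)$ with $\ell\geq 2$ becomes a level-$(\ell-1)$ element of the new structure; writing $P_\ell',\Delta_\ell'$ for the analogous quantities of $\coneoff{\cuco X}$, one has $P_\ell'=P_{\ell+1}$ and $\Delta_\ell'=\Delta_{\ell+1}$, and moreover each $\Delta_\ell'$ is finite by Corollary~\ref{cor:finite_asdim_curve_graph}. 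Applying the HHS bound inductively to $\coneoff{\cuco X}$ then yields
$$\asdim\coneoff{\cuco X}\leq\sum_{\ell=1}^{\xi-1}P_\ell'\Delta_\ell'=\sum_{\ell=2}^\xi P_\ell\Delta_\ell.$$

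Next, Proposition~\ref{prop:build_A} produces, for every $R$ and every $x_0\in\coneoff{\cuco X}$, a hierarchically quasiconvex set $A\supseteq\psi^{-1}(B(x_0,R))$ of asymptotic dimension at most $\asdim\mathcal O$, with hierarchical quasiconvexity function and control function depending only on $R$ and the HHS data, not on $x_0$. This is precisely the uniformity statement required to feed the family $\{\psi^{-1}(B(x_0,R))\}_{x_0}$ into the Fibration Theorem~\ref{thm:fibration_theorem}, which then gives $\asdim\cuco X\leq\asdim\coneoff{\cuco X}+\asdim\mathcal O$.

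Finally, I would bound $\asdim\mathcal O$ using the product formula: each $\mathbf F_V$ with $V\in\mathfrak U_1$ is uniformly quasi-isometric to $\fontact V$ (Lemma~2.1 of~\cite{BehrstockHagenSisto:HHS_II}), so for every totally orthogonal $\mathfrak V\subseteq\mathfrak U_1$ one has $\asdim\prod_{V\in\mathfrak V}\mathbf F_V\leq|\mathfrak V|\cdot n$. In the general RHHS case, using the crude cardinality bound $|\mathfrak V|\leq\xi$ yields $\asdim\mathcal O\leq n\xi$ and hence the first claimed bound; in the HHS case, using $|\mathfrak V|\leq P_1$ together with the identification $n=\Delta_1$ yields $\asdim\mathcal O\leq P_1\Delta_1$, which combines with the inductive bound on $\asdim\coneoff{\cuco X}$ to close the induction for the sharper HHS bound. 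Finiteness is automatic once $n<\infty$, since every $\Delta_\ell$ with $\ell\geq 2$ is finite by Corollary~\ref{cor:finite_asdim_curve_graph}. The main subtlety is the level-index bookkeeping for the hat space and the uniformity (in $x_0$) of the control function required by the Fibration Theorem, both of which are handled by the respective statements as given.
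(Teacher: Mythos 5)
Your proposal is correct and follows essentially the same route as the paper: reduce to the hat space via Proposition~\ref{prop:cone_off_bottom}, use Proposition~\ref{prop:build_A} to get uniform control on the fibers of $\psi$, apply the Bell--Dranishnikov Fibration Theorem, and induct on complexity with the level-shifting bookkeeping $P_\ell'=P_{\ell+1}$, $\Delta_\ell'=\Delta_{\ell+1}$. Your observation that one should bound $\asdim\mathcal O\leq P_1\Delta_1$ (rather than the cruder $\xi\Delta_1$) in the HHS case is exactly the refinement needed to obtain the stated bound $\sum_{\ell=1}^\xi P_\ell\Delta_\ell$.
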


Observe that when $(\cuco X,\mathfrak S)$ is actually an HHS, Corollary~\ref{cor:finite_asdim_curve_graph} automatically gives $n<\infty$.

Before proving Theorem~\ref{thm:technical_asdim}, we record a 
lemma whose proof is an immediate consequence of \cite[Lemma~3.B.6]{CornulierdelaHarpe}.

\begin{lem}\label{lem:DGS_persists}
Let $\cuco X$ be a (not necessarily locally-finite) $(r_0,r_1)$--discrete geodesic space with a $\delta$-relatively hierarchically hyperbolic structure $(\cuco X,\mathfrak S)$.  Let $\mathfrak U\subseteq\mathfrak S$ be closed under nesting.  Then $\cox_{\mathfrak U}$ is quasi-isometric to a connected graph $\Gamma$, with constants independent of $\mathfrak U$.
\end{lem}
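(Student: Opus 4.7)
The plan is to build the graph $\Gamma$ by hand and apply (essentially a direct check of) the Cornulier--de la Harpe criterion: a pseudo-metric space that is ``large-scale geodesic'' with step size bounded in terms of fixed constants is quasi-isometric to a connected graph, with quasi-isometry constants controlled by those constants. The whole point of the lemma is to verify that the factored-space construction preserves the large-scale geodesic property with constants depending on $r_0,r_1$ only (and not on the subset $\mathfrak U$), whence $\Gamma$ and the QI constants are likewise independent of $\mathfrak U$.

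Concretely, I would take $\Gamma$ to have vertex set $\cuco X$ and declare $\{x,y\}$ to be a (unit-length) edge whenever either (i) $\dist(x,y)\leq r_1$, or (ii) there exist $U\in\mathfrak U$ and $e\in\mathbf E_U$ with $x,y\in \mathbf F_U\times\{e\}$. Connectedness of $\Gamma$ is immediate from the $(r_0,r_1)$-discrete geodesic axiom: any two points of $\cuco X$ are joined by a discrete geodesic whose consecutive steps have $\dist$-length at most $r_1$, and each such step is an edge of type (i).

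It then remains to check that the (set-theoretic) identity map $\cox_{\mathfrak U}\to\Gamma$ is a quasi-isometry with constants depending only on $r_0,r_1$. One direction, $\hat\dist(x,y)\leq \max(1,r_1)\,d_\Gamma(x,y)$, is trivial: a type (i) edge contributes at most $r_1$ to $\hat\dist$ and a type (ii) edge contributes at most $1$. The other direction, $d_\Gamma(x,y)\leq C(\hat\dist(x,y)+1)$, is obtained by taking an almost-infimizing sequence $x=y_0,\ldots,y_k=y$ for $\hat\dist(x,y)$ (with repetitions removed, so each $\dist'(y_i,y_{i+1})\geq r:=\min(1,r_0)$, giving $k\leq (\hat\dist(x,y)+1)/r$), and then replacing each step either by a single type-(ii) edge (when $y_i,y_{i+1}$ lie in a common parallel copy) or by a subdivision of a $\cuco X$-discrete-geodesic into at most $\dist(y_i,y_{i+1})/r_0 = \dist'(y_i,y_{i+1})/r_0$ type-(i) edges.

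There is no real obstacle here; the only thing one needs to be careful about is that the constants do not leak dependence on $\mathfrak U$. This is automatic: the role of $\mathfrak U$ is only to enlarge the edge set of $\Gamma$ (and correspondingly to identify more pairs with $\dist'$-value $1$), while $r_0,r_1$ and hence the QI constants $\max(1,r_1)$ and $C=1/r_0+1/r+1$ depend purely on the ambient discrete geodesic structure of $\cuco X$. Thus the construction yields a connected graph $\Gamma$ quasi-isometric to $\cox_{\mathfrak U}$ with constants independent of $\mathfrak U$, as required.
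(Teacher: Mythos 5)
Your proposal is correct and is essentially the paper's argument: the paper simply cites \cite[Lemma~3.B.6]{CornulierdelaHarpe} (the large-scale geodesic criterion for being quasi-isometric to a connected graph), and your construction of $\Gamma$ and the two-sided estimate is exactly the standard verification underlying that citation, with the constants visibly depending only on $r_0,r_1$ and not on $\mathfrak U$.
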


\begin{proof}[Proof of Theorem~\ref{thm:technical_asdim}]
By Proposition~\ref{prop:build_A}, for $R\geq0,x\in\cox_{\mathfrak
U_1}$, we have $\asdim\psi^{-1}(B^{\cox_{\mathfrak
U_1}}(x,R))\leq\asdim\mathcal O$ uniformly, where $\psi\colon \cuco
X\to\cox_{\mathfrak U_1}$ is the Lipschitz map provided by
Proposition~\ref{prop:identity_Lipschitz}.  
The space 
$\cuco X$ is a geodesic space, by Lemma~\ref{lem:DGS_persists}.
Hence, we may apply 
Theorem~\ref{thm:fibration_theorem}, which yields  $\asdim\cuco
X\leq\asdim\mathcal O+\asdim\cox_{\mathfrak U_1}$.

Now, by Proposition~\ref{prop:cone_off_bottom}, $(\cox_{\mathfrak U_1},\mathfrak S-\mathfrak U_1)$ is an HHS of complexity $\xi-1$, and $\cox_{\mathfrak U_1}$ is uniformly quasi-isometric to a geodesic space (a graph) by Lemma~\ref{lem:DGS_persists}.  Observe that for each $\nest$--minimal $U\in\mathfrak S-\mathfrak U_1$, we have that the associated subspace $\mathbf F_U\subset\cox_{\mathfrak U_1}$ is uniformly quasi-isometric to $\fontact U$, and thus, by induction,
$$\asdim\cuco X\leq n\xi+\sum_{\ell=2}^\xi P_\ell\Delta_\ell,$$
which is finite since $P_\ell\leq\xi$ for all $\ell$ by~\cite[Lemma~2.1]{BehrstockHagenSisto:HHS_II} and $\Delta_\ell<\infty$ by Corollary~\ref{cor:finite_asdim_curve_graph}.  This yields the desired bound for $(\cuco X,\mathfrak S)$ an HHS, since then we may take $n=\Delta_1$.
\end{proof}

In the case of the mapping class group, sharper bounds on the 
asymptotic dimensions of curve graphs are known. 
Webb has a combinatorial argument which gives a bound which is 
exponential in the complexity~\cite{Webb:tight}. We will make use of 
a much tighter bound due to Bestvina--Bromberg 
\cite{BestvinaBromberg:asdim}. Using this we will now prove 
Corollary~\ref{cori:MCG}:

\begin{proof}[Proof of Corollary~\ref{cori:MCG}]
We use the hierarchically hyperbolic structure on $\MCG(S)$ from~\cite[Section~11]{BehrstockHagenSisto:HHS_II}, where $S$ is a surface of complexity $\xi(S)$, and $\mathfrak S$ is the set of essential subsurfaces up to isotopy, and for each $U\in\mathfrak S$, the space $\fontact U$ is the curve graph.  Let $L$ be the maximum level of a subsurface, i.e., the maximal $\ell$ so that $\mathfrak S$ has a chain $U_1\propnest U_2\propnest\ldots\propnest U_\ell=S$.  

Let $\{U_1,\ldots,U_k\}$ be a collection of pairwise-disjoint
subsurfaces, each of level exactly $\ell>1$.  Then each $U_i$ contains
a subsurface $U_i'$ of level exactly $\ell-1$, and the complement in
$U_i$ of $U'_i$ has level $1$ unless it is a degenerate subsurface. 
Hence $U_i$ contains at least $\ell-1$ disjoint
subsurfaces of level $1$, so $(\ell-1) P_\ell\leq L$ for all 
$\ell\geq 2$ while $P_{1}=L=\xi(S)$  
(note, $U_{i}$ contains at most $\ell$ disjoint subsurfaces of level 
$1$).
As shown in~\cite{BestvinaBromberg:asdim}, $\asdim\fontact U\leq
2\ell+3$ uniformly.  Thus $\Delta_\ell\leq2\ell+3$, and
Theorem~\ref{thm:technical_asdim} gives:
\begin{eqnarray*}
\asdim(\MCG(S))&\leq& 
5L+L\sum_{\ell=2}^{L}\frac{(2\ell+3)}{\ell-1}\leq 2L^2+3L\log L+8L.\\
\end{eqnarray*}
Observing that $L\leq\xi(S)$ provided $\xi(S)\geq2$ completes the proof.
\end{proof}

We now prove Corollary~\ref{cori:teich}:

\begin{proof}[Proof of Corollary~\ref{cori:teich}]
    As noted in 
    \cite{BehrstockHagenSisto:HHS_I}, Teichm\"{u}ller space with 
    either of the two metrics mentioned is a hierarchically 
    hyperbolic space; for details see the corresponding
    discussion for the mapping class group in~\cite[Section
    11]{BehrstockHagenSisto:HHS_II} which applies {\it mutatis 
    mutandis} in the present context. 
    The
    index-set $\mathfrak S$ consists of all isotopy classes of essential
    subsurfaces (only non-annular ones in the case of the 
    Weil--Petersson metric).  For each $U\in\mathfrak S$ which is not an annulus,
    $\fontact U$ is the curve graph of $U$; For annular $U$ the space $\fontact U$ is a
    combinatorial horoball over the annular curve graph. Observing that $\asdim\fontact U\leq
    2$ when $\fontact U$ is the horoball over an annular curve graph, the
    claim now follows as in the proof of
    Corollary~\ref{cori:MCG}, albeit with an extra additive term of 
    $\xi(S)$ since the lowest complexity terms have asymptotic 
    dimension 2 in the present case instead of 1. (Since 
    Weil--Petersson doesn't have contributions from annuli at all, 
    one obtains a sharper bound than we record here.) 
\end{proof}

\section{Quotients of HHG}\label{sec:HHG_quotient}
In this section, we study quotients of hierarchically hyperbolic groups. 
It will be a standing assumption 
throughout this section that $H$ is a \emph{hyperbolically embedded 
subgroup} of the HHG $(G,\mathfrak S)$, as defined by the following, 
cf.\ \cite{DGO}:

\begin{defn}[Hierarchically hyperbolically embedded subgroup]\label{defn:hhhg}
Let $(G,\mathfrak S)$ be an HHG, let $S\in\mathfrak S$ be
$\nest$--maximal and let $H\leq G$.  Given a (possibly infinite) generating set
$\mathcal T$ of $G$, then we write $H\hhhg(G,\mathfrak S)$ if the 
following hold:
\begin{itemize}
 \item $\fontact S$ is the Cayley graph of $G$ with respect to $\mathcal T$ and $\pi_S$ is the inclusion;
 \item $\mathcal T\cap H$ generates $H$;
 
 \item $H$ is hyperbolically embedded in $(G,\mathcal T)$.  Recall 
 that this means that $\cayley(G, H\cup \mathcal T)$ is hyperbolic 
 and $H$ is proper with respect to the metric $\hat\dist$ obtained 
 from measuring the length of a shortest  
 path $\gamma\subset \cayley(G, H\cup \mathcal T)$ with the 
 property that between pairs of vertices in $H\cap\gamma$ the only edges 
 allowed are those from $\mathcal T$.
\end{itemize}
\end{defn}

Throughout this section we let $N$ denote a subgroup $N\triangleleft H\leq 
G$, and we let $\nclose N$ denote its normal closure in 
$G$. When dealing with different HHS structures $\mathfrak S,\mathfrak T$, and when it is necessary to distinguish between the two, we will use the notation $\fontact_{\mathfrak S}U,\fontact_{\mathfrak T}V$ for $U\in\mathfrak S,V\in\mathfrak V$ instead of $\fontact U,\fontact V$. Our main result is:

\begin{thm}\label{thm:quotients}
Let $(G,\mathfrak S)$ be an HHG and let $H\hhhg(G,\mathfrak S)$.  Then there exists a finite set $F\subset H-\{1\}$ such that for all $N\triangleleft H$ with $F\cap N=\emptyset$, the group $G/\nclose N$ admits a relative HHG structure $(G/\nclose N,\mathfrak S_N)$ where:
\begin{itemize}
 \item $\mathfrak S_N=(\mathfrak S\cup\{gH\}_{g\in G})/\nclose N$ and hence $\xi(\mathfrak S_N)\leq\max\{\xi(\mathfrak S),2\}$;
 \item $\fontact_{\mathfrak S_N} S_N=\cayley(G/\nclose N,\mathcal T/\nclose N\cup H/N)$, where $S_N\in\mathfrak S_N$ is $\nest$--maximal and $\mathcal T$ is as in Definition~\ref{defn:hhhg};
 \item for each $U\in\mathfrak S_N-\{S_N\}$, either $\fontact_{\mathfrak S_N} U$ is isometric to $\fontact_{\mathfrak S}U'$ for some $U'\in\mathfrak S$, or $\fontact_{\mathfrak S_N} U$ is isometric to a Cayley graph of $H/N$.
\end{itemize}
In particular, if $N$ avoids $F$ and $H/N$ is hyperbolic, then $G/\nclose N$ is an HHG.
\end{thm}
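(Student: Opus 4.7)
The strategy is to use the Dahmani--Guirardel--Osin very rotating family machinery to control the quotient $G/\nclose{N}$. First, I would invoke the main results of~\cite{DGO} to obtain a finite set $F\subset H-\{1\}$ with the following property: for any $N\triangleleft H$ avoiding $F$, the action of $\nclose{N}$ on the hyperbolic Cayley graph $\cayley(G,H\cup\mathcal T)$ (hyperbolic by the definition of $H\hhhg(G,\mathfrak S)$) is a very rotating family with rotation subgroups the conjugates of $N$. This immediately gives hyperbolicity of $\cayley(G/\nclose{N},\mathcal T/\nclose{N}\cup H/N)$, which will serve as $\fontact_{\mathfrak S_N}S_N$. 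Furthermore, the windmill construction provides for each $g\nclose{N}$ a preferred representative $\tilde g\in G$ whose $\cayley(G,\mathcal T)$-length is controlled and whose projections to original elements of $\mathfrak S$ are coarsely canonical.

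Second, I would define $\mathfrak S_N$ as the disjoint union of $\mathfrak S/\nclose{N}$ with the set of cosets $\{gH\}/\nclose{N}$, equipped with: the inherited relations from $\mathfrak S$; each new coset element is $\nest$-minimal and nested in $S_N$; distinct cosets are pairwise transverse; and a coset is transverse to an element of $\mathfrak S-\{S\}$ (the natural default). For hyperbolic spaces I take $\fontact_{\mathfrak S_N}U=\fontact_{\mathfrak S}U'$ for original types (the very-rotating condition prevents $\nclose{N}$-collapsing of distinct translates) and $\fontact_{\mathfrak S_N}gH=\cayley(H/N,(\mathcal T\cap H)/N)$ for cosets. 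Projections descend via the windmill normal form for original $U$, and for $gH$ are defined by precomposing the gate onto the appropriate copy of $H\subseteq G$ with the quotient map $H\to H/N$.

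Third, I would verify the axioms of Definition~\ref{defn:space_with_distance_formula} for $(G/\nclose{N},\mathfrak S_N)$. Lipschitz projections, the nesting/orthogonality structure, and the complexity bound $\xi(\mathfrak S_N)\leq\max\{\xi(\mathfrak S),2\}$ are immediate from the construction. Consistency, bounded geodesic image, and large links restricted to original-type elements follow from the corresponding axioms in $(G,\mathfrak S)$ by lifting to windmill normal forms. The new consistency inequalities involving coset elements are driven by two facts: distinct conjugates $gHg^{-1}$ have coarsely perpendicular projections in $\fontact S_N$ (a manifestation of hyperbolic embedding), and $\rho$-consistency (Proposition~\ref{prop:rho_consistency}) controls the remaining interactions. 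Partial realization is built by first realizing the coordinates in $\mathfrak S/\nclose{N}$ in $G$, and then translating within the selected coset, which leaves the other projections coarsely unchanged by our choice of transversality.

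The main obstacle, and the deepest place where DGO's technology is essential, is the uniqueness axiom: if $\dist_{G/\nclose{N}}(g\nclose{N},g'\nclose{N})$ is large, some $V\in\mathfrak S_N$ must detect this. Naively lifting to $G$ fails, because a word-length-minimal lift in $G$ may be short while the quotient-metric representative forces a long excursion through cosets of $H$. DGO's windmill/normal-form theorem resolves exactly this ambiguity: any word in $G/\nclose{N}$ decomposes as an alternation between $G$-parts and excursions through cosets $gH/N$. Either a $G$-part contributes a large projection to some $V\in\mathfrak S_N$ inherited from $\mathfrak S$ (detected by uniqueness in $(G,\mathfrak S)$), or an excursion contributes a large projection to the relevant coset element (detected in its $H/N$ factor, using properness of $H$ with respect to $\hat\dist$ from Definition~\ref{defn:hhhg}). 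The ``in particular'' clause is then immediate: if $H/N$ is hyperbolic, every $\fontact_{\mathfrak S_N}V$ is hyperbolic, so the relative HHG structure promotes to a full HHG structure.
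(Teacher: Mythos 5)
Your high-level strategy---DGO rotating families, enlarging the index set by the cosets $gH$, and quotienting---is the same as the paper's in outline, but two of your foundational steps have genuine gaps.

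First, the very rotating family cannot live on $\cayley(G,H\cup\mathcal T)$: in that graph every coset $gH$ has diameter $1$, so the fixed-point sets of the rotation subgroups are not separated, and the very rotating condition (which requires the apices to be $2r$--separated for $r\gg\delta$) fails outright. The paper instead builds the pyramid space $\pyramid{G}_r$ by attaching radius--$r$ hyperbolic cones over the cosets, so that the apices $v_{gH}$ are $2r$--separated and $(\{v_{gH}\},\{N^g\})$ becomes a very rotating family for $N$ deep enough (Proposition~\ref{prop:hyperbolic_pyramids}). This coning is not cosmetic: the entire fulcrum/Greendlinger apparatus, and hence every subsequent well-definedness argument, is phrased in terms of geodesics in $\pyramid{G}_r$ passing near apices.

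Second, and more seriously, there is no ``preferred representative $\tilde g\in g\nclose N$ whose projections to original elements of $\mathfrak S$ are coarsely canonical,'' and the windmill construction does not supply one. For $n\in\nclose N$ the projections $\pi_U(g)$ and $\pi_U(ng)$ can differ arbitrarily, and which representative of $g\nclose N$ is admissible depends on which class $\mathbf U$ you are projecting to. The paper's resolution is the linked-pair formalism: $\pi_{\mathbf U}(g\nclose N)$ is defined as the union of $\pi_{U'}(g')$ over all pairs of representatives $(U',g')$ admitting no fulcrum between $\rho^{U'}_S$ and $\pi_S(g')$ in the pyramid space, and the Greendlinger lemma together with a thin-triangle argument (Lemma~\ref{lem:pass_close_to_apex}, Corollary~\ref{cor:unlinked_have_same_proj}) shows that any two such choices coarsely agree. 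Without this, or an equivalent, your projections, the relative projections $\rho^{\mathbf U}_{\mathbf V}$, and even the quotient relations $\nest$, $\orth$, $\transverse$ are not well-defined, so the axiom verifications cannot begin. Relatedly, the paper first upgrades $(G,\mathfrak S)$ to an auxiliary HHG structure $(G,\mathfrak T)$ on $G$ itself, with the pyramid space at the top and the cosets already present (Proposition~\ref{prop:aux_HHG}); consistency, bounded geodesic image, and large links for the new domains are verified there, in $G$, and only then pushed to the quotient via linked pairs. Your plan to verify these axioms directly in $G/\nclose N$ by ``lifting to windmill normal forms'' would, once made precise, have to reconstruct essentially this machinery.
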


We postpone the proof until after explaining the necessary tools.  The definition of the index set $\mathfrak S_N$ in the above theorem means the following: by the definition of a hierarchically hyperbolic group structure, $G$, and hence $\nclose N\leq G$, acts on $\mathfrak S$ --- see Section~\ref{subsubsec:aut}.  Also, $\nclose N$ acts on the set $\{gH\}_{g\in G}$ in the obvious way.  Thus $\nclose N$ acts on the set $\mathfrak S\cup\{gH\}_{g\in G}$, and $\mathfrak S_N$ is the quotient of $\mathfrak S\cup\{gH\}_{g\in G}$ by this action.

\begin{rem}[Geometric separation and quasiconvexity]\label{rem:geom_sep}
Since $H\hookrightarrow_h(G,\mathcal T)$, we have that $H$ is \emph{geometrically separated} (with respect to $\fontact S$), i.e., for each $\epsilon\geq0$ there exists $M(\epsilon)$ so that, if $\diam_{\fontact S}(H\cap\neb_\epsilon(gH))\geq M(\epsilon))$, then $g\in H$.  Indeed, for any $\epsilon>0$, if we can choose $g$ so that $\diam_{\fontact S}(H\cap\neb_\epsilon(gH))$ is arbitrarily large, then $H$ contains infinitely many elements $h'$ with $\hat\dist(1,h')\leq 2\epsilon+1$, which is impossible.  In fact,~\cite[Theorem~6.4]{Sisto:metric_relhyp} shows that $\cayley(G,\mathcal T)$ is (metrically) hyperbolic relative to $\{gH\}$, and in particular there exists $M$ such that $H\subset\cayley(G,\mathcal T)$ is $M$--quasiconvex, by~\cite[Lemma~4.3]{DrutuSapir}.
\end{rem}

\begin{rem}[$H$ is hyperbolic]\label{rem:H_is_hyperbolic}
In our situation, the hyperbolically embedded subgroup 
$H\hookrightarrow_h(G,\mathcal T)$ must be hyperbolic. This holds 
since 
$(G,\mathcal T)$ is already hyperbolic (even before adding $H$ to the
generating set); thus, hyperbolicity of $H$ follows from the fact 
that $H\subset (G,\mathcal T)$ is quasiconvex and
$H$ acts properly (indeed, the word-metric $\dist_{\mathcal T}$
restricted to $H$ is bounded below by the auxillary metric on $H$ 
from Definition~\ref{defn:hhhg}, which is proper). 
This hyperbolicity is not a significant restriction on the subgroup, 
since it holds in the cases of interest, including the case where $G$ is the mapping class group of a surface
and $H\cong\integers$ is generated by a pseudo-Anosov element.
\end{rem}

\subsection{Pyramid spaces}\label{subsec:pyramid}
In this section we define the \emph{pyramid spaces} which are hyperbolic spaces,
associated to $G$ and $G/\nclose N$. We also describe a new 
hierarchically hyperbolic structure on~$G$ in which the pyramid space associated to $G$ replaces $\cayley(G,\mathcal T)$.  We begin by recalling from~\cite{GrovesManning:dehn} the notion of a combinatorial horoball, and the attendant hyperbolic cone construction from~\cite{DGO}.

\begin{defn}[Combinatorial horoball, hyperbolic cone]\label{defn:combinatorial_horoball}
Let $\Gamma$ be a graph.  The \emph{combinatorial horoball} $\mathcal H(\Gamma)$ is the graph formed from $\Gamma\times(\naturals\cup\{0\})$ by adding the following edges: for each $r\in\naturals\cup\{0\}$ and each vertex $v\in\Gamma$, join $(v,r),(v,r+1)$ by an edge.  For each $r$ and each $v,v'\in\Gamma$, join $(v,r),(v',r)$ if $\dist_\Gamma(v,v')\in(0,2^r]$.  

Given $r\in\naturals$, the \emph{hyperbolic cone} $\cone(\Gamma,r)$ of \emph{radius $r$} over $\Gamma$ is obtained from $\mathcal H(\Gamma)$ by adding a vertex $v$, called the \emph{apex} of the cone, and joining $v$ to each vertex $(w,s)$ of $\mathcal H(\Gamma)$ for which $s\geq r$.  We endow $\cone(\Gamma,r)$ with the usual graph metric. 
\end{defn}

Lemma~6.43 of~\cite{DGO} says that for any choice of $\Gamma$ and any $r\in\naturals$, the graph $\cone(\Gamma,r)$ is $\delta$--hyperbolic, where $\delta$ may be chosen independently of $\Gamma$ and $r$.

\begin{defn}[Pyramid spaces $\pyramid{G}_r$ and $\pyramid{G/\nclose N}_r$]\label{defn:coned_spaces}
For $r\geq1$, the \emph{pyramid space} $\pyramid{G}_r$ associated to
$(G,\mathcal T)$ and $H$ is obtained from $\cayley(G,\mathcal T)$ by
attaching the hyperbolic cone of radius $r$ over each coset $gH$, with
apex $v_{gH}$. 
Let $\dist_\triangle$ be the graph metric on $\pyramid{G}_r$.
Likewise, $\pyramid{G/\widehat N}_r$ is obtained from $\cayley(G/\nclose
N,\mathcal T/\nclose N)$ by attaching a radius--$r$ hyperbolic cone
over each coset of $H/N$.
\end{defn}

\begin{prop}\label{prop:hyperbolic_pyramids}
There exists $\delta\geq 1$ with the following properties:
\begin{enumerate}
 \item \label{item:g_pyr_hyp} $\pyramid{G}_r$ is $\delta$--hyperbolic for all sufficiently large $r$;
 \item \label{item:gN_pyr_hyp} for each sufficiently large $r>0$,
 there exists a finite set $F_r\subset H-\{1\}$ such that if $N\cap
 F_r=\emptyset$, then $\pyramid{G/\nclose N}_r$ is
 $\delta$--hyperbolic.
\end{enumerate}
\end{prop}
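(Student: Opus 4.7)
Part~\ref{item:g_pyr_hyp} will follow by identifying $\pyramid{G}_r$ with a truncation of the Groves--Manning horoballification of $(\cayley(G,\mathcal T),\{gH\}_{g\in G})$. By Remark~\ref{rem:geom_sep}, $\cayley(G,\mathcal T)$ is metrically hyperbolic relative to $\{gH\}$, so by \cite{GrovesManning:dehn} the space $\mathcal G$ obtained from $\cayley(G,\mathcal T)$ by attaching a combinatorial horoball $\mathcal H(gH)$ to each coset is $\delta_0$-hyperbolic, with $\delta_0$ independent of $G$, $H$, $\mathcal T$. There is a natural $1$-Lipschitz surjection $\Phi\colon \mathcal G\to \pyramid{G}_r$ which is the identity on $\cayley(G,\mathcal T)$ and which collapses each subset $\{(v,s):v\in gH,\,s\geq r\}\subset\mathcal H(gH)$ to the apex $v_{gH}$. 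Standard distance estimates in the horoball levels show that for $r$ sufficiently large, $\Phi$ is a $(1,C)$--quasi-isometry, whence $\pyramid{G}_r$ is $\delta$-hyperbolic.

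Part~\ref{item:gN_pyr_hyp} is an application of the very rotating families machinery from \cite{DGO} (Section~\ref{subsec:rotating_families}). The normal closure $\nclose N$ acts on $\pyramid{G}_r$ extending the $G$-action on $\cayley(G,\mathcal T)$, and for each apex $v_{gH}$ the subgroup $gNg^{-1}\leq \nclose N$ fixes $v_{gH}$. These stabilizers together give a rotating family; the substantive point is to arrange that this family is \emph{very rotating}, i.e., that each nontrivial $\eta\in gNg^{-1}$ moves every nearby non-apex point a large distance in $\pyramid{G}_r$. A direct calculation inside $\cone(gH,r)$ shows that the relevant translation length is bounded below in terms of $\hat\dist(1,g^{-1}\eta g)$, where $\hat\dist$ denotes the auxiliary metric on $H$ from Definition~\ref{defn:hhhg}. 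Since $H$ is $\hat\dist$--proper, there is a finite set $F_r\subset H-\{1\}$ consisting of elements whose $\hat\dist$--length is below the required threshold; whenever $N\cap F_r=\emptyset$, the family is very rotating. Invoking the main quotient theorem of \cite{DGO}, that the quotient of a $\delta_0$--hyperbolic space by a very rotating family is $\delta$--hyperbolic for some $\delta=\delta(\delta_0)$, we obtain hyperbolicity of $\pyramid{G}_r/\nclose N$.

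The proof concludes by identifying $\pyramid{G}_r/\nclose N$ with $\pyramid{G/\nclose N}_r$: the quotient of $\cayley(G,\mathcal T)$ by $\nclose N$ is $\cayley(G/\nclose N,\mathcal T/\nclose N)$, and the quotient of each $\cone(gH,r)$ by its setwise stabilizer $gNg^{-1}\leq\nclose N$ is canonically the hyperbolic cone over the image of $gH$ in $G/\nclose N$. The principal technical obstacle is the translation-length estimate underpinning the very-rotating condition: one must control, in terms of $\hat\dist$, how conjugates of elements of $N$ displace vertices in the intermediate horoball layers of $\cone(gH,r)$, and then select $F_r$ accordingly so that DGO's hypothesis is satisfied uniformly across all apexes.
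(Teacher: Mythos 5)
Your treatment of part~\eqref{item:gN_pyr_hyp} is essentially the paper's argument: the paper observes that $(\{v_{gH}\},\{H^g\})$ is a $2r$--separated rotating family, invokes \cite[Corollary~6.36]{DGO} to produce the finite set $F_r$ making $(\{v_{gH}\},\{N^g\})$ very rotating (this corollary is exactly the packaging of the translation-length/properness argument you sketch), and then cites \cite[Proposition~5.28]{DGO} for hyperbolicity of the quotient. Your closing identification of $\pyramid{G}_r/\nclose N$ with $\pyramid{G/\nclose N}_r$ is only a coarse one, not a canonical isomorphism as you assert --- the horoball over $H/N$ has edges governed by $\dist_{H/N}$, which can be strictly smaller than the $\nclose N$--quotient of $\dist_H$ --- but this does not affect hyperbolicity.

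There is, however, a genuine gap in your part~\eqref{item:g_pyr_hyp}. The map $\Phi$ from the Groves--Manning cusped space $\mathcal G$ to $\pyramid{G}_r$ that you describe is \emph{not} a $(1,C)$--quasi-isometry, for any $r$. Take a coset $gH$ and two vertices $(v,r),(v',r)$ at depth $r$ in the horoball $\mathcal H(gH)$ with $\dist_{gH}(v,v')$ arbitrarily large. Both are sent to (or lie within distance $1$ of) the apex $v_{gH}$, so their images in $\pyramid{G}_r$ are at distance at most $2$; but their distance in $\mathcal G$ is comparable to $2\bigl(\log_2\dist_{gH}(v,v')-r\bigr)$, which is unbounded since $H$ is infinite. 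So the coarse inverse inequality fails, and hyperbolicity of $\pyramid{G}_r$ does not transfer from $\mathcal G$ by quasi-isometry invariance as you claim; moreover the defect worsens, rather than improves, as $r\to\infty$. The statement itself is of course true: the paper obtains it by citing \cite[Lemma~6.45]{DGO} directly. If you want a self-contained argument along your lines, the correct route is to view $\pyramid{G}_r$ as obtained from $\mathcal G$ by coning off the deep parts $\{(v,s):s\geq r\}$ of the horoballs, which form a collection of uniformly quasiconvex, pairwise cobounded subsets of the hyperbolic space $\mathcal G$; hyperbolicity of such a cone-off is a standard (but not trivial) fact, and one must still check that the resulting hyperbolicity constant can be taken independent of $r$, which is part of what the cited DGO lemma provides.
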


\begin{proof}
Assertion~\eqref{item:g_pyr_hyp} holds by~\cite[Lemma~6.45]{DGO}.  To
prove assertion~\eqref{item:gN_pyr_hyp}, consider the action of $G$ on
the $\delta$--hyperbolic space $\pyramid{G}_r$.  By construction, the
set $\{v_{gH}\}$ of apices is $G$--invariant, $2r$--separated, and
each $v_{gH}$ is fixed by the \emph{rotation subgroup} $H^g$, i.e., 
$(\{v_{gH}\},\{H^g\})$ is a $2r$--separated \emph{rotating family} in
the sense of~\cite[Definition~5.1]{DGO}.  Observe
that~\cite[Corollary~6.36]{DGO} provides a finite set $F_r\subset
H-\{1\}$ such that $\{N^g\}$ is a $\frac{2r}{\delta}$--rotating
family, with respect to the action of $\pyramid{G}_r$, provided
$N\triangleleft H$ avoids $F_r$.  (This means that
$(\{v_{gH}\},\{N^g\})$ is a $2r$--separated rotating family that also
satisfies the \emph{very rotating condition}
of~\cite[Definition~5.1]{DGO}.)  Proposition~5.28 of~\cite{DGO}
provides $r_0$ so that $\pyramid{G/\nclose N}_r$ is uniformly
hyperbolic when $r\geq r_0$.  Enlarging $\delta$ completes the proof.
\end{proof}

\begin{lem}\label{lem:quasiconvex_cones}
 There exist $Q,r_0$ so that $\cone(H,r)\subset\pyramid{G}_r$ is $Q$--quasiconvex if $r\geq r_0$.
\end{lem}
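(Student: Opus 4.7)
The plan hinges on a simple observation: the apex $v_H$ is at distance at most $r+1$ from every point of $\cone(H,r)$, while conversely the closed ball $B_{\pyramid{G}_r}(v_H,r+1)$ is entirely contained in $\cone(H,r)$. Given both facts, $Q$--quasiconvexity with $Q=\delta$ follows from $\delta$--thinness of geodesic triangles in $\pyramid{G}_r$, where $\delta$ is the hyperbolicity constant from Proposition~\ref{prop:hyperbolic_pyramids}.

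The first statement is immediate from the definition of the hyperbolic cone: any horoball vertex $(h,s)$ with $s\leq r$ is joined to $v_H$ by a path of length $r-s+1$ going up vertically and then to the apex, and vertices with $s\geq r$ are directly adjacent to $v_H$. Since inclusions are distance-nonincreasing, the bound $\dist_{\pyramid{G}_r}(v_H,z)\leq r+1$ persists for all $z\in\cone(H,r)$.

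For the ball containment, the plan is to argue that any path in $\pyramid{G}_r$ from $v_H$ to a vertex $z\notin\cone(H,r)$ uses at least $r+2$ edges. The apex $v_H$ is adjacent only to vertices of $\cone(H,r)$ at horoball height $\geq r$, so the first edge of any such path must land in $\cone(H,r)$. Next, the only vertices of $\cone(H,r)$ with neighbours in $\pyramid{G}_r\setminus\cone(H,r)$ are the level-$0$ vertices $h\in H$: indeed, $h\in H$ forces $h\notin gH$ for any $g\notin H$, so $h$ is a level-$0$ vertex of no other cone, and its only neighbours outside $\cone(H,r)$ are the Cayley neighbours $ht$ with $t\in\mathcal{T}\setminus H$. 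To travel from a height-$\geq r$ vertex of $\cone(H,r)$ down to a level-$0$ vertex requires at least $r$ further edges, because each edge in the combinatorial horoball $\mathcal{H}(H)$ changes height by at most $1$. Adding the initial step from $v_H$ and the final exit step yields total length at least $1+r+1=r+2$.

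With these two facts in hand, fix $x,y\in\cone(H,r)$ and consider any geodesic triangle with vertices $x,y,v_H$ in $\pyramid{G}_r$. Each of the sides $[v_H,x]$ and $[v_H,y]$ has length $\leq r+1$, so every point on them is within $r+1$ of $v_H$ and hence lies in $B_{\pyramid{G}_r}(v_H,r+1)\subseteq\cone(H,r)$. By $\delta$--thinness of the triangle, any geodesic $[x,y]$ in $\pyramid{G}_r$ is contained in the $\delta$--neighbourhood of $[v_H,x]\cup[v_H,y]\subseteq\cone(H,r)$. This gives $\delta$--quasiconvexity, so one may take $Q=\delta$ and $r_0$ the hyperbolicity threshold from Proposition~\ref{prop:hyperbolic_pyramids}. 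I do not anticipate a serious obstacle; the only substantive step is the ball containment, which is a routine verification from the explicit graph structure of the pyramid space near the apex.
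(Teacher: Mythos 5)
Your proof is correct, and it takes a genuinely different route from the paper's. The paper builds an explicit coarse retraction $\retract\colon\pyramid{G}_r\to\cone(H,r)$ that fixes the cone, sends $g\in G$ to a closest point of $H$ in $\fontact S$ (coarsely well defined by the $M$--quasiconvexity of $H$ in $\cayley(G,\mathcal T)$ from Remark~\ref{rem:geom_sep}), and collapses every other cone $\cone(gH,r)$ to a uniformly bounded subset of $H$; three claims then show $\retract$ is coarsely Lipschitz for $\dist_\triangle$, and quasiconvexity follows since a coarse Lipschitz retraction onto a subset of a hyperbolic geodesic space forces that subset to be quasiconvex. You instead exploit the fact that the cone has a distinguished center: $\cone(H,r)\subseteq B(v_H,r+1)$, and conversely $B(v_H,r+1)\subseteq\cone(H,r)$ because any path leaving the cone must first descend the horoball from height $\geq r$ to a level--$0$ vertex (the only vertices with neighbours outside the cone, since $h\in H$ lies in no other coset $gH$), which costs at least $r+2$ edges from the apex; the tripod through $v_H$ then gives $Q=O(\delta)$ by thin triangles. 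Your route is shorter and needs strictly less input --- only the hyperbolicity of $\pyramid{G}_r$ from Proposition~\ref{prop:hyperbolic_pyramids} (which does not depend on this lemma, so there is no circularity) plus the local graph structure near the apex --- whereas the paper's route additionally uses the geometric separation and quasiconvexity of the cosets in $\fontact S$, and produces the retraction as a by-product (which, as far as the later text is concerned, is not reused). Both arguments yield $Q$ independent of $r$, which is what matters for the subsequent choice $r\geq10^9\delta EQ$ in Remark~\ref{rem:constants}; the only points to phrase carefully in your version are the usual graph-metric pedantries (points in edge interiors, and the exact slim-triangle convention, which may turn $Q=\delta$ into $Q=2\delta+1$).
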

\setcounter{claim}{0}
\begin{proof}
Choose $r$ sufficiently large so that $\pyramid{G}_r$ is $\delta$--hyperbolic, using Proposition~\ref{prop:hyperbolic_pyramids}, and let $\cone$ be the hyperbolic cone of radius $r$ over $H$.  

Define a map $\retract\colon \pyramid{G}_r\to\cone$ as follows.  First, for each $x\in\cone$, let $\retract(x)=x$.  Next, for each $g\in G$, let $\retract(g)$ be some $h\in H$ so that $\dist_{\fontact S}(g,H)=\dist_{\fontact S}(g,h)$.  (This is coarsely unique since $H$ is $M$--quasiconvex in $\fontact S$ by Remark~\ref{rem:geom_sep}.)  If $gH\neq H$ and $(y,n)\in\cone(gH,r)$ is not the apex $v_{gH}$, then let $\retract(y,n)=\retract(y)$.  By Claim~\ref{claim:cosets_bounded}, $\retract$ sends $gH$ to a uniformly bounded subset of $H$, and we define $\retract(v_{gH})$ to be an arbitrarily-chosen point in $\retract(gH)$.

\begin{claim}\label{claim:cosets_bounded}
There exists a constant $K_1$ so that $\diam_{\fontact S}(\retract(gH))\leq K_1$ for all $gH\neq H$.
\end{claim}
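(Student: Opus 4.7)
The plan is to derive Claim \ref{claim:cosets_bounded} from the geometric separation property of $H$ recorded in Remark~\ref{rem:geom_sep}, together with quasiconvexity of cosets and hyperbolicity of $\fontact S$. Recall that $\fontact S = \cayley(G,\mathcal T)$ is hyperbolic and $H\subseteq\fontact S$ is $M$--quasiconvex (Remark~\ref{rem:geom_sep}). Since left multiplication by any $g\in G$ acts as an isometry of $\fontact S$, each coset $gH$ is also $M$--quasiconvex. Note that for $y\in gH$, the value $\retract(y)$ is by definition a nearest point in $H$ to $y$ in $\fontact S$; so I only need to bound the $\fontact S$--diameter of the coarse closest-point projection $\pi_H(gH)$, uniformly in $g\notin H$.

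The main step is a standard fellow-traveling argument in the hyperbolic space $\fontact S$. Suppose for contradiction that there exist $g\notin H$ and $y_1,y_2\in gH$ with $h_i:=\pi_H(y_i)$ satisfying $\dist_{\fontact S}(h_1,h_2)$ arbitrarily large. By $M$--quasiconvexity of $gH$, the geodesic $[y_1,y_2]$ lies in $\neb_M(gH)$. On the other hand, by hyperbolicity and $M$--quasiconvexity of $H$, the standard contraction/fellow-traveling property of closest-point projection onto quasiconvex subsets implies that, after shaving off initial and final segments of uniformly bounded length near $y_1$ and $y_2$, the geodesic $[y_1,y_2]$ $C$--fellow travels with a geodesic $[h_1,h_2]\subseteq \neb_M(H)$, where $C$ depends only on $\delta$ and $M$. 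In particular, a subsegment $\sigma\subseteq[y_1,y_2]$ of length comparable to $\dist_{\fontact S}(h_1,h_2)$ lies in $\neb_{C+M}(H)\cap\neb_M(gH)$.

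From $\sigma$ I obtain points in $H\cap\neb_{\epsilon}(gH)$ whose $\fontact S$--diameter is comparable to $\dist_{\fontact S}(h_1,h_2)$, for $\epsilon$ depending only on $C,M$. Since $\dist_{\fontact S}(h_1,h_2)$ can be made arbitrarily large, this eventually exceeds the constant $M(\epsilon)$ of geometric separation, forcing $g\in H$ and contradicting $gH\neq H$. The bound $\dist_{\fontact S}(h_1,h_2)\leq K_1$ then holds with $K_1$ depending only on $M$, $\delta$, and $M(\epsilon)$, which are all uniform in $g$.

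The only subtle point is checking the constants in the fellow-traveling step, so that one cleanly recovers a long subsegment simultaneously close to $H$ and to $gH$; this is routine given $\delta$--hyperbolicity of $\fontact S$ and the quasiconvexity of both subsets, but it is the step where one must be careful about the endpoint behaviour of the projection. Once this is in place, the application of geometric separation is immediate.
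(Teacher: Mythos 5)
Your proof is correct and is precisely the argument the paper compresses into its one-line proof (``Apply $M$--quasiconvexity and separation of $gH,H$''): you use quasiconvexity of $H$ and $gH$ in the hyperbolic space $\fontact S$ to produce, from a large-diameter projection, a long segment near both cosets, contradicting geometric separation. No issues; the fellow-traveling step you flag is indeed the standard closest-point-projection lemma for quasiconvex subsets of a $\delta$--hyperbolic space and goes through as you describe.
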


\renewcommand{\qedsymbol}{$\blacksquare$}
\begin{proof}[Proof of Claim~\ref{claim:cosets_bounded}]
Apply $M$--quasiconvexity and separation of $gH,H$ (Remark~\ref{rem:geom_sep}).
\end{proof}

\begin{claim}\label{claim:Lipschitz_1}
There exists a constant $K_2$ so that $\dist_{\fontact S}(\retract(x),\retract(y))\leq K_2\dist_{\fontact S}(x,y)$ for all $x,y\in G$.
\end{claim}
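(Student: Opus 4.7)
\textbf{Proof proposal for Claim~\ref{claim:Lipschitz_1}.}
The plan is to observe that on $G\subset\pyramid{G}_r$, the map $\retract$ is precisely a closest-point projection onto $H$ with respect to the metric $\dist_{\fontact S}$ on $\cayley(G,\mathcal T)=\fontact S$. Since $(G,\mathfrak S)$ is an HHG with $\fontact S$ hyperbolic, and since Remark~\ref{rem:geom_sep} tells us that $H$ is $M$--quasiconvex in $\fontact S$, we are in the classical setting of closest-point projection onto a quasiconvex subset of a Gromov hyperbolic space.

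First I would invoke the standard fact from hyperbolic geometry: if $Y$ is a $Q$--quasiconvex subset of a $\delta$--hyperbolic geodesic space $Z$, then any choice of closest-point projection $Z\to Y$ is coarsely Lipschitz with constants depending only on $\delta$ and $Q$; more precisely, there exists $K=K(\delta,Q)$ so that for all $x,y\in Z$,
\[
\dist_Z(\retract(x),\retract(y))\leq \dist_Z(x,y)+K.
\]
Applying this with $Z=\fontact S$, $Y=H$, $\delta$ the hyperbolicity constant of $\fontact S$, and $Q=M$ from Remark~\ref{rem:geom_sep}, gives the estimate $\dist_{\fontact S}(\retract(x),\retract(y))\leq\dist_{\fontact S}(x,y)+K$ for all $x,y\in G$.

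Next I would upgrade this coarse Lipschitz estimate to the honest Lipschitz bound demanded by the claim, using the discreteness of $\dist_{\fontact S}$ on the vertex set $G$ of $\cayley(G,\mathcal T)$: if $x=y$ then $\retract(x)=\retract(y)$ by the fact that $\retract$ is a (well-defined) function, and if $x\neq y$ then $\dist_{\fontact S}(x,y)\geq 1$, so the additive constant $K$ is absorbed by taking $K_2=K+1$. Thus $\dist_{\fontact S}(\retract(x),\retract(y))\leq K_2\dist_{\fontact S}(x,y)$ for all $x,y\in G$, as required.

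The only potentially nontrivial step is citing (or briefly justifying) the coarse Lipschitz property of closest-point projection in hyperbolic spaces; this is standard and the constants depend only on $\delta$ and $M$, both of which have been fixed independently of $r$. I do not expect any real obstacle: all the inputs ($\delta$--hyperbolicity of $\fontact S$, $M$--quasiconvexity of $H$, and the discreteness of the word metric) are already in hand from earlier in the section.
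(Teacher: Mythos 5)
Your proposal is correct and matches the paper's (one-line) argument, which likewise just invokes the $M$--quasiconvexity of $H$ in $\fontact S$ from Remark~\ref{rem:geom_sep} together with the definition of $\retract$ as a closest-point projection; you have simply written out the standard coarse-Lipschitz property of nearest-point projection in a hyperbolic space and the absorption of the additive constant via the integrality of the word metric on $\cayley(G,\mathcal T)$. No gaps.
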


\begin{proof}[Proof of Claim~\ref{claim:Lipschitz_1}]
Apply quasiconvexity and the definition of $\mathfrak l$.
\end{proof}

\begin{claim}\label{claim:Lipschitz_2}
There exists $K_3$ so that, if $r$ is sufficiently large, then $\dist_\triangle(\retract(x),\retract(y))\leq K_3\dist_\triangle(x,y)+K_3$ for all $x,y\in\pyramid{G}_r$.
\end{claim}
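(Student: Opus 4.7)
The plan is to reduce the inequality to single edges of $\pyramid{G}_r$ and then concatenate along a $\dist_\triangle$--geodesic: it suffices to exhibit a uniform constant $K$ such that $\dist_\triangle(\retract(u),\retract(v))\leq K$ for every edge $\{u,v\}$ of $\pyramid{G}_r$, whence any $\dist_\triangle$--geodesic of length $L$ from $x$ to $y$ gives $\dist_\triangle(\retract(x),\retract(y))\leq KL$, and absorbing a $+K$ produces the advertised form.

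First I would enumerate the three edge types in $\pyramid{G}_r$: (a) $\mathcal T$--edges between adjacent $g,g'\in G$; (b) combinatorial horoball edges inside some $\cone(gH,r)$, both vertical $(y,n)\text{--}(y,n+1)$ and horizontal $(y,n)\text{--}(y',n)$; and (c) apex edges $v_{gH}\text{--}(y,s)$ with $s\geq r$. For (a), Claim~\ref{claim:Lipschitz_1} gives $\dist_{\fontact S}(\retract(g),\retract(g'))\leq K_2$, and since $\cayley(G,\mathcal T)=\fontact S$ embeds as a subgraph of $\pyramid{G}_r$ this yields $\dist_\triangle(\retract(g),\retract(g'))\leq K_2$. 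For (b) and (c) with $gH\neq H$, the definition of $\retract$ sends every non-apex point of $\cone(gH,r)$ into $\retract(gH)\subset H$, and the apex $v_{gH}$ to a chosen point of the same set $\retract(gH)$; by Claim~\ref{claim:cosets_bounded} this set has $\dist_{\fontact S}$--diameter at most $K_1$, so the two endpoints of any such edge map to points at $\dist_\triangle$--distance at most $K_1$ (and vertical edges actually collapse to a single point). For (b) and (c) with $gH=H$, the map $\retract$ is the identity on $\cone(H,r)$, so these edges are preserved with length exactly $1$. Taking $K_3=\max\{K_1,K_2,1\}$ gives the per-edge bound, and concatenation along geodesics completes the argument.

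The main conceptual obstacle to watch out for is that a horizontal horoball edge at a deep level $n$ may connect vertices in $gH$ whose underlying distance in $\fontact S$ is as large as $2^n$, so a naive application of Claim~\ref{claim:Lipschitz_1} edge by edge would blow up exponentially in $n$. What rescues the argument is precisely Claim~\ref{claim:cosets_bounded}: for every coset $gH\neq H$, the retract $\retract(gH)$ is \emph{already} uniformly $\fontact S$--bounded, independently of how far apart the preimages live in $gH$; so the image of the whole cone $\cone(gH,r)$ sits inside a $K_1$--bounded subset of $\cone(H,r)\subseteq\pyramid{G}_r$, and the estimate for horoball edges becomes level-independent. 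This is the only place where the geometric separation of $H$ in $\fontact S$ (and hence the hyperbolic embeddedness hypothesis on $H$) is genuinely used in this lemma; the hyperbolicity of $\pyramid{G}_r$ itself is not invoked.
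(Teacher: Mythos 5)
Your argument is correct and follows essentially the same route as the paper's: both rest on Claim~\ref{claim:cosets_bounded} to collapse each cone $\cone(gH,r)$ with $gH\neq H$ to a $K_1$--bounded image and on Claim~\ref{claim:Lipschitz_1} for the portions lying in $\fontact S$. The only difference is that you decompose a $\dist_\triangle$--geodesic edge by edge rather than into maximal cone/$\fontact S$ subpaths as the paper does, which is a harmless reorganization (and, as you observe, shows the ``$r$ sufficiently large'' hypothesis is not actually needed for this particular claim).
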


\begin{proof}[Proof of Claim~\ref{claim:Lipschitz_2}]
Let $[x,y]_\triangle$ be a $\pyramid{G}_r$--geodesic from $x\in G$ to $y\in G$, so that $[x,y]_\triangle=\alpha_1\beta_1\cdots\alpha_n\beta_n\alpha_{n+1}$, where each $\beta_i$ lies in some hyperbolic cone $\cone(g_iH,r)$ and each $\alpha_i$ avoids $v_{gH}\cup(gH\times[1,\infty))$ for each coset $gH$ (i.e., $\alpha_i$ is a possibly trivial path in $\fontact S$).

For each $i$, let $\check\beta_i$ be a geodesic in $\fontact S$ joining the endpoints $a_i,b_i$ of $\beta_i$ (which lie in $g_iH$).  By Claim~\ref{claim:cosets_bounded}, we have $\dist_\triangle(\retract(a_i),\retract(b_i))\leq\dist_{\fontact S}(\retract(a_i),\retract(b_i))\leq K_1$.

By Claim~\ref{claim:Lipschitz_1}, we have $\dist_{\fontact S}(\retract(b_{i-1}),\retract(a_i))\leq K_2\dist_{\fontact S}(b_{i-1},a_i)$.  Now, since $\alpha_i$ is a $\pyramid{G}$--geodesic, we have $\dist_{\fontact S}(b_{i-1},a_i)=\dist_{\triangle}(b_{i-1},a_i)$, whence $\dist_{\fontact S}(\retract(b_{i-1}),\retract(a_i))\leq K_2\dist_{\triangle}(b_{i-1},a_i)$.  Finally, $\dist_{\fontact S}(\retract(b_{i-1}),\retract(a_i))\geq\dist_{\triangle}(\retract(b_{i-1}),\retract(a_i))$, so $\dist_{\triangle}(\retract(b_{i-1}),\retract(a_i))\leq K_2\dist_{\triangle}(b_{i-1},a_i)$.  Hence $\dist_{\triangle}(\retract(x),\retract(y))\leq K_2\sum_i|\alpha_i|+K_1n\leq\max\{K_1,K_2\}\dist_\triangle(x,y)$, and we are done.

The other possibilities are that $x\in\cone(g_0H),y\in\cone(g_{n+1}H)$ or $x\in\cone(g_0H),y\in G$, so that we consider paths of the form $\beta_0\alpha_1\beta_1\cdots\alpha_n\beta_n\alpha_{n+1}$ or $\beta_0\alpha_1\beta_1\cdots\alpha_n\beta_n\alpha_{n+1}\beta_{n+1}$.  We now argue as above and conclude that $\dist_\triangle(\retract(x),\retract(y))\leq 2K_1+\max\{K_1,K_2\}\dist_\triangle(x,y)$.
\end{proof}
\renewcommand{\qedsymbol}{$\Box$}
The lemma follows easily from Claim~\ref{claim:Lipschitz_2}.
\end{proof}

\begin{rem}[Choosing $r$]\label{rem:constants}
Fix $r$ as in Proposition \ref{prop:hyperbolic_pyramids} and assume $r\geq10^9\delta EQ$, where $\delta$ is as in Proposition \ref{prop:hyperbolic_pyramids} and $Q$ as in Lemma \ref{lem:quasiconvex_cones}. Let $\pyramid{G}=\pyramid{G}_r$ and, for each $gH$, let $\cone(gH)=\cone(gH,r)\subset\pyramid{G}$.  Later, we will impose additional assumptions on the size of $r$.
\end{rem}

\begin{defn}[Push-off]\label{defn:push_off}
Let $\alpha$ be a geodesic in $\pyramid{G}$ with endpoints in $G$.  The path $\gamma$ in $\fontact S=\cayley(G,\mathcal T)$ is a \emph{push-off} of $\alpha$ if it is obtained by replacing sub-geodesics in hyperbolic cones with geodesics in $\fontact S$; we call these new subpaths \emph{replacement paths}.
\end{defn}

\begin{lem}\label{lem:quasiconvex_hull}
Let $Z$ be a $\delta$--hyperbolic space and let $\mathcal Q$ be a collection of $Q$--quasiconvex subspaces.  Let $\alpha$ be a geodesic of $Z$ and let $H\subseteq Z$ be the union of $\alpha$ and every $Y\in\mathcal Q$ with $\alpha\cap Y\neq\emptyset$.  Then $H$ is $(Q+2\delta)$--quasiconvex.
\end{lem}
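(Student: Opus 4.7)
The plan is to take any two points $x,y\in H$, any geodesic $\gamma=[x,y]$ in $Z$, and show every point of $\gamma$ lies within $Q+2\delta$ of $H$. This reduces to a case analysis on which part of $H$ contains each of $x,y$, combined with standard slim--polygon arguments in a $\delta$--hyperbolic space.

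First I would dispose of the case where $x,y\in\alpha$: the sub-geodesic $\alpha_{x,y}\subseteq\alpha$ and $\gamma$ are two geodesics with the same endpoints, so bigon-thinness gives $\gamma\subseteq\neb_\delta(\alpha)\subseteq\neb_\delta(H)$. Next, the case where $x\in\alpha$ and $y\in Y$ for some $Y\in\mathcal Q$ with $Y\cap\alpha\neq\emptyset$: pick $y'\in Y\cap\alpha$ and form a geodesic triangle on $x,y',y$ whose side $[x,y']$ is the corresponding sub-geodesic of $\alpha$. By $\delta$--slimness, every point of $\gamma$ is within $\delta$ of $[x,y']\cup[y,y']$. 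The first of these lies in $\alpha\subseteq H$; the second is a geodesic with both endpoints in $Y$, hence lies in $\neb_Q(Y)\subseteq\neb_Q(H)$ by $Q$--quasiconvexity. This yields $\gamma\subseteq\neb_{Q+\delta}(H)$.

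The remaining case is the main one: $x\in Y_1$ and $y\in Y_2$ with $Y_i\cap\alpha\neq\emptyset$ (allowing $Y_1=Y_2$). Choose $y_i\in Y_i\cap\alpha$ and consider the geodesic quadrilateral with vertices $x,y_1,y_2,y$, where the side from $y_1$ to $y_2$ is the sub-geodesic of $\alpha$. Since geodesic $n$-gons in $\delta$--hyperbolic spaces are $(n-2)\delta$--thin, the quadrilateral is $2\delta$--thin, so $\gamma\subseteq\neb_{2\delta}([x,y_1]\cup\alpha_{y_1,y_2}\cup[y_2,y])$. The middle piece lies in $\alpha\subseteq H$, while $[x,y_1]\subseteq\neb_Q(Y_1)$ and $[y_2,y]\subseteq\neb_Q(Y_2)$ by $Q$--quasiconvexity, each of which is contained in $\neb_Q(H)$. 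Combining yields $\gamma\subseteq\neb_{Q+2\delta}(H)$.

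I expect no real obstacle here: the lemma is a standard hyperbolic-geometry exercise, and the only mildly delicate point is remembering to invoke the quadrilateral version of thinness in Case~3 (a triangle-only argument loses an extra $\delta$). The bound $Q+2\delta$ is achieved exactly because $\alpha$ itself is a geodesic, so the $y_1y_2$-side of the quadrilateral can be chosen \emph{inside} $H$, absorbing the $\delta$ that would otherwise arise from comparing it to another geodesic.
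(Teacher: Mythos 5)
Your proof is correct and follows essentially the same route as the paper: the same three-way case analysis, with the two-points-in-quasiconvex-sets case handled by $2\delta$--thinness of the geodesic quadrilateral whose $\alpha$--side lies in $H$ and whose two remaining sides lie in $Q$--neighbourhoods of the relevant $Y_i$. The only cosmetic difference is that the paper chooses \emph{closest} points of $Y_i\cap\alpha$ rather than arbitrary ones, which is not needed for the estimate.
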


\begin{proof}
Let $\gamma$ be a geodesic joining points $h,h'\in H$.  If $h,h'\in\alpha$, then we are done since $\gamma\subset\neb_{2\delta}(\alpha)$.  Next suppose $h\in Y,h'\in Y'$ for some $Y,Y'\in\mathcal Q$.  Consider closest points $p,p'$ of $Y\cap\alpha,Y'\cap\alpha$ to $h,h'$ respectively, and consider the geodesic quadrilateral $\gamma\rho'\beta\rho^{-1}$, where $\beta$ is the subgeodesic of $\alpha$ from $p'$ to $p$ and $\rho',\rho$ join $h,h'$ to $p,p'$.  Then every point of $\gamma$ lies either $2\delta$--close to a point of $\beta$ or lies $2\delta$--close to a point of $\rho\cup\rho'$ and hence $(Q+2\delta)$--close to a point of $Y\cup Y'$.  A virtually identical argument works if $h\in\alpha$ and $h'\in Y'\in\mathcal Q$.
\end{proof}

\begin{lem}\label{lem:geodesic_fellow_travel_pyramid}
There exists $C=C(\delta,Q)$ so that, if $r$ is sufficiently large, then the following holds for all $x,y\in\pyramid{G}$ and all $gH$: if $\dist_\triangle(x,\cone(gH)),\dist_\triangle(y,\cone(gH))\leq 2\delta+Q$, and $\dist_\triangle(x,y)\geq C$, then any geodesic from $x$ to $y$ intersects $\cone(gH)$ in an interior point of $[x,y]$.
\end{lem}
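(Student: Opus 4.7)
The plan is to exploit the observation that the apex $v_{gH}$ of the hyperbolic cone $\cone(gH)$ is ``deep'' in $\cone(gH)$: it lies within bounded distance of every point of $\cone(gH)$, but at a definite distance from everything in $\pyramid G \setminus \cone(gH)$. More precisely, I would first record the combinatorial fact that $\dist_\triangle(z, v_{gH}) \geq r+1$ for every vertex $z \in \pyramid G \setminus \cone(gH)$: any path from $v_{gH}$ to such a $z$ must leave $\cone(gH)$, and the only way to do so is via a vertex in $gH \times \{0\} = gH$, which is at distance exactly $r+1$ from the apex (one descends $r$ levels through the horoball and uses one apex edge). Conversely, every vertex of $\cone(gH)$ lies within distance $r+1$ of $v_{gH}$.

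Combined with the hypothesis this gives $\dist_\triangle(x, v_{gH}), \dist_\triangle(y, v_{gH}) \leq K := 2\delta + Q + r + 1$. For any geodesic $\gamma$ from $x$ to $y$ in the $\delta$-hyperbolic space $\pyramid G$, the standard Gromov-product estimate yields
\[
\dist_\triangle(v_{gH}, \gamma) \,\leq\, (x|y)_{v_{gH}} + 4\delta \,\leq\, K - \tfrac{1}{2}\dist_\triangle(x,y) + 4\delta \,\leq\, K - C/2 + 4\delta.
\]
Setting $C = 12\delta + 2Q + 2$ makes this at most $r$, since $r \geq 4\delta$ by the standing assumption $r \geq 10^9 \delta E Q$ from Remark~\ref{rem:constants}. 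Hence some vertex $q$ of $\gamma$ satisfies $\dist_\triangle(q, v_{gH}) \leq r$, and the separation property above forces $q \in \cone(gH)$.

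It remains to ensure that $q$ can be taken to be an interior vertex of $\gamma$. If $q \notin \{x, y\}$, we are done. Otherwise, say $q = x$: then $\dist_\triangle(x, v_{gH}) \leq r$, but $gH$ lies at distance precisely $r+1$ from $v_{gH}$, forcing $x \in \cone(gH) \setminus gH$. Vertices in $\cone(gH) \setminus gH$ are ``insulated'' in $\pyramid G$: every edge of $\pyramid G$ incident to such a vertex (whether at a positive level of the horoball or at the apex itself) stays inside $\cone(gH)$. Therefore the next vertex $w_1$ of $\gamma$ after $x$ lies in $\cone(gH)$, and since $\gamma$ has length at least $C \geq 2$, $w_1 \neq y$, so $w_1$ is an interior vertex. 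The case $q = y$ is symmetric.

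The main work is in establishing the separation property of $v_{gH}$ and applying the Gromov-product estimate correctly; beyond that the argument is a direct computation. The only subtlety is the endpoint case, which is resolved by the insulation of $\cone(gH) \setminus gH$ from the rest of $\pyramid G$.
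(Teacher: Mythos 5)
Your proof is correct, but it follows a genuinely different route from the one in the paper. You use the apex $v_{gH}$ as a coarse center for $\cone(gH)$: every point of the cone lies within $r+1$ of the apex, while any path from the apex to a point outside the cone must exit through a level-zero vertex of $gH$ and hence has length more than $r$; so the hypothesis places $x$ and $y$ within $K=r+2\delta+Q+1$ of $v_{gH}$, and the Gromov-product estimate forces any geodesic $[x,y]$ to pass within $K-C/2+4\delta\leq r$ of the apex once $C\geq 12\delta+2Q+2$, with the insulation of $\cone(gH)\setminus gH$ disposing of the degenerate case where the near point is an endpoint. The paper instead argues by shortcutting: using quasiconvexity of the cone (Lemma~\ref{lem:quasiconvex_cones}) it extracts a long subsegment $[a,b]$ of the geodesic staying $(2\delta+Q)$--close to $\cone(gH)$, notes that if this subsegment avoided the cone it would project to a path of length at most $KC+K$ in $gH$, and then shortcuts that path through the combinatorial horoball in length $O(\log(KC+K))$, contradicting $\dist_\triangle(a,b)\geq C-20\delta$. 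Your argument is more elementary --- it needs neither the retraction behind Lemma~\ref{lem:quasiconvex_cones} nor the logarithmic shortcut, and it yields an explicit $C$ visibly independent of $r$ --- though it leans on the presence of the apex, whereas the paper's argument exploits only the horoball structure and locates the entry point down near $gH$ itself. Two cosmetic points: the clause ``since $r\geq 4\delta$'' plays no role in your inequality $K-C/2+4\delta\leq r$, which holds identically once $C\geq 12\delta+2Q+2$; and the constant $4\delta$ in the estimate $\dist_\triangle(v_{gH},\gamma)\leq (x|y)_{v_{gH}}+4\delta$ depends on the convention for $\delta$--hyperbolicity, but any discrepancy is absorbed by enlarging $C$.
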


\begin{proof}
Let $x',y'\in\cone(gH)$ satisfy $\dist_\triangle(x,x'),\dist_\triangle(y,y')\leq 2\delta+Q$ and let $\gamma,\gamma'$ be geodesics joining $x,y$ and $x',y'$ respectively.  Let $\alpha,\beta$ be geodesics joining $x,x'$ and $y,y'$ respectively.  Examining the quadrilateral $\alpha\gamma'\beta^{-1}\gamma^{-1}$ provides $a,b\in\gamma$ so that $\dist_\triangle(a,x),\dist_\triangle(b,y)\leq10\delta$, while $\dist_\triangle(a,\cone(gH)),\dist_\triangle(b,\cone(gH))\leq 2\delta+Q$ (by Lemma~\ref{lem:quasiconvex_cones}), and $C\geq\dist_\triangle(a,b)\geq C-20\delta$.  Suppose that the subpath $[a,b]$ of $\gamma$ from $a$ to $b$ does not enter $\cone(gH)$ (except possibly at the endpoint).  Then this path projects to a path of length at most $KC+K$, for $K=K(Q)$, that lies in $gH$ and joins points $a',b'$ respectively at distance $\leq2\delta+Q$ from $a,b$.  Thus, if $r$ is sufficiently large, we have a path of length $2\log_2(KC+K)+1$ in $\cone(gH)$ joining $a',b'$, whence $\dist_\triangle(a,b)\leq4\delta+Q+2\log_2(KC+K)+1$.  On the other hand, $\dist_\triangle(a,b)\geq C-20\delta$, and this is a contradiction provided $C$ was chosen sufficiently large (in terms of $Q$ and $\delta$).
\end{proof}

\begin{lem}[Push-offs are close to $\fontact S$--geodesics]\label{lem:push-off}
Suppose that $r$ is sufficiently large, in terms of $M,\delta$.  Then
there exists $D$, independent of $r$, so that any push-off of a
geodesic in $\pyramid{G}$ that starts and ends in $G$ is a
$(D,D)$--quasigeodesic and lies within Hausdorff distance $D$ (in
$\fontact S$) from a geodesic in $\fontact S$ with the same endpoints.
\end{lem}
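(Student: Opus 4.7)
The plan is to first establish that the push-off $\gamma$ is a uniform quasigeodesic in $\fontact S$; since $\fontact S$ is $\delta$-hyperbolic as the top-level space of the HHG structure on $G$, the Hausdorff closeness to any $\fontact S$-geodesic with the same endpoints then follows from the standard fellow-traveling property of quasigeodesics in hyperbolic spaces, with constants depending only on the quasigeodesic constants and $\delta$.

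Decompose the $\pyramid{G}$-geodesic $\alpha$ from $x$ to $y$ as $\alpha = \alpha_0 \beta_1 \alpha_1 \cdots \beta_k \alpha_k$, where each $\beta_i \subset \cone(g_i H)$ is a maximal subgeodesic lying inside a hyperbolic cone and each $\alpha_i$ is the complementary segment, which is automatically an $\fontact S$-geodesic between its endpoints. Let $a_i, b_i \in g_i H$ denote the entry and exit points of $\beta_i$, so that by definition $\gamma = \alpha_0 \beta'_1 \alpha_1 \cdots \beta'_k \alpha_k$, where $\beta'_i$ is an $\fontact S$-geodesic from $a_i$ to $b_i$. Remark~\ref{rem:geom_sep} gives that each coset $g_i H$ is $M$-quasiconvex in $\fontact S$, so each $\beta'_i$ lies in the $M$-neighborhood of $g_i H$ in $\fontact S$.

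With $r$ large enough that Proposition~\ref{prop:hyperbolic_pyramids} and Lemma~\ref{lem:geodesic_fellow_travel_pyramid} apply, Lemma~\ref{lem:geodesic_fellow_travel_pyramid} together with maximality of the $\beta_i$'s lets us consolidate any cone excursions that are $\pyramid G$-close, so that the cosets $\{g_i H\}$ become pairwise distinct and each connecting segment $\alpha_i$ has $\pyramid{G}$-length at least the threshold $C$ from that lemma. Consequently, $\gamma$ is a \emph{broken $\fontact S$-geodesic} that alternates between genuine $\fontact S$-geodesic segments $\alpha_i$, which (being $\pyramid G$-geodesic sub-arcs avoiding cones) cannot deeply backtrack into any coset without contradicting geodesicity of $\alpha$ via the hyperbolic-cone shortcut, and segments $\beta'_i$ confined to the $M$-neighborhoods of pairwise distinct cosets.

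To upgrade this description to a uniform quasigeodesic, we invoke the relative hyperbolicity of $(\cayley(G,\mathcal T), \{gH\}_{g \in G})$, which holds by Remark~\ref{rem:geom_sep} together with \cite[Theorem~6.4]{Sisto:metric_relhyp} and \cite[Lemma~4.3]{DrutuSapir}. In a relatively hyperbolic space, a broken geodesic whose ``peripheral'' pieces are confined to uniform neighborhoods of pairwise distinct peripheral cosets and whose ``transversal'' pieces do not deeply penetrate peripheral cosets is a uniform quasigeodesic, by the bounded coset penetration property. The resulting quasigeodesic constants depend only on $\delta$, $M$, $Q$, and the uniform constants of relative hyperbolicity, and crucially not on $r$. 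The main obstacle is verifying the ``no deep back-penetration'' hypothesis on the $\alpha_i$'s with $r$-independent constants; this is handled directly using $\pyramid{G}$-geodesicity of $\alpha$ and the fact that two points in a coset that are far in $\fontact S$ admit a much shorter detour through the corresponding hyperbolic cone, so any deep penetration by $\alpha_i$ would produce a competing $\pyramid G$-path shorter than $\alpha$.
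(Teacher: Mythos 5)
Your overall architecture matches the paper's: show the push-off is a uniform quasigeodesic in $\fontact S$ and then apply stability of quasigeodesics in the hyperbolic space $\fontact S=\cayley(G,\mathcal T)$. The decomposition of the $\pyramid{G}$--geodesic, the use of $M$--quasiconvexity of the cosets, and the consolidation of excursions into a common coset via Lemma~\ref{lem:geodesic_fellow_travel_pyramid} are all reasonable. The gap is in the central step, where you assert that ``a broken geodesic whose peripheral pieces are confined to uniform neighborhoods of pairwise distinct peripheral cosets and whose transversal pieces do not deeply penetrate peripheral cosets is a uniform quasigeodesic, by the bounded coset penetration property.'' This is not a quotable fact, and as stated it is false: BCP is a property \emph{of} quasigeodesics sharing endpoints, not a criterion \emph{for} quasigeodesicity, and the hypotheses you list do not preclude global backtracking of the concatenation --- for instance $\alpha_i$ could retrace $\beta_i'$ while staying just above whatever ``deep penetration'' threshold is fixed, giving a path of length $2L$ with displacement $O(1)$. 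The standard result in this neighborhood is the lifting/de-electrification lemma, which applies to geodesics \emph{in the coned-off Cayley graph} $\cayley(G,\mathcal T\cup H)$ and says that their lifts are uniform quasigeodesics. Your path is instead a push-off of a geodesic in $\pyramid{G}_r$, and the natural comparison between the $\pyramid{G}_r$--metric and the coned-off metric on $G$ distorts distances by a factor on the order of $r$ in one direction; showing that the excursion pattern of a $\pyramid{G}_r$--geodesic (which cosets it enters, and where) agrees, with $r$--independent constants, with that of a relative geodesic is essentially the content of the lemma being proved, not something that can be assumed.

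The paper sidesteps all of this by working entirely inside $\pyramid{G}$: it forms $H=[a,b]\cup\bigcup_i\cone(g_iH)$, shows it is $(Q+2\delta)$--quasiconvex via Lemma~\ref{lem:quasiconvex_hull}, and composes the closest-point projection of $\pyramid{G}$ onto $H$ with projections onto the replacement geodesics $\hat\beta_i$ to build an explicit coarsely Lipschitz retraction $q\colon\fontact S\to I$ onto the domain of the push-off; the existence of such a retraction immediately forces the push-off to be a quasigeodesic with constants depending only on $M,\delta,Q$ and not on $r$. To salvage your route you would need to prove, rather than cite, a quasigeodesicity criterion whose hypotheses (including a no-backtracking condition relating consecutive pieces) you can verify from $\pyramid{G}$--geodesicity with $r$--independent constants; at that point you would effectively be reconstructing the paper's retraction argument.
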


\begin{proof}
Let $[a,b]=\alpha_0\beta_1\cdots\alpha_{n-1}\beta_n\alpha_{n+1}$ be a $\pyramid{G}$--geodesic with $a,b\in\fontact S$, where each $\alpha_i$ is a $\fontact S$--geodesic and each $\beta_i$ lies in some $\cone(g_iH)$.  For each $i$, let $\hat\beta_i$ be a $\fontact S$--geodesic joining the endpoints of $\beta_i$, so that $\gamma=\alpha_0\hat\beta_1\cdots\alpha_{n-1}\hat\beta_n\alpha_{n+1}$ is a push-off of $[a,b]$.  Let $H=[a,b]\cup_i\cone(g_iH)$, so that $H$ is $(Q+2\delta)$--quasiconvex by Lemma~\ref{lem:quasiconvex_hull}.  Let $p\colon \pyramid{G}\to H$ be the projection, and fix a $1$--Lipschitz parameterization $\gamma\colon I\to\fontact S$ of $\gamma$.

Assuming $r$ is large enough, we now define a coarsely Lipschitz projection $q\colon \fontact S\to I$, with constants bounded in terms of $M,\delta$. The existence of such map $q$ easily implies that $\gamma$ is a quasigeodesic with constants depending on $M,\delta$ only, and hence that it is also Hausdorff close to a geodesic, as required.  Write $I=A_0\cup B_1\cup\cdots\cup A_{n-1}\cup B_n\cup A_{n+1}$, where $\gamma|_{A_i}=\alpha_i$ and $\gamma|_{B_i}=\hat\beta_i$.  Given $g\in\fontact S$, if $p(g)\in\alpha_i$, then let $q(g)$ be chosen in $A_i$ so that $\gamma(q(g))=p(g)$.  Otherwise, $p(g)\in g_iH$ for some $i$, and we let $q(g)$ be chosen in $B_i$ so that $\hat\beta_i(q(g))$ is the closest-point projection of $p(g)$ on $\hat\beta_i$, i.e., $\dist_{\fontact S}(p(g),\hat\beta_i(q(g)))=\dist_{\fontact S}(p(g),\hat\beta_i)$. 

Our goal is now to show that $q$ is coarsely Lipschitz whenever $r$ is large enough. It suffices to bound $|q(g)-q(h)|$ for $g,h\in\fontact S$ satisfying $\dist_{\fontact S}(g,h)\leq 1$.

Let $g,h\in\fontact S$ satisfy $\dist_{\fontact S}(g,h)\leq 1$.  Then there exists $K=K(Q,\delta)$ so that $\dist_{\triangle}(p(g),p(h))\leq K$. First of all, we show that we can bound $|q(g)-q(h)|$ whenever $r$ is large enough in any of the following cases:
\begin{itemize}
 \item $p(g),p(h)$ both belong to some $g_iH$,
 \item $p(g),p(h)$ each lie on some $\alpha_i$.
\end{itemize}

In fact, in the first case we can use the fact that, provided $r$ is much larger than $K$, we have $\dist_{\fontact S}(p(g),p(h))\leq K'=K'(K)$, combined with the fact that the closest point projection on $\hat\beta_i$ is coarsely Lipschitz. In the second case, the fact that $p(g),p(h)$ are connected by a subgeodesic of $[a,b]$ of length at most $K$ again ensures that $\dist_{\fontact S}(p(g),p(h))\leq K'=K'(K)$ whenever $r$ is large enough.

Up to switching $g,h$, there is only one case left to analyze: Suppose that there exists $i$ so that $p(g)\in g_iH,p(h)\not\in g_iH$; then $p(h)\in g_jH\cup\alpha_j$ for some $j$.  Let $a'\in g_iH$ be the entrance point in $g_iH$ of the subpath of $[a,b]$ joining $\alpha_j$ to $g_iH$.  Then we claim that there exists $C'=C'(\delta,Q)$ so that $\dist_{\fontact S}(a',p(g))\leq C'$. Since a similar statement holds for $h$ as well if $p(h)\in g_jH$, following arguments similar to the ones above we can then easily get the required bound on $|q(g)-q(h)|$ provided $r$ is much larger than $K$ and $C'$.

Consider a geodesic quadrilateral determined by $p(g),p(h),a',c$, in that order, where $c\in[a,b]$ and $[c,a']$ intersects $\cone(g_iH)$ only in $a'$, and either $c,p(h)\in g_jH$ or $c=p(h)$.  Choose $s$ (independent of $r$) so that $\diam(\neb_{Q}(\cone(gH))\cap\neb_{2\delta+2Q}(\cone(g'H)))\leq s$ whenever $gH\neq g'H$.  Suppose by contradiction that $\dist_\triangle(a',p(g))>10\delta+s+C+K$, for $C$ as in Lemma \ref{lem:geodesic_fellow_travel_pyramid}.  Choose $x,y\in[p(g),a']$ with $\dist_\triangle(p(g),x)=3\delta+K$ and $\dist_\triangle(p(g),y)=5\delta+K+s$.  Then, since $\dist_\triangle(x,[p(g),p(h)]), \dist_\triangle(y,[p(g),p(h)])\geq  K+3\delta- K>2\delta$, we have that $x,y$ are $2\delta$--close to $[a',c]$ or $[p(h),c]$.  However, the former is ruled out by Lemma~\ref{lem:geodesic_fellow_travel_pyramid} since $\dist_\triangle(a',x),\dist_\triangle(a',y)>C+2\delta$ and the fact that $[a',c]$ does not have interior points in $\cone (g_iH)$.  Hence we must have $p(h)\neq c$, so that $p(h),c\in g_jH$, and $\dist_\triangle(x,[p(h),c]),\dist_\triangle(y,[p(h),c])\leq 2\delta$, which is impossible, in view of the definition of $s$, since $\dist_\triangle(x,g_iH),\dist_\triangle(y,g_iH)\leq Q$ and $\dist_\triangle(x,y)>s$. Hence, we showed $\dist_{\fontact S}(a',p(g))\leq C'$ for $C'=10\delta+s+C+K$, as required.
\end{proof}

\subsubsection{An alternative HHG structure on $G$}
It will be convenient to add the cosets of $H$ to the HHG structure of
$G$ and, to do so, we also must replace $\fontact S$ with $\pyramid{G}$. The index set of the 
new structure will include $\mathfrak S$ as a proper subset. For each 
element of $W\in\mathfrak S-\{S\}$, the associated hyperbolic space 
$\fontact W$ is the same in both structures; the hyperbolic space 
associated to $S$ will differ, so we denote the two spaces by 
$\fontact_{\mathfrak S} S$ and $\fontact_{\mathfrak T} S$; sometimes 
for emphasis we will, more generally, use the notation 
$\fontact_{\mathfrak S} W$ and $\fontact_{\mathfrak T} W$ to 
emphasize which structure we are considering at the time.

\begin{prop}\label{prop:aux_HHG}
 The following is an HHG structure on $G$:
 \begin{itemize}
  \item the index set, $\mathfrak T$, contains all the elements of 
  $\mathfrak S$ together with one element for each coset $\{gH\}_{g\in G}$;
  \item $\nest$ and $\orth$ restricted to $\mathfrak S$ are unchanged,
  each $gH$ is only nested into $S$ and not orthogonal to anything;
  \item $\fontact_{\mathfrak T} S$ is $\pyramid{G}$, while  
  $\fontact_{\mathfrak T} U = \fontact_{\mathfrak S} U$ for 
  $U\in\mathfrak S-\{S\}$. Set $\fontact_{\mathfrak T} gH=\cayley(gH, \mathcal T \cap H)$;
  \item $\rho^U_V$ is unchanged for $U,V\in\mathfrak S-\{S\}$ (when defined);
  \item for $U\in\mathfrak S$, the map
  $\rho^S_U\colon \pyramid{G}\to\fontact U$ is unchanged on $\fontact
  S\subset\pyramid{G}$, while $\rho^S_U((gh,s))=\rho^S_U(gh)$ for each
  $g\in G,h\in H,s< \infty$ and $\rho^S_U(v_{gH})=\cup_{h\in H}\rho^S_U(gH)$;
  \item $\rho^S_{gH}(x)$ is the set of entrance points in $\cone(gH)$ of all geodesics $[x,v_{gH}]$, when $x\not\in\cone(gH)$, and otherwise $\rho^S_{gH}(x)=gH$, while $\rho^{gH}_S=v_{gH}$;
  \item $\rho^{U}_{gH}$ is $\rho^S_{gH}(\rho^U_S)$ for $U\in\mathfrak 
  S - \{S\}$, while $\rho^{gH}_U=\pi_U(gH)$ for $U\in\mathfrak S-\{S\}$;
   \item $\pi_U$ is unchanged for $U\in \mathfrak S$ and is the composition $\rho^S_{gH}\circ\pi_{S}$ for each $gH$.
 \end{itemize}
\end{prop}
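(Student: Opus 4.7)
The plan is to verify each axiom of Definition~\ref{defn:space_with_distance_formula} for the proposed structure $(G,\mathfrak T)$ and to check that the $G$-action by left multiplication acts by HHS automorphisms with the usual coarse commutativity. The index set $\mathfrak T$ is obviously countable; the nesting relation has the same unique maximum $S$ and only augments $\mathfrak S$ by adding the cosets $\{gH\}$ at level two (each nested only into $S$ and pairwise transverse, with no new orthogonality). Finite complexity therefore holds with bound $\max\{\xi(\mathfrak S),2\}$. Coarse Lipschitzness of $\pi_{gH}$ on the original metric on $G$ follows by combining the coarse Lipschitzness of $\pi_S$ (which, in the new structure, takes values in $\pyramid G$, not $\fontact_{\mathfrak S}S$) with the coarse Lipschitzness of the ``entrance in $\cone(gH)$'' map from $\pyramid G$ to $\cone(gH)$; the latter reduces, via Lemma~\ref{lem:quasiconvex_cones} and the standard thin-triangle projection lemma in $\delta$-hyperbolic spaces, to quasiconvexity of $\cone(gH)\subset\pyramid G$. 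For $\pi_S$ itself, replacing $\fontact_{\mathfrak S}S$ by $\pyramid G$ only decreases distances, so coarse Lipschitzness is preserved.

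The main step is checking consistency, bounded geodesic image, and large links for the new index elements $gH$. For consistency between two distinct cosets $gH$ and $g'H$, I would argue as follows: $\rho^{g'H}_{gH}$ is the set of entry points into $\cone(gH)$ of geodesics $[x,v_{g'H}]$ for suitable $x$, and the geometric separation property recorded in Remark~\ref{rem:geom_sep} together with Lemma~\ref{lem:geodesic_fellow_travel_pyramid} ensures that if some $x\in G$ has $\pi_S(x)$ (equivalently its image in $\pyramid G$) far from both $\rho^{gH}_S$ and $\rho^{g'H}_S$, then a geodesic from $x$ to either apex cannot enter the other cone deeply, so one of the two projections must be small. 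For the mixed case $U\in\mathfrak S-\{S\}$ versus $gH$, I would combine the definition $\rho^U_{gH}=\rho^S_{gH}(\rho^U_S)$ with Lemma~\ref{lem:push-off}: any $\pyramid G$-geodesic realizing the distance $\dist_{gH}(x,\rho^U_{gH})$ can be pushed off to a near-geodesic in $\fontact_{\mathfrak S}S$ with Hausdorff error depending only on $\delta, Q, M$, and then the original consistency axiom for $(G,\mathfrak S)$ applies.

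Bounded geodesic image for a domain $U\in\mathfrak S-\{S\}$ and a geodesic $\gamma$ in $\pyramid G$ is handled by push-offs: if $\gamma$ has endpoints in $G$, Lemma~\ref{lem:push-off} produces a uniform quasi-geodesic in $\fontact_{\mathfrak S}S$ with the same endpoints and Hausdorff-close to an $\fontact_{\mathfrak S}S$-geodesic, and we invoke bounded geodesic image for $\mathfrak S$; endpoints in cones are reduced to this case by truncating at entry/exit points, since $\rho^S_U$ of points inside $\cone(gH)$ was defined to coincide with that of the associated coset point, and by Lemma~\ref{lem:geodesic_fellow_travel_pyramid} we can absorb the cost in the constant $E$. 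Bounded geodesic image for a domain $gH$ asks that geodesics of $\pyramid G$ either project into a bounded subset of $gH$ or pass $E$-close to $\rho^{gH}_S=v_{gH}$, and this is built directly into the definitions of $\pi_{gH}$ and $\rho^S_{gH}$ via entry points. Large links for the new cosets follows from the fact that distance in $\cayley(gH,\mathcal T\cap H)$ coincides with the relative $\hat\dist$-distance inside $H$, which is proper by the hyperbolic embedding hypothesis, together with large links for $\mathfrak S$.

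Partial realization and uniqueness are the remaining axioms. Partial realization is almost immediate: a family of pairwise-orthogonal elements in $\mathfrak T$ is still entirely contained in $\mathfrak S$ (since the $gH$ are orthogonal to nothing), so the $\mathfrak S$-partial realization point works, and one checks the additional requirement $\dist_{gH}(x,\rho^{V_j}_{gH})\leq\alpha$ using the definition $\rho^{V_j}_{gH}=\rho^S_{gH}(\rho^{V_j}_S)$ and coarse Lipschitzness of $\rho^S_{gH}$. Uniqueness follows by contradiction: if $\dist_G(x,y)$ is large but all $\mathfrak T$-coordinates agree, then in particular all $\mathfrak S$-coordinates agree except possibly for $S$; the uniqueness axiom for $(G,\mathfrak S)$ forces $\dist_{\fontact_{\mathfrak S}S}(x,y)$ to be large, and an application of Lemma~\ref{lem:push-off} converts this into a large $gH$-projection (if the $\fontact_{\mathfrak S}S$-geodesic penetrates a cone deeply) or into a large $\pyramid G$-distance directly. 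Equivariance under left multiplication of $G$ is clear on the new objects from the $G$-equivariance of the coset structure and the construction of $\pyramid G$. The hardest step is the consistency axiom for the mixed pair $(U,gH)$ with $U\in\mathfrak S-\{S\}$ and $U\transverse gH$, where one must carefully combine the push-off lemma with consistency in $\mathfrak S$; everything else is a bookkeeping application of hyperbolicity of $\pyramid G$ and quasiconvexity of the cones.
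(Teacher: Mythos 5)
Your overall architecture matches the paper's (axiom-by-axiom verification using the push-off lemma, quasiconvexity of the cones, and hyperbolicity of $\pyramid{G}$), and your treatment of consistency, partial realization and uniqueness is essentially the intended one. But there is a genuine gap in your verification that $\pi_{gH}$ is coarsely Lipschitz, and it stems from a metric confusion. You reduce the claim to quasiconvexity of $\cone(gH)\subset\pyramid{G}$ plus the standard nearest-point-projection lemma in the $\delta$--hyperbolic space $\pyramid{G}$. That lemma only controls the displacement of entrance points in the metric $\dist_\triangle$ induced from $\pyramid{G}$ --- but $\cone(gH)$ has $\dist_\triangle$--diameter at most about $2r+2$ (any two points of $gH$ are joined through the apex), so bounding $\dist_\triangle$ of the images says nothing about distances in $\fontact_{\mathfrak T}gH=\cayley(gH,\mathcal T\cap H)$, which is the metric the axiom is about and which is typically unbounded. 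The correct argument must pass through $\fontact_{\mathfrak S}S=\cayley(G,\mathcal T)$: one uses the push-off lemma (Lemma~\ref{lem:push-off}) to replace a $\pyramid{G}$--geodesic by an $\fontact_{\mathfrak S}S$--quasigeodesic, Lemma~\ref{lem:entry} to identify entrance points of geodesics $[x,v_{gH}]$ with $\fontact_{\mathfrak S}S$--nearest-point projections to $gH$ up to bounded error, $M$--quasiconvexity of $gH$ in $\fontact_{\mathfrak S}S$ (Remark~\ref{rem:geom_sep}), and the properness of $(H,\hat\dist)$ to convert $\dist_{\mathcal T}$--bounds on $H$ into $\dist_{\mathcal T\cap H}$--bounds. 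The same confusion undermines your bounded geodesic image verification for the domain $gH$: establishing that the relevant entrance points $a_x,a_y$ satisfy $\dist_\triangle(a_x,a_y)\leq100\delta$ is not the conclusion; one must still argue that, since $100\delta<2r$, a connecting geodesic stays in the horoball part of the cone, whence $\dist_{gH}(a_x,a_y)$ is bounded.

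A second gap is in large links for the $\nest$--maximal domain. The bound $N=\lambda\dist_{\fontact_{\mathfrak T}S}(a,b)+\lambda$ is now computed in $\pyramid{G}$, where $\dist_\triangle(a,b)$ can be far smaller than $\dist_{\fontact_{\mathfrak S}S}(a,b)$, so you cannot simply quote large links for $(G,\mathfrak S)$; and you must additionally bound the number of cosets $gH$ with $\dist_{gH}(a,b)$ large linearly in $\dist_\triangle(a,b)$. Properness of $\hat\dist$ on $H$ is not the relevant input here. What is needed is a decomposition of an $\fontact_{\mathfrak S}S$--geodesic from $a$ to $b$ into subpaths fellow-travelling the ``deep'' cosets and complementary arcs, together with geometric separation of the cosets (Remark~\ref{rem:geom_sep}) to show that the deep cosets overlap pairwise boundedly along the geodesic, hence number at most $2\dist_\triangle(a,b)$; the complementary arcs have total length comparable to $\dist_\triangle(a,b)$ and feed into the large link lemma for $(G,\mathfrak S)$.
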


Before proving the proposition, we record two lemmas:

\begin{lem}\label{lem:entry}
There exists $C\geq1$, independent of $r$, so that for each $x,g\in G$, the set of entry points of $\pyramid{G}$--geodesics $[x,v_{gH}]$ in $\cone(gH)$ is within Hausdorff distance $C$ of $\{x'\in gH: \dist_{\fontact S}(x,x')=\dist_{\fontact S}(x,gH)\}$.  Hence there exists $C$ with $\diam(\rho^S_{gH}(a))\leq C$ for all $a,g\in G$.
\end{lem}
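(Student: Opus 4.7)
The plan is to study a $\pyramid{G}$--geodesic $\gamma$ from $x$ to $v_{gH}$ by decomposing it and analyzing its base and cone portions separately. Write $\gamma=\gamma_1\gamma_2$, where $\gamma_1$ runs from $x$ to the first entry point $y\in gH$ into $\cone(gH)$ and $\gamma_2$ runs from $y$ to $v_{gH}$ inside $\cone(gH)$. Since reaching the apex from a level--$0$ vertex in the combinatorial horoball requires at least $r+1$ edges, $|\gamma_2|\geq r+1$. Comparing $|\gamma|$ with the alternative path from $x$ to an arbitrary $y'\in gH$ via a $\pyramid{G}$--geodesic followed by the straight-up cone climb of length $r+1$, geodesicity forces $|\gamma_1|\leq\dist_\triangle(x,y')$ for every $y'\in gH$, so $|\gamma_1|=\dist_\triangle(x,gH)$ and $|\gamma_2|=r+1$. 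In particular, the entry point $y$ is a $\pyramid{G}$--closest point of $x$ in $gH$.

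Next I would bound the $\fontact S$--diameter of the entry-point set using $\delta$--hyperbolicity of $\pyramid{G}$ from Proposition~\ref{prop:hyperbolic_pyramids}. Two $\pyramid{G}$--geodesics from $x$ to $v_{gH}$ fellow travel, and by the previous paragraph their entry points occur at the common parameter $\dist_\triangle(x,gH)$, hence are within $O(\delta)$ of each other in $\pyramid{G}$. Both being level--$0$ vertices of $\cone(gH)$, a short $\pyramid{G}$--path connecting them uses at most $O(\delta)$ edges, each spanning at most $2^{O(\delta)}$ units in $\fontact S$ by the cone construction. Consequently, any two entry points are within $\fontact S$--distance $C_1=C_1(\delta)$, which both establishes the bounded $\fontact S$--diameter of the entry-point set and gives the second assertion of the lemma about $\rho^S_{gH}(a)$.

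Finally I would relate entry points to $\fontact S$--closest points by applying Lemma~\ref{lem:push-off} to $\gamma_1$: its push-off is a $(D,D)$--quasigeodesic in $\fontact S$ within Hausdorff distance $D$ of the $\fontact S$--geodesic $\beta$ from $x$ to $y$. Since $gH$ is $M$--quasiconvex in $\fontact S$ by Remark~\ref{rem:geom_sep}, $\beta$ passes within bounded distance of any $\fontact S$--closest point $y^*\in gH$ of $x$. When $\dist_\triangle(x,gH)=\dist_{\fontact S}(x,gH)$ such a $y^*$ is simultaneously a $\pyramid{G}$--closest point, hence an entry point, and the Hausdorff comparison follows from the diameter bound above. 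The main obstacle I expect is the opposite case, where $\gamma_1$ genuinely shortcuts through some $\cone(g'H)$ with $g'H\neq gH$, so that the $\pyramid{G}$-- and $\fontact S$--closest points of $x$ in $gH$ may be distinct. To handle this I would invoke the geometric separation of $H\hhhg(G,\mathfrak S)$ recorded in Remark~\ref{rem:geom_sep} to confine both $y$ and $y^*$ to a common bounded-diameter overlap region $gH\cap\neb_\epsilon(g'H)$, supplying the required $\fontact S$--bound on $\dist_{\fontact S}(y,y^*)$.
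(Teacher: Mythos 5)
Your first two steps are correct, and your overall strategy is the one the paper intends (its entire proof is a one-line citation of $M$--quasiconvexity from Remark~\ref{rem:geom_sep} and Lemma~\ref{lem:push-off}). In particular: since $\dist_\triangle(y',v_{gH})=r+1$ for every $y'\in gH$, entry points are exactly the $\dist_\triangle$--closest points of $x$ in $gH$; and because $r\gg\delta$, a $\pyramid{G}$--path of length $O(\delta)$ between two points of $G$ cannot climb above level $O(\delta)$ in any horoball nor reach an apex, so each of its edges displaces at most $2^{O(\delta)}$ in $\fontact S$ and the entry-point set has bounded $\fontact S$--diameter.

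The gap is in your final step, precisely where you flag it, and your proposed repair does not work. In the case $\dist_\triangle(x,gH)<\dist_{\fontact S}(x,gH)$ you want to confine both the entry point $y$ and the $\fontact S$--closest point $y^*$ to a bounded overlap $gH\cap\neb_\epsilon(g'H)$; but there is no reason for $y$ to lie near $g'H$ --- after its last excursion through $\cone(g'H)$ the geodesic $\gamma_1$ may travel arbitrarily far in $\fontact S$ before reaching $gH$, and $\gamma_1$ may shortcut through many cosets, so you have not even singled out which $g'H$ is relevant. The fact that closes the argument is the one you state and then abandon: quasiconvexity of $gH$ and the closest-point property of $y^*$ force the $\fontact S$--geodesic $[x,y]$, hence the push-off $\hat\gamma_1$, to pass within uniform distance of $y^*$ at some point $p$. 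If $p$ lies on a subpath of $\gamma_1$ contained in $\fontact S$, then $\dist_\triangle(x,y^*)\leq\dist_\triangle(x,p)+O(1)$, and comparing with $\dist_\triangle(x,p)+\dist_\triangle(p,y)=\dist_\triangle(x,y)=\dist_\triangle(x,gH)\leq\dist_\triangle(x,y^*)$ gives $\dist_\triangle(p,y)=O(1)$, hence $\dist_\triangle(y,y^*)=O(1)$, which your level argument converts into an $\fontact S$--bound. If instead $p$ lies on a replacement path for an excursion of $\gamma_1$ into some $\cone(g'H)$ with $g'H\neq gH$, one must use geometric separation --- but applied to $y^*$ and the \emph{exit point} $b\in\gamma_1\cap g'H$ of that excursion (using that the portion of $[x,y]$ beyond $y^*$ fellow-travels $gH$) to get $\dist_{\fontact S}(y^*,b)=O(1)$, and then rerun the previous case with $b$ in place of $p$. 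Without this case analysis, the comparison between $\dist_\triangle$--closest points and $\dist_{\fontact S}$--closest points is not established.
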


\begin{proof}
This follows from $M$--quasiconvexity of $gH$ in $\fontact S$ (Remark~\ref{rem:geom_sep}) and Lemma~\ref{lem:push-off}.
\end{proof}

\begin{lem}\label{lem:bounded_projections_he}
There exists $C\geq1$, independent of $r$, so that:
\begin{enumerate}
 \item $\diam_{\fontact U}(\pi_U(gH))\leq C$ for all $g\in G$ for all $U\in\mathfrak S-\{S\}$;\label{item:bounded_proj_on_s}  
 \item for all $g'H\neq gH$, $x,y\in g'H$, and all geodesics $[x,v_{gH}],[y,v_{gH}]$ of $\pyramid{G}$ from $x,y$ to $v_{gH}$, the entry points $a_x,a_y$ of $[x,v_{gH}],[y,v_{gH}]$ in $\cone(gH)$ satisfy $\dist_{gH}(a_x,a_y)\leq C$.\label{item:bounded_proj_on_other_cosets}
\end{enumerate}
\end{lem}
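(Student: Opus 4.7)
The two parts require different strategies: part~\ref{item:bounded_proj_on_other_cosets} reduces to standard hyperbolic projection lemmas, while part~\ref{item:bounded_proj_on_s} combines bounded geodesic image with properness of $\hat\dist$ on $H$.

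For part~\ref{item:bounded_proj_on_other_cosets}, Lemma~\ref{lem:entry} identifies the entry points $a_x,a_y$, up to a uniform error $C$, with the $\fontact S$--nearest-point projections of $x,y\in g'H$ onto $gH$. Both $gH$ and $g'H$ are $M$--quasiconvex in the $\delta$--hyperbolic space $\fontact S$ and are geometrically separated (Remark~\ref{rem:geom_sep}). A standard lemma in hyperbolic geometry then gives that the nearest-point projection of $g'H$ onto $gH$ has $\fontact S$--diameter bounded in terms of $M$ and $\delta$ alone. Since $gH$ is quasiconvex in $\fontact S$, the metric $d_{gH}$ and the restriction of $d_{\fontact S}$ to $gH$ are comparable on $gH$, transferring this bound to yield the desired $d_{gH}(a_x,a_y)\leq C$.

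For part~\ref{item:bounded_proj_on_s}, by $G$--equivariance of the HHG structure it suffices to bound $\diam_{\fontact U}(\pi_U(H))$ uniformly over $U\in\mathfrak S-\{S\}$. Given $h,h'\in H$, consider a $(\lambda,\mu)$--quasigeodesic $\gamma_H$ from $h$ to $h'$ lying entirely in $H$, where $\lambda,\mu$ depend only on the quasiconvexity constant of $H$ in $\fontact S$. Since $\fontact S$ is $\delta$--hyperbolic, quasigeodesic stability combined with bounded geodesic image yields a constant $E'=E'(E,\lambda,\mu,\delta)$ such that either $d_U(h,h')\leq E'$ or $\gamma_H\cap\neb_{E'}(\rho^U_S)\neq\emptyset$. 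In the latter case, let $p,p'\in\gamma_H$ be the first and last points of $\gamma_H$ in $\neb_{E'}(\rho^U_S)$; crucially $p,p'\in H$. Applying BGI to the sub-quasigeodesics $[h,p]$ and $[p',h']$ gives $d_U(h,p),d_U(p',h')\leq E'$. To bound $d_U(p,p')$ uniformly in $U$, note that since $p,p'\in H\cap\neb_{E'}(\rho^U_S)$, their $\fontact S$--distance is at most $2E'+E$, so $\hat\dist(1,p^{-1}p')\leq d_{\mathcal T}(1,p^{-1}p')=d_{\fontact S}(p,p')\leq 2E'+E$. Properness of $\hat\dist$ on $H$ then confines $p^{-1}p'$ to a finite subset of $H$ independent of $U$, which has bounded diameter in the (proper) finite-generating-set word metric $d_G$ on $G$; the coarse Lipschitz property of $\pi_U$ with respect to $d_G$, with uniform constants in $U$, bounds $d_U(p,p')$ uniformly.

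The main obstacle is the failure of $\pi_U$ to be Lipschitz with respect to $d_{\fontact S}$ near $\rho^U_S$, which prevents directly bounding $d_U(p,p')$ from the $\fontact S$--closeness of $p,p'$. The key to overcoming this is to choose a quasigeodesic lying entirely in $H$, so that the entry and exit points $p,p'$ are honest elements of $H$; properness of $\hat\dist$ then restricts $p^{-1}p'$ to a uniformly bounded subset of $H$ in the ambient word metric, where Lipschitz-ness of $\pi_U$ applies.
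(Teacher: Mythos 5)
Your proof is correct and uses exactly the ingredients the paper's (one-sentence) proof points to: bounded geodesic image together with properness of $\hat\dist$ on $H$ for part (1), and Lemma~\ref{lem:entry} together with the quasiconvexity and geometric separation of cosets from Remark~\ref{rem:geom_sep} for part (2). Your write-up is simply a detailed expansion of that argument (the only quibbles being that the uniform quasigeodesic in $H$ itself requires properness of $\hat\dist$, not quasiconvexity alone, since $\cayley(G,\mathcal T)$ need not be locally finite, and that $[h,p]$ touches $\neb_{E'}(\rho^U_S)$ at its endpoint — both handled by the same properness argument you already use).
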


\begin{proof}
The first assertion follows from bounded geodesic image and the fact
that $H$ acts properly on $\cayley(G,\mathcal T)$.  The second follows from
Lemma~\ref{lem:entry} and Remark~\ref{rem:geom_sep}.
\end{proof}

We can now prove the proposition:

\begin{proof}[Proof of Proposition~\ref{prop:aux_HHG}]
All aspects of Definition~\ref{defn:space_with_distance_formula} involving only the $\nest,\orth,\transverse$ relations (but not the projections) are obviously satisfied.  Note that $G$ acts cofinitely on $\mathfrak T$ and each $g\in G$ induces an isometry $\fontact_{\mathfrak T}U\to\fontact_{\mathfrak T}(gU)$ for each $U\in\mathfrak T$.  Abusing notation slightly, we denote by $gH$ the subgraph of $\cayley(G,\mathcal T)$ spanned by the vertices of $gH$, which is connected since $\mathcal T\cap H$ generates $H$, and which is hyperbolic by Remark~\ref{rem:H_is_hyperbolic}.

\textbf{Projections $\pi_U$ are well-defined and coarsely Lipschitz:}  This is automatic for $U\in\mathfrak S$.  For each $gH$ and $a\in G$, the projection $\pi_{gH}(a)=\rho^S_{gH}(\pi_S(a))=\rho^S_{gH}(a)$ is bounded by Lemma~\ref{lem:entry}.  We now verify that $\pi_{gH}$ is coarsely Lipschitz; it suffices to verify that $\rho^S_{gH}$ is coarsely Lipschitz on $G\subset\fontact S$.  Let $\gamma$ be a geodesic in $\pyramid{G}$ joining $a,b\in G$, and let $\alpha$ be a push-off of $\gamma$, so that $\dist_\triangle(a,b)=|\gamma|\leq|\alpha|$.  By Lemma~\ref{lem:push-off}, $\alpha$ lies at Hausdorff distance at most $D$ from a geodesic $\alpha'$ of $\fontact S$, and the claim follows.

Lemma~\ref{lem:bounded_projections_he} says that $\pi_U(gH)$ is bounded for $g\in G,U\in\mathfrak S$, so $\rho^{gH}_U$ is coarsely constant.

\textbf{Consistency:}  Let $U,V\in\mathfrak T$ and $a\in G$.  If $U,V\in\mathfrak S-\{S\}$, then consistency holds automatically.  Hence suppose that $U=gH$ for some $g\in G$.  If $gH\propnest V$, then $V=S$.  In this case, $\pi_{gH}(a)=\rho^S_{gH}(\pi_S(a))$ by definition, so consistency holds.  If $U\in\mathfrak S$ and $V=S$, then consistency follows easily from consistency in $(G,\mathfrak S)$.  There is no case in which $V\propnest gH$.  

Hence suppose $gH\transverse V$.  Choose $b\in\rho^S_{gH}(a)$, so that
$b$ is the entrance point in $\cone(gH)$ of some geodesic
$[a,v_{gH}]$.  Then
$\dist_U(a,\rho^{gH}_U)=\dist_U(a,\pi_U(gH))\leq\dist_U(a,b)$.  If
$\dist_U(a,b)>E$, then $\rho^U_S$ lies $10E$--close to a $\fontact
S$--geodesic from $a$ to $b$, from which it is easily deduced that
$\dist_{gH}(b,\rho^S_{gH}(\rho^U_S))$ is uniformly bounded, as
required.

The bound on $\dist_W(\rho^U_W,\rho^V_W)$ from Definition~\ref{defn:space_with_distance_formula}.\eqref{item:dfs_transversal} holds automatically when $U,V,W\in\mathfrak S$ and holds vacuously otherwise by the definition of the nesting relation in $\mathfrak T$.

\textbf{Bounded geodesic image:}  If $U,V\in\mathfrak S-\{S\}$ and $U\nest V$, then bounded geodesic image holds because it held in $(G,\mathfrak S)$.  Hence it remains to consider the case where $V=S$.  First suppose that $U\in\mathfrak S$ and that $\gamma$ is a geodesic in $\pyramid{G}$ that does not pass $(E+D+2r)$--close to $\rho^U_S$.  Let $\alpha$ be a push-off of $\gamma$ and let $\alpha'$ be a $\fontact S$--geodesic at Hausdorff distance $\leq D$ from $\alpha$, provided by Lemma~\ref{lem:push-off}.  Then $\alpha'$ cannot pass through the $E$--neighborhood of $\rho^U_S$, for otherwise $\alpha$ would pass through the $(E+D)$--neighborhood of $\rho^U_S$, whence $\gamma$ would pass through the $(E+D+2r)$--neighborhood of $\rho^U_S$ in $\pyramid{G}$.  Hence there exists $E'=E'(D,E)$ so that $\rho^S_U(\alpha')$ has diameter at most $E'$, by bounded geodesic image in $(G,\mathfrak S)$ (here, we mean $\rho^S_U\colon \fontact S\to\fontact U$).  Each point of $\gamma$ maps by $\pi_U$ to a point at distance at most $C$ from a point of $\pi_U(\alpha')$, by Lemma~\ref{lem:bounded_projections_he}, and we are done since $\rho^S_U(\cone(gH))\subseteq\pi_U(gH)$ for each $gH$.

Next suppose that $U=gH$ for some $g\in G$.  Let $x,y\in\pyramid{G}$.  A thin triangle argument shows that if each geodesic $[x,y]$ is sufficiently far from $gH$ in $\pyramid{G}$, then the geodesics $[x,v_{gH}],[y,v_{gH}]$ enter $\cone(gH)$ at points $a_x,a_y\in gH$ with $\dist_\triangle(a_x,a_y)\leq 100\delta$.  Our choice of $r$ ensures that $\dist_{gH}(a_x,a_y)$ is uniformly bounded, since points in $gH$ at distance $<2r$ in $\cone(gH)$ are at uniformly bounded distance (depending on $r$) in $gH$.

\textbf{Large links:}  Let $a,b\in G$ and let $N=\lfloor\dist_\triangle(a,b)\rfloor$.  We will produce uniform constants $K,\lambda'$ and $T_1,\ldots,T_m\in\mathfrak S$ and $g_1H,\ldots,g_nH$ so that $m+n\leq \lambda' N+\lambda'$ and $\dist_U(a,b)\leq K$ unless $U=g_iH$ or $U\nest T_j$ for some $i,j$.

Fix a $\fontact S$--geodesic $\gamma$ from $a$ to $b$.  By
Lemma~\ref{lem:push-off} and Lemma~\ref{lem:entry}, for each
sufficiently large $K_0>2r$ there exists $K_1$ so that either
$\dist_{gH}(a,b)\leq K_0$ or $\gamma$ has a maximal subpath $\gamma_g$
lying in the $K_1$--neighborhood of $gH$ (in $\fontact S$), and the
endpoints $a_g,b_g$ of $\gamma_g$ satisfy $\dist_{gH}(a,a_g)\leq
K_1,\dist_{gH}(b,b_g)\leq K_1$.  Let $G(a,b)$ be the set of $gH$ with
$\dist_{gH}(a,b)>K_0$.  Observe that for all distinct $gH,g'H\in G(a,b)$, we have
$\diam_{\fontact S}(\gamma_g\cap\gamma_{g'})\leq K_2$, where $K_2$
depends on $K_1$ and the geometric separation constants from
Remark~\ref{rem:geom_sep}. We note that $K_{2}$ does not depend on 
$K_{0}$ and thus, by choosing $K_{0}$ large enough compared to 
$K_{2}$, we can ensure that at most two elements of $G(a,b)$ 
simultaneously overlap. Observe that this implies that the 
cardinality of $G(a,b)$ is at most $2\cdot\dist_{\triangle}(a,b)$.

Write $\gamma=\left(\prod_{i=1}^k\alpha_i\beta_i\right)\alpha_{k+1}$,
where each $\beta_i$ is a subpath contained in the union of subpaths
$\gamma_g$, where $gH\in G(a,b)$, and $\interior{\alpha_i}$ is in the complement
of the union of such paths. 
By bounded geodesic image,
Lemma~\ref{lem:bounded_projections_he}, and the large link lemma in
$(G,\mathfrak S)$, there exist $T_1,\ldots, T_m\in\mathfrak S$ such
that $\dist_U(a,b)\leq K_0$ unless $U\nest T_i$ for some $i$, where
$m=\lambda\lfloor\sum_i|\alpha_i|\rfloor+\lambda$.  Hence any 
elements $U\in\mathfrak T$ in which $\dist_{U}(a,b)>\max\{E,K_{0}\}$ 
is nested into one of at most $3\cdot (\lambda\dist_{\triangle}(a,b)+\lambda)$ 
elements of $\mathfrak T-\{S\}$.

\textbf{Partial realization:} Let $\{U_i\}$ be a set of
pairwise--orthogonal elements of $\mathfrak T$ and let
$b_i\in\fontact_{\mathfrak T}U_i$ for each $i$.  We consider two
cases.  First, if each $U_i\in\mathfrak S$, then partial realization
in $(G,\mathfrak S)$ implies that there exists $g\in G$ so that
$\dist_{U_i}(g,b_i)\leq E$ for each $i$ and
$\dist_V(b_i,\rho^{U_i}_V)\leq E$ when $U_i\propnest V$ or
$U_i\transverse V$.  Thus it remains to note that
$\dist_{\fontact_{\mathfrak T}S}(b_i,\rho^{U_i}_V)$ is uniformly
bounded, since $\fontact
S\hookrightarrow\pyramid{G}$ is distance non-increasing.

When $\{U_i\}=\{S\}$, partial realization holds for $\fontact_{\mathfrak T}S$ since it held for $\fontact S$.  Hence it remains to consider a coset $gH$ and some $gh\in gH$.  Obviously $gh\in G$ has the correct projection in $gH$.  If $gH\propnest V$, then $V=S$ and $\rho^{gH}_S=v_{gH}$.  Hence $\dist_{\fontact_{\mathfrak T}S}(gh,\rho^{gH}_S)\leq r$ as required.  If $gH\transverse V$, then $\rho^{gH}_V=\pi_V(gH)\ni\pi_V(gh)$, as required.

\textbf{Uniqueness:}  Let $\kappa\geq0$ be given and let $\theta=\theta(\kappa)$ be the corresponding constant from $(G,\mathfrak S)$, so that, if $\dist_G(a,b)\geq\theta$ then $\dist_{\fontact V}(a,b)\geq\kappa$ for some $V\in\mathfrak S$.  Hence we must consider only $a,b\in G$ such that $\dist_G(a,b)\geq\theta$ but $\dist_{\fontact V}(a,b)\leq\kappa$ if and only if $V\neq S$.  Consider a geodesic $\gamma$ in $\pyramid{G}$ joining $a$ to $b$ and a push-off $\alpha=\alpha_0\beta_0\cdots\alpha_n\beta_n\alpha_{n+1}$ of $\gamma$, where each $\beta_i$ is a $\fontact S$--geodesic joining two points in $\gamma$ lying in some $g_iH$.  Lemma~\ref{lem:push-off} implies that $\alpha$ lies at Hausdorff distance $D$ from a $\fontact S$--geodesic $\alpha'$ joining $a,b$.  By assumption, $\alpha'$ has length at least $\kappa$.  Hence either $|\beta_i|\geq\kappa$ for some $i$ (i.e., $\dist_{g_iH}(a,b)\geq\kappa$) or $n\geq \epsilon\kappa$ for some uniform $\epsilon\geq0$, and thus $\dist_{\fontact_{\mathfrak T}S}(a,b)\geq\epsilon\kappa$.  Thus for each $\kappa\geq0$, we have that $\dist_V(a,b)\geq\kappa$ for some $V\in\mathfrak T$ provided $\dist_G(a,b)\geq\max\{\theta(\kappa),\theta(\epsilon^{-1}\kappa)\}$, i.e., uniqueness holds.    
\end{proof}

\subsection{Proof of Theorem \ref{thm:quotients}}
In light of Lemma~\ref{lem:push-off}, we now enlarge $r$ so that 
$r\geq 10^9CDEQ\delta$, where $C$ exceeds the constants from
Lemma~\ref{lem:entry} and Lemma~\ref{lem:bounded_projections_he}.  Let
$F$ be a finite subset of $H-\{1\}$ chosen so that for any  $N\triangleleft H$ which avoids $F$,
yields  $(\{N^g:g\in G\},\{v_{gH}:g\in
G\})$ is a \emph{very rotating family} (see the proof of
Proposition~\ref{prop:hyperbolic_pyramids}); our choice of $r$ ensures
that $\{v_{gH}\}$ is $200\delta$--separated.
Let
$\pyramid{G}=\pyramid{G}_r$, and let 
$\pyramid{G/\nclose N}=\pyramid{G/\nclose N}_r$,
where $N\triangleleft H$ avoids $F$.  

\subsubsection{Linked pairs}\label{subsubsec:linked_pair}
\begin{defn}[Fulcrum]\label{defn:fulcrum}
 Let $x,y\in \pyramid{G}$ and $N\triangleleft H$. We say that the apex $v_{gH}$ is a $d$--\emph{fulcrum} for $x,y$ if $\dist_\triangle(x,y)=\dist_\triangle(x,v_{gH})+\dist_\triangle(v_{gH},y)$ and there exists $h\in gNg^{-1}$, $x'\in [x,v_{gH}]$, $y'\in[v_{gH},y]$ with the following properties.
 \begin{itemize}
  \item $\dist_\triangle(x',v_{gH}),\dist_\triangle(y',v_{gH})\in [25\delta,30\delta]$,
  \item $\dist_\triangle(x',hy')\leq d$.
 \end{itemize}
A fulcrum is shown in Figure~\ref{fig:fulcrum}.
\end{defn}

\begin{figure}[h]
\begin{overpic}[width=0.6\textwidth]{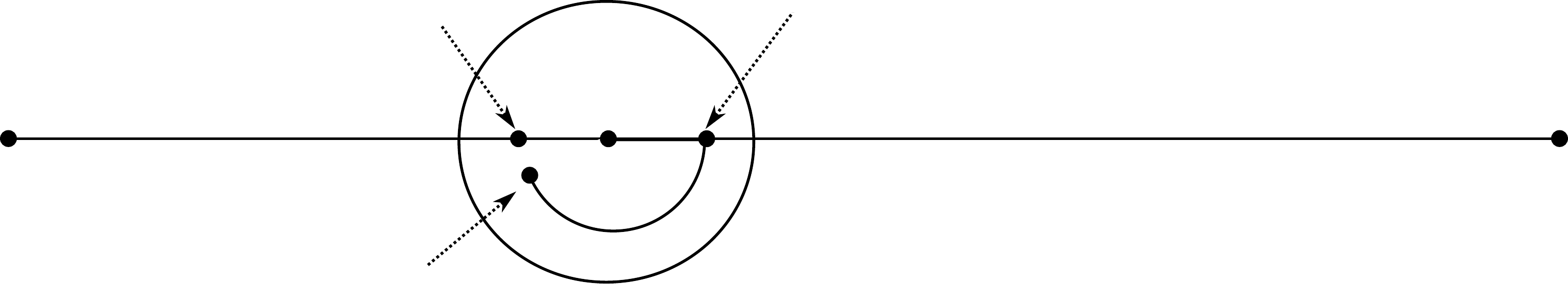}
\put(36,6){$v_{gH}$}
\put(0,6){$x$}
\put(98,6){$y$}
\put(51,18){$y'$}
\put(26,17){$x'$}
\put(22,0){$hy'$}
\end{overpic}
\caption{A fulcrum.}\label{fig:fulcrum}
\end{figure}

Our assumptions on $N$ and $r$ mean that we have the following ``Greendlinger lemma''~\cite[Lemma~5.10]{DGO}, which is formulated in our context as follows:

\begin{lem}[Greendlinger lemma]\label{lem:Greendlinger}
Let $n\in\nclose N-\{1\}$ and let $p\in\pyramid{G}$.  Then one of the following holds:
\begin{enumerate}[(A)]
 \item any $\pyramid{G}$--geodesic $[p,np]$ contains a
 $\delta$--fulcrum for $p,np$; or, 
 \item there exists $v_{gH}$ so that $n\in gNg^{-1}\triangleleft
 gHg^{-1}$ and $\dist_\triangle(p,v_{gH})\leq 25\delta$.
\end{enumerate}
\end{lem}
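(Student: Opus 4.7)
The plan is to deduce this lemma directly from the classical Greendlinger Lemma for very rotating families, namely~\cite[Lemma~5.10]{DGO}, applied to $\pyramid{G}$. First, I would verify that the hypotheses of that lemma are in force in our setting: by the choice of $r$ (made in Remark~\ref{rem:constants} and the paragraph opening Subsection~\ref{subsubsec:linked_pair}) and of $F$ (constructed in Proposition~\ref{prop:hyperbolic_pyramids}), the family $(\{v_{gH}:g\in G\},\{N^g:g\in G\})$ is a $200\delta$--separated very rotating family on the $\delta$--hyperbolic space $\pyramid{G}$ whenever $N\triangleleft H$ avoids $F$. This is precisely the setting of~\cite[Section~5]{DGO}.

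Second, I would apply~\cite[Lemma~5.10]{DGO} to the nontrivial element $n\in\nclose N$ and the point $p\in\pyramid{G}$. Its conclusion is a dichotomy: either some apex $v_{gH}$ satisfies $n\in gNg^{-1}$ with $p$ lying within a universal multiple of $\delta$ of $v_{gH}$---which, after possibly enlarging the $\delta$ of Proposition~\ref{prop:hyperbolic_pyramids}, becomes the bound $\dist_\triangle(p,v_{gH})\leq 25\delta$ of case~(B)---or else any $\pyramid{G}$--geodesic $[p,np]$ passes through an apex $v_{gH}$ at which some $h\in gNg^{-1}\setminus\{1\}$ ``shortens'' the geodesic. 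In the first case we are done; in the second case, the DGO output automatically gives the additive equality $\dist_\triangle(p,np)=\dist_\triangle(p,v_{gH})+\dist_\triangle(v_{gH},np)$ required by Definition~\ref{defn:fulcrum}.

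The translation of the ``shortening'' conclusion into the fulcrum condition is where I expect the main bookkeeping. Informally, the DGO output says that the incoming segment $[p,v_{gH}]$ and the $h$--image of the outgoing segment $[v_{gH},np]$ fellow-travel near $v_{gH}$, because $h$ fixes $v_{gH}$ and the very rotating condition forces $x'$ and $hy'$ to be close whenever $x'\in[p,v_{gH}]$ and $y'\in[v_{gH},np]$ lie within the ``rotation zone'' around $v_{gH}$ (a zone whose radius is as large as we like, provided $r$ was chosen large enough). Choosing $x'$ and $y'$ at distances in $[25\delta,30\delta]$ from $v_{gH}$, which lie comfortably inside this zone, and combining with $\delta$--hyperbolicity of $\pyramid{G}$ applied to the triangle $\{p,v_{gH},np\}$, yields $\dist_\triangle(x',hy')\leq\delta$. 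Together with the additive distance equality above, this is exactly the definition of a $\delta$--fulcrum, giving case~(A).

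The main obstacle is matching constants: one must verify that the thresholds $25\delta$, $30\delta$, and the fellow-traveling distance $\delta$ in the fulcrum definition are consistent with the quantitative output of~\cite[Lemma~5.10]{DGO}. This may require enlarging $\delta$ once (and correspondingly rechoosing $r$ via Proposition~\ref{prop:hyperbolic_pyramids} and Remark~\ref{rem:constants}) so that the very rotating condition enforces the required precision on the annulus $25\delta\leq\dist_\triangle(\cdot,v_{gH})\leq 30\delta$. No new ideas beyond the DGO machinery are needed; the content is purely a translation, the only care required being that these enlargements do not interfere with the earlier choices of constants made in Section~\ref{sec:HHG_quotient}.
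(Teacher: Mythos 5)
Your proposal matches the paper's treatment: the paper gives no independent proof, simply stating that the lemma is~\cite[Lemma~5.10]{DGO} ``formulated in our context,'' with the very rotating family $(\{v_{gH}\},\{N^g\})$ on $\pyramid{G}$ supplied by the choices of $r$ and $F$, and with a remark that the $5\delta$ in the DGO shortening conclusion can be made arbitrarily small (hence $\delta$) exactly as your constant-matching discussion anticipates. Your translation of the shortening output into the fulcrum condition is the same bookkeeping the authors implicitly perform, so the proposal is correct and essentially identical in approach.
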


In~\cite{DGO}, the first conclusion uses $5\delta$, but in fact the conclusion can be made to hold for an arbitrarily small constant, and we use $\delta$ for convenience.

\begin{rem}[]\label{rem:projections_are_points}
For convenience, we can and shall assume that, for all $U\in\mathfrak S-\{S\}$ and all $x\in G$, the sets $\rho^U_S$ and $\pi_S(x)$ consist of single points, and $\rho^{gU}_S=g\rho^U_S$ and $\pi_S(gx)=g\pi_S(x)$ for all $g\in G$.  Indeed, equivariantly replace each relevant bounded set with one of its elements, and adjust the constants of Definition~\ref{defn:space_with_distance_formula} and Subsection~\ref{subsubsec:aut} uniformly if necessary.
\end{rem}

\begin{defn}[Linked pair]\label{defn:linked_pair}
Let $U,V\in\mathfrak S$ (resp.\ $x,y\in G$, resp.\ $U\in\mathfrak S,
x\in G$).  Then $\{U,V\}$ (resp.\  $\{x,y\}$, resp.\  $\{U,x\}$) is
\emph{linked} if there does not exist a
$10\delta$--fulcrum for
$\rho^U_S,\rho^V_S$ (resp.\  $\pi_S(x),\pi_S(y)$, resp.
$\rho^U_S,\pi_S(x)$). We say the pair is \emph{weakly linked} when 
there is no $5\delta$--fulcrum.
\end{defn}

\begin{lem}\label{lem:linked_pairs_exist}
Linked pairs have the following properties:
\begin{enumerate}
 \item for all $[U],[V]\in\mathfrak S/\nclose{N}$, there exists a (weakly) linked pair $U\in[U],V\in[V]$, and the same holds for pairs $\nclose Nx,\nclose Ny$;\label{item:linked_pairs_exist}
 \item for any $g\in G$ the pair $\{gU,gV\}$ is (weakly) linked
 whenever $\{U,V\}$ is (weakly) linked; the same
 holds for linked pairs $\{x,y\}$;\label{item:linked_is_invariant}
 \item if $x\in G$ and $n\in\nclose N$, then $\{x,nx\}$ are not
 weakly linked and, in fact, have a $\delta$--fulcrum (and the same 
 holds for $x$ and $xn$ since $\nclose 
 N$ is normal). \label{item:translates_not_linked}
\end{enumerate}
\end{lem}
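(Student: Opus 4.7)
I shall prove the three parts in the order \eqref{item:linked_is_invariant}, \eqref{item:translates_not_linked}, \eqref{item:linked_pairs_exist}.

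Part~\eqref{item:linked_is_invariant} is a direct equivariance check. The left-multiplication action of $G$ on $\pyramid{G}$ is isometric, permutes the apices via $g\cdot v_{g_0H}=v_{gg_0H}$, and (extending the conventions of Remark~\ref{rem:projections_are_points} to the structure of Proposition~\ref{prop:aux_HHG}) is equivariant on projections, i.e., $\rho^{gU}_S=g\rho^U_S$ and $\pi_S(gx)=g\pi_S(x)$. Hence a $d$-fulcrum for $\rho^U_S,\rho^V_S$ at $v_{g_0H}$ with witnesses $(h,x',y')$ transports via $g$ to a $d$-fulcrum for $\rho^{gU}_S,\rho^{gV}_S$ at $v_{gg_0H}$ with witnesses $(ghg^{-1},gx',gy')$, and conversely; the analogous assertion for pairs $\{x,y\}$ is identical.

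For part~\eqref{item:translates_not_linked}, I apply Lemma~\ref{lem:Greendlinger} to $p=\pi_S(x)$ and $n\in\nclose N-\{1\}$, using that $\pi_S(nx)=n\pi_S(x)$. In case~(A), any $\pyramid{G}$-geodesic from $\pi_S(x)$ to $\pi_S(nx)$ contains a $\delta$-fulcrum, giving the conclusion immediately. In case~(B), there is an apex $v_{gH}$ with $n\in gNg^{-1}$ and $\dist_\triangle(\pi_S(x),v_{gH})\leq 25\delta$; since $n$ fixes $v_{gH}$, also $\dist_\triangle(n\pi_S(x),v_{gH})\leq 25\delta$, so the $\pyramid{G}$-geodesic from $\pi_S(x)$ to $\pi_S(nx)$ has length at most $50\delta$. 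This degenerate case --- the main technical obstacle --- requires a supplementary argument using the very rotating property of $\{N^g\}$ at $v_{gH}$: the rotation-like behaviour of $n$ near $v_{gH}$ allows one to locate a $\delta$-fulcrum (at $v_{gH}$ itself, along an appropriately extended or alternative geodesic).

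For part~\eqref{item:linked_pairs_exist}, I use a shortening procedure on representatives. Start with arbitrary $U_0\in[U]$, $V_0\in[V]$; if $\{U_0,V_0\}$ is not (weakly) linked, a $10\delta$- (resp.\ $5\delta$-) fulcrum $v_{gH}$ exists with witnesses $h\in gNg^{-1}$, $x'\in[\rho^{U_0}_S,v_{gH}]$, $y'\in[v_{gH},\rho^{V_0}_S]$ satisfying $\dist_\triangle(x',v_{gH}),\dist_\triangle(y',v_{gH})\in[25\delta,30\delta]$ and $\dist_\triangle(x',hy')\leq 10\delta$ (resp.\ $5\delta$). Since $v_{gH}$ lies on a $\pyramid{G}$-geodesic between $\rho^{U_0}_S$ and $\rho^{V_0}_S$ and $h$ acts isometrically,
\begin{align*}
\dist_\triangle(\rho^{U_0}_S,h\rho^{V_0}_S)
&\leq \dist_\triangle(\rho^{U_0}_S,x')+\dist_\triangle(x',hy')+\dist_\triangle(y',\rho^{V_0}_S)\\
&= \dist_\triangle(\rho^{U_0}_S,\rho^{V_0}_S)-\dist_\triangle(x',v_{gH})-\dist_\triangle(y',v_{gH})+\dist_\triangle(x',hy')\\
&\leq \dist_\triangle(\rho^{U_0}_S,\rho^{V_0}_S)-40\delta.
\end{align*}
Setting $V_1:=hV_0\in[V]$ and using $\rho^{V_1}_S=h\rho^{V_0}_S$ from part~\eqref{item:linked_is_invariant}, the distance has strictly decreased by at least $40\delta$. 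Since $\pyramid{G}$-distances are non-negative integers, iteration must terminate in a (weakly) linked pair. The argument for pairs $\nclose Nx,\nclose Ny$ is identical, with $\pi_S(x),\pi_S(y)$ replacing $\rho^{U_0}_S,\rho^{V_0}_S$ and the action on $y$ replacing the action on $V_0$.
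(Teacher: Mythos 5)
Your treatments of parts~\eqref{item:linked_pairs_exist} and~\eqref{item:linked_is_invariant} are correct and essentially identical to the paper's: the same shortening-by-$h$ computation (the paper replaces $y$ by $hy$ and gets a decrease of at least $45\delta$ in the weakly linked case; your $40\delta$ in the linked case is the same estimate with $10\delta$ in place of $5\delta$), and the same equivariance observation. One small point worth making explicit in the shortening step is that the witness $h$ lies in $gNg^{-1}\subseteq\nclose N$ (as in Definition~\ref{defn:fulcrum}), which is exactly what guarantees $hV_0\in[V]$; you use this correctly.

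Part~\eqref{item:translates_not_linked} has a genuine gap. You correctly reduce to Lemma~\ref{lem:Greendlinger}, but you then treat case~(B) as a live possibility and defer its resolution to an unspecified ``supplementary argument using the very rotating property.'' That promised argument is never given, and as sketched it would not work: if $\dist_\triangle(\pi_S(x),v_{gH})\leq 25\delta$ were actually possible, the geodesics $[\pi_S(x),v_{gH}]$ and $[v_{gH},\pi_S(nx)]$ would be too short to contain the points $x',y'$ at distance in $[25\delta,30\delta]$ from $v_{gH}$ that the definition of a fulcrum requires, so no fulcrum at $v_{gH}$ could be produced. The correct resolution is that case~(B) is vacuous here: $p=\pi_S(x)$ lies in $\fontact S\subset\pyramid{G}$, and every apex $v_{gH}$ is adjacent only to horoball vertices of depth at least $r$, so $\dist_\triangle(p,v_{gH})\geq r>10^9\delta>25\delta$ (this is exactly the observation used in the proof of Lemma~\ref{lem:not_close}). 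Hence conclusion~(A) of Lemma~\ref{lem:Greendlinger} always holds for such $p$, any geodesic $[\pi_S(x),\pi_S(nx)]$ contains a $\delta$--fulcrum, and in particular $\{x,nx\}$ is not weakly linked. With that one-line observation replacing your deferred case analysis, part~\eqref{item:translates_not_linked} is complete.
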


\begin{proof}
Choose $x,y\in\pyramid{G}$ and suppose that $v_{gH}$ is a $5\delta$--fulcrum for $x,y$ (i.e., $x,y$ are not weakly linked).  Choose $x',y'\in[x,v_{gH}],[y,v_{gH}]$ and $h\in gHg^{-1}$ so that $\dist_\triangle(x',v_{gH}),\dist_\triangle(y',v_{gH})\in[25\delta,30\delta]$ and $\dist_\triangle(hy',x')\leq 5\delta$.  Then $$\dist_\triangle(x,hy)\leq\dist_\triangle(x,x')+\dist_\triangle(y,y')+5\delta\leq\dist_\triangle(x,y)-45\delta,$$
so, by replacing $x,y$ with $x,hy$, we obtain a closer pair of representatives of $\nclose Nx,\nclose Ny$.  This proves assertion~\eqref{item:linked_pairs_exist} for weakly linked pairs. Repeating exactly the same argument with $10\delta$ replacing $5\delta$ establishes the assertion for linked pairs.  

For all $x,y\in \pyramid{G}$, cosets $g'H$, and $g\in G$, and $d\geq0$, observe that $v_{g'H}$ is a $d$--fulcrum for $x,y$ if and only if $v_{gg'H}$ is a $d$--fulcrum for $gx,gy$, which proves assertion~\eqref{item:linked_is_invariant}.  Assertion~\eqref{item:translates_not_linked} follows from Lemma~\ref{lem:Greendlinger}.
\end{proof}

\subsubsection{Proof of Theorem~\ref{thm:quotients}}\label{subsubsec:proof}
Throughout this section, we say that $N\triangleleft H$ is \emph{sufficiently deep} if $N\cap F=\emptyset$, where $F\subset H-\{1\}$ is the finite subset whose exclusion from $N$ implies that $(\{gNg^{-1}\},\{v_{gH}\})$ is a $200\delta$--separated very rotating family. We now define the hierarchical space structure on $G/\nclose N$.  By choosing $N$ suffciently deep, we can ensure that, if $g,g'\in G$ differ by a generator in a fixed finite generating set, then $g,g'$ are linked.  

\begin{cons}\label{cons:hhg_structure}
The index set and associated hyperbolic spaces are defined by the 
following, where $(G,\mathfrak S)$ is the original HHG structure and 
the modified HHG structure provided by Proposition~\ref{prop:aux_HHG} 
is denoted $(G,\mathfrak T)$:
\begin{enumerate}
 \item the index set $\mathfrak S_N$ is $\mathfrak T/\nclose N$;
 \item for $\mathbf S=\{S\}$ (the $\nclose N$--orbit of $S$), let
 $\fontact \mathbf S=\cayley(G/\nclose N,\mathcal T/\nclose N\cup
 H/N)$ --- note that this is quasi-isometric to $\pyramid{G/\nclose N}$;
 \item $\mathbf U\in \mathfrak S/\nclose N$, let $\fontact\mathbf
 U=\left(\bigsqcup_{U\in\mathbf U}\fontact U\right)/\nclose N$ --- 
 note that this 
 is isomorphic to $\fontact U$ for some (hence any) $U\in\mathbf U$;
 \item for $\mathbf U=\nclose N gH$, let $\fontact\mathbf
 U=\left(\bigsqcup_{ngH\in\mathbf U}\cayley(H,\mathcal T\cap
 H)\right)/\nclose N$ --- note that this is isometric to a Cayley graph of $H/N$;
 
 \item for each $\mathbf U,\mathbf V\in \mathfrak S_N$, let $\mathbf
 U\nest \mathbf V$ (resp.  $\mathbf U\orth \mathbf V$) if there exists
 a linked pair $\{U,V\}\subseteq \mathfrak S$ with $U\in\mathbf U$,
 $V\in\mathbf V$ so that $U\nest V$ (resp.  $U\orth V$).  If $\mathbf 
 U=\nclose N gH$, then we let $\mathbf U\nest \mathbf S$. 
 If neither $\mathbf U\nest \mathbf
 V$, $\mathbf V\nest \mathbf U$ nor $\mathbf U\orth \mathbf V$ holds, 
 then  we let
 $\mathbf U\transverse\mathbf V$ .
\end{enumerate}

The projections are defined by taking all linked pair representatives:

\begin{enumerate}
\setcounter{enumi}{5}
 \item $\pi_{\mathbf U}(g\nclose N)=\left(\bigcup \pi_{U'}(g')\right)/\nclose N$, where the union is taken over all linked pairs $\{U',g'\}$ with $U'\in\mathbf U$, $g'\in g\nclose N$;
 \item similarly, for $\mathbf U \propnest\mathbf V$ or $\mathbf U \transverse \mathbf V$, $\rho^{\mathbf U}_{\mathbf V}$ is defined as $\left(\bigcup \rho^{U'}_{V'}\right)/\nclose N$, where the union is taken over all linked pairs $\{U',V'\}$ with $U'\in\mathbf U$, $V'\in \mathbf V$;
 \item finally, for $\mathbf V \propnest\mathbf U$ and $\nclose Nx\in\fontact \mathbf U$, let $\rho^{\mathbf U}_{\mathbf V}(\nclose Nx)=\left(\bigcup \rho^{U'}_{V'}(x')\right)/\nclose N$, where the union is taken over all linked pairs $\{U',V'\}$ with $U'\in\mathbf U$, $V'\in \mathbf V$ and $x'\in \nclose Nx\cap \fontact U'$.
\end{enumerate}
\end{cons}

Before proceeding with the proof of Theorem~\ref{thm:quotients}, we need several lemmas:

\begin{lem}\label{lem:unique_close_linked_partner}
 If $N$ is sufficiently deep, then for any $\mathbf U,\mathbf V\in \mathfrak S/\nclose{N}-\{\mathbf S_N\}$ and any $U\in\mathbf U$ there exists at most one $V\in\mathbf V$ with $\dist_{\fontact S}(\rho^U_S,\rho^V_S)\leq 10E$, and hence in particular at most one such $V$ with $U\nest V$, $V\nest U$ or $U\orth V$.
\end{lem}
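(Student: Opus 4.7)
The plan is to argue by contradiction, playing the Greendlinger lemma for our rotating family against the combinatorial fact that ground-level vertices of $\pyramid{G}$ are far from every apex.

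Suppose there were distinct $V,V'\in\mathbf V$ with $\dist_{\fontact S}(\rho^U_S,\rho^V_S)$ and $\dist_{\fontact S}(\rho^U_S,\rho^{V'}_S)$ both at most $10E$, and write $V'=nV$ for some $n\in\nclose N$; then $n\ne 1$. Set $p=\rho^V_S$, which by Remark~\ref{rem:projections_are_points} we may treat as a single point of $G\subseteq\fontact S$. Using the equivariance of the HHG structure (the action of $n\in G$ on $\fontact S=\cayley(G,\mathcal T)$ is by left multiplication, and $nS=S$), we have $\rho^{V'}_S=np$, so the triangle inequality gives $\dist_{\fontact S}(p,np)\le 20E$, and hence $\dist_\triangle(p,np)\le 20E$ because $\fontact S$ sits as a subgraph of $\pyramid{G}$.

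Next I would apply Lemma~\ref{lem:Greendlinger} to the pair $(p,n)$. In case (A) some apex $v_{gH}$ lies on a $\pyramid{G}$-geodesic from $p$ to $np$, so $\dist_\triangle(p,v_{gH})\le\dist_\triangle(p,np)\le 20E$; in case (B) we directly have $\dist_\triangle(p,v_{gH})\le 25\delta$. Either way, $\dist_\triangle(p,v_{gH})\le\max\{20E,25\delta\}$. However, any $\pyramid{G}$-path from a vertex of $\fontact S$ to the apex $v_{gH}$ must enter the horoball $\mathcal H(gH)$ at height $0$ and then climb at least $r$ steps inside it (since $v_{gH}$ is adjacent only to vertices of $\mathcal H(gH)$ at height $\ge r$), which forces $\dist_\triangle(p,v_{gH})\ge r+1$. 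The standing choice $r\ge 10^9 CDEQ\delta$ from Remark~\ref{rem:constants} then produces the contradiction, forcing $V=V'$.

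For the ``in particular'' clause, if $V\in\mathbf V$ satisfies $U\nest V$, $V\nest U$, or $U\orth V$, then since $U,V\in\mathfrak S-\{S\}$ are both properly nested in $S$, the last consistency inequality of Definition~\ref{defn:space_with_distance_formula}.\eqref{item:dfs_transversal} (in the nested cases) or Lemma~\ref{lem:orth_close} (in the orthogonal case) yields $\dist_S(\rho^U_S,\rho^V_S)\le 2E\le 10E$, reducing to the first part. The only slightly delicate step I expect is the equivariance identification $\rho^{nV}_S=n\cdot\rho^V_S$ combined with the normalization of Remark~\ref{rem:projections_are_points}; after that, the argument is just the routine cone-geometry estimate against the Greendlinger dichotomy.
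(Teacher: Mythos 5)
Your argument is correct and is essentially the paper's: both reduce to the fact that for $n\in\nclose N-\{1\}$ the Greendlinger dichotomy forces any $p\in\fontact S$ with $\dist_\triangle(p,np)$ small to lie close to an apex, which is impossible since ground-level vertices are at distance at least $r$ from every apex, and both handle the ``in particular'' clause via Definition~\ref{defn:space_with_distance_formula}.\eqref{item:dfs_transversal} and Lemma~\ref{lem:orth_close}. The only cosmetic difference is that the paper packages the contradiction as the displacement bound $\dist_\triangle(x,nx)\geq 2r$ rather than as proximity of $p$ to $v_{gH}$.
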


\begin{proof}
It follows from Lemma~\ref{lem:Greendlinger} that, for each $n\in N-\{1\}$ and $x\in\fontact S$, we have $\dist_\triangle(x,nx)\geq 2r$, and hence $\dist_{\fontact S}(x,nx)\geq 2r>10E$. The last assertion follows from Definition \ref{defn:space_with_distance_formula}.\ref{item:dfs_transversal} when $U\nest V$ or $V\nest U$, and from Lemma~\ref{lem:orth_close} when $U\orth V$; in both cases $\rho^U_S$ and $\rho^V_S$ are $E$--close.
\end{proof}

\begin{lem}\label{lem:pass_close_to_apex}
If $N$ is sufficiently deep, then the following holds for each $x,y\in
G$: if $\{U,\pi_S(x)\}$ and $\{U,\pi_S(y)\}$ are linked (resp.\ 
weakly linked) then either:
  \begin{enumerate}
   \item $\{\pi_S(x),\pi_S(y)\}$ is weakly linked (resp.\ has no 
   $\delta$--fulcrum), or
   \item there exists $v_{gH}$ with
   $\dist_\triangle([\pi_S(x),\rho^U_S],v_{gH})\leq 40\delta$ and
   $\dist_\triangle([\pi_S(y),\rho^U_S],v_{gH})\leq 40\delta$.
  \end{enumerate}
The analogous statement holds when replacing $x$ and/or $y$ with an element of $\mathfrak S-\{S\}$.
\end{lem}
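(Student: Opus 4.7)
I plan to prove this by contrapositive: assume alternative (1) fails, producing a small fulcrum for $\{\pi_S(x),\pi_S(y)\}$, and then use the linked (resp.\ weakly linked) hypotheses on $\{U,\pi_S(x)\}$ and $\{U,\pi_S(y)\}$ to force alternative (2). Concretely, suppose there is a $5\delta$--fulcrum (resp.\ $\delta$--fulcrum) at some apex $v_{gH}$ for $\{\pi_S(x),\pi_S(y)\}$, witnessed by points $x'\in[\pi_S(x),v_{gH}]$, $y'\in[v_{gH},\pi_S(y)]$ with $\dist_\triangle(x',v_{gH}),\dist_\triangle(y',v_{gH})\in[25\delta,30\delta]$ and $h\in gNg^{-1}$ with $\dist_\triangle(x',hy')\leq 5\delta$ (resp.\ $\leq\delta$). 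I need to show $v_{gH}$ lies $40\delta$--close to both $[\pi_S(x),\rho^U_S]$ and $[\pi_S(y),\rho^U_S]$.

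Apply $\delta$--hyperbolicity of $\pyramid{G}$ to the geodesic triangle with vertices $\pi_S(x),\pi_S(y),\rho^U_S$. Since $v_{gH}$ lies on the side $[\pi_S(x),\pi_S(y)]$, it is $\delta$--close to one of the other two sides; without loss of generality, say $\dist_\triangle(v_{gH},[\pi_S(y),\rho^U_S])\leq\delta$, which already delivers half of conclusion~(2). For the other half, I will argue by contradiction: suppose $\dist_\triangle(v_{gH},[\pi_S(x),\rho^U_S])>40\delta$. Then in the triangle $v_{gH},\pi_S(x),\rho^U_S$, the Gromov product $(\pi_S(x)\mid\rho^U_S)_{v_{gH}}$ exceeds $39\delta$, so the geodesics $[v_{gH},\pi_S(x)]$ and $[v_{gH},\rho^U_S]$ $2\delta$--fellow-travel from $v_{gH}$ out to distance at least $30\delta$. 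In particular, there exists $z'\in[v_{gH},\rho^U_S]$ with $\dist_\triangle(v_{gH},z')\in[25\delta,30\delta]$ and $\dist_\triangle(x',z')\leq 2\delta$.

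Combining the fulcrum relation with this fellow-travelling estimate gives
\[
\dist_\triangle(h^{-1}z',y')=\dist_\triangle(z',hy')\leq\dist_\triangle(z',x')+\dist_\triangle(x',hy')\leq 7\delta
\]
(respectively $\leq 3\delta$). Since $h^{-1}\in gNg^{-1}$, the data $(y',z',h^{-1})$ witness a fulcrum of size $7\delta$ (resp.\ $3\delta$) at $v_{gH}$ for the pair $\{\pi_S(y),\rho^U_S\}$, which is strictly smaller than $10\delta$ (resp.\ $5\delta$). This contradicts the hypothesis that $\{U,\pi_S(y)\}$ is linked (resp.\ weakly linked). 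The analogous statement in which $x$ or $y$ is replaced by some $V\in\mathfrak S-\{S\}$ follows by the identical argument with $\rho^V_S$ substituted for $\pi_S(x)$ or $\pi_S(y)$.

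The main technical hurdle is that the fulcrum definition demands $v_{gH}$ lie exactly on a geodesic from $\pi_S(y)$ to $\rho^U_S$, whereas our WLOG step only places $v_{gH}$ within $\delta$ of such a geodesic. I would resolve this by observing that the concatenation $[\pi_S(y),v_{gH}]\cdot[v_{gH},\rho^U_S]$ is a $(1,2\delta)$--quasigeodesic, so a standard stability-of-quasigeodesics perturbation (absorbed into the $7\delta\to 10\delta$ and $3\delta\to 5\delta$ gaps) upgrades the near-fulcrum to a genuine one. The generous numerical margin between the ``input'' fulcrum sizes ($10\delta,5\delta$) and the ``output'' sizes produced by the fellow-travelling ($7\delta,3\delta$) is precisely what supplies this slack, and is the reason the lemma is stated with $40\delta$ rather than a smaller constant.
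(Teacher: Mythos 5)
Your overall strategy is the same as the paper's: assume a small fulcrum $v_{gH}$ exists for $\{\pi_S(x),\pi_S(y)\}$, look at the geodesic triangle on $\pi_S(x),\pi_S(y),\rho^U_S$, and transport the fulcrum witnesses $x',y'$ onto the other two sides to contradict linkedness of $\{U,\pi_S(x)\}$ or $\{U,\pi_S(y)\}$. The numerics of the transport step (Gromov product, $2\delta$--fellow-travelling, $5\delta+2\delta=7\delta<10\delta$) are fine.

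The gap is exactly at the point you flag as a ``technical hurdle,'' and your proposed fix does not work. To contradict linkedness you must exhibit a genuine $10\delta$--fulcrum (resp.\ $5\delta$--fulcrum) for $\rho^U_S,\pi_S(y)$, and Definition~\ref{defn:fulcrum} requires the exact additivity $\dist_\triangle(\pi_S(y),\rho^U_S)=\dist_\triangle(\pi_S(y),v_{gH})+\dist_\triangle(v_{gH},\rho^U_S)$, i.e.\ that $v_{gH}$ lie \emph{on} a geodesic from $\pi_S(y)$ to $\rho^U_S$. Stability of quasigeodesics only tells you that the concatenation $[\pi_S(y),v_{gH}]\cdot[v_{gH},\rho^U_S]$ is Hausdorff-close to such a geodesic; it cannot place the apex on one. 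In a graph, a point within $\delta$ of a geodesic between two vertices need not lie on any geodesic between them (think of a vertex hanging off a geodesic in a tree), and then no choice of witnesses $y',z'$ repairs the failed additivity condition, so no fulcrum exists at $v_{gH}$ and no contradiction with linkedness is obtained. The slack between $7\delta$ and $10\delta$ is irrelevant to this defect. What actually closes the gap is the \emph{very rotating condition} (\cite[Lemma~5.5]{DGO}): once you have $y',z'$ at distance in $[25\delta,30\delta]$ from $v_{gH}$ on either side with $\dist_\triangle(y',h^{-1}z')$ small for some nontrivial $h^{-1}\in gNg^{-1}$, the very rotating property forces every geodesic between (points beyond) $y'$ and $z'$ to pass exactly through $v_{gH}$. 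This is a special feature of the cone-off/rotating-family geometry, not of $\delta$--hyperbolicity, and it is precisely where the hypothesis that $N$ is sufficiently deep (making $(\{N^g\},\{v_{gH}\})$ a very rotating family) enters; your argument never invokes it, which is the telltale sign of the missing ingredient. The paper's proof runs the same transport argument but cites the very rotating condition at this exact juncture to conclude both that $v_{gH}\in[\rho^U_S,\pi_S(y)]$ and that it is a $10\delta$--fulcrum there.
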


\begin{proof}
Let $U,x,y$ be as in the statement and suppose that $\{x,y\}$ is not weakly linked.  By definition, there exists a $5\delta$--fulcrum $v_{gH}$ for $\pi_S(x),\pi_S(y)$.  Consider a geodesic triangle in the $\delta$--hyperbolic space $\pyramid{G}$ with vertices $\pi_S(x),\pi_S(y)$ and $\rho^U_S$, with $v_{gH}\in [\pi_S(x),\pi_S(y)]$. Choose $x'\in[x,v_{gH}],y'\in[v_{gH},y]$ so that $\dist_\triangle(x',v_{gH}),\dist_\triangle(y',v_{gH})\in[25\delta,30\delta]$ and so that $\dist_\triangle(x',hy')\leq 5\delta$ for some $h\in gHg^{-1}$.

\begin{figure}[h]
\begin{overpic}[width=0.6\textwidth]{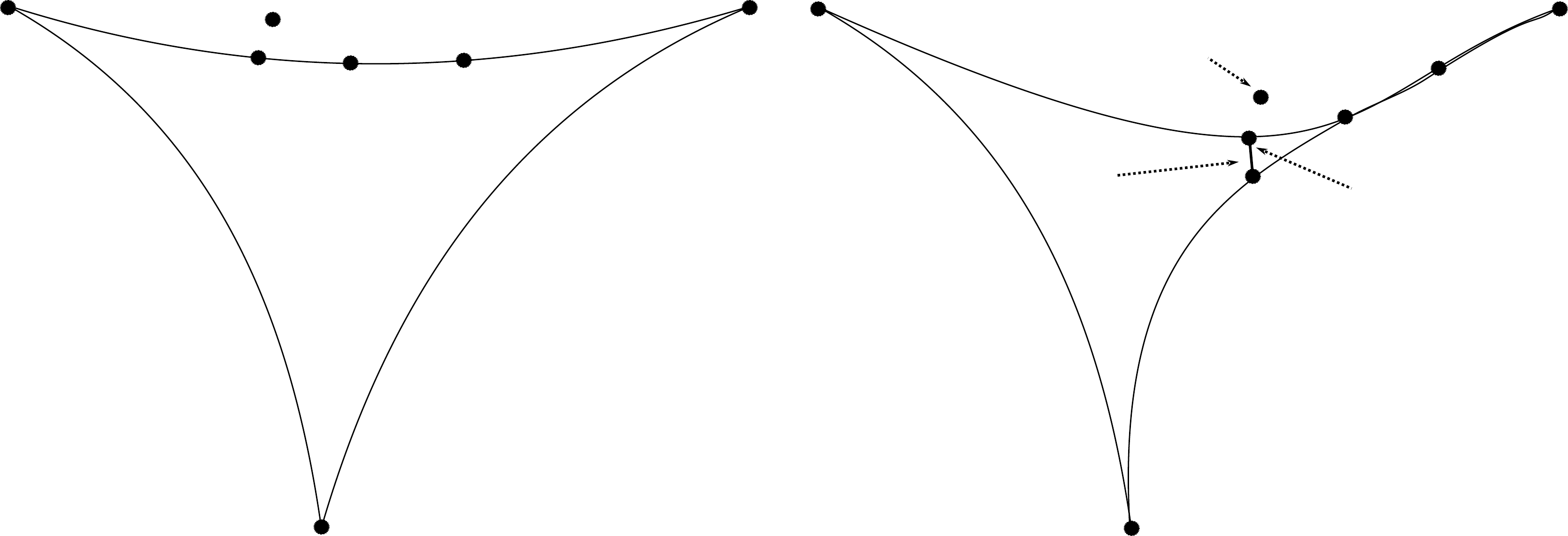}
\put(21,0){$\rho^U_S$}
\put(73,0){$\rho^U_S$}
\put(-1,31){$x$}
\put(46,31){$y$}
\put(51,31){$x$}
\put(98,31){$y$}
\put(12,33){$hy'$}
\put(15,27){$x'$}
\put(21,27){$v_{gH}$}
\put(30,28){$y'$}
\put(65,22){$\leq\delta$}
\put(86,20){$x'$}
\put(73,31){$hy'$}
\put(91,32){$y'$}
\put(87,25){$v_{gH}$}
\end{overpic}
\caption{Finding an illegal fulcrum in the proof of Lemma~\ref{lem:pass_close_to_apex}.}\label{fig:triangle_fulcrum}
\end{figure}

If
$\dist_\triangle(x',[\rho^U_S,\pi_S(x)]),\dist_\triangle(y',[\rho^U_S,\pi_S(x)])>\delta$,
then the very rotating condition~\cite[Lemma 5.5]{DGO} implies 
that $v_{gH}$ is contained in $[\rho^U_S,\pi_S(y)]$ and that $v_{gH}$
is a $10\delta$--fulcrum for $\pi_S(y),\rho^U_S$ (witnessed by the
same element $h\in gHg^{-1}$), contradicting that $U,y$ are linked. 
(See Figure~\ref{fig:triangle_fulcrum}.)  Since the same argument works
for $[\rho^U_S,\pi_S(y)]$, we have
$$\dist_\triangle([\rho^U_S,\pi_S(x)],\{x',y'\})\leq
\delta,\,\,\,\,\dist_\triangle([\rho^U_S,\pi_S(y)],\{x',y'\})\leq
\delta,$$ and $\dist_\triangle(v_{gH},x'),
\dist_\triangle(v_{gH},y')\leq 30\delta$, so the claim follows.
\end{proof}

\begin{lem}\label{lem:same_proj_as_gH}
There exists $K\geq 0$ so that the following holds: if $x,g\in G$ and $U\in\mathfrak S$ satisfy $\dist_\triangle([\pi_S(x),\rho^U_S],v_{gH})\leq 40\delta$ then $\diam_{\fontact U}(\pi_U(x)\cup gH)\leq K$.
\end{lem}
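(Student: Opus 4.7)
I would start by turning the hypothesis into concrete geometric information in $\fontact S$. Fix a $\pyramid{G}$-geodesic $\gamma$ from $\pi_S(x)$ to $\rho^U_S$ that comes within $40\delta$ of $v_{gH}$. Since the choice of $r$ in Remark~\ref{rem:constants} ensures $r \gg 40\delta$, any point of $\gamma$ within $40\delta$ of $v_{gH}$ sits at positive depth inside $\cone(gH)$; consequently $\gamma$ enters $\cone(gH)$ at some vertex $a_x\in gH$ on the $\pi_S(x)$-side. (The degenerate cases $\pi_S(x)\in gH$ or $\rho^U_S\in gH$ are immediate from coarse Lipschitzness of $\pi_U$ and Lemma~\ref{lem:bounded_projections_he}.)

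\textbf{Push-off and bounded geodesic image.} I would then push $\gamma$ off to $\fontact S$: form a push-off $\alpha$ in the sense of Definition~\ref{defn:push_off}, which in particular replaces the $\cone(gH)$ piece of $\gamma$ by an $\fontact S$-geodesic and hence still passes through $a_x$. By Lemma~\ref{lem:push-off}, $\alpha$ lies within $\fontact S$-Hausdorff distance $D$ (independent of $r$) of an $\fontact S$-geodesic $\alpha'$ from $\pi_S(x)$ to $\rho^U_S$; in particular there is $p\in\alpha'$ with $\dist_{\fontact S}(p,a_x)\le D$. To bring in the bounded geodesic image axiom of $(G,\mathfrak S)$ with $U \propnest S$, I would pick a truncation point $q\in\alpha'$ as follows: take $q=p$ if $\dist_{\fontact S}(p,\rho^U_S)\ge 5E$; otherwise, if $\dist_{\fontact S}(\pi_S(x),\rho^U_S)\ge 5E$, take $q$ to be the point of $\alpha'$ strictly between $\pi_S(x)$ and $p$ at $\fontact S$-distance exactly $5E$ from $\rho^U_S$. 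In each such case, the subgeodesic $\alpha_2\subseteq\alpha'$ from $\pi_S(x)$ to $q$ lies outside the $E$-neighborhood of $\rho^U_S$ (since $\alpha'$ is a geodesic and distances along it add), so bounded geodesic image yields $\dist_U(\pi_U(x),\pi_U(q))\le E$. Combining with $\dist_{\fontact S}(q,a_x)\le 10E+D$, coarse Lipschitzness of $\pi_U$, and the bound $\diam_{\fontact U}\pi_U(gH)\le C$ from Lemma~\ref{lem:bounded_projections_he}, one obtains the desired uniform bound on $\diam_{\fontact U}(\pi_U(x)\cup \pi_U(gH))$.

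\textbf{Remaining case and main obstacle.} The only remaining possibility is $\dist_{\fontact S}(\pi_S(x),\rho^U_S)<5E$, which by the triangle inequality forces $\dist_{\fontact S}(\pi_S(x),a_x)$ to be uniformly bounded (in terms of $E$ and $D$); the conclusion then follows from coarse Lipschitzness of $\pi_U$ and Lemma~\ref{lem:bounded_projections_he}. The main subtlety is the case where $\rho^U_S$ lies close to $gH$ in $\fontact S$: there $\alpha'$ already runs near $\rho^U_S$ on the way to $a_x$, which a priori blocks a direct use of bounded geodesic image on the subgeodesic from $\pi_S(x)$ to a point of $\alpha'$ closest to $a_x$. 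The truncation step above circumvents this by cutting $\alpha'$ short just before the $E$-neighborhood of $\rho^U_S$, with the resulting small residual gap from $q$ to $a_x$ absorbed into the coarse Lipschitz constant of $\pi_U$.
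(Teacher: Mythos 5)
Your skeleton matches the paper's: locate the entry point $a_x\in gH$ of the geodesic into $\cone(gH)$, push off to $\fontact S$ via Lemma~\ref{lem:push-off}, apply bounded geodesic image for $U\propnest S$, and finish with Lemma~\ref{lem:bounded_projections_he}. The BGI step itself (truncating $\alpha'$ at $q$ so that $[\pi_S(x),q]$ avoids $\neb_E(\rho^U_S)$, hence $\dist_U(x,q)\leq E$ up to consistency slop) is fine. The gap is in the bridge from $q$ back to $gH$: you bound $\dist_{\fontact U}(\pi_U(q),\pi_U(a_x))$ by invoking ``coarse Lipschitzness of $\pi_U$'' together with $\dist_{\fontact S}(q,a_x)\leq 10E+D$. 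But $\pi_U$ is coarsely Lipschitz only with respect to the word metric on $G$ for a finite generating set, whereas $\fontact S=\cayley(G,\mathcal T)$ with $\mathcal T$ typically infinite; a bounded $\fontact S$--distance gives no bound on $G$--distance, and the downward projection $\rho^S_U\colon\fontact S\to 2^{\fontact U}$ carries no Lipschitz control at all --- it is governed only by BGI along geodesics that stay out of $\neb_E(\rho^U_S)$. In exactly the situation you single out as the main obstacle ($\rho^U_S$ close to $a_x$, i.e.\ your ``otherwise'' branch), a geodesic from $q$ to $a_x$ may enter $\neb_E(\rho^U_S)$, so neither BGI nor any Lipschitz property closes this step. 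The same unjustified inference appears in your treatment of the ``remaining case'' (that case is in fact vacuous, since a $\pyramid{G}$--geodesic of length $<5E$ between points of $\fontact S$ cannot approach an apex, which sits at $\dist_\triangle$--distance $\geq r$ from $\fontact S$).

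The missing ingredient is the observation the paper uses to kill the bad case outright: because the geodesic comes within $40\delta$ of $v_{gH}$ while the cone has radius $r\geq 10^9CDE\delta$, its excursion through $\cone(gH)$ has $\dist_\triangle$--length at least $2r-80\delta$, so the two entry points $a,b\in gH$ satisfy $\dist_{\fontact S}(a,b)>1000CDE\delta$. Since the push-off is a $(D,D)$--quasigeodesic Hausdorff--$D$--close to an $\fontact S$--geodesic from $\pi_S(x)$ to $\rho^U_S$, and the replacement path from $a$ to $b$ sits between $a$ and $\rho^U_S$ along it, one gets $\dist_{\fontact S}(a,\rho^U_S)>100E$. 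Consequently every $\fontact S$--geodesic $[x,a]$ avoids $\neb_E(\rho^U_S)$, BGI applies to a geodesic whose terminal point already lies in $gH$, and no bridging step is needed. With that observation inserted, your truncation becomes unnecessary (one always has $q=p$, and even that detour can be dropped); without it, the argument as written does not go through.
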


\begin{proof}
Observe that $[\pi_S(x),\rho^U_S]$ must pass through $\cone(gH)$, since $\dist_{\triangle}([\pi_S(x),\rho^U_S],v_{gH})\leq40\delta$ and $r\geq10^9\delta$.  Let $a$ be the entry point of $[x,\rho^U_S]$ in $\cone(gH)$ and let $[x,a]$ be the sub-geodesic joining $x$ to $a$.  Let $[a,b]$ be the sub-geodesic of $[\pi_S(x),\rho^U_S]$ that joins the entry point $a$ of $[\pi_S(x),\rho^U_S]$ in $\cone(gH)$ to the entry point $b$ of $[\rho^U_S,x]$ in $\cone(gH)$.  Since $\dist_\triangle(a,b)\geq 2r-80\delta>1000CDE\delta$, we have $\dist_{\fontact S}(a,b)>1000CDE\delta$.  Combined with Lemma~\ref{lem:push-off}, this shows that $\dist_{\fontact S}(\rho^U_S,c)>100E$ for any $c$ on a $\fontact S$--geodesic from $x$ to $a$.  Hence $\dist_{\fontact S}([x,a],\rho^U_S)>E$, so bounded geodesic image in $(G,\mathfrak S)$ implies that $\dist_U(x,a)\leq E$, whence $\diam_U(\pi_U(x)\cup\pi_U(gH))\leq E+C$, by Lemma~\ref{lem:bounded_projections_he}.
\end{proof}

Combining Lemma \ref{lem:pass_close_to_apex} and Lemma \ref{lem:same_proj_as_gH}, and increasing $C$ if necessary, yields:

\begin{cor}\label{cor:unlinked_have_same_proj}
 There exists $C\geq 0$ so that the following holds for each $x,y\in
 G$ provided $N$ is sufficiently deep.  If $\{U,\pi_S(x)\}$ and
 $\{U,\pi_S(y)\}$ are linked (resp.\ weakly linked) 
 but $\{\pi_S(x),\pi_S(y)\}$ is not
 weakly linked (resp.\ has a $\delta$--fulcrum), then $\dist_{\fontact U}(x,y)\leq C$.  The same
 holds with $x$ and/or $y$ replaced with elements of~$\mathfrak
 S-\{S\}$.
\end{cor}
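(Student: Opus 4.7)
The plan is to observe that this corollary is essentially an immediate combination of the two cited lemmas, and the main task is simply bookkeeping the constants. First I would apply Lemma~\ref{lem:pass_close_to_apex} to the hypotheses: since $\{U,\pi_S(x)\}$ and $\{U,\pi_S(y)\}$ are linked (resp.\ weakly linked) while $\{\pi_S(x),\pi_S(y)\}$ fails the corresponding weaker linkage conclusion, the first alternative of Lemma~\ref{lem:pass_close_to_apex} is ruled out, so we fall into its second alternative. This produces an apex $v_{gH}$ satisfying $\dist_{\triangle}([\pi_S(x),\rho^U_S],v_{gH})\leq 40\delta$ and $\dist_{\triangle}([\pi_S(y),\rho^U_S],v_{gH})\leq 40\delta$ simultaneously.

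Next I would feed this apex into Lemma~\ref{lem:same_proj_as_gH} twice, once for $x$ and once for $y$, to obtain $\diam_{\fontact U}(\pi_U(x)\cup\pi_U(gH))\leq K$ and $\diam_{\fontact U}(\pi_U(y)\cup\pi_U(gH))\leq K$, where $K$ is the constant from that lemma. Combining these via the triangle inequality and Lemma~\ref{lem:bounded_projections_he}.\eqref{item:bounded_proj_on_s}, which bounds $\diam_{\fontact U}(\pi_U(gH))$, we conclude $\dist_{\fontact U}(x,y)\leq 2K + C'$, where $C'$ is the bound from Lemma~\ref{lem:bounded_projections_he}. Setting $C=2K+C'$ (and enlarging if necessary as stated in the hypothesis) gives the desired bound.

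For the final sentence, asserting the analogous statement when $x$ and/or $y$ is replaced by an element $V\in\mathfrak S-\{S\}$, I would simply invoke the corresponding ``analogous statement'' clause of Lemma~\ref{lem:pass_close_to_apex}, which delivers exactly the same apex configuration with $\pi_S(x)$ replaced by $\rho^V_S$. Since the proof of Lemma~\ref{lem:same_proj_as_gH} used only that the geodesic from some point in $\fontact S$ to $\rho^U_S$ passes close to $v_{gH}$, and bounded geodesic image in $(G,\mathfrak S)$ together with $\nest$--consistency controls $\pi_U(\rho^V_S)$ in terms of $\rho^V_U$ when $V$ is not friendly to $U$, the same argument proves $\diam_{\fontact U}(\rho^V_U\cup\pi_U(gH))\leq K$ after possibly enlarging $K$. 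Chaining inequalities as before yields $\dist_{\fontact U}(V,*)\leq C$, where $*$ is $x$, $y$, or the appropriate $W\in\mathfrak S-\{S\}$.

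The step I expect to be the most delicate is not the main chain of inequalities but rather verifying that the constant $K$ from Lemma~\ref{lem:same_proj_as_gH} and the constants emerging from the ``analogous statement'' of Lemma~\ref{lem:pass_close_to_apex} can be chosen uniformly, independent of $U,V,x,y,g$, and in particular independent of $N$ provided $N$ is sufficiently deep. Since the proofs of both cited lemmas use only the HHS structure constants, the geometric separation constants from Remark~\ref{rem:geom_sep}, the hyperbolicity constant $\delta$, and the fact that $r$ was chosen sufficiently large in advance, this uniformity is automatic and no new input is needed.
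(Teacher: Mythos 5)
Your proof is correct and is essentially identical to the paper's, which derives the corollary in one line by combining Lemma~\ref{lem:pass_close_to_apex} (whose first alternative is excluded by hypothesis) with Lemma~\ref{lem:same_proj_as_gH} applied to each of $x$ and $y$, enlarging $C$ as needed. Your additional remarks on the $\mathfrak S-\{S\}$ case and on uniformity of constants are consistent with what the paper leaves implicit.
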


\begin{lem}\label{lem:hellyish}
Let $\{\mathbf U_i\}_{i=1}^k$ be a totally orthogonal set with $\mathbf U_i\in\mathfrak S/\nclose N$ for all $i$.  Then there exist representatives $U_i\in\mathbf U_i$ so that for all distinct $i,j$, we have $U_i\orth U_j$ and $\{U_i,U_j\}$ is a linked pair.
\end{lem}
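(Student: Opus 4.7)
The plan is to fix one representative $U_1 \in \mathbf{U}_1$ arbitrarily and use it as a reference point to canonically define $U_i \in \mathbf{U}_i$ for each $i \geq 2$; the goal is then to show that these canonical representatives are automatically pairwise orthogonal and linked. The case $k \leq 1$ is trivial, so we may assume $k \geq 2$, in which case no $\mathbf{U}_i$ equals $\mathbf{S}$ since $S$ is orthogonal to nothing.

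First, fix an arbitrary $U_1 \in \mathbf{U}_1$. For each $i \geq 2$, the hypothesis $\mathbf{U}_1 \orth \mathbf{U}_i$ combined with Construction~\ref{cons:hhg_structure} yields a linked orthogonal pair $\{V, V'\}$ with $V \in \mathbf{U}_1$ and $V' \in \mathbf{U}_i$. Writing $V = n U_1$ for some $n \in \nclose{N}$ and setting $U_i := n^{-1} V' \in \mathbf{U}_i$, the $G$-equivariance of the hierarchical structure together with Lemma~\ref{lem:linked_pairs_exist}(2) gives that $U_1 \orth U_i$ and that $\{U_1, U_i\}$ is linked. Since $U_1, U_i \in \mathfrak{S}-\{S\}$ satisfy $U_1 \orth U_i$, Lemma~\ref{lem:orth_close} yields $\dist_{\fontact S}(\rho^{U_1}_S, \rho^{U_i}_S) \leq 2E$, and hence Lemma~\ref{lem:unique_close_linked_partner} shows that $U_i$ is the \emph{unique} representative of $\mathbf{U}_i$ enjoying this closeness to $\rho^{U_1}_S$.

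The heart of the proof is the claim that $\{U_i, U_j\}$ is orthogonal and linked for all distinct $i, j \geq 2$. Applying the same translation trick to the relation $\mathbf{U}_i \orth \mathbf{U}_j$ while keeping $U_i$ fixed produces some $W_j \in \mathbf{U}_j$ with $U_i \orth W_j$ and $\{U_i, W_j\}$ linked. Two applications of Lemma~\ref{lem:orth_close} together with the triangle inequality then give $\dist_{\fontact S}(\rho^{U_1}_S, \rho^{W_j}_S) \leq \dist_{\fontact S}(\rho^{U_1}_S, \rho^{U_i}_S) + \dist_{\fontact S}(\rho^{U_i}_S, \rho^{W_j}_S) \leq 4E \leq 10E$. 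Since $U_j$ already satisfies this same bound with respect to $\rho^{U_1}_S$ by the previous paragraph, the uniqueness clause of Lemma~\ref{lem:unique_close_linked_partner} forces $W_j = U_j$, and therefore $\{U_i, U_j\}$ is orthogonal and linked.

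I do not expect any substantive obstacle beyond the bookkeeping described above: the argument reduces to a triangle inequality on the reference point $\rho^{U_1}_S$ combined with the uniqueness principle of Lemma~\ref{lem:unique_close_linked_partner}. The one point to treat with care is that the relevant $\rho$-projections live in $\fontact_{\mathfrak{S}} S = \cayley(G,\mathcal T)$ (and not the pyramid space $\pyramid{G}$), which is the setting in which both Lemma~\ref{lem:orth_close} and Lemma~\ref{lem:unique_close_linked_partner} are formulated.
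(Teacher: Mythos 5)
Your proof is correct and follows essentially the same strategy as the paper's: choose representatives that are orthogonal and linked relative to a reference, bound $\dist_{\fontact S}(\rho^{U}_S,\rho^{V}_S)$ for orthogonal pairs via Lemma~\ref{lem:orth_close} and a triangle inequality, and invoke the separation of $\rho$-points within a single $\nclose N$-orbit to force the candidate representatives to coincide. The only cosmetic differences are that the paper organizes this as an induction on $k$ (adjoining $\mathbf U_k$ to an already-constructed orthogonal linked family) and cites Lemma~\ref{lem:not_close} for the orbit-separation step, whereas you fix $U_1$ as a global reference and quote the equivalent uniqueness statement from Lemma~\ref{lem:unique_close_linked_partner}.
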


\begin{proof}
This follows by induction on $k$, using Lemma~\ref{lem:not_close}.  Indeed, when $k=1$, there is nothing to prove.  Suppose that we can choose the required $U_i$ for $1\leq i\leq k-1$.  For each $i\leq k-1$, choose $U_k^i\in\mathbf U_k$ so that $\{U_i,U^i_k\}$ is a linked pair and $U_i\orth U^i_k$.  Then for each $i,j$, we have $\dist_\triangle(\rho^{U^i_k}_S,\rho^{U^j_k}_S)\leq30E$, so, by Lemma~\ref{lem:not_close}, the $U^i_k$ all coincide, and we are done.
\end{proof}

\begin{lem}\label{lem:not_close}
Let $U\in\mathfrak S$ and let $n\in\nclose N$.  Then either $nU=U$ or $\dist_\triangle(\rho^U_S,\rho^{nU}_S)>100E$.
\end{lem}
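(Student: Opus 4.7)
The plan is to reduce the lemma to a statement about displacement of points in $\fontact S$ under $\nclose N$ via equivariance, and then apply the Greendlinger Lemma (Lemma~\ref{lem:Greendlinger}) together with the fact that apices sit far from $\fontact S$ in $\pyramid{G}$.

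First, by equivariance of the $G$-action on the HHG structure $(G,\mathfrak T)$ from Proposition~\ref{prop:aux_HHG}, we have $\rho^{nU}_S = n \cdot \rho^U_S$, where $n$ acts on $\fontact_{\mathfrak T}S = \pyramid{G}$ extending its left-multiplication action on $\cayley(G,\mathcal T)$ and permuting apices by $n \cdot v_{gH} = v_{ngH}$. (The case $U = S$ is trivial since $S$ is the unique $\nest$-maximal element, so $nU = U$; and if $n = 1$ then $nU = U$ immediately.) Setting $x := \rho^U_S \in \fontact S$, the lemma therefore reduces to showing that for $n \in \nclose N - \{1\}$ we have $\dist_\triangle(x, nx) > 100E$.

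Now apply Lemma~\ref{lem:Greendlinger} to $p = x$. Observe that any path in $\pyramid{G}$ from a point of $\fontact S$ to an apex $v_{gH}$ must traverse the hyperbolic cone of radius $r$, so $\dist_\triangle(\fontact S, v_{gH}) \geq r + 1$. Since $r \geq 10^9 \delta E Q$ by Remark~\ref{rem:constants}, in particular $r + 1 > 25\delta$, which excludes conclusion~(B) of the Greendlinger Lemma (which would require $\dist_\triangle(x, v_{gH}) \leq 25\delta$ for some apex). Therefore conclusion~(A) holds: some apex $v_{g'H}$ is a $\delta$-fulcrum for $x$ and $nx$ and hence lies on a geodesic from $x$ to $nx$. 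Since $x, nx \in \fontact S$ and $\dist_\triangle(v_{g'H}, \fontact S) \geq r + 1$, we conclude
\[
\dist_\triangle(x, nx) = \dist_\triangle(x, v_{g'H}) + \dist_\triangle(v_{g'H}, nx) \geq 2(r+1) > 100E,
\]
as required.

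There is no significant obstacle here; the argument is essentially that the pyramidal coning makes apices so far from $\fontact S$ that Greendlinger's option~(B) is impossible for points of $\fontact S$, and option~(A) automatically produces the quantitative lower bound on displacement via the same distance estimate.
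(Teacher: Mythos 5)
Your proof is correct and uses the same two ingredients as the paper's: the equivariance convention $\rho^{nU}_S=n\rho^U_S$ from Remark~\ref{rem:projections_are_points} together with the Greendlinger Lemma and the fact that apices lie at distance at least $r\gg 100E$ from $\fontact S$. The only difference is organizational — the paper argues by contradiction (a geodesic of length $\leq 100E$ between points of $\fontact S$ misses all apices, so no fulcrum exists, forcing conclusion~(B), which is impossible), whereas you run the same implications in the forward direction — so this is essentially the paper's argument.
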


\begin{proof}
If $U\neq nU$ and $\dist_\triangle(\rho^U_S,\rho^{nU}_S)\leq100E$, then each geodesic $[\rho^U_S,\rho^{nU}_S]$ in $\pyramid{G}$ fails to pass through any apex, since $r>10^9E$.  In particular, there is no $5\delta$--fulcrum for $\rho^U_S,\rho^{nU}_S=n\rho^U_S$.  Thus, by Lemma~\ref{lem:Greendlinger}, there exists $v_{gH}$ so that $n\in gNg^{-1}$ and $\dist_\triangle(\rho^U_S,v_{gH})\leq25\delta$.  But this is impossible, since $\rho^U_S\in\fontact S$ lies at distance at least $r>10^9\delta$ from any apex.  
\end{proof}

\begin{lem}\label{lem:labelled}
There exists $C'$ so that the following holds.  Let $\{g,g'\}$ be $\dist_\triangle$--minimal representatives of $g\nclose N,g'\nclose N$ and let $U\in\mathfrak T$ satisfy $\dist_U(g,g')>C'$.  Then $\{U,g\}$ and $\{U,g'\}$ are linked.
\end{lem}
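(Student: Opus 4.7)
The plan is to argue by contradiction. Assume that $C'$ is chosen sufficiently large (to be specified at the end), that $\dist_U(g,g')>C'$, and that $\{U,g\}$ is not linked (the case of $\{U,g'\}$ is symmetric). The goal is to construct a $5\delta$-fulcrum for $(\pi_S(g),\pi_S(g'))$, which would contradict the $\dist_\triangle$-minimality of $\{g,g'\}$ since the construction in the proof of Lemma~\ref{lem:linked_pairs_exist}(1) would then produce a strictly closer pair of coset representatives.

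Since $\{U,g\}$ is not linked, there is a $10\delta$-fulcrum $v_{aH}$ for $(\rho^U_S,\pi_S(g))$ witnessed by some $h\in aNa^{-1}$ and points $x'\in[\rho^U_S,v_{aH}]$, $y'\in[v_{aH},\pi_S(g)]$. Applying Lemma~\ref{lem:Greendlinger} to $h\in\nclose N-\{1\}$ and $p=\pi_S(g)$ yields a $\delta$-fulcrum for $(\pi_S(g),h\pi_S(g))$; the fulcrum must be at $v_{aH}$ by the $200\delta$-separation of apices, and the ``close-to-apex'' alternative of Lemma~\ref{lem:Greendlinger} is ruled out (up to a boundary case absorbed by a small adjustment of constants) by the lower bound $\dist_\triangle(\pi_S(g),v_{aH})\geq 25\delta$ inherent in the fulcrum data. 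This supplies $h''\in aNa^{-1}$ and points $u'\in[\pi_S(g),v_{aH}]$, $v'\in[v_{aH},h\pi_S(g)]$ with $\dist_\triangle(u',h''v')\leq\delta$. To upgrade this into a $5\delta$-fulcrum for $(\pi_S(g),\pi_S(g'))$ at $v_{aH}$, it suffices to find $w'\in[v_{aH},\pi_S(g')]$ with $\dist_\triangle(v',w')\leq 4\delta$; then $\dist_\triangle(u',h''w')\leq 5\delta$ as required, and the pair $(u',w')$ together with the witness $h''$ constitutes the sought-after fulcrum.

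The existence of such a $w'$ reduces to showing that $[v_{aH},h\pi_S(g)]$ and $[v_{aH},\pi_S(g')]$ $\delta$-fellow-travel for the first $30\delta$ away from $v_{aH}$. For this, I would choose $C'$ much larger than the bounded-geodesic-image threshold for the HHG structure $(G,\mathfrak T)$ from Proposition~\ref{prop:aux_HHG}, so that any $\pyramid G$-geodesic $[\pi_S(g),\pi_S(g')]$ passes within $E+D+2r$ of $\rho^U_S$. Combining this with $\delta$-thinness of $\pyramid G$-triangles and the bottleneck behavior of apices (supplied by Lemmas~\ref{lem:quasiconvex_cones} and~\ref{lem:geodesic_fellow_travel_pyramid}) would force $[\pi_S(g),\pi_S(g')]$ to pass within bounded distance of $v_{aH}$, placing both $h\pi_S(g)$ and $\pi_S(g')$ on the side of $v_{aH}$ opposite $\pi_S(g)$ and hence supplying the required lower bound on $(h\pi_S(g)\mid\pi_S(g'))_{v_{aH}}$. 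The main technical obstacle is precisely this bottleneck step: because $\dist_\triangle(v_{aH},\rho^U_S)$ can be as small as $r+1$, a naive triangle-inequality argument going through $\rho^U_S$ falls short, and one must exploit the cone geometry of $\cone(aH)$ together with the choice $r\gg E,D,\delta$ from Remark~\ref{rem:constants} to force the geodesic $[\pi_S(g),\pi_S(g')]$ through the apex's bottleneck rather than around it.
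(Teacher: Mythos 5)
Your overall strategy --- contradict $\dist_\triangle$--minimality of $\{g,g'\}$ by producing a fulcrum for $(\pi_S(g),\pi_S(g'))$ at the apex $v_{aH}$ witnessing that $\{U,g\}$ is unlinked --- is the same contradiction the paper runs in its ``Case 2''. But there is a genuine gap at exactly the step you flag as the main obstacle, and your proposed resolution does not close it. You need the geodesic $[\pi_S(g),\pi_S(g')]$ to pass within $O(\delta)$ of $v_{aH}$ \emph{on the correct side}, so that the original fulcrum points $x'\in[\rho^U_S,v_{aH}]$, $y'\in[v_{aH},\pi_S(g)]$ can be pushed onto $[g,g']$. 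Bounded geodesic image in $(G,\mathfrak T)$ only forces $[g,g']$ to come within $E+D+2r$ of $\rho^U_S$, while $\dist_\triangle(v_{aH},\rho^U_S)$ may be as small as $r+1$; so the fellow-travelling zone of $[g,\rho^U_S]$ and $[g,g']$ (which extends only up to the Gromov product $(g'\mid\rho^U_S)_g$) need not reach $v_{aH}$, and neither Lemma~\ref{lem:quasiconvex_cones} nor Lemma~\ref{lem:geodesic_fellow_travel_pyramid} (whose hypotheses require \emph{both} endpoints to be near $\cone(aH)$) applies. The missing ingredient is the dichotomy the paper uses: either $[\rho^U_S,\pi_S(g')]$ also passes $40\delta$--close to $v_{aH}$ --- in which case Lemma~\ref{lem:same_proj_as_gH}, applied to both $g$ and $g'$ (note $v_{aH}$ lies \emph{on} $[\rho^U_S,\pi_S(g)]$), forces $\pi_U(g)$ and $\pi_U(g')$ to coarsely coincide with $\pi_U(aH)$, contradicting $\dist_U(g,g')>C'$ --- or it does not, in which case $\delta$--thinness of the triangle $g,g',\rho^U_S$ places the $30\delta$--neighbourhood of $v_{aH}$ on $[g,\rho^U_S]$ within $\delta$ of $[g,g']$. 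This is the only place the hypothesis $\dist_U(g,g')>C'$ is really used; your BGI argument is not an adequate substitute for it.

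Two further points. First, the detour through Lemma~\ref{lem:Greendlinger} is both unnecessary and introduces an unjustified claim: you need $[v_{aH},h\pi_S(g)]$ and $[v_{aH},\pi_S(g')]$ to $\delta$--fellow-travel out to $30\delta$, i.e.\ $(h\pi_S(g)\mid\pi_S(g'))_{v_{aH}}\gtrsim 30\delta$. Being ``on the side opposite $\pi_S(g)$'' gives no such lower bound in general; the only available estimate comes from the original fulcrum inequality $\dist_\triangle(x',hy')\leq10\delta$, which yields roughly $(h\pi_S(g)\mid\rho^U_S)_{v_{aH}}\geq 20\delta-O(\delta)$, not $30\delta$, so your target $\dist_\triangle(v',w')\leq4\delta$ is out of reach. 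The direct route is to transfer the original data: $y'$ already lies on the $g$--side, $x'$ is $O(\delta)$--close to a point of $[v_{aH},\pi_S(g')]$ once the dichotomy above is settled, and $h^{-1}$ is the witness; one obtains a $d$--fulcrum with $d$ on the order of $12\delta$, which still beats the $45\delta$ of slack in the shortcut $\dist_\triangle(g,h^{-1}g')<\dist_\triangle(g,g')$. Second, Definition~\ref{defn:fulcrum} requires $v_{aH}$ to lie exactly on a geodesic $[x,y]$; if you only establish that $[g,g']$ passes \emph{near} $v_{aH}$, you should run the shortcut inequality directly rather than invoking the literal statement proved in Lemma~\ref{lem:linked_pairs_exist}\eqref{item:linked_pairs_exist}.
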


\begin{proof}
Consider a geodesic triangle in $\pyramid{G}$ formed by
$g,g',\rho^U_S$.  Suppose $\{U,g\}$ is not linked, so $[\rho^U_S,g']$
contains a $10\delta$--fulcrum $v=v_{g''H}$ for $\{U,g'\}$.  If
$[g,\rho^U_S]$ passes $40\delta$--close to $v$, then
Lemma~\ref{lem:same_proj_as_gH} shows that $\pi_U(g),\pi_U(g')$
coarsely coincide with $\rho^{g''H}_U$.  Otherwise, $[\rho^U_S,g']$
contains a length--$60\delta$ subpath, centered at $v$ and contained
in $\neb_\delta([g,g'])$.  Using the notation of
Definition~\ref{defn:fulcrum}, let $h\in N^{g''},x',y'\in[\rho^U_S,g]$
witness the fact that $v$ is a $10\delta$--fulcrum for $\rho^U_S,g'$,
with $y'$ between $v$ and $g'$.  Choose $x'',y''\in[g,g']$ with
$\dist_\triangle(x',x''),\dist_\triangle(y',y'')\leq\delta$.  Then
$\dist_\triangle(g,hg')\leq\dist_\triangle(g,x'')+\dist_\triangle(y'',g')+12\delta<\dist_\triangle(g,g')$,
contradicting our choice of $g,g'$.
\end{proof}

We are now ready to complete the proof of Theorem~\ref{thm:quotients}:

\begin{proof}[Proof of Theorem \ref{thm:quotients}] 
The claimed hierarchical space structure $(G/\nclose{N},\mathfrak S_N)$ is described in Construction~\ref{cons:hhg_structure}.  Observe that each $\fontact\mathbf U$ is uniformly hyperbolic by definition when $\mathbf U=\nclose NU$ for some $U\in\mathfrak S-\{S\}$.  Moreover, $\fontact\mathbf S$ is hyperbolic by Proposition~\ref{prop:hyperbolic_pyramids}.  If $\mathbf U\in\mathfrak S_N$ arose from a coset of $H/N$, then $\mathbf U$ is necessarily $\nest$--minimal.  Hence, if $(G/\nclose N,\mathfrak S_N)$ is a hierarchical space structure, then it is a relatively hierarchically hyperbolic space structure.  Moreover, $G/\nclose N$ acts on  $\mathfrak S_N$ and, for each $g\in G/\nclose N$ and $\mathbf U\in\mathfrak S_N$, it is easily seen that there is an induced isometry $\fontact\mathbf U\to\fontact g\mathbf U$ so that the required diagrams from Section~\ref{subsubsec:aut} coarsely commute.  Hence it suffices to show that $(G/\nclose N,\mathfrak S_N)$ is a hierarchical space.

We observe that if $\{U,x\}$ is linked and $\{U,nx\}$ is weakly linked, then $\dist_U(x,nx)\leq C$, for $C$ as in Corollary 6.26. In fact, $\{x,nx\}$ has a $\delta$--fuclrum by Lemma 6.18.

\textbf{Verifying
Definition~\ref{defn:space_with_distance_formula}.\eqref{item:dfs_curve_complexes}:}
To finish proving that $(G/\nclose N,\mathfrak S_N)$ satisfies the
projections axiom, we must check that each $\pi_{\mathbf U}$ sends
points to uniformly bounded sets and is uniformly coarsely Lipschitz.
Let $\mathbf U\in\mathfrak S_N$, then for each $g\in G$ we have that 
$\pi_{\mathbf U}(g\nclose N)$ is uniformly bounded by 
Corollary~\ref{cor:unlinked_have_same_proj} and 
Lemma~\ref{lem:linked_pairs_exist}.(\ref{item:translates_not_linked}).

We now show that $\pi_{\mathbf U}$ is coarsely lipschitz.  Let
$g\nclose N,g'\nclose N\in G/\nclose N$.  It suffices to consider the case where $\dist_{G/\nclose{N}}(g\nclose N,g'\nclose N)\leq 1$. Choose representatives $g,g'$ that differ by a single generator of $G$. 
Let $\mathbf U\in\mathfrak T$.  Choose $U\in\mathbf U$ so that $\{U,g\}$ is a linked pair, so that since $N$ was chosen sufficiently deep, $\{U,g'\}$ is weakly linked. Hence, the observation above shows $\dist_U(g,g')\leq C$, as required.

%

\textbf{Verifying Definition~\ref{defn:space_with_distance_formula}.\eqref{item:dfs_nesting},\eqref{item:dfs_orthogonal},\eqref{item:dfs_complexity},\eqref{item:dfs:bounded_geodesic_image}:}  The nesting, orthogonality, finite complexity and bounded geodesic image axioms easily follow from Lemma~\ref{lem:unique_close_linked_partner}.

\textbf{Verifying
Definition~\ref{defn:space_with_distance_formula}.\eqref{item:dfs_transversal}:}
We now prove consistency.  Let $\mathbf U\transverse\mathbf V$ and
$g\nclose N\in G/\nclose N$.  Also, suppose $U\in\mathbf U,
V\in\mathbf V$ and $g$ have the property that $\{U,V\}$ and $\{U,g\}$
are linked; this is justified by Lemma~\ref{lem:linked_pairs_exist}.
If $\{V,g\}$ is weakly linked, then, using the observation above, consistency for $\mathbf U,\mathbf
V,g\nclose N$ follows from consistency for $U,V,g$
(Proposition~\ref{prop:aux_HHG}).  If not, by Corollary
\ref{cor:unlinked_have_same_proj} we have $\dist_{U}(\rho^V_U,g)\leq
C$, and hence the consistency inequality holds.

Now suppose that $\mathbf U\propnest\mathbf V$ and $g\nclose N\in
G/\nclose N$.  Also, suppose $U\in\mathbf U, V\in\mathbf V$ and $g$
have the property that $\{U,V\}$ and $\{U,g\}$ are linked and
$U\propnest V$.  If $\{V,g\}$ is a weakly linked pair, then consistency
follows from consistency for $U,V,g$.  Otherwise, apply
Corollary~\ref{cor:unlinked_have_same_proj} as above.

If $\mathbf U\nest\mathbf V$ and $\mathbf W$ satisfies either $\mathbf V\propnest \mathbf W$ or $\mathbf V\transverse\mathbf W$ and $\mathbf W\not\orth\mathbf U$, then $\dist_{\mathbf W}(\rho^{\mathbf U}_{\mathbf W},\rho^{\mathbf V}_{\mathbf W})$ is uniformly bounded by a similar argument.  This completes the proof of consistency. 
 
\textbf{Verifying
Definition~\ref{defn:space_with_distance_formula}.\eqref{item:dfs_large_link_lemma}:}
Let $g\nclose N,g'\nclose N\in G/\nclose N$ and let $\mathbf
W\in\mathfrak S_N$.  We divide into three cases according to whether
$\fontact \mathbf W=g''H/ N$ for some $g''$, or $\fontact\mathbf W
=\fontact W$ for some $W\in\mathbf W$, or $\fontact
W=\pyramid{G/\nclose N}$.  In the first case, nothing is properly
nested into $\mathbf W$ and we are done.

Consider the second case, and choose $g,g'\in g\nclose N,g'\nclose N$
and $W,W'\in\mathbf W$ so that $\{W,g\}$ and $\{W',g'\}$ are linked
pairs.  By translating, we may assume that $W=W'$, and $\dist_{\mathbf
W}(g\nclose N,g'\nclose N)=\dist_W(g,g')$ by definition.  Hence the
large link lemma in $(G,\mathfrak T)$ provides $\mathbf
T_1,\ldots,\mathbf T_k\propnest \mathbf W$, with $\mathbf T_i$
represented by some $T_i\in\mathfrak S$ so that if $U\propnest W$,
then $\dist_U(g,g')>E$ only if $U\nest T_i$ for some $i$.  Suppose
that $\dist_{\mathbf U}(g\nclose N,g'\nclose N)>E+2C$ for some
$\mathbf U\propnest\mathbf W$.  Then there exist $g_1,g_1'\in g\nclose
N,g'\nclose N$ so that $g_1,g_1'$ are both linked to some $U\in\mathbf
U$ with $U\propnest W$.
Lemma~\ref{lem:linked_pairs_exist}.\eqref{item:translates_not_linked}
and Corollary~\ref{cor:unlinked_have_same_proj} imply that
$\pi_U(g),\pi_U(g_1)$ $C$--coarsely coincide, and the same is true for
$\pi_U(g'),\pi_U(g'_1)$, so $U$ must be nested in some $T_i$. Since 
$\rho^U_S$ and $\rho^{T_i}_S$ coarsely coincide, it follows that 
$U,T_i$ is a linked pair and thus
$\mathbf U\nest\mathbf T_i$.

Consider the third case.  Let $g,g'\in g\nclose N,g'\nclose N$ be minimal-distance (in $\dist_\triangle$) representatives.  Observe that $\{g,g'\}$ is a linked pair and that $\dist_{\triangle}(g,g')=\dist_{\triangle_N}(g\nclose N,g'\nclose N)$, where $\dist_{\triangle_N}$ is the metric on $\pyramid{G/\nclose N}$.  The claim follows from the large link lemma in $(G,\mathfrak T)$ as above.  

\textbf{Verifying
Definition~\ref{defn:space_with_distance_formula}.\eqref{item:dfs_partial_realization}:}
Let $\{\mathbf U_i\}_{i=1}^k$ be a totally orthogonal subset of
$\mathfrak T/\nclose N$. If $\mathbf U_i\not\in\mathfrak S/\nclose N$ for some $i$, then $k=1$ and partial realization obviously holds.  Hence suppose that $\mathbf U_i\in\mathfrak S/\nclose N$ for all $i$.  Then, by Lemma~\ref{lem:hellyish}, for each $i\leq k$, there exists $U_i\in\mathbf U_i$ so that for all distinct $i,j$, we have $U_i\orth U_j$ and $\{U_i,U_j\}$ is a linked pair.  The claim now follows from partial realization in $(G,\mathfrak T)$.

\textbf{Verifying Definition~\ref{defn:space_with_distance_formula}.\eqref{item:dfs_uniqueness}:}  Let $g\nclose N,g'\nclose N\in G/\nclose N$ and let $\kappa\geq 0$.  Suppose that for all $\mathbf U\in\mathfrak T$, we have $\dist_{\mathbf U}(g\nclose N,g'\nclose N)\leq\kappa$.  Let $g,g'\in g\nclose N,g'\nclose N$ be minimal-distance (in $\dist_\triangle$) representatives.  

We now show that $\pi_U(g),\pi_U(g')$ are $(\kappa+C')$--close in every $U\in\mathfrak S$, for $C'$ as in Lemma~\ref{lem:labelled}.  By uniqueness in $(G,\mathfrak T)$, it follows that $\dist_G(g,g')\leq\theta(\kappa+C')$, which implies the required bound on $\dist_{G/\nclose N}(g\nclose N,g'\nclose N)$.

Since we chose $g,g'$ at minimal distance, we have $\dist_\triangle(g,g')\leq\kappa$. Suppose that there exists $U\in\mathfrak T-\{S\}$ with $\dist_U(g,g')>C'$.  Then by Lemma~\ref{lem:labelled}, $U$ is linked to both $g$ and $g'$, and hence $\dist_U(g,g')=\dist_{\mathbf U}(g\nclose N,g'\nclose N)\leq\kappa$, as required.
\end{proof}

\bibliographystyle{alpha}
\bibliography{hhs_asdim}
\end{document}